\documentclass[10pt, a4paper]{amsart}
\usepackage[margin=2cm]{geometry}
\usepackage{graphicx} 
\usepackage{amsmath}
\usepackage{amssymb}
\usepackage{stix} 
\usepackage{amsthm}
\allowdisplaybreaks
\usepackage{mathtools}
\usepackage{mathrsfs}
\usepackage{dsfont} 
\usepackage[english]{babel}
\usepackage{csquotes}
\usepackage{hyperref}
\usepackage{xcolor} 
\usepackage{verbatim} 
\mathtoolsset{showonlyrefs}

\newtheorem{thm}{Theorem}[section]
\newtheorem{cor}[thm]{Corollary}
\newtheorem{lem}[thm]{Lemma}
\newtheorem{prop}[thm]{Proposition}

\newtheorem*{question*}{Question}

\theoremstyle{definition}
\newtheorem{defn}[thm]{Definition}

\newtheorem{remark}[thm]{Remark}

\newtheorem*{remark*}{Remark}

\numberwithin{equation}{section}

\newcommand{\p}{\mathcal P}
\newcommand{\E}{\mathcal{E}}
\newcommand{\F}{\mathcal{F}}

\newcommand{\ve}{\varepsilon}
\renewcommand{\H}{\mathscr{H}}
\newcommand{\G}{\mathcal{G}} 
\newcommand{\D}{\Delta} 
\newcommand{\Zp}{\mathbb{N}}
\newcommand{\R}{\mathbb{R}}        
\renewcommand{\S}{\mathbb{S}}        

\newcommand{\RNp}{\mathbb{R}^{m+1}_{+}}

\newcommand{\eps}{\varepsilon}

\renewcommand{\d}{\mathrm{d}}

\newcommand{\abs}[1]{\left| #1 \right|}
\newcommand{\spt}{\textnormal{spt}}

\usepackage{crossreftools}

\begin{document}

\title[Defect measures and free boundaries]{Parabolic defect measures for Harmonic Maps with Free Boundary}
\author{Anirban Das}
\address{Tata Institute of Fundamental Research,
Centre for Applicable Mathematics, Bangalore, 560065, India}
\email{anirban21@tifrbng.res.in}
\author{Ali Hyder}
\address{Tata Institute of Fundamental Research,
Centre for Applicable Mathematics, Bangalore, 560065, India}
\email{hyder@tifrbng.res.in}
\author{Yannick Sire}
\address{Johns Hopkins University, Krieger Hall, 3400 N. Charles St., Baltimore, MD, 21218, USA}
\email{ysire1@jhu.edu}

	\begin{abstract}
	We introduce defect measures for the heat flow of a class of harmonic maps with free boundary and initiate their analysis. Our flow is motivated by singular perturbations on the boundary of the domain, emphasizing structural differences between harmonic maps with free boundary and their classical counterpart. The harmonic maps we consider are a reformulation of maps introduced by Da Lio and Rivi\`ere and arise in several problems in the spectral geometry of Steklov eigenvalues. Contrary to the classical case, the flow we consider exhibits features of global nature and the blow-up analysis turns out to be much more challenging. 
	\end{abstract}

\Large

\maketitle

\tableofcontents

\section{Introduction}

Suppose $M$ and $N$ are closed (i.e. compact without boundary) Riemannian manifolds. In their well known work \cite{ES}, Eells and Sampson studied the so called homotopy problem for harmonic maps which investigates that starting with a given smooth map $\phi : M \to N$, whether there exists a harmonic map $v : M \to N$ that is homotopic to $\phi$. Their idea was to consider the following heat flow of harmonic maps
\begin{align}\label{classical heat flow}
\begin{cases}
&\partial_t u = \tau(u), \text{ on } M \times (0, \infty),\\
&u(\cdot, 0) = \phi, \text{ on } M,
\end{cases}
\end{align}
where for any $t > 0$, $\tau (u(\cdot, t))$ denotes the tension field of $u(\cdot, t)$. They proved that when the sectional curvature of $N$ is non-positive then \eqref{classical heat flow} admits a global smooth solution and hence the homotopy problem is solved in that case with an affirmative answer. Later Struwe in \cite{S} tried to remove the curvature assumption from the target manifold $N$ by considering weak solutions of \eqref{classical heat flow}. However in this work the domain was considered to be two dimensional. In order to show existence and partial regularity of solution of \eqref{classical heat flow} from higher dimensional domains, Chen and Struwe used Ginzburg-Landau approximation in \cite{CS}. In \cite{LW-1} using Ginzburg-Landau approximations Lin and Wang recovered the result by Eells and Sampson in the case when the target manifold is negatively curved.  

Another important geometric variational problem is the so called harmonic map system with free boundary. More precisely, let $(M, g)$ be an $m$-dimensional Riemannian manifold with non empty boundary $\partial M$ and $Z$ be an $\ell$-dimensional Riemannian manifold without boundary. Suppose $N$ is a $k$-dimensional submanifold in $Z$ without boundary. Any continuous map $u_0: M\to Z$ satisfying $u_0(\partial M)\subset N$ defines a relative homotopy class in maps from $(M, \partial M)$ to $(Z, N)$. A map $u: M\to Z$ with $u(\partial M)\subset N$ is called homotopic to $u_0$ if there exists a continuous homotopy $h:[0, 1]\times M \to Z$ satisfying $h([0,1]\times \partial M)\subset N$, $h(0) = u_0$ and $h(1) = u$. An interesting problem is that whether or not each {\sl relative} homotopy class of maps has a representation by harmonic maps, which is a stationary solution  to the following parabolic PDE,
\begin{align}\label{FB heat flow}
\begin{cases}
&\partial_t u = \tau(u), \text{ on } M \times (0, \infty),\\
&u(\partial M \times (0, \infty))\subset N,\\
&\frac{\partial u}{\partial \nu}\perp T_u N,\\
&u(\cdot, 0) = \phi, \text{ on } M.
\end{cases}
\end{align}
Here $\nu$ is the unit normal vector of $M$ along the boundary $\partial M$ and $T_pN$ is the tangent space of $N$ at $p$ and $\perp$ means orthogonal in $T_p Z$. The stationary system ($\partial_t u = 0$) constitutes the Euler-Lagrange equations for critical points of the Dirichlet energy functional
\begin{equation*}
E(u) = \frac 1 2 \int_{M}|\nabla u|^2dM
\end{equation*}
defined on the space of maps  
\begin{equation*}
H^1_N(M, Z) = \{u\in H^1(M, Z): u(\partial M)\subset N\}.
\end{equation*}

Existence and partial regularity of energy minimizing harmonic maps in $H^1_N(M, Z)$ have been established (for example, in \cite{BaldesMM1982}, \cite{DuzaarSteffen1989JRAM}, \cite{DuzaarSteffenAA1989}, \cite{GulliverJostJRAM1987}, \cite{HardtLinCPAM1989}). As for  the parabolic problem, problem \eqref{FB heat flow} was first studied by Ma \cite{MaLiCMH1991} in the case $m = \textnormal{dim} M = 2$, where a global existence and uniqueness result for finite energy weak solutions was obtained under {suitable} geometrical hypotheses on $Z$ and $N$. Global existence for weak solutions of (\ref{FB heat flow}) was established by Struwe in \cite{StruweManMath1991} for $m\ge 3$. 
In \cite{Hamilton1975LNM}, Hamilton considered the case when $\partial Z = N$ is totally geodesic and the sectional curvature $K_Z\leq 0$.
He proved the existence of a unique global smooth solution for (\ref{FB heat flow}). When $Z$ is a Euclidean space, the first equation in (\ref{FB heat flow}) is the standard heat equation
\begin{equation}
u_t - \Delta u = 0 \text{ on }M \times [0, \infty).
\end{equation}
As pointed out in \cite{chen-lin} and \cite{StruweManMath1991}, estimates near the boundary for (\ref{FB heat flow}) are difficult because of the highly nonlinear boundary conditions. Struwe in \cite{StruweManMath1991}  used a Ginzburg-Landau approximation in the interior to investigate the free boundary heat  flow. 

In \cite{DR}, \cite{DR2} Da Lio and Rivi\`ere introduced the notion of half-harmonic maps. These are also connected to harmonic maps with free boundary as has been shown by Millot and Sire in \cite{MS} as we now explain. Half-harmonic maps are critical points of a suitable energy which is the 
Dirichlet form induced by the fractional Laplacian $(-\Delta)^\frac12$, the Fourier multiplier of symbol
$|\xi|$.  More generally, given bounded open interval $\omega\subset \mathbb{R}$, the nonlocal (or fractional) $1/2$-energy in $\omega$ of a measurable function $u:\mathbb{R}\to\R^L$ is defined as 
$$\mathcal{E}(u,\omega):=\frac{\gamma}{2}\iint_{\omega\times\omega}\frac{|u(x)-u(y)|^2}{|x-y|^{2}}\,dxdy +\gamma \iint_{\omega\times(\R\setminus\omega)}\frac{|u(x)-u(y)|^2}{|x-y|^{2}}\,dxdy\,.$$
The normalization constant  $\gamma$ is chosen in such a way that  
$$\mathcal{E}(u,\omega)=\frac{1}{2}\int_{\R}|(-\Delta)^{\frac{1}{4}} u|^2\,dx, \quad \forall u\in C^\infty_c(\omega;\R^L)\,.$$
We denote by $\widehat H^\frac12(\omega;\R^L)$ 
the Hilbert space made of $L^2_{\rm loc}(\R)$-functions $u$ such that $\mathcal{E}(u,\omega)<\infty$, and we set 
$$\widehat H^\frac12(\omega;N):=\big\{u\in\widehat H^\frac12(\omega;\R^L): u(x)\in N\text{ a.e. on }\R \big\}\,.$$ 

A half-harmonic map satisfies the following Euler-Lagrange equation 
$$ (-\Delta)^{1/2}u\perp T_u\,N\quad \text{in $\mathscr{D}^\prime(\omega)$}\,.$$
In terms of scaling, this equation turns out to be critical. In the papers \cite{DR,DR2}, a regularity theory is developed for such maps. The link between such maps and harmonic maps with free boundary goes as follows. Let us consider, in a simplified setting, 
$M=\mathbb R^{n+1}_+,\,\,\,Z=\mathbb R^L,\,\,\,N=\mathbb S^{L-1}
$  and the following harmonic map system 
\begin{equation}\label{HMspheres}
\begin{cases}
-\Delta u = 0\,\,\,\text{in}\,\,\mathbb R^{n+1}_+,\\
u(\mathbb R^n)\subset \mathbb S^{L-1},\\
\frac{\partial u}{\partial \nu}\perp T_u \mathbb S^{L-1}.
  \end{cases}
\end{equation}
It can be shown (see e.g. \cite{MS}) that the trace of $u$ (denoted $v$) on $\mathbb R^n=\partial \mathbb R^{n+1}_+$, satisfies distributionally the half-harmonic map equation 
$(-\Delta)^{\frac12}\, v\perp T_v \, \mathbb S^{L-1}.$ 

On the other hand, consider a weak solution of the equation, $(-\Delta)^{\frac12}\, v\perp T_v \, \mathbb S^{L-1}$ on $\R^n$, its harmonic extension $u:\R^{n+1}_+ \to \R^L$ (obtained by convolving component-wise with the Poisson kernel) satisfies weakly \eqref{HMspheres}. 

The previous computation establishes a natural relation between half-harmonic maps and a class of harmonic maps with free boundary. Such a link can be also exhibited at the parabolic level, though some technicalities arise. The following flow was considered in \cite{HSSW} in this context
\begin{align}\label{half-harmonic heat flow}
\begin{cases}
&(\partial_t - \Delta)^{\frac 12} u \perp T_u N, \text{ on } \R^m \times (0, \infty),\\
&u(\cdot, t) = u_0, \text{ on } \R^m, \forall t \leq 0,
\end{cases}
\end{align}
for some integer $m \geq 1$ and a map $u_0 \in \dot{H}^{\frac 1 2} (\R^m; N)$. 
It is known (see \cite{banerjeeGaro}) that, suitably formulated, the flow \eqref{half-harmonic heat flow} enjoys a Struwe-type monotonicity formula.  This is due to the existence of a suitable (caloric) extension to the upper-half space (see \cite{NS} and \cite{ST}). As we will see below, though the operator $\big(\partial_t  - \Delta \big)^\frac12$ defined as a Fourier multiplier seems unnatural, its caloric extension to the upper half-space is naturally associated to extrinsic harmonic maps with free boundary. Indeed, extending the problem to $\RNp$ by adding an extra variable, one gets the following problem

\begin{align}\label{free bdry heat flow}
\left\{
\begin{array}{ll}
\partial_t U-\Delta_X U=0, &\text{in }\RNp \times(0,\infty),\\
\rule{0cm}{0.5cm} -\partial_y U \perp T_U N, &\text{on }\partial \RNp \times(0,\infty),
\end{array} \right.
\end{align}
along with the initial condition
\begin{equation}\label{eq:initial_cndn}
    U(x,y,0)=U_0(x,y), \quad   \text{on }\RNp \times\{0\},
\end{equation}

\noindent where $U_0$ satisfies
\begin{align}\label{eq:initial condition}
\begin{cases}
\Delta U_0 = 0, &\text{ on } \RNp,\\
U_0 = u_0, &\text{ on } \partial \RNp,
\end{cases}
\end{align}
and $X = (x, y)$ denotes the variable for points in $\R^m \times (0, \infty)$. In \eqref{free bdry heat flow} the map $U$ is considered to take values in $\R^L$ which is a Euclidean space that embeds the manifold $N$ isometrically. We notice that for the second condition in \eqref{free bdry heat flow} we need $U \big|_{\partial \RNp \times (0, \infty)} \in N$. \eqref{free bdry heat flow}-\eqref{eq:initial_cndn} is actually the free boundary heat flow from the space $\overline{\RNp}$. The existence and partial regularity for this flow was established in \cite{HSSW} using a suitable ``boundary Ginzburg-Landau approximation", which we introduce now. 

\subsection{The Set Up}\label{subsec-setup}

The aim of the present paper is to start the blow-up analysis of a class of flows of harmonic mappings with free boundary, by means of suitable parabolic defect measures in the spirit of the Lin-Wang theory. 
For simplicity we consider the domain manifold to be the upper half of a Euclidean space, say $\RNp := \R^m \times (0, \infty)$ for some integer $m \geq 1$. Let $N$ be a compact Riemannian manifold without boundary. By Nash's embedding theorem we may assume that $N$ is isometrically embedded in $\R^L$ for some positive integer $L$. For any $\delta > 0$ let $$B_{\delta} (N) := \{ p \in \R^L | \: \textnormal{dist}(p, N) < \delta \}.$$
It is well known that there exists $\delta_N > 0$ and a smooth map (called the nearest point projection map) $\Pi : B_{2 \delta_N} (N) \to N$ such that for all $p \in B_{2 \delta_N} (N)$, $\Pi(p)$ is the unique point in $N$ satisfying $\textnormal{dist}(p, N) = |p - \Pi(p)|$. In particular the map $p \mapsto \textnormal{dist}^2(p, N)$ is smooth on $B_{2 \delta_N} (N)$. Let us consider $\chi \in C^{\infty} (\R)$ to be an increasing function satisfying $$\chi(s) =
\begin{cases}
s, &\text{ for } s \leq \delta_N^2,\\
4 \delta_N^2, &\text{ for } s \geq 2 \delta_N^2.
\end{cases}$$
Now define $ F : \R^L \to \R$ as 
\begin{equation}\label{eq:defn of perturbation}
F(p) := \chi \left( \textnormal{dist}^2(p, N) \right).
\end{equation}

\noindent Consider the space 
\begin{equation}\label{fnspace1}
    \mathcal{V} := \left\{ V: \RNp \to \R^L | \: \int_{\RNp} |\nabla V|^2 dX \: < \infty \: \text{and} \: F(v) \in L^1(\R^m) \right\}
\end{equation}
where $v := \textup{Trace}(V)$.
For any $\eps>0$ define the following energy functional
\begin{equation}
\label{eq:energy}
E^{\eps}(V) := \frac 12\int_{\R^{m+1}_+} \abs{\nabla V}^2 \d X +
\frac{1}{\eps^2}\int_{\partial \RNp} F(v)\: dx, \ \textrm{for} \ V\in \mathcal{V}.
\end{equation}

Now for $\eps > 0$, let us consider the following boundary value problem as an approximation to \eqref{free bdry heat flow}:
\begin{equation}
\label{eq:approx_ext_+ general target}
\left\{
\begin{array}{ll}
 \partial_t U_{\eps} - \D_X U_{\eps} = 0,
&\text{ in } \mathbb{R}_+^{m+1}\times (0, \infty),\\
\rule{0cm}{0.5cm} \partial_y U_{\eps} = \frac{1}{\eps^2} (\nabla F) (U_{\eps}), &\text{ on } \partial \RNp \times (0, \infty),
\end{array} \right.
\end{equation}
together with the initial condition
\begin{equation}\label{eq:approx_ext_+ initial_cndn}
    U_\eps(\cdot ,0) = U_0, \quad\text{on }\RNp,
\end{equation}
where we require $U_{\eps}$ to take values in $\R^L$.  In the special case when $N = \S^{L-1}$, we will always consider $F(p) := \frac{1}{4} (1 - |p|^2)^2$.  The system \eqref{eq:approx_ext_+ general target} is invariant under  parabolic rescalings. More explicitly, for $(X_0, t_0) \in \partial \RNp \times (0, \infty)$ and $r > 0$, the rescaled  map $$V_{\eps}(X, t) := U_{\eps} (X_0 + rX, t_0 + r^2 t),$$   satisfies the system \eqref{eq:approx_ext_+ general target} in its domain,  with parameter  $\eps$  replaced by $\frac{\eps}{\sqrt r}$. However, this is not the case with the initial condition \eqref{eq:approx_ext_+ initial_cndn}. As the above mentioned rescalings  play a central role in  our analysis,  it is natural to look for scaling invariant conditions that can replace  \eqref{eq:approx_ext_+ initial_cndn} while retaining the essential estimates. The following constraints turn out to be suitable substitute for this:

\begin{equation}\label{eq:scale_inv_bds}
\left\{
\begin{array}{ll}
\rule{0cm}{0.5cm} \| U_{\eps} \|_{L^{\infty} (\RNp \times (0, \infty))} \leq C, &\forall \eps > 0,\\
\rule{0cm}{0.5cm} \E_{\eps} (U_{\eps}, (X, t), r) \leq C, & \forall \eps > 0, \, \forall (X, t) \in \mathcal K, \, \forall r \in (0, \frac{\sqrt t}{2}),
\end{array} \right.
\end{equation}
where $\mathcal{K}$ is any compact subset of $\partial \RNp \times (0, \infty)$ and the constant $C$ in the last inequality depends only on $m, N$ and $\mathcal K$. Here the energy $\E_{\eps}$ is as defined in Lemma \ref{lem-mono} below. The system \eqref{eq:scale_inv_bds} is invariant under scaling, in the sense that $V_{\eps}$ satisfies a similar system when $\eps$ in the second inequality of \eqref{eq:scale_inv_bds} is replaced by $\frac{\eps}{\sqrt r}$. We will discuss this in more details in Section \ref{blow up analysis section}. It can also be seen using the machinery provided in Section \ref{preliminary results section} that any solution of \eqref{eq:approx_ext_+ general target}-\eqref{eq:approx_ext_+ initial_cndn} also obeys the bounds given in \eqref{eq:scale_inv_bds}.

Existence of smooth solutions $U_{\eps}$ for the above system was established in \cite{HSSW}. Building on \cite{HSSW}, it is natural to introduce the following family of  energy densities in the space of Radon measures
\begin{equation}\label{eq:energy density radon measure}
e(U_{\eps}) \: dX dt := \frac12 |\nabla_X U_{\eps}|^2 \: dX dt + \frac{1}{ \eps^2} F(U_{\eps}) \: d\H^m \lfloor_{\partial \RNp} dt, \: \: \text{ on} \:\: \overline{\RNp} \times (0, \infty),
\end{equation}
and investigate the structure of their weak limits. The object of the present paper is precisely to derive several structure theorems on those limiting measures. Following the seminal work of Lin in \cite{Lin-annals} and Lin-Wang in \cite{LW-1,LW-2,LW-3}, {\sl defect measures} (both for the elliptic or the parabolic case) are natural objects to investigate for the appearance of singularities, which in our context are a consequence of concentration of energy. The properties of the limiting measures then encapsulates the occurrence and lack thereof singularities. In particular the theory of Lin-Wang provides the following two obstructions to the global existence and smoothness of the heat flow of harmonic maps:
\begin{itemize}
\item Harmonic spheres, i.e. non trivial harmonic maps from $\mathbb S^m$ into $N$,
\item Quasi-Harmonic spheres, i.e. non trivial maps of the form $\Phi(\frac{x}{\sqrt{-t}})$ solving \eqref{classical heat flow}. 
\end{itemize}

The system we consider, \eqref{eq:approx_ext_+ general target} has a different structure compared to the classical heat flow. The nonlinearity takes place on the boundary; this means that no concentration can hold inside the domain since the map in this case is caloric. Our purpose is indeed to have a better understanding of the formation of boundary singularities.

\subsection{Main Results}\label{subsec-main-result}

We now describe below our main results. We would like first to notice that it is important to identify the  critical dimension where  the problem is invariant under conformal transformations. For our flow, this critical dimension is $m=1$. In this case, several arguments can be simplified and  stronger results can be obtained. The problem becomes supercritical for $m \geq 2$. In contrast, the critical dimension for the classical heat flow is $m=2$. 

In harmonic mapping theory, singularity analysis revolves around bubbling phenomena. Although the situation for our free boundary flow is of a much more global nature, the core principle of geometric bubbles still prevails. As in Lin-Wang theory, the main obstructions to regularity are half-harmonic spheres, that is, global maps satisfying the boundary value problem: 
\begin{equation}\label{bubble}
\begin{cases}
-\Delta u =  0,\text{in}\,\,\, \mathbb R^{m+1}_+,\\
u(\partial \mathbb R^{m+1}_+)\subset N,\\
\frac{\partial u}{\partial \nu}\perp T_u N.
  \end{cases}
\end{equation}
Taking the trace of $u$ (denoted $u_0$) on $\partial \mathbb R^{m+1}_+$, it satisfies the equation 
$$
(-\Delta)^\frac12 u_0 \perp T_{u_0} N,
$$
where $(-\Delta)^\frac12$ is the square root of the Laplacian in $\mathbb R^m$ (the multiplier of symbol $|\xi|$). This is precisely the distributional formulation of half-harmonic maps from $\mathbb R^m$ into $N$. An important result in the original theory developed by Eells and Sampson \cite{ES} is that if the target is negatively curved, then the flow is globally defined, smooth and converges to a harmonic map in infinite time. This result has been generalized by Ding and Lin \cite{Ding-Lin}, allowing a slight amount of positive curvature in the target. For the harmonic map with free boundary, such result is available for the full system \eqref{FB heat flow}, under a mean convexity assumption of the boundary of the domain (see \cite{chen-lin}). 

Our setup is different since the map $u$ is valued in a flat space and only the boundary of the domain is constrained. This lack of ambient curvature allows more flexibility and our class of maps with free boundary are more related to half-harmonic maps as discussed above. In this context, we are not aware of a satisfying result in analogy with the Eells-Sampson theorem. More precisely, as we investigate in the current paper, non trivial half-harmonic loops into $N$ are the elements contributing to singularity formation for the flow. Recent results (see e.g. \cite{MMS}) suggest that such maps always exist for closed targets; additionally the Cartan-Hadamard theorem tells us that negative curvature for complete targets with trivial fundamental group would imply that the manifold is non-compact. This suggests that an Eells-Sampson type theorem in our setting amounts to require some structural properties of the defect measures, instead of a geometric condition. We would like to remark that even in the classical case, the singularity analysis is not completely settled since there is no existence result for quasi-harmonic spheres. This difficulty in the case of system \eqref{free bdry heat flow} amounts to the global nature of the problem and our contribution is a first step towards the understanding of singularity formation for such flows. We would like also to point out that in a recent preprint \cite{Changyou-defects}, the analysis of the defect measures for stationary half-harmonic maps has been worked out using quantitative stratification \`a la Naber-Valtorta \cite{Naber-V}. Curvature-related issues we just described also appear in this context. At a more technical level, curvature conditions arise when deriving Bochner type formulas. This is the original proof by Eells-Sampson but also hidden in all the other arguments like in Ding-Lin. Since our map $u$ is not fully constrained, but only its boundary, it is not clear at all how to get a useful (integrated) Bochner inequality. In view of this, we will take another viewpoint, reminiscent somehow of the blow-up analysis of Sacks and Uhlenbeck \cite{sacks-U}, using the flow as a perturbative template. Conditionally to some assumptions on the space-time singular set of weak solutions, we can extract a global half-harmonic map obtained as a non trivial limit of suitable parabolic solutions.

We now describe our main results in  detail. Building on the $\eps$-regularity theory of \cite{HSSW},  we fix a sufficiently small $\eps_0 > 0$ (characterized by Lemma \ref{eps_gradient_est} below) and introduce the singular set
\begin{equation}\label{sing_set_intro}
 \Sigma:=\Sigma_{\eps_0} := \bigcap_{R>0} \bigcup_{r \in (0, R)} \Big\{ Z_0 = (X_0, t_0) \in \partial \RNp \times (0,\infty)| \: \liminf_{\eps \to 0} \mathcal{E}_{\eps} (U_\eps, Z_0, r) \geq \eps_0^2 \Big\}
\end{equation}
where $\mathcal{E}_{\eps} (U_\eps, Z_0, r)$ denotes the  scale-invariant weighted   energy defined in Lemma \ref{lem-mono} below. The set $\Sigma$ contains the singular set of the weak limit of suitable solutions $U_\varepsilon$ of our systems, and coincides with the set of points where energy concentrates, which is the underlying mechanism for the formation of singularities.

Since the threshold parameter $\eps_0 > 0$ arises from the $\eps$-regularity theory in \cite{HSSW} and remains fixed throughout, we suppress the dependence on $\eps_0$ in the notation for $\Sigma$.

We first present our results in the critical dimension $m = 1$. Our main blow-up theorem is as follows:

\begin{thm}\label{main thm_intro} 
Let $m = 1$. Consider a sequence $(U_{\eps})_{\eps}$ of smooth solutions of \eqref{eq:approx_ext_+ general target} satisfying \eqref{eq:scale_inv_bds}. If the energy concentration set $\Sigma$, as defined in \eqref{sing_set_intro}, is non empty then there exists a sequence $( W_n )$ obtained through suitable parabolic rescalings of $U_\eps$ and a map $W$ such that
\begin{itemize}
\item $W_n \to W$ in $H^1_{loc} (\overline{\R^2_+} \times \R)$,
\item $W$ is independent of the time variable $t$, and
\item the spatial profile of $W$ (still denoted by $W$) is a non-trivial, smooth harmonic map satisfying
\begin{equation} \label{eq-bubble}
\begin{cases}
-\Delta W = 0, \quad &\text{in}\,\,\mathbb R^{2}_+,\\
W(\partial \R^{2}_+)\subset N,\\
\frac{\partial W}{\partial \nu}\perp T_W N, & \text{on } \partial \R^{2}_+. 
  \end{cases}
\end{equation} 
\end{itemize}    
\end{thm}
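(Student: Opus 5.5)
Since $\Sigma\neq\emptyset$, the plan is a Struwe/Lin--Wang-type blow-up at a point of $\Sigma$, with rescalings chosen so that the rescaled parameter $\eps/\sqrt{r}$ tends to zero. Fix $Z_0=(x_0,0,t_0)\in\Sigma$. By \eqref{sing_set_intro}, there are arbitrarily small $r$ and a subsequence $\eps_i\to 0$ with $\mathcal E_{\eps_i}(U_{\eps_i},Z_0,r)\geq \eps_0^2/2$. For $m=1$ the problem is critical, so the ratio $\eps/r$ is scale invariant, and I would split into two regimes according to its size.

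\textbf{Step 1: choice of the bubbling scale.} For each $\eps_i$ I choose a point $Z_i=(x_i,0,t_i)\to Z_0$ and a scale $r_i\to0$ as the largest scale at which the weighted energy, maximized over nearby boundary points, reaches a fixed fraction of $\eps_0^2$. This is a Sacks--Uhlenbeck / Schoen point-picking argument. The monotonicity formula of Lemma \ref{lem-mono} makes the energy nearly nondecreasing in $r$, so such a maximal scale exists. The scale satisfies $r_i\to0$: otherwise $\varepsilon$-regularity (Lemma \ref{eps_gradient_est}) would give uniform smooth bounds near $Z_0$ and contradict $Z_0\in\Sigma$.

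Set $W_i(X,t)=U_{\eps_i}(Z_i+(r_iX,r_i^2t))$. These solve \eqref{eq:approx_ext_+ general target} with parameter $\eps_i/\sqrt{r_i}$ and obey \eqref{eq:scale_inv_bds} uniformly. By the maximality of $r_i$, every boundary point and every scale of order one carries energy at most $\eps_0^2/2$. Lemma \ref{eps_gradient_est} then gives uniform $C^{1,\alpha}$ (and higher) bounds on compact subsets of $\overline{\R^2_+}\times\R$. If instead $\eps_i/\sqrt{r_i}$ stays bounded below, the penalization does not degenerate, and I would handle this by a separate argument or an adjusted selection. I expect the point-picking to force $\eps_i/\sqrt{r_i}\to0$, because a concentrated energy at a scale comparable to $\eps$ would make the boundary penalty bound the profile. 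This step needs checking.

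\textbf{Step 2: time independence.} The key identity is the energy/monotonicity identity. It bounds
\[
\int\!\!\int |\partial_t U_\eps|^2\, G\,\frac{dX\,dt}{t}
\]
by the drop of $\mathcal E_\eps$ between scales. Since $\mathcal E_{\eps}(U_{\eps},Z_0,\cdot)$ is monotone and bounded, its limit as $r\to 0$ exists. For the rescaled maps, the time derivative over parabolic cylinders of unit size is controlled by the difference of $\mathcal E$ at scales $r_i R$ and $r_i/R$. This difference tends to $0$ when the base point is fixed at $Z_0$. Moving the base point from $Z_0$ to $Z_i$ forces a choice: either take $Z_i=Z_0$ with scales chosen along the monotone limit, or use the Lin--Wang trick of averaging in time. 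I would take $Z_i=Z_0$ and choose $r_i$ so that the energy in the window $[r_i/R_i,\,r_iR_i]$ varies by $o(1)$, while $\varepsilon$-regularity still holds locally. Then $\partial_tW_i\to0$ in $L^2_{loc}$, and with the uniform smooth bounds $W_i\to W$ in $H^1_{loc}$ and in $C^1_{loc}$ with $\partial_tW=0$.

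\textbf{Step 3: limit equation and nontriviality.} The bound $\eps_i^{-2}\int F(w_i)\le C$ after rescaling gives $F(W)=0$ on the boundary, so $W(\partial\R^2_+)\subset N$. The map $W$ is harmonic because the $W_i$ are caloric and $\partial_tW=0$. For the boundary condition, I test the Neumann condition against tangential fields $P_T(W_i)\phi$. Since $\nabla F(p)$ is normal to $N$ near $N$, the tangential part of $\partial_yW_i$ is zero, and in the limit $\partial_\nu W\perp T_WN$. Smoothness follows from the uniform estimates.

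Nontriviality comes from the normalization: the energy on the unit scale at the origin equals a fixed fraction of $\eps_0^2$. Strong local convergence passes this to $W$, but the Gaussian weight has tails. I would control those tails using the uniform bound and the $C^1$ bounds, plus the vanishing of the penalty term's contribution, which must be shown not to carry the normalized energy. The main obstacle is this last point together with Step 2: showing the energy does not escape into the $\eps^{-2}F$ term or to spatial infinity because of the global Gaussian weight. This is where the global character of the problem enters.
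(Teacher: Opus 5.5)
Your Step 2 is where the argument breaks. Lemma \ref{lem-mono} does not control $|\partial_t U_\eps|^2$. The drop $\E_\eps(U_\eps,Z_0,R)-\E_\eps(U_\eps,Z_0,r)$ equals $\int_r^R\frac1s\int_{T_s^+(t_0)}\frac{|((X-X_0)\cdot\nabla_X)U_\eps+2(t-t_0)\partial_tU_\eps|^2}{2(t_0-t)}\,\G_{Z_0}\,dX\,dt\,ds$ plus a penalty term. A vanishing drop over a diverging window of scales therefore only gives, in the limit, backward self-similarity $(X\cdot\nabla_X)W+2t\,\partial_tW=0$ (a quasi-harmonic type profile), not $\partial_tW=0$.

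Centering at $Z_i=Z_0$ also conflicts with your normalization. Suppose the rescaled energy at the origin stays near a value $c<\eps_0^2$ on scales $[1/R_i,R_i]$ with $R_i\to\infty$. Then Lemma \ref{eps_gradient_est} at scale $R_i$ gives $|\nabla_XV_i|^2+|\partial_tV_i|\le C(\delta_0R_i)^{-2}$ on $P^+_{\delta_0R_i}$, so the limit is constant. If instead $c\ge\eps_0^2$, you lose $\eps$-regularity at the origin.

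More fundamentally, $\partial_tU_\eps$ can genuinely concentrate at points of $\Sigma$; at an isolated point one has $A^-=A^++\eta(\{Z_0\})$ (Corollary \ref{bubbling_energy_relation}). Time independence of a blow-up limit therefore requires choosing the blow-up point so that it carries no time-derivative mass at small scales, and nothing in your proposal does this.

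That choice is the missing idea, and it is where $m=1$ enters. For $m=1$ the quantity $\int_{P_r^+(Z)}|\partial_tU_\eps|^2\,dX\,dt$ is scale invariant. So the set $F_0$ of Proposition \ref{time_singular_set} lies in the atoms of the weak limit of $|\partial_tU_\eps|^2\,dX\,dt$ and is at most countable. The paper splits accordingly.
\begin{itemize}
\item If $\Sigma$ has an isolated point, the bubble comes from the forward energy identity (Theorem \ref{energy identity_intro}, built on Lemma \ref{free boundary bubbling}). It is applied to almost-stationary time slices of $U$, with $A>0$ by Lemma \ref{fwd_bubble_exist}.
\item Otherwise $\Sigma$ is closed and perfect, hence uncountable, and one takes $Z_0\in\Sigma\setminus F_0$. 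Point-picking is done only in space, on the slice $\{t=t_0\}$, via $Q_\eps(r)$. Local finiteness of $\Sigma^{t_0}$ (Lemma \ref{partial regularity lemma}) forces $Z_n\to Z_0$, so that $\partial_tV_n\to0$ in $L^2_{loc}$ for the rescalings $V_n$. Smallness of the energy at all other times, hence $\eps$-regularity everywhere, is then propagated from $t=t_0$ by Lemma \ref{local_energy_ineq1}. It is not obtained from maximality.
\end{itemize}

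The regime $\tilde\eps_n:=\eps_n/\sqrt{r_n}\not\to0$, which you leave open, is not excluded by point-picking either. The case $\tilde\eps_n\to\infty$ is ruled out by even reflection and Liouville. The case $\tilde\eps_n\to c\in(0,\infty)$ is ruled out by the Pohozaev-type Proposition \ref{Proposition A.2}. That proposition needs both $\partial_tV=0$ and the bound $E^c(V;B_r^+)\le C$ uniformly in $r$, which is again a consequence of $m=1$.

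Your remaining concern about Gaussian tails is handled by Proposition \ref{energy_tail_est}. Together with it, the penalty term vanishes once $\tilde\eps_n\to0$ (Proposition \ref{convergence of approximation}).
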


As a consequence of Theorem \ref{main thm_intro},  if the target manifold $N$ does not contain any non-constant half-harmonic $\S^1$, namely there does not exist any non-constant half-harmonic map from $\S^1 \to N$, then the convergence of $U_{\eps} \to U$ is ``strong"  and the map $U$ is smooth everywhere.  An analogous result  for the classical Ginzburg-Landau flow was proved in \cite{LW-1}.

\begin{thm}\label{energy identity_intro}
    Let $(U_{\eps})_{\eps}$ be as in Theorem \ref{main thm_intro}, $U$ be the weak limit of $U_{\eps}$ in $H^1_{loc} \left( \overline{\R^2_+} \times (0, \infty) \right)$ and $\Sigma$ be as in \eqref{sing_set_intro}. Suppose $Z_0 = (X_0, t_0) \in \partial \R^2_+ \times (0, \infty)$ is an isolated point in $\Sigma$. Then there exists a positive integer $b$, real numbers $a^1, \ldots, a^b \in (- \infty, 0]$ and for each $j \in \{1, \ldots, b\}$ a harmonic map $\omega^j : \R^2_{\geq a^j} \cup \{ \infty \} \to \R^L$ with $\omega^j \in \dot{H}^1 \left( \R^2_{ \geq a^j}; \R^L \right)$, $\omega^j (\partial \R^2_{\geq a^j}) \subseteq N$ and $\partial_y \omega^j \perp T_{\omega^j} N$ on $\partial \R^2_{\geq a^j}$ such that $$\lim_{t \nearrow t_0} E ( U(t); B_R^+ (X_0)) = E ( U(t_0); B_R^+ (X_0)) + \sum_{j = 1}^b E(\omega^j),$$ where $E$ denotes the usual Dirichlet energy and $R > 0$ is such that $P_{2R}(Z_0) \cap \Sigma = \{ Z_0 \}$. Furthermore, there exist sequences $(X^1_i), \ldots, (X^b_i)$ in $\overline{B_R^+ (X_0)}$ that converge to $X_0$ and sequences of positive real numbers $(\lambda^1_i), \ldots, (\lambda^b_i)$ converging to 0 and a sequence of time $(s_i)$ increasing  to $t_0$ such that $$ \lim_{i \to \infty} U(\cdot, s_i) - \sum_{j = 1}^b \left[ \omega^j \left( \frac{\cdot - X^j_i}{\lambda^j_i} \right) - \omega^j (\infty) \right] = U(\cdot, t_0), \text{ strongly in } H^1 (B_R^+ (X_0)).$$
\end{thm}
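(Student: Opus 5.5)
We argue along the lines of Lin--Wang's energy identity for the Ginzburg--Landau heat flow at isolated singular times, adapted to the boundary setting. The first step is to identify the limit $U$ on $P_{2R}(Z_0)\setminus\{Z_0\}$. Away from $\Sigma$, the $\eps$-regularity (Lemma \ref{eps_gradient_est}) gives uniform smooth bounds, so $U_\eps\to U$ smoothly there. Hence $U$ is a smooth solution of the free boundary flow \eqref{free bdry heat flow} on $P_{2R}(Z_0)\setminus\{Z_0\}$, with trace in $N$.

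Next we show that $U(\cdot,t)\to U(\cdot,t_0)$ weakly in $H^1(B_R^+(X_0))$ as $t\nearrow t_0$, and that $\lim_{t\nearrow t_0}E(U(t);B_R^+(X_0))$ exists. For this we use the local energy inequality: multiply by $\partial_tU\,\phi^2$ with a cutoff $\phi$ near $\partial B_R^+$. The boundary term vanishes because $\partial_tU$ is tangent to $N$ and $\partial_yU\perp T_UN$. This gives $\int\!\!\int|\partial_tU|^2<\infty$ near $Z_0$. By the smoothness of $U$ on the annular region $(B_{2R}^+\setminus B_{R/2}^+)\times(t_0-R^2,t_0]$, the energy of $U(t)$ in $B_R^+$ has a limit. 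The defect is therefore $\lim_{t\nearrow t_0}E(U(t))-E(U(t_0))=:\theta\ge0$, and $\theta>0$ because $Z_0\in\Sigma$ and $U$ cannot be regular there. Indeed, if $\theta=0$ the scale-invariant energy would drop below $\eps_0^2$ at small scales, and $\eps$-regularity would rule out $Z_0\in\Sigma$.

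Then we choose times $s_i\nearrow t_0$ such that
$$(t_0-s_i)\int_{B_R^+}|\partial_tU(s_i)|^2\,dX\to0.$$
Such times exist by integrability of $|\partial_t U|^2$ via a standard mean-value argument; one can also take $\int_{B^+_R}|\partial_t U(s_i)|^2\to 0$ along a subsequence in dimension $m=1$. The maps $u_i:=U(\cdot,s_i)$ are then harmonic in $B_R^+$ with free boundary condition $\partial_yu_i-\tau_i\perp T_{u_i}N$, where the error $\tau_i$ (coming from $\partial_tU$) is small in $L^2$. We thus obtain an approximate harmonic map with free boundary, of bounded energy, in the conformally invariant dimension $m=1$.

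We now perform Sacks--Uhlenbeck/Ding--Tian bubble extraction. Energy concentrates only at $X_0$, since it is the only point of $\Sigma$ at time $t_0$. At each step we pick the concentration center $X_i^j$ and scale $\lambda_i^j$, and rescale $u_i$. If the distance of $X_i^j$ to $\partial\R^2_+$ is comparable to $\lambda_i^j$, the rescaled domain converges to a half plane $\R^2_{\ge a^j}$ with $a^j\le0$. If the ratio diverges, the limit would be an entire harmonic map of finite energy on $\R^2$, hence constant, so this case contributes nothing and is excluded. The $\eps$-regularity for approximate free boundary harmonic maps, together with removable singularity at $\infty$ (finite energy in 2D, conformal invariance), yields nontrivial bubbles $\omega^j$ with the stated boundary conditions. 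Iterating terminates because each bubble carries energy at least a fixed $\eps_1>0$ (the energy gap for half-harmonic maps).

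The main obstacle is the no-neck-energy step: showing that no energy is lost in the annular neck regions $\{\lambda_i R'\le|X-X_i|\le \delta\}\cap\R^2_+$. Here we use the smallness of energy on dyadic half-annuli, plus the tension smallness $\|\tau_i\|_{L^2}$, to get a Pohozaev/Hopf-differential type identity on half-annuli. Because $\partial_yu\perp T_uN$ and $u$ is tangent to $N$ along the flat boundary, the boundary term of the Hopf differential vanishes, so this identity is available. It shows that the radial and angular energies are comparable up to the error $\lambda$-weighted $\int|x||\tau_i|\,|\nabla u_i|$. Combined with an $L^{2,\infty}$ or $L^{2,1}$ duality estimate, or alternatively a three-circle decay on half-annuli, this gives vanishing neck energy, and hence the energy identity. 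Strong $H^1$ convergence of $u_i-\sum_j[\omega^j(\cdot-X_i^j)/\lambda_i^j-\omega^j(\infty)]$ to $U(t_0)$ then follows from the energy identity, since weak convergence together with convergence of norms implies strong convergence. Here we use that $u_i\rightharpoonup U(t_0)$, which holds because $\partial_tU\in L^2$ gives continuity in $L^2$.
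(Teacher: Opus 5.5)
\textbf{Gap: the tension is not controlled at the original scale.} The problem is the step where you turn the flow into a sequence of approximate free boundary harmonic maps.

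A correction first: $u_i=U(\cdot,s_i)$ is not harmonic with a perturbed boundary condition. It solves $\Delta u_i=\partial_tU(\cdot,s_i)=:\tau_i$ in $B_R^+(X_0)$ with the exact condition $\partial_yu_i\perp T_{u_i}N$, so the error is the interior tension field.

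The real issue is the size of $\tau_i$. Your choice of $s_i$ only gives $\sqrt{t_0-s_i}\,\|\tau_i\|_{L^2(B_R^+)}\to0$. The alternative claim, that one may take $\|\partial_tU(\cdot,s_i)\|_{L^2}\to0$ (or even bounded), is unjustified. Integrability of $t\mapsto\|\partial_tU(t)\|_{L^2}^2$ on $(t_0-R^2,t_0)$ only forces $\liminf_{t\nearrow t_0}(t_0-t)\|\partial_tU(t)\|^2_{L^2}=0$; an integrable profile such as $(t_0-t)^{-1/2}$ has infinite $\liminf$, and $m=1$ does not change this.

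Under $X\mapsto X_0+\lambda X$ the $L^2$ norm of the tension becomes $\lambda\|\tau_i\|_{L^2}$. So two tools are only controlled at scales $\lesssim\sqrt{t_0-s_i}$:
\begin{itemize}
\item $\eps$-regularity and bubble extraction, which need $L^2$-bounded tension, as in Lemma~\ref{free boundary bubbling};
\item your Pohozaev neck identity, whose error on a half-annulus of outer radius $\delta$ is of order $\delta\|\tau_i\|_{L^2}\|\nabla u_i\|_{L^2}$.
\end{itemize}
On the range $\sqrt{t_0-s_i}\ll|X-X_0|\le\delta$ your argument rules out neither energy concentration at intermediate scales (where rescaled limits need not be harmonic) nor energy loss in that outer part of the neck.

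\textbf{How the paper avoids this.} The paper never does neck analysis above the first bubble scale. Instead it uses the localized energy inequality for $U$ (Lemma~\ref{energy decay}, Remark~\ref{remark:energy decay}) as bookkeeping.
\begin{itemize}
\item By Remark~\ref{remark: energy concentration} it picks $t_i\nearrow t_0$ and $\lambda_i\searrow0$ with $\int_{B^+_{\lambda_i}(X_0)}|\nabla_XU(\cdot,t_i)|^2\ge(1-\frac1i)A$.
\item It rescales to $\tilde V_i(X,t)=U(X_0+\lambda_iX,t_i+\lambda_i^2t)$. By the scale invariance special to $m=1$, $\int_{-1}^0\int_{B^+_{R/\lambda_i}}|\partial_t\tilde V_i|^2=\int_{t_i-\lambda_i^2}^{t_i}\int_{B^+_R(X_0)}|\partial_tU|^2\to0$. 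This yields $\bar t_i\to0$ with $\|\partial_t\tilde V_i(\cdot,\bar t_i)\|_{L^2}\to0$.
\item Lemma~\ref{free boundary bubbling} then applies to $v_i=\tilde V_i(\cdot,\bar t_i)$, and the weak limit $v$ of $v_i$ is counted among the $\omega^j$.
\item Lemma~\ref{energy decay} transfers the concentration to $s_i=t_i+\lambda_i^2\bar t_i$, giving $\liminf_i\int_{B^+_{2\lambda_i}(X_0)}|\nabla_XU(\cdot,s_i)|^2\ge A$. Together with Fatou this gives $\int_{B_R^+(X_0)\setminus B^+_{\rho\lambda_i}(X_0)}|\nabla_XU(\cdot,s_i)|^2\to\int_{B_R^+(X_0)}|\nabla_XU(\cdot,t_0)|^2$, with no neck estimate needed there.
\end{itemize}

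\textbf{How to repair your route.} Let $t\nearrow t_0$ in Lemma~\ref{energy decay}; this gives $\int_{B^+_{2r}(X_0)}|\nabla_XU(\cdot,s)|^2\ge A-C(t_0-s)/r^2$. Taking $r=K\sqrt{t_0-s_i}$, all but $C/K^2$ of the excess energy lies inside the parabolic scale. After rescaling $u_i$ by $\sqrt{t_0-s_i}$ the tension is small in $L^2$. The bubble-tree and neck analysis (for instance Lemma~\ref{free boundary bubbling}) are then only needed in that rescaled picture.
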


An energy identity for the free boundary heat flow on compact manifolds with boundary was proved in \cite{JLZ} by Jost, Liu and Zhu. Building on their  work and refining the analysis leads to Theorem \ref{energy identity_intro}. Concerning  reverse bubbling, we prove:   
\begin{thm}\label{reverse bubbling}
Suppose $Z_0 = (X_0, t_0) \in \partial \R^2_+ \times (0, \infty)$ is an isolated point in $\Sigma$, and let $R>0$ be such that $P_{2R}(Z_0) \cap \Sigma = \{ Z_0 \}$. Then there exist a non-negative integer $b$,   real numbers $a^1, \ldots, a^b \in (- \infty, 0]$ and for each $j \in \{1, \ldots, b\}$ a harmonic map $\omega^j : \R^2_{\geq a^j} \cup \{ \infty \} \to \R^L$ with $\omega^j \in \dot{H}^1 \left( \R^2_{ \geq a^j}; \R^L \right)$, $\omega^j  (\partial \R^2_{\geq a^j})\subseteq  N$ and $\partial_y \omega^j \perp T_{\omega^j} N$ on $ \partial \R^2_{\geq a^j}$ such that $$\lim_{t \searrow t_0} E ( U(t); B_R^+ (X_0)) = E ( U(t_0); B_R^+ (X_0)) + \sum_{j = 1}^b E(\omega^j).$$  Moreover,  there exist sequences $(X^1_i), \ldots, (X^b_i)$ in $\overline{B_R^+ (X_0)}$  converging to $X_0$,  sequences of positive real numbers $(\lambda^1_i), \ldots, (\lambda^b_i)$ converging to $0$,  and a sequence of times $(s_i)$  decreasing  to $t_0$ such that $$ \lim_{i \to \infty} U(\cdot, s_i) - \sum_{j = 1}^b \left[ \omega^j \left( \frac{\cdot - X^j_i}{\lambda^j_i} \right) - \omega^j (\infty) \right] = U(\cdot, t_0), \text{ strongly in } H^1 (B_R^+ (X_0)).$$
\end{thm}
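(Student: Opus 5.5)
We follow the scheme used for Theorem \ref{energy identity_intro}, with time reversed: we analyse the limit $U$ on $P_{2R}(Z_0)$ for times $t \searrow t_0$ rather than $t \nearrow t_0$. The first step is to record the properties of $U$ on $\big(\overline{B^+_{2R}(X_0)}\times(t_0,t_0+4R^2)\big)$. Since $Z_0$ is isolated in $\Sigma$, the $\eps$-regularity of Lemma \ref{eps_gradient_est} gives that $U_\eps \to U$ smoothly away from $Z_0$. Hence $U$ is a smooth solution of the free boundary flow \eqref{free bdry heat flow} on $P_{2R}(Z_0)\setminus\{Z_0\}$. Passing the energy inequality to the limit, we obtain for $t_0<s<t$ close to $t_0$
\[
E(U(t);B_R^+(X_0)) + \int_s^t\!\!\int_{B_R^+}|\partial_t U|^2 \le E(U(s);B_R^+(X_0)) + \text{(flux through } \partial^+ B_R^+),
\]
where the flux is controlled because $U$ is smooth near $\partial^+B_R^+\times[t_0,t_0+R^2]$. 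It follows that $t\mapsto E(U(t);B_R^+)$ is, up to a continuous correction, monotone on $(t_0,t_0+R^2)$. Therefore $\lim_{t\searrow t_0}E(U(t);B_R^+)$ exists, and $\int_{t_0}^{t_0+R^2}\!\int|\partial_tU|^2<\infty$. We also have that $U(t)\rightharpoonup U(t_0)$ weakly in $H^1(B_R^+)$ as $t\searrow t_0$. This follows from the $L^2$-continuity in time (from $\partial_t U\in L^2$) together with the uniform energy bound. By lower semicontinuity the defect $\lim_{t\searrow t_0}E(U(t))-E(U(t_0))$ is nonnegative. If it vanishes we take $b=0$; otherwise we proceed.

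Next, by the finiteness of $\int|\partial_t U|^2$, we choose $s_i\searrow t_0$ with $(s_i-t_0)\int_{B_R^+}|\partial_tU(s_i)|^2\to0$. In fact it suffices that $\|\partial_tU(s_i)\|_{L^2}$ satisfies the Ding--Tian/Jost--Liu--Zhu smallness condition relative to the bubble scales. Each $u_i:=U(\cdot,s_i)$ then solves, on $B_R^+$, the elliptic problem
\[
-\Delta u_i = -\partial_tU(s_i) =: \tau_i \ \text{ in } B_R^+,\qquad u_i(\partial^0 B_R^+)\subset N,\qquad \partial_y u_i\perp T_{u_i}N,
\]
with $\tau_i$ bounded in $L^2$. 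These are approximate free boundary harmonic maps with uniformly bounded energy. Since $m=1$ the problem is conformally critical, so we apply the bubble-tree decomposition for free boundary harmonic maps with $L^2$ tension, as in \cite{JLZ} and in the proof of Theorem \ref{energy identity_intro}. At each concentration point we rescale by $\lambda_i^j$ around $X_i^j$. The limiting domain is either $\R^2$ (an interior bubble, impossible here because $U$ is caloric in the interior, so those contributions vanish) or a half-plane $\R^2_{\geq a^j}$ with $a^j=\lim -y(X_i^j)/\lambda_i^j\in(-\infty,0]$. The rescaled maps converge to $\omega^j\in\dot H^1$, which are harmonic with free boundary, and removable singularity lets them extend to $\infty$. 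The weak limit of $u_i$ is $U(t_0)$, by the weak convergence established in the first step.

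The main obstacle is the no-neck/energy identity: showing that no energy is lost on the neck annuli $\{\lambda_i^j R'\le |X-X_i^j|\le \delta\}$, including half-annuli meeting the boundary, and that the $L^\infty$ oscillation on necks vanishes. The latter is what yields strong $H^1$ convergence of $u_i-\sum_j[\omega^j(\frac{\cdot-X_i^j}{\lambda_i^j})-\omega^j(\infty)]$ to $U(t_0)$. We would treat this by a Pohozaev-type identity on half-annuli. Combined with the free boundary condition, it shows that the radial and angular energies are comparable, up to an error $\le C\|\tau_i\|_{L^2}\cdot(\text{scale})$. We then apply a three-circle or decay argument on dyadic half-annuli using small-energy $\eps$-regularity. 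The error term is where the choice of $s_i$ enters: after rescaling, the tension in $L^2$ is multiplied by the neck scale $\le\delta$, so it is small. This gives $\lim_{t\searrow t_0}E(U(t))=\lim_i E(u_i)=E(U(t_0))+\sum_jE(\omega^j)$. Here we use that the limit along $s_i$ coincides with the full limit, by the first step.
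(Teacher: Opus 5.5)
There is a genuine gap in your second step: the claim that $\tau_i=-\partial_tU(\cdot,s_i)$ is bounded in $L^2(B_R^+)$ is unjustified.

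\textbf{The gap.} Your choice of times only yields $\|\partial_tU(\cdot,s_i)\|_{L^2(B_R^+)}=o\big((s_i-t_0)^{-1/2}\big)$. No choice of times can do better in general. The condition $\int_{t_0}^{t_0+R^2}\|\partial_tU(t)\|^2_{L^2}\,dt<\infty$ is compatible with $\|\partial_tU(t)\|_{L^2}\to\infty$ as $t\searrow t_0$, which is what one expects while a bubble changes scale.

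Without a tension bound, Lemma \ref{free boundary bubbling} cannot be applied to $u_i=U(\cdot,s_i)$. Your own neck estimate also fails where it matters. On a half-annulus of radius $r$ the rescaled tension is $r\|\tau_i\|_{L^2}$, which is small only for $r\ll\sqrt{s_i-t_0}$. Nothing forces the bubble scales $\lambda^j_i$ to be comparable to $\sqrt{s_i-t_0}$. So the outer part of the neck, $\sqrt{s_i-t_0}\lesssim|X-X_0|\le\delta$, is uncontrolled. Multiplying by $r\le\delta$ does not help, since $\delta$ is fixed while $\|\tau_i\|_{L^2}$ may diverge. Your hedge about smallness ``relative to the bubble scales'' does not rescue this. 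The $s_i$ are chosen before the bubble scales are known, and the relevant comparison scale is the outer neck scale $\delta$, not $\lambda^j_i$.

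\textbf{The missing idea.} You need to rescale \emph{before} invoking the bubble-tree lemma, at a scale where the tension becomes small, and to handle the outer scales by energy bookkeeping in time rather than by a neck estimate. The paper, following \cite{Q}, proceeds as follows.
\begin{enumerate}
\item Using smooth convergence $U(t)\to U(t_0)$ away from $X_0$, it finds $t_i\searrow t_0$ and $\lambda_i\searrow0$ with $\int_{B^+_{\lambda_i}(X_0)}|\nabla_XU(\cdot,t_i)|^2\ge A-o(1)$, where $A$ is the energy excess.
\item Since $\iint|\partial_tU|^2$ is scale invariant in this dimension, a mean-value choice $s_i=t_i+\lambda_i^2\bar t_i$ with $\bar t_i\downarrow0$ gives $\lambda_i^2\|\partial_tU(\cdot,s_i)\|^2_{L^2(B_R^+)}\to0$.
\item Hence $v_i=U(X_0+\lambda_i\,\cdot\,,s_i)$ has tension tending to zero in $L^2$, and Lemma \ref{free boundary bubbling} applies on every $B^+_\rho$. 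The weak limit $v$ of $(v_i)$ becomes one more $\omega^j$, with $a^j=0$, centre $X_0$ and scale $\lambda_i$.
\item On $B_R^+\setminus B^+_{\rho\lambda_i}(X_0)$ no neck analysis is needed. Remark \ref{remark:energy decay} moves the mass $\ge A-o(1)$ from $B^+_{\lambda_i}$ at time $t_i$ into $B^+_{2\lambda_i}$ at time $s_i$. The total energy at $s_i$ tends to $E(U(t_0);B_R^+)+A$, and Fatou gives the matching lower bound. So the energy of $U(\cdot,s_i)$ outside $B^+_{\rho\lambda_i}$ converges to $E(U(t_0);B_R^+)$.
\end{enumerate}

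\textbf{Alternative repair keeping your $s_i$.} Rescale at the parabolic scale $\sqrt{s_i-t_0}$, where the tension does tend to zero in $L^2$. Then bound the energy of $U(\cdot,s_i)$ on $\{K\sqrt{s_i-t_0}\le|X-X_0|\le\delta\}$ by the localized energy identity between $t_0$ and $s_i$. This is legitimate because $U$ is smooth there up to time $t_0$. It gives $E(U(t_0);B^+_{2\delta}(X_0))$ plus an error $\lesssim\int_{t_0}^{s_i}\!\int_{B^+_{2\delta}}|\partial_tU|^2+K^{-2}+(s_i-t_0)\delta^{-2}$, which vanishes in the limit.
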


To state our  further results, we introduce the limiting defect measures $\mu, \nu$ and $\eta$ as the weak* limits of sequence of the energy densities,
\begin{align}\label{eq:defect measure definition_intro}
&e(U_{\eps}) \: dX dt \xrightarrow{\eps \to 0} \frac12 |\nabla_X U|^2 dX dt + \nu =: \mu,\\
&| \partial_t U_{\eps}|^2 \: dX dt \xrightarrow{\eps \to 0} |\partial_t U|^2 \: dX dt + \eta,
\end{align}
where we recall the definition of the Radon measure $e(U_{\eps}) \: dX dt$ from \eqref{eq:energy density radon measure}. For an isolated point $Z_0 \in \Sigma$, forward bubbling necessarily occurs at $Z_0$ (Theorem \ref{energy identity_intro}), while reverse bubbling may additionally occur (Theorem \ref{reverse bubbling}). The total forward and reverse bubble energies, $A^-$ and $A^+$, are related  to the atomic mass of $\eta$ at $Z_0$ via the   identity $A^- = A^+ + \eta(\{Z_0\})$ (see  Corollary \ref{bubbling_energy_relation}).

We now describe the results in higher dimensions.

\begin{thm}\label{no harmonic S1_intro} 
Let $m\geq2$. Consider a sequence $(U_{\eps})_{\eps}$ of smooth solutions to \eqref{eq:approx_ext_+ general target} satisfying \eqref{eq:scale_inv_bds}.
If the  defect measure $\nu$, defined by \eqref{eq:defect measure definition_intro}, is non-zero, then there exists a sequence $( W_n )$ obtained through suitable parabolic rescalings of $U_\eps$ and a map $W$ such that 

\begin{itemize}
\item $ W_n \to W$ in $H^1_{loc} (\overline{\RNp} \times \R)$,

\item $W$ is independent of both the time variable $t$ and $m - 1$ linearly independent spatial directions,  

\item with respect to the remaining two spatial variables, $W$ is a bubble, that is, a non-trivial, smooth harmonic map satisfying the free boundary system \eqref{eq-bubble}.
\end{itemize}
\end{thm}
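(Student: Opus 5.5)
We follow the Lin--Wang scheme for parabolic defect measures, adapted to the boundary setting: a dimension-reduction argument peels off symmetric directions until the problem is effectively critical ($m=1$), and then the critical blow-up argument of Theorem \ref{main thm_intro} produces the bubble.

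\textbf{Step 1: structure of $\nu$.} We first collect the basic structure of the defect measure.
\begin{itemize}
\item By the monotonicity formula (Lemma \ref{lem-mono}) and the $\eps$-regularity (Lemma \ref{eps_gradient_est}), $\nu$ is supported in $\Sigma$.
\item $\Sigma\subset\partial\RNp\times(0,\infty)$, since $U_\eps$ is caloric in the interior and no concentration occurs there.
\item $\Sigma$ has locally finite parabolic $\mathcal{H}^{m-1}$ measure.
\item On $\Sigma$, the parabolic $(m-1)$-density $\Theta(\nu,Z)=\lim_{r\to0} r^{1-m}\nu(P_r(Z))$ exists and is bounded below by $\eps_0^2$ for $\mathcal{H}^{m-1}$-a.e.\ $Z$.
\end{itemize}
In the $(m+1)$-dimensional half space with parabolic scaling, the energy $\mathcal{E}_\eps$ scales like $r^{-(m-1)}$ times the space-time energy. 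Hence $\nu\lfloor\Sigma=\theta\,\mathcal{H}^{m-1}\lfloor\Sigma$ with $\theta\ge\eps_0^2$.

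\textbf{Step 2: tangent measures and rectifiability.} Since $\nu\neq0$, we pick a point $Z_0\in\Sigma$ with positive finite density at which $\Sigma$ admits an approximate tangent plane. We take parabolic blow-ups $U_{\eps_i}(X_0+r_iX,t_0+r_i^2t)$, with $r_i\to0$ and $\eps_i/\sqrt{r_i}\to0$ chosen by a diagonal argument. The rescaled maps still satisfy \eqref{eq:scale_inv_bds}, so the monotonicity formula gives limiting tangent measures $\nu_*$ and limit maps. By monotonicity (Lemma \ref{lem-mono}), the tangent measure is backward self-similar, and the weak limit map is constant because the smooth part scales away. Following Lin--Wang's generalization of Preiss/Marstrand-type arguments, and using the stratification of invariant tangent objects, at $\mathcal{H}^{m-1}$-a.e.\ point the tangent measure is $\theta\,\mathcal{H}^{m-1}\lfloor V\times\{\text{time}\}$, where $V\subset\partial\RNp$ is an $(m-1)$-plane. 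The delicate part is showing that the measure is time-independent (no time direction in the support), which we obtain from the $\eta$-bound via the monotonicity identity: $\int|\partial_tU_\eps|^2$ is controlled, and after blow-up it tends to zero on the tangent level at density points.

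\textbf{Step 3: reduction to $m=1$.} We diagonalize the rescaled sequence so that the limiting defect measure is translation invariant along $V$ and in $t$. Integrating the energy over unit cubes in $V$ gives a sequence of solutions, essentially of the $m=1$ problem in the two transverse variables $(x_m,y)$, up to errors that vanish in $L^2$ as the derivatives in $V$ and in $t$ vanish. Concentration persists, since the transverse slice carries mass $\theta>0$.

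We then repeat the rescaling argument of Theorem \ref{main thm_intro}: choose $\lambda_n$ with $\sup$ of the local energy in $P_{\lambda_n}$ equal to $\eps_0^2/2$, and rescale. The $\eps$-regularity gives $H^1_{loc}$ (indeed smooth) convergence, and the vanishing of $\int|\partial_tW_n|^2$ on compact sets yields a limit $W$ independent of $t$ and of the $m-1$ directions of $V$. The boundary penalization forces $W(\partial\R^2_+)\subset N$ with the free boundary condition, and nontriviality comes from the normalization $\eps_0^2/2$.

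The main obstacle is the middle of Step 2 and Step 3: proving that the derivatives along $V$ and in time vanish strongly, i.e.\ $r^{1-m}\int_{P_r}(|\partial_V U|^2+|\partial_t U|^2)\to0$ under blow-up. For this we would use the monotonicity identity, whose derivative controls the self-similarity defect $|(X\cdot\nabla U+2t\partial_tU)|^2/t$. We combine this with the fact that translation invariance of the limiting density in directions of $V$ forces the corresponding directional energies to vanish, via a shifted-monotonicity comparison of $\mathcal{E}$ at two nearby base points.
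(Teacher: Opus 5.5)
Your architecture (blow up at a good point of $\Sigma$, find $m-1$ spatial directions and the time direction along which the rescaled maps become invariant, then extract a two-dimensional bubble) is the paper's. Your ``shifted-monotonicity comparison at two base points'' is exactly its Claim 6. The gap is in producing the base points. You delegate this to a flat-tangent-measure/rectifiability statement via Preiss/Marstrand, which is not available here. Lemma \ref{lem-mono} only gives existence of the Gaussian density $\Theta^{m+1}(\mu,Z)$ of \eqref{defn_density}, not of the ball density $\lim_{r\to0}r^{-(m+1)}\mu(P_r(Z))$ that Preiss-type theorems need, and the geometry is parabolic. Your Step 1 bookkeeping is also off. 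Space-time energy scales like $r^{m+1}$, so $\Sigma$ is locally $\p^{m+1}$-finite; only its time slices are $\H^{m-1}$-finite. Consequently $r^{1-m}\nu(P_r(Z))\to0$ rather than staying $\geq\eps_0^2$, and a locally $\p^{m-1}$-finite $\Sigma$ would force $\nu\equiv0$ by Proposition \ref{blowupcndn}.

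The paper avoids rectifiability altogether, in these steps:
\begin{enumerate}
\item It blows up at $Z_0\in\Sigma$ where $|\nabla_XU|^2$ and $|\partial_tU|^2$ have zero $(m+1)$-density, giving a tangent measure $\mu_*$.
\item It selects a time slice $t_1$ of $\mu_*$ (on which $\mu_*^{t_1}\ll\H^{m-1}\lfloor_{\Sigma_*^{t_1}}$ by Claim 2). It then picks a point $Z_1=(X_1,t_1)$ at which $\Theta^{m+1}(\mu_*,\cdot)$ and the slice density are approximately continuous, and at which the $(m-1)$-density of $|\partial_t\tilde U_k|^2$ vanishes (Proposition \ref{time_singular_set}).
\item It applies Lin's geometric Lemma \ref{geometric lemma} to get points $X^j_1,\dots,X^j_{m-1}\in\Sigma_*^{t_1}$ at scale $s_j$ spanning $m-1$ directions.
\item It blows up again at $Z_1$.
\end{enumerate}
Approximate continuity then makes the Gaussian density of $\mu_{**}$ equal to $\Theta^{m+1}(\mu_*,Z_1)$ at every point of its support (Claim 4). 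Only this lets the monotonicity identity kill the self-similarity defect simultaneously at $(0,0)$ and at each $(\xi_k,0)$.

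Step 3 has a second gap. For $m\geq2$, vanishing of $\partial_tV_l$ and $\partial_kV_l$ in $L^2_{loc}$ at unit scale does not survive the further rescaling by $\lambda_n\to0$ used to extract the bubble. Around the new center, $\int_{P_1^+}|\partial_kW_n|^2=\lambda_n^{-(m+1)}\int_{P_{\lambda_n}^+}|\partial_kV_n|^2$ and $\int_{P_1^+}|\partial_tW_n|^2=\lambda_n^{1-m}\int_{P_{\lambda_n}^+}|\partial_tV_n|^2$. For $m=1$ the latter is scale invariant, which is why Theorem \ref{main thm_intro} needs no extra care. The paper handles this in four moves:
\begin{enumerate}
\item It chooses the center through a weak-$L^1$ maximal function estimate, so that these derivative densities vanish at all scales below $r_0$.
\item It normalizes the energy by a maximum over boundary points of the transverse half-plane only, with $x'$ and $t$ frozen.
\item It uses the product structure of Claim 8 to see that the maximizers converge to the chosen parabolic tentacle.
\item It spreads smallness of energy to all of $\partial\RNp\times\R$ via the mollified energy $\Psi_l$, whose $x'$- and $t$-derivatives tend to zero uniformly.
\end{enumerate}
Without this you have neither $\eps$-regularity away from the center nor invariance of the limit. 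Integrating the energy over cubes in $V$ gives only a transverse energy density, not solutions of the $m=1$ problem, because the boundary nonlinearity does not pass through such averaging. Finally, the penalization yields the free boundary condition only if the rescaled parameter tends to $0$. Ensuring this requires the trichotomy of Claim 3 in the proof of Theorem \ref{main thm_intro}: even reflection if the parameter tends to $\infty$, and the Liouville Proposition \ref{Proposition A.2} if it has a positive finite limit.
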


Along the way, we will actually provide finer information on the singular set of weak solutions and initiate the analysis of its stratification. We indeed prove:

\begin{thm}\label{fine reg parabolic blow up set}
 For $m \ge 2$, consider a sequence $(U_{\eps})_{\eps}$ of smooth functions as in Theorem \ref{no harmonic S1_intro} with associated energy concentration set $\Sigma$ defined in \eqref{sing_set_intro}.  Denoting by $\mathcal{P}_{\textnormal{dim}}(\Sigma)$ the parabolic Hausdorff dimension of $\Sigma$, at least one of the following holds:
\begin{enumerate}
\item $\mathcal{P}_{\textnormal{dim}}(\Sigma) \le m - 1$;
\item There exists a non-trivial, smooth, free boundary harmonic map $W : \R^2_+ \to \R^L$ with free boundary on $N$, satisfying \eqref{eq-bubble}, obtained as a limit of some parabolic rescalings and suitable conformal transformations of $(U_{\eps})$.
\end{enumerate}
\end{thm}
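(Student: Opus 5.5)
The plan is to argue by dichotomy on the parabolic Hausdorff dimension of $\Sigma$, in the spirit of Lin--Wang's stratification of defect measures. Suppose (1) fails, i.e. $\mathcal{P}^{s}(\Sigma)>0$ for some $s>m-1$; we aim to produce the bubble $W$ in (2). The first step is to show that $\Sigma$ is closed and has locally finite $\mathcal{P}^m$-measure. For this, use the monotonicity formula of Lemma \ref{lem-mono}, the bounds \eqref{eq:scale_inv_bds}, and the $\eps$-regularity of Lemma \ref{eps_gradient_est}, combined with a Vitali covering argument. As a consequence, $\mathcal{P}_{\textnormal{dim}}(\Sigma)\le m$, and the defect measure $\nu$ is supported on $\Sigma$.

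Next, I would pick a point $Z_0\in\Sigma$ of positive upper $\mathcal{P}^s$-density. I would take iterated tangent measures of $\mu$ (equivalently, blow-ups $V_{\eps}(X,t)=U_\eps(X_0+rX,t_0+r^2t)$ with $\eps/\sqrt r\to0$ chosen diagonally). The monotonicity formula implies that the limiting measures are invariant under the parabolic dilations. A Federer-type dimension reduction then yields a tangent object whose singular set contains a linear subspace. This subspace is either
\begin{enumerate}
\item $(m-1)$ spatial directions of $\partial\RNp$ together with the time direction, or
\item an $(m-1+\theta)$-dimensional set in the parabolic metric.
\end{enumerate}
The condition $s>m-1$ forces the symmetry to include at least $m-1$ independent spatial directions. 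The mechanism is the standard "cone splitting" step: the limit measure is translation invariant along every direction in which it has positive density of singular points. A time-like direction counts with weight $2$, which is why the time direction alone cannot account for dimension $>m-1$ unless enough spatial symmetry is present.

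After quotienting out the $m-1$ invariant spatial directions, we obtain a sequence of solutions of \eqref{eq:approx_ext_+ general target} in effectively two spatial variables ($x_1$ and $y$) plus time. These satisfy the bounds \eqref{eq:scale_inv_bds} with vanishing parameter and have nonzero concentration. This is essentially the critical-dimension setting $m=1$, so Theorem \ref{main thm_intro} applies, or more precisely its proof, since the reduced maps are only approximately independent of the other directions. That argument produces a further parabolic rescaling converging in $H^1_{loc}$ to a time-independent, nontrivial smooth solution of \eqref{eq-bubble}.

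To handle the transition from the $m=1$ proof to the $(m-1)$-invariant limits, I would adapt the proof of Theorem \ref{no harmonic S1_intro}. The time derivative vanishes in the limit by the monotonicity formula: the weighted $\int|\partial_tU_\eps|^2$ term is controlled by differences of $\mathcal E_\eps$ at two scales, and this difference tends to zero along tangent blow-ups. The "suitable conformal transformations" mentioned in (2) enter here. The resulting static free-boundary harmonic map may a priori live on a domain like $\R^2_{\geq a}$, as in Theorem \ref{energy identity_intro}. If $a$ is finite, translate; if the boundary escapes to infinity, the limit would be a harmonic map on the whole plane with finite energy, which is constant, contradicting nontriviality. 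Otherwise, use the Möbius map from the disk to $\R^2_+$ and removable singularity at $\infty$ to obtain $W$ on $\R^2_+$.

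The main obstacle is the dimension-reduction step. One must show that the tangent measure is genuinely translation invariant along $m-1$ spatial directions, rather than merely having a large singular set, and that the time-like directions do not absorb the dimension. Monotonicity here has the Gaussian weight on the half-space with the boundary term $\eps^{-2}F$, so the "equality in monotonicity implies homogeneity" argument requires controlling the boundary potential in the limit. I would first try showing that $\eps^{-2}F(U_\eps)\,d\H^m\lfloor_{\partial\RNp}dt$ converges, after blow-up, to a measure absorbed into $\nu$ with the same homogeneity as the Dirichlet part, by using the Pohozaev-type identity underlying Lemma \ref{lem-mono}.
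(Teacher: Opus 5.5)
Your proposal has a genuine gap at the dimension-reduction step, and it puts the bubble of alternative (2) in the wrong place.

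\textbf{The dimension count.} Your claim that ``$s>m-1$ forces at least $m-1$ invariant spatial directions'' is off by one. A static tangent singular set $L\times\R$, with $L\subset\partial\RNp$ a $k$-dimensional subspace, has parabolic dimension $k+2$. So $s>m-1$ only forces $k\ge m-2$. Having $m-1$ spatial directions plus time means dimension $m+1$, i.e.\ $\p^{m+1}(\Sigma)>0$, i.e.\ $\nu\not\equiv0$ by Proposition \ref{blowupcndn}. That case is settled at once by Theorem \ref{no harmonic S1_intro}, which is exactly your ``2D concentration'' mechanism. There, the $m-1$ directions come from $\mathcal{H}^{m-1}$-positivity of time slices of a defect measure, not from $s>m-1$.

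\textbf{The case $\nu\equiv0$.} This is the real content of the theorem. Blow up the weak limit $U$ at a point of positive upper $\p^s$-density of $\Sigma$, and invoke Theorem \ref{no harmonic S1_intro} at every rescaling stage. Each time you get either a bubble or \emph{strong} $L^2_{loc}$ convergence of $\nabla_X$. So the tangent objects are maps with no defect, and your reduced two-variable problem ``with nonzero concentration'' never arises. In fact, a static tangent map invariant in $m-1$ directions is $0$-homogeneous in the remaining two variables. Being in $H^1_{loc}$, it is then constant, contradicting density $\ge\eps_0^2$ at the origin. The iteration therefore ends with a static, nonconstant, $0$-homogeneous free boundary harmonic map on $\R^3_+$, after quotienting out $m-2$ directions. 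The missing idea is to restrict this cone to $\overline{\S^2_+}$ and transport it to $\R^2_+$ by the conformal equivalence $\S^2_+\cong\R^2_+$. That is the ``conformal transformation'' in (2); your translation/M\"obius discussion about maps on $\R^2_{\ge a}$ is beside the point.

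\textbf{Non-static tangent maps.} Your mechanism for killing $\partial_t$ is incorrect. The monotonicity identity controls $\int|((X-X_0)\cdot\nabla_X)U_\eps+2(t-t_0)\partial_tU_\eps|^2\,\G_{Z_0}/(t_0-t)$, not $\int|\partial_tU_\eps|^2$. Constancy of $\E$ therefore only yields backward self-similarity, $(X\cdot\nabla_X)V+2t\,\partial_tV=0$. Time-independence instead comes from blowing up again at a second singular point $Z_1=(X_1,t_1)$ with $t_1<0$; taking $t_1=0$ gives translation invariance along $X_1$. If $\p^s$ of the singular set in $\{t<0\}$ vanishes, the singular set charges the slice $\{t=0\}$, and $s>m-1$ forces invariance along all $m$ boundary directions. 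Then $V=\psi(y/\sqrt{-t})$ with $\psi''-\frac y2\psi'=0$ and $\psi$ bounded, so $\psi$ is constant. This contradiction must be proved; your vague alternative of an ``$(m-1+\theta)$-dimensional set'' does not cover it.

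\textbf{Your first step.} It is false as stated. Lemma \ref{partial regularity lemma} only gives locally finite $\p^{m+1}$-measure, and if $\nu\not\equiv0$ then $\p^{m+1}(\Sigma)>0$. So $\p_{\textnormal{dim}}(\Sigma)\le m$ fails in general, and the argument must begin by splitting off the case $\nu\not\equiv0$. The boundary-potential difficulty you flag disappears once strong convergence is known, by Proposition \ref{convergence of approximation}.
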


The present paper is the first installment of the blow-up analysis of such flows and many issues are still under investigation. The nonlocal/global nature of the problem is a challenging aspect and recent applications in various areas of geometry and topology motivate to get a better understanding of the deformation theory and the characteristics of harmonic mappings with free boundary.

\subsection{Notations and Conventions}
We now introduce some notations and conventions which will be used throughout what follows.
\begin{itemize}
\item $\RNp := \R^m \times (0, \infty)$. A general point in $\overline{\RNp}$ will be usually denoted by $X, \overline X, X_0$ etc. We will further decompose them as $X = (x, y), \overline X = (\overline x, \overline y), \ldots \in \R^m \times [0, \infty)$. 

\item $\delta_N > 0$ is such that the nearest point projection map $\Pi$ onto $N$ is defined in the $2 \delta_N$ neighborhood of $N$ in $\R^L$.

\item for any $a \in \R$, $\R^2_{\geq a}$ stands for the set $\{ (x, y) \in \R^2 | \: y \geq a \}$ and by saying that a function $w$ has domain $\R^2_{\geq a} \cup \{ \infty \}$ we mean that $w$ is defined on $\R^2_{\geq a}$ and $w(x, y)$ has a limit as $|(x, y)| \to \infty$.

\item unless specified otherwise, $U_{\eps}$ will denote a smooth solution of \eqref{eq:approx_ext_+ general target} satisfying \eqref{eq:scale_inv_bds}, and $U$ will denote an $H^1_{loc}$ weak limit of $U_{\eps}$ as $\eps$ tends to $0$, that solves
\begin{align}
\left\{
\begin{array}{ll}
\partial_t U-\Delta_X U=0, &\text{in }\RNp \times(0,\infty),\\
\rule{0cm}{0.5cm} -\partial_y U \perp T_U N, &\text{on }\partial \RNp \times(0,\infty).
\end{array} \right.
\end{align}

\item for any function $V : \RNp \to \R^L, \int_{\partial \RNp} V dx$ would denote $ \int_{\R^m} Trace(V) dx$.

\item for any $\eps > 0$, a function $V \in \mathcal{V}$ (see \eqref{fnspace1} for the definition of $\mathcal{V}$) and a bounded real valued function $\phi$ we define $$E^{\eps} (V) := \frac 12\int_{\R^{m+1}_+} \abs{\nabla_X V}^2 \: dX +
\frac{1}{\eps^2} \int_{\partial \RNp} F(V) \: dx, \text{  and}$$ $$E^{\eps}_{\phi} (V) := \frac 12\int_{\R^{m+1}_+} \abs{\nabla_X V}^2 \phi \: dX +
\frac{1}{\eps^2}\int_{\partial \RNp} F(V) \: \phi \: dx.$$

\item for a function $V : \RNp \to \R^L,\: E(V)$ will denote the usual Dirichlet energy of $V$, i.e., $$E(V) := \frac 1 2 \int_{\RNp} | \nabla_X V|^2 dX.$$
As before, for any bounded function $\phi$ we define $$E_{\phi} (V) := \frac 1 2 \int_{\RNp} |\nabla_X V|^2 \phi \: dX.$$

\item for any $\Omega \subseteq \RNp$, $\eps > 0$ and a suitable map $V : \RNp \to \R^L$, define $$ E^{\eps} (V; \Omega) := \frac 1 2 \int_{\Omega} |\nabla_X V|^2 \: dX +
\frac{1}{\eps^2} \int_{\partial \Omega \cap \partial \RNp} F(v) \: dx,$$
and similarly $$E(V; \Omega) := \frac 1 2 \int_{\Omega} |\nabla_X V|^2 dX.$$

\item for $X_0 \in \overline{\RNp}, \: t_0 \in \R$  and $ R > 0$, we  set 
\begin{align*}
T_R^+ (t_0) &:= \{ (X, t) \in \RNp \times \R | \:  t_0 - 4 R^2 <  t <  t_0 - R^2 \}, \\
\partial^0 T_R^+ (t_0) &:= \{ (x, 0, t) \in \partial \RNp \times \R | \: t_0 - 4 R^2 < t < t_0 - R^2 \},\\
B_R (X_0) &:= \{ X \in \R^{m+1} | \: |X - X_0| < R\}, B_R := B_R (0),\\
B_R^+ (X_0) &:= B_R (X_0) \cap \RNp, B_R^+ := B_R^+ (0), \\
\partial^0 B_R^+ (X_0) &:= \partial B_R^+ (X_0) \cap \partial \RNp, \partial^0 B_R^+ := \partial^0 B_R^+ (0),\\
\partial^+ B_R^+ (X_0) &:= \partial B_R^+ (X_0) \cap \RNp, \partial^+ B_R^+ := \partial^+ B_R^+ (0).
\end{align*}

\item for $Z_0 = (X_0, t_0) \in \overline{\RNp} \times \R$ and $R > 0$, we define
\begin{align*}
P_R (Z_0) &:= B_R (X_0) \times (t_0 - R^2, t_0 + R^2), P_R := P_R (0, 0)\\
P_R^+ (Z_0) &:= B_R^+ (X_0) \times (t_0 - R^2, t_0 + R^2), P_R^+ := P_R^+ (0, 0)\\
\partial^0 P_R^+ (Z_0) &:= \partial P_R^+ (Z_0) \cap \left( \partial \RNp \times \R \right), \partial^0 P_R^+ := \partial^0 P_R^+ (0, 0)\\
\partial^+ P_R^+ (Z_0) &:= \partial P_R^+ (Z_0) \cap \left( \RNp \times \R \right), \partial^+ P_R^+ := \partial^+ P_R^+ (0, 0).
\end{align*}
The dimension of the Euclidean space in which we consider a ball or a parabolic cylinder is of course not fixed and won't be specified every time. It will be clear from the context. 

\item for any non-negative integer $k$, $\H^k$ and $\p^k$ denotes respectively the $k$-dimension Hausdorff and parabolic Hausdorff measures. 

\item for $A \subseteq \R^{ m + 1} \times \R$ and $r > 0$, $$ P_r(A) := \bigcup_{Z \in A} P_r (Z), \text{ and } P_r^+ (A) := \bigcup_{Z \in A} P_r^+ (Z). $$

\item We will obey the usual convention with constants: $C$ will always denote a generic constant and the same symbol $C$ might have two different values when it appears in two different expressions. The parameters on which $C$ depends might sometimes be mentioned in parentheses beside $C$. $C$ will usually be dependent on either or both of $m$ and $N$ and this dependence will not be mentioned explicitly. 

\item  As our proofs are local in nature, the dependence of the constant $C$ on compact sets in \eqref{eq:scale_inv_bds} will be omitted henceforth. 
\end{itemize}

\section{Preliminary Results and Regularity Estimates}\label{preliminary results section}

The  first equation in the system under consideration 
is the standard heat equation, which possesses well-known, robust interior estimates. Consequently, the primary analytical challenge stems from the highly nonlinear free boundary condition. When passing to the limit in   \eqref{eq:approx_ext_+ general target} as $\varepsilon \to 0$, the analysis remains relatively straightforward in the interior  $\mathbb{R}^{m+1}_+ \times (0, \infty)$, compared to the boundary $\partial\mathbb{R}^{m+1}_+ \times (0, \infty)$. Accordingly, our primary focus is to examine these boundary effects, establish boundary estimates that are uniform with respect to $\varepsilon$, and identify and investigate where these estimates break down. In this section, we review relevant results in this direction established in \cite{HSSW}.

For $Z_0 = (X_0, t_0)\in \partial \RNp \times \R$, let 
$$\G_{Z_0} (X,t) := \frac{1}{\Gamma (\frac1 2) (4 \pi)^{\frac m 2} (t_0-t)^{ \frac{m + 1}{2}}} e^{ - \frac{| X - X_0|^2 }{ 4 (t_0 - t)}}, \quad \forall t < t_0, X \in \overline{\RNp}.$$

\begin{lem}[Monotonicity formula]\label{lem-mono}
Let $U_{\eps}$ be a smooth solution of \eqref{eq:approx_ext_+ general target} satisfying \eqref{eq:scale_inv_bds}. For any $Z_0 = (X_0, t_0) \in \partial \RNp \times (0, \infty)$ and $ 0 < R < \frac{\sqrt{t_0}}{2} $, the following two renormalized energies
\begin{align*}
\mathcal{D}_{\eps} (U_{\eps}, Z_0, R) := R^2 \Bigg( \frac 1 2 \int_{\RNp \times \{t_0 - R^2\}} |\nabla U_{\eps} |^2 \G_{Z_0} dX 
 + \frac{1}{ \eps^2} \int_{\partial \RNp \times \{t_0-R^2\} } F(U_{\eps}) \G_{Z_0} dx \Bigg)
\end{align*}
and
\begin{align*}
\E_{\eps} (U_{\eps}, Z_0, R) :=  \frac 1 2 \int_{T_R^+ (t_0)} |\nabla_X U_{\eps}|^2 \G_{Z_0} dXdt
+ \frac{1}{ \eps^2} \int_{\partial^0 T_R^+ (t_0)} F(U_{\eps}) \G_{Z_0} dx dt
\end{align*}
are nondecreasing with respect to $R$. Namely,
\begin{align*}
&\mathcal{D}_{\eps} (U_{\eps}, Z_0, r) \leq \mathcal{D}_{\eps} (U_{\eps}, Z_0, R),\\
&\E_{\eps} (U_\ve, Z_0, r)\leq \E_{\eps} (U_\ve, Z_0, R),
\end{align*}
for any $0 < r \leq R < \frac{\sqrt{t_0}}{2}$. More precisely, we have 
\begin{align*}
&\E_{\eps} (U_{\eps}, Z_0, R) - \E_{\eps} (U_{\eps}, Z_0, r)\\
=& \int_r^R \frac{1}{s} \left[ \int_{T_s^+(t_0)} \frac{ \left| \left( (X - X_0) \cdot \nabla_X \right) U_{\eps} + 2 (t - t_0) \partial_t U_{\eps} \right|^2}{2 (t_0 - t)} \, \G_{Z_0} \, dX dt + \int_{\partial^0 T_s^+ (t_0)} \frac{F(U_{\eps})}{\eps^2} \, \G_{Z_0} \, dx \, dt \right] ds.
\end{align*}
\end{lem}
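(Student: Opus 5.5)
Our plan is the classical Struwe computation, carried out on the half-space: the weight $\G_{Z_0}$ has the dimensional exponent $\frac{m+1}{2}$ of a backward heat kernel in $\R^{m+1}$, centered at a boundary point $X_0 \in \partial\RNp$. We first prove monotonicity of $\mathcal{D}_\eps$ and then obtain $\E_\eps$ by averaging in $R$. Translate so that $Z_0=(0,0)$ and rescale: set $U_R(X,t):=U_\eps(RX,R^2t)$, which solves \eqref{eq:approx_ext_+ general target} with $\eps$ replaced by $\eps/\sqrt R$. Since $R^2\G_{Z_0}(RX,-R^2)\,R^{m+1} = \G_{0}(X,-1)$ (the factor $R^{m+1}$ from $dX$, together with $R^2$, exactly cancels $R^{-(m+1)}$ from the kernel, while the $dx$-boundary term gets $R^m\cdot R\cdot R^{-1}$ after accounting for $\eps^2\mapsto\eps^2/R$), we obtain
\begin{equation}
\mathcal{D}_\eps(U_\eps,0,R)=\frac12\int_{\RNp}|\nabla U_R(X,-1)|^2\G\,dX+\frac{R}{\eps^2}\int_{\R^m}F(U_R(x,0,-1))\G\,dx .
\end{equation}

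Next we differentiate in $R$. Using $\frac{d}{dR}U_R = \frac1R\big(X\cdot\nabla U_R+2t\partial_tU_R\big)|_{t=-1}$, the derivative of the gradient term is $\int\nabla U_R\cdot\nabla(\partial_RU_R)\G$. We integrate by parts on $\RNp$: the interior contributes $-\int \partial_RU_R\cdot(\Delta U_R\,\G+\nabla U_R\cdot\nabla\G)$, and the boundary $\{y=0\}$, whose outward normal is $-e_y$, contributes $-\int_{\R^m}\partial_RU_R\cdot\partial_yU_R\,\G\,dx$. Here the key point is that $X_0$ lies on $\partial\RNp$, so $\partial_y\G=-\frac{y}{2|t|}\G$ vanishes at $y=0$; no extra boundary term arises from $\G$, and $\nabla\G=\frac{X}{2t}\G$ holds everywhere. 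We substitute $\Delta U_R=\partial_tU_R$ and $\partial_yU_R=\frac{R}{\eps^2}\nabla F(U_R)$. The interior terms combine to $\frac{1}{R}\int \frac{|X\cdot\nabla U_R+2t\partial_tU_R|^2}{2|t|}\G\ge0$ at $t=-1$. On the boundary, the term $-\frac{R}{\eps^2}\int\nabla F(U_R)\cdot\partial_RU_R\,\G$ cancels exactly against the derivative $\frac{R}{\eps^2}\int\nabla F\cdot\partial_RU_R\,\G$ coming from the potential, leaving $\frac{1}{\eps^2}\int F(U_R)\G\ge0$ from differentiating the explicit factor $R$. Scaling back gives
\begin{equation}
\frac{d}{dR}\mathcal{D}_\eps=\frac{2}{R}\Big[R^2\!\!\int_{t=-R^2}\frac{|X\cdot\nabla U_\eps+2t\partial_tU_\eps|^2}{2|t|}\G\,dX+\frac{R^2}{\eps^2}\int_{t=-R^2}F\,\G\,dx\Big]\ge0,
\end{equation}
up to the precise normalization. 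A sign check on the $F$-term relies on $F\ge0$ and on the fact that the boundary condition has the sign matching the gradient flow of $E^\eps$.

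For $\E_\eps$, we change variables $t=-R^2\tau$ with $\tau\in(1,4)$ and write $\E_\eps(R)=\int_1^4\mathcal{D}_\eps(\sqrt{\tau}R)\frac{d\tau}{2\tau}$. This is nondecreasing because each integrand is. Differentiating under the integral and undoing the substitution converts $\int_1^4 \frac{d\tau}{2\tau}\,\frac{d}{dR}\mathcal{D}(\sqrt\tau R)$ into a space-time integral over $T_s^+$, which yields the stated identity after integrating $\int_r^R ds$.

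The main obstacle is justifying the integrations by parts on the unbounded domain: we need decay of $|\nabla U_\eps||\partial_tU_\eps|\G$ at infinity and finiteness of all terms. We plan to use the $L^\infty$ bound in \eqref{eq:scale_inv_bds}, Gaussian decay of $\G$, and local energy bounds, inserting a cutoff $\phi(X/\rho)$, doing the computation with $\phi$, and letting $\rho\to\infty$. This requires an a priori finiteness of the weighted energy, which we expect to obtain from the energy inequality of \cite{HSSW} or from the smoothness assumptions together with the second bound in \eqref{eq:scale_inv_bds}.
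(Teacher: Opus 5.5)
Your argument is correct. It is the standard Struwe-type scaling computation; the paper gives no proof of its own (the lemma is quoted from \cite{HSSW}), and your mechanism is exactly what drives it: the boundary term from integrating by parts cancels the $\nabla F$-part of the derivative of the rescaled potential, leaving the nonnegative $\eps^{-2}\int F\G$ from the explicit factor $R$. Strictly speaking, what $X_0\in\partial\RNp$ buys in your rescaled version is that dilations about $X_0$ preserve $\RNp$; that integration by parts produces no boundary term from $\G$ wherever $X_0$ lies.

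Two of your constants are off, but the errors cancel. In fact $\E_\eps(R)=\int_1^4\mathcal D_\eps(\sqrt\tau R)\,\frac{d\tau}{\tau}$ (i.e.\ $\int_R^{2R}\frac2s\mathcal D_\eps(s)\,ds$, as in Remark~\ref{remark lem-mono}), and $\frac{d}{dR}\mathcal D_\eps=\frac1R\bigl[\cdots\bigr]$ with your bracket. So you do land on the stated identity.

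For the cutoff step you need no a priori control of $\partial_tU_\eps$. With $\phi_\rho^2$ as cutoff, Cauchy--Schwarz absorbs the error into the good term, leaving $O(\rho^{-2})$ times the Gaussian-weighted energy, which is finite by \eqref{eq:scale_inv_bds}. Fatou then gives integrability of the monotonicity integrand, which upgrades the inequality to the exact identity.
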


\begin{lem}[Local energy inequality]\label{local_energy_ineq1}
Suppose $U_{\eps}$ is a smooth solution of \eqref{eq:approx_ext_+ general target}. For any $\phi \in C_c^1 (\R^{m+1})$, $\eps > 0$ and $0 < s < t < \infty$, it holds that
$$E^{\eps}_{\phi} (U_{\eps} (t)) - E^{\eps}_{\phi} (U_{\eps} (s)) = - \int_s^t \int_{\RNp} |\partial_t U_{\eps}|^2 \phi \: dX dt - \int_s^t \int_{\RNp} \langle \partial_t U_{\eps}, (\nabla \phi \cdot \nabla_X) U_{\eps} \rangle \: dX dt.$$
In particular, we have
$$E^{\eps}_{\phi^2} (U_{\eps} (t)) - E^{\eps}_{\phi^2} (U_{\eps} (s)) \leq - \frac 1 2 \int_s^t \int_{\RNp} |\partial_t U_{\eps}|^2 \phi^2 \: dX dt + 2 \int_s^t \int_{\RNp} |\nabla_X U_{\eps}|^2 |\nabla \phi|^2 \: dX dt,$$
and for any $Z_0 = (X_0, t_0) \in \overline{\RNp} \times (0, \infty)$ and $0 < R < \frac{\sqrt{t_0}}{2}$, $$ \int_{P_R (Z_0) \cap \RNp} \left| \partial_t U_{\eps} \right|^2 dX dt \leq \frac{C}{R^2} \left( \int_{P_{2R} (Z_0) \cap \RNp} \left| \nabla_X U_{\eps} \right|^2 dX dt + \int_{P_{2R} (Z_0) \cap \partial \RNp} \frac{F(U_{\eps})}{\eps^2} \: dx \: dt \right). $$
\end{lem}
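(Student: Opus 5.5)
The plan is to test the equation against $\partial_t U_\eps\,\phi$ and integrate by parts in space. The boundary term this produces is exactly cancelled by the time derivative of the boundary potential, because of the Neumann-type condition in \eqref{eq:approx_ext_+ general target}. Fix $\phi\in C^1_c(\R^{m+1})$ and $0<s<t$. Since $U_\eps$ is smooth and bounded, and (after a standard cutoff) has locally finite energy, I will compute
\[
\frac{d}{d\tau}E^\eps_\phi(U_\eps(\tau)) = \int_{\RNp}\langle \nabla_X U_\eps,\nabla_X\partial_\tau U_\eps\rangle\phi\,dX + \frac{1}{\eps^2}\int_{\partial\RNp}\langle \nabla F(U_\eps),\partial_\tau U_\eps\rangle\phi\,dx .
\]
In the first integral I integrate by parts in $X$. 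The outward normal on $\partial\RNp$ is $-e_{m+1}$, so this gives
\[
-\int_{\RNp}\langle \Delta_X U_\eps,\partial_\tau U_\eps\rangle\phi - \int_{\RNp}\langle (\nabla\phi\cdot\nabla_X)U_\eps,\partial_\tau U_\eps\rangle - \int_{\partial\RNp}\langle \partial_y U_\eps,\partial_\tau U_\eps\rangle\phi\,dx .
\]
The boundary condition $\partial_y U_\eps=\eps^{-2}\nabla F(U_\eps)$ makes the last term cancel the potential term. Substituting $\Delta_X U_\eps=\partial_\tau U_\eps$ in the bulk and integrating from $s$ to $t$ then yields the identity. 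The only point needing care is justifying the integration by parts and the differentiation under the integral. This is handled by the compact support of $\phi$ together with the smoothness of $U_\eps$ up to the boundary, so the only integrability needed is local.

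For the inequality, I replace $\phi$ by $\phi^2$, so that $\nabla(\phi^2)=2\phi\nabla\phi$. The cross term is bounded by Young's inequality:
\[
\Big|\int 2\phi\langle\partial_t U_\eps,(\nabla\phi\cdot\nabla_X)U_\eps\rangle\Big| \le \frac12\int|\partial_tU_\eps|^2\phi^2 + 2\int|\nabla_XU_\eps|^2|\nabla\phi|^2 .
\]
Absorbing the first term into $-\int|\partial_tU_\eps|^2\phi^2$ gives the stated estimate.

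For the last estimate, I choose a cutoff $\phi$ with $\phi\equiv1$ on $B_R(X_0)$, support in $B_{2R}(X_0)$ and $|\nabla\phi|\le C/R$. I also take a time cutoff $\psi(\tau)$ with $\psi=1$ on $(t_0-R^2,t_0+R^2)$, support in $(t_0-4R^2,t_0+4R^2)$ and $|\psi'|\le C/R^2$. The restriction $R<\sqrt{t_0}/2$ keeps $t_0-4R^2>0$. I then run the same computation with test function $\partial_tU_\eps\,\phi^2\psi$, which gives
\[
\int\!\!\int|\partial_tU_\eps|^2\phi^2\psi \le 4\int\!\!\int|\nabla_XU_\eps|^2|\nabla\phi|^2\psi + 2\int\!\!\int \psi'\,\Big(\tfrac12|\nabla_XU_\eps|^2\phi^2\,dX+\eps^{-2}F(U_\eps)\phi^2\,dx\Big).
\]
Here the boundary term $\psi\,\frac{d}{d\tau}E^\eps$ is integrated by parts in time, and this produces the $\psi'$ term. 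Using $|\nabla\phi|^2,|\psi'|\lesssim R^{-2}$ and $F\ge0$, the right side is bounded by $CR^{-2}$ times the energy over $P_{2R}(Z_0)$. This is exactly the claimed bound, stated with the cylinder $P_{2R}$, whose time extent $4R^2$ matches the support of $\psi$. I do not expect any real obstacle here. The only subtle step is the boundary cancellation, which is exactly where the structure of \eqref{eq:approx_ext_+ general target} enters, and the fact that $F\ge0$ allows the potential term to be kept with a favorable sign.
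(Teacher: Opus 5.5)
Your proof is correct, and the computations check out, including the sign of the boundary term and the constants $2$ and $4$ in the cut-off estimate. The paper does not prove this lemma itself (it is recalled from \cite{HSSW}), but your route is the standard one: test against $\phi\,\partial_t U_{\eps}$, cancel the boundary term using $\partial_y U_{\eps}=\eps^{-2}(\nabla F)(U_{\eps})$, apply Young's inequality for the $\phi^2$ version, and average in time with a cut-off supported in $(t_0-4R^2,t_0+4R^2)$. This is the same computation the paper carries out for the limit map in Lemma~\ref{energy decay} and Remark~\ref{remark:energy decay}.
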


\begin{lem}[Small energy regularity]\label{eps_gradient_est}
There exists $\eps_0 > 0$, depending only on $m$ and $N$, such that for any smooth function $U_{\eps}$ satisfying \eqref{eq:approx_ext_+ general target} and \eqref{eq:scale_inv_bds}, $Z_0=(X_0, t_0) \in \partial \RNp \times (0, \infty)$ and $0 < R < \frac{\sqrt{t_0}}{2}$, if
\begin{equation}\label{small_cond1}
\mathcal{E}_{\eps} (U_\eps, Z_0, R)<\eps_0^2,
\end{equation}
then $$\textnormal{dist}(U_{\eps}, N) \leq \delta_N,\: \text{ and } \: |\nabla_X U_{\eps}|^2 + |\partial_t U_{\eps}| \leq C (\delta_0 R)^{- 2}, \text{ on} \: P_{\delta_0 R}^+ (Z_0),$$
where $0 < \delta_0 < 1$ and $C$ are independent of $\eps, R$ and $Z_0$, and $\delta_N>0$ is as introduced in Subsection \ref{subsec-setup}. 
Moreover, under the same hypothesis as above it holds that $$\| U_{\eps} \|_{C^{1, \alpha} \left( \overline{P_{\delta_0 R}^+ (Z_0)} \right)} \leq C(\alpha, R),$$ 
for any $\alpha \in (0, 1)$, where the above constant $C (\alpha, R)$ is independent of $\eps$ and $Z_0$.
\end{lem}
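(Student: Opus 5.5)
The plan is a boundary version of the Chen--Struwe / Struwe $\varepsilon$-regularity argument, carried out in three steps.

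\textbf{Step 1: normalize.} By the monotonicity formula (Lemma \ref{lem-mono}) the scale-invariant energy $\E_\eps(U_\eps,Z,r)$ is nondecreasing in $r$. If $\eps_0$ is small, smallness at scale $R$ therefore propagates to all smaller scales and to all base points $Z\in\partial^0P^+_{\delta R}(Z_0)$. This uses the continuity of $\E_\eps$ in $Z$ and the bound \eqref{eq:scale_inv_bds}, which controls the Gaussian tails. By parabolic rescaling, which replaces $\eps$ by $\eps/\sqrt R$, it is enough to treat $R=1$.

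\textbf{Step 2: $L^\infty$ closeness to $N$.} Suppose there were a point of $\partial^0 P^+_{\delta}(Z_0)$ with $\textnormal{dist}(U_\eps,N)>\delta_N$. We get a contradiction in two cases.
\begin{itemize}
\item If $\eps$ is comparable to the scale, interior and boundary parabolic Schauder estimates for the Neumann problem, with the bounded nonlinearity $\eps^{-2}\nabla F(U_\eps)$, give Lipschitz bounds at scale $\eps^2$. Then $F(U_\eps)\ge c$ on a parabolic boundary cylinder of size $\sim\eps^2$. This forces $\eps^{-2}\int F\,\G\,dx\,dt\gtrsim c$ at some scale, contradicting $\E_\eps<\eps_0^2$.
\item If $\eps$ is small relative to the scale, rescale so that $\eps=1$. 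The same argument applies, using that $\E$ is scale invariant.
\end{itemize}

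\textbf{Step 3: gradient bound.} This is the main part. We use Schoen's trick: choose $\sigma$ and a point $Z_1$ maximizing
\[
\sigma^2\sup_{P^+_{\delta-\sigma}}\bigl(|\nabla_X U_\eps|^2+\eps^{-2}F(U_\eps)\bigr),
\]
and call the maximal value $e_1$. Assume $e_1>C$ and rescale by $e_1^{-1/2}$. The rescaled map $V$ then has energy density at most $4$ on $P^+_1$, density $1$ at the origin, and scaled energy $<\eps_0^2$.

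On the bounded density region, $V$ solves the heat equation with Neumann datum $\eps'^{-2}\nabla F(V)$. Since $\textnormal{dist}(V,N)\le\delta_N$, we have $\nabla F(V)=2(V-\Pi V)$, which points in the normal direction. The boundary term is then controlled using two facts:
\begin{itemize}
\item the tangential part of $\partial_y V$ vanishes;
\item the normal part equals $\eps'^{-2}\cdot 2(V-\Pi V)$, which is bounded by the density bound $\eps'^{-2}F\le4$ only through $\eps'^{-1}\sqrt F$.
\end{itemize}
This is the main obstacle. We must obtain a uniform-in-$\eps$ bound on $|\nabla V|$ and $\partial_tV$ near the boundary. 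The normal derivative may be as large as $\eps'^{-1}$ unless we exploit the structure. The first thing to try is a Bochner-type subsolution inequality for
\[
e=\tfrac12|\nabla V|^2+\eps'^{-2}F(V),
\]
of the form $(\partial_t-\Delta)e\le0$ in the interior together with a boundary inequality $\partial_y e\le Ce$. Here the convexity of $F$ near $N$ (its Hessian is the projection onto normal directions, up to curvature terms) controls the sign of the bad terms. A parabolic Moser / mean value inequality for Neumann subsolutions would then give
\[
1=e(0)\le C\int_{P^+_1}e\le C\eps_0^2,
\]
which is a contradiction for small $\eps_0$. Here $\int_{P_1^+}e$ is bounded by $\E$ via the Gaussian weight comparison on $T^+$ cylinders. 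The bound on $|\partial_tU_\eps|$ follows similarly, or by differentiating the equation in $t$, since $\partial_t V$ satisfies a linear Neumann problem with coefficient $\eps^{-2}\nabla^2F(V)\ge -C$ in the normal direction.

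\textbf{Step 4: $C^{1,\alpha}$ bounds.} With $|\nabla U_\eps|$ bounded and $U_\eps$ close to $N$, rewrite the boundary condition as follows. The tangential part of $\partial_yU_\eps$ vanishes, and the normal part is determined by the equation. Then $\Pi(U_\eps)$ and $U_\eps-\Pi(U_\eps)$ satisfy an oblique/Neumann problem with bounded data. Standard boundary $W^{2,1}_p$ and Schauder theory for the heat equation give $C^{1,\alpha}$ bounds independent of $\eps$.
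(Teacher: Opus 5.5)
The paper gives no proof of this lemma; it is quoted from \cite{HSSW}. So I judge your argument on its own. It has a genuine gap in Step 3, which is the real content of the lemma. First, your boundary inequality points the wrong way: since $\partial_y$ is the inward derivative, a mean-value inequality for Neumann subsolutions needs $\partial_y e\ge -Ce$, not $\partial_y e\le Ce$. More seriously, no such inequality is available for the full density. Differentiating the boundary condition tangentially and using $\partial_{yy}V=\partial_tV-\Delta_xV$ gives, on $\partial\RNp$,
\[
\partial_y\tfrac12|\nabla_X V|^2=\frac{2}{\eps'^2}\sum_{i\le m}\bigl\langle \nabla^2F(V)\,\partial_iV,\partial_iV\bigr\rangle+\frac{1}{\eps'^2}\,(\partial_t-\Delta_x)\bigl(F(V)\bigr).
\]
The last term is second order and unsigned: the boundary density enters through a dynamic boundary condition, not a Neumann one. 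Your Schoen quantity is also not scale consistent. Under $V(X,t)=U(X_0+\lambda X,t_0+\lambda^2t)$ one has $|\nabla V|^2=\lambda^2|\nabla U|^2$ but $\eps'^{-2}F(V)=\lambda\,\eps^{-2}F(U)$, so the trick should be run on $|\nabla_XU|^2$ alone. Once you do that, the obstacle you single out is not the real one. The bound $|\nabla_XV|\le 2$ up to the boundary gives $|\partial_yV|=2\eps'^{-2}\textnormal{dist}(V,N)\le 2$ there, i.e.\ $\textnormal{dist}(V,N)\le\eps'^2$. This is exactly what makes $\eps'^{-2}\nabla^2F(V)\ge -C\eps'^{-2}\textnormal{dist}(V,N)\ge -C$. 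You invoke that bound for $\partial_tV$, and it would fail if $\partial_yV$ could really be of size $\eps'^{-1}$. With it, each $W=\partial_iV$ ($i\le m$) or $W=\partial_tV$ is caloric with $\partial_yW=\eps'^{-2}\nabla^2F(V)W$. So $\tfrac12|W|^2$ is a genuine Robin subsolution, and the mean-value inequality makes $\nabla_xV$ and $\partial_tV$ small near the origin. But nothing in your argument controls $\partial_yV$: its boundary flux $\partial_{yy}V=\partial_tV-\Delta_xV$ involves second tangential derivatives. If the maximum in Schoen's trick is carried by the normal derivative at or near $\partial\RNp$, you get no contradiction. Uniform-in-$\eps$ control of $\partial_yV$ up to the boundary is the missing step, and it has to use the geometry of $N$.

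The decomposition you only invoke in Step 4 is the natural tool for this. Since $\nabla\Pi(p)$ annihilates $p-\Pi(p)$, one has $\partial_y[\Pi(V)]=2\eps'^{-2}\nabla\Pi(V)(V-\Pi(V))=0$ exactly on $\partial\RNp$. So $\Pi(V)$ solves a homogeneous Neumann problem with forcing $-\sum_\alpha\nabla^2\Pi(V)(\partial_\alpha V,\partial_\alpha V)$, which is bounded after the normalization; even reflection and $W^{2,1}_p$ estimates then apply. Meanwhile $V-\Pi(V)$ solves the scalar Robin problem $\partial_y w=2\eps'^{-2}w$ with bounded forcing, and what you need are estimates uniform in the Robin constant $2\eps'^{-2}\to\infty$. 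This is also why Step 4 is incomplete as written: the data there are not bounded, because the Robin coefficient is $2\eps^{-2}$. Finally, your Step 2 only covers $\eps^2\lesssim R$, and this restriction cannot be dropped. If $\textnormal{dist}(p,N)\ge 2\delta_N$, then $\nabla F(p)=0$, so $U\equiv p$ solves \eqref{eq:approx_ext_+ general target} with $\E_\eps(U,Z_0,R)=8\delta_N^2R/(\sqrt{\pi}\,\eps^2)$. This is $<\eps_0^2$ once $R\ll\eps^2$, even though $\textnormal{dist}(U,N)>\delta_N$. The closeness conclusion must therefore be read with $\eps^2\lesssim R$, which is the regime in which the paper relies on it, and your proof should state this explicitly.
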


We now briefly recall the essentials of the parabolic Hausdorff measure. Note that our definitions may differ from other standard versions by a constant factor depending only on the dimensions. This choice, however, does not alter the properties we are concerned with, such as sets of measure zero, sets of locally finite measure, or the parabolic Hausdorff dimension.

For any $d, \alpha \in (0, \infty)$, and any subset $S \subseteq \mathbb{R}^{m+2}$, we define$$\mathcal{P}^d_{\alpha} (S) := \inf \left\{ \sum_{i = 1}^{\infty} r_i^d \ \bigg|\  S \subseteq \bigcup_{i = 1}^{\infty} \overline{P_{r_i} (Z_i) } \text{ for some } Z_i \in \mathbb{R}^{m+2} \text{ and } r_i \leq \alpha \right\}.$$The quantity $\mathcal{P}^d_{\alpha} (S)$ takes values in the extended real numbers $[0, \infty]$. The $d$-dimensional parabolic Hausdorff measure of $S$ is then defined as$$\mathcal{P}^d(S) := \lim_{\alpha \searrow 0} \mathcal{P}^d_{\alpha} (S).$$Since $\mathcal{P}^d_{\alpha} (S)$ increases as $\alpha$ decreases to $0$, this limit always exists in $[0, \infty]$. For $d = 0$, we define $\mathcal{P}^0$ to be the counting measure. Next, for any $S \subseteq \mathbb{R}^{m + 2}$, we define its parabolic Hausdorff dimension as$$\mathcal{P}_{\textnormal{dim}} (S) := \inf \left\{ d \in (0, \infty) \mid \mathcal{P}^d (S) = 0 \right\}.$$

\begin{lem}\label{partial regularity lemma}
    For smooth solutions $U_{\eps}$ of \eqref{eq:approx_ext_+ general target} satisfying \eqref{eq:scale_inv_bds}, the energy concentration set $\Sigma$ as defined in \eqref{sing_set_intro}, has locally finite $(m + 1)$-dimensional parabolic Hausdorff measure, that is,  for any compact set $K \subseteq \partial \RNp \times (0, \infty)$, $$\p^{m + 1} (\Sigma \cap K) < \infty.$$
    Furthermore, for any $t \in (0, \infty)$, the time slice $\Sigma^t := \{ X \in \partial \RNp| \, (X, t) \in \Sigma \}$ has locally finite $(m - 1)$-dimensional Hausdorff measure.
\end{lem}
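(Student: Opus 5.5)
The plan is a Vitali-type covering argument for the parabolic measure estimate and a separate slice-wise covering argument for the time slices, in the spirit of Chen--Struwe and Lin--Wang. The only inputs are the monotonicity formula (Lemma \ref{lem-mono}), the uniform bounds \eqref{eq:scale_inv_bds}, and the definition of $\Sigma$ with threshold $\eps_0$.

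\emph{The estimate $\p^{m+1}(\Sigma\cap K)<\infty$.} Fix a compact $K\subseteq\partial\RNp\times(0,\infty)$, and choose $R_0>0$ small so that $4R_0^2<\frac12\inf_K t$.
\begin{enumerate}
\item Take $Z_0\in\Sigma\cap K$ and $\delta<R_0$. By definition of $\Sigma$ and monotonicity there is $r\le\delta$ with $\liminf_{\eps}\E_\eps(U_\eps,Z_0,r)\ge\eps_0^2$.
\item Split $\E_\eps(U_\eps,Z_0,r)$ into the part over $P_{Cr}(Z_0)\cap T^+_r(t_0)$ and the part far from $X_0$. On $T_r^+(t_0)$ we have $t_0-t\sim r^2$, so $\G_{Z_0}\le Cr^{-(m+1)}$ near $X_0$.
\item The Gaussian tail satisfies $\int_{|X-X_0|\ge Cr}\G_{Z_0}\,e\,dX\,dt\le \eps_0^2/2$ once $C$ is large. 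For this I would use the uniform bound on $\E_\eps$ at larger scales from \eqref{eq:scale_inv_bds}, together with the standard comparison $\E_\eps(\cdot,Z_0,r)$-tail $\lesssim e^{-C^2/32}\sup_{\rho\le R_0}\E_\eps(\cdot,Z_0',\rho)$ over nearby base points. This is the Chen--Struwe/Lin--Wang tail argument.
\item Steps 2 and 3 give the local density lower bound
\[
\liminf_{\eps\to0} r^{-(m+1)}\,e(U_\eps)\big(P_{Cr}(Z_0)\big)\ge c\,\eps_0^2 .
\]
\item The family $\{P_{Cr}(Z_0)\}$ covers $\Sigma\cap K$ finely. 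Vitali's lemma gives a disjoint subfamily $P_{Cr_i}(Z_i)$ such that the $P_{5Cr_i}(Z_i)$ cover.
\item Summing the density bounds, the total is dominated by $\limsup_\eps e(U_\eps)\big(P_{R_0}(K)\big)$. This quantity is finite because the unweighted local energy on a cylinder is controlled by $\E_\eps$ at scale comparable to $R_0$ (where $\G\gtrsim R_0^{-(m+1)}$), and that is bounded by \eqref{eq:scale_inv_bds}.
\item Hence $\p^{m+1}_{5C\delta}(\Sigma\cap K)\le C\eps_0^{-2}$; letting $\delta\to0$ gives the first claim.
\end{enumerate}

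\emph{The time slices.} Fix $t$, a compact $K'\subseteq\partial\RNp$, and $Z_0=(X_0,t)\in\Sigma$ with $X_0\in K'$.
\begin{enumerate}
\item The concentration is inherited by the time interval $(t-4r^2,t-r^2)$ of length $\sim r^2$. Fubini turns the density bound above into
\[
\liminf_\eps \frac1{r^{2}}\int_{t-4r^2}^{t-r^2} r^{-(m-1)}\,e_t(U_\eps)\big(B_{Cr}(X_0)\big)\,ds\ge c\eps_0^2,
\]
where $e_t$ denotes the spatial energy density at time $s$.
\item Covering $\Sigma^t\cap K'$ by disjoint balls $B_{Cr_i}(X_i)$ via Vitali, the sum $\sum_i r_i^{m-1}$ is then bounded by an average over times of the slice energy on a fixed neighborhood.
\item The slice energy at time $s$ is controlled uniformly in $s\in(t-4R_0^2,t)$. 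For this I would use the local energy inequality (Lemma \ref{local_energy_ineq1}) to compare slice energies at different times: with a cutoff $\phi$, $E^\eps_{\phi^2}(U_\eps(s))$ is bounded by its average over an earlier interval plus $C\int|\nabla U_\eps|^2|\nabla\phi|^2$. Both terms are bounded by \eqref{eq:scale_inv_bds}.
\item This gives $\H^{m-1}_{\delta}(\Sigma^t\cap K')\le C\eps_0^{-2}$, proving the second claim.
\end{enumerate}

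\emph{Main obstacle.} I expect the hardest step to be the Gaussian tail control in Step 3 of the first part. Because the problem is nonlocal and energies are global, the far-field contribution must be bounded using only \eqref{eq:scale_inv_bds} and monotonicity at nearby base points. If the direct tail bound fails, I would replace $\E_\eps$ by a cutoff version, estimating the error through the $L^\infty$ bound and the local energy inequality, as in Chen--Struwe.
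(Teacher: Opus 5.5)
The paper gives no proof of this lemma; it is one of the results imported in Section~\ref{preliminary results section} from earlier work on the boundary Ginzburg--Landau approximation, so I am judging your argument on its own terms.

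\textbf{The space-time part is correct and standard.} The Gaussian tail you single out as the main obstacle is routine. It is exactly Proposition~\ref{energy_tail_est}: compare with $\E_\eps$ at base point $(X_0,t_0+28r^2)$ and scale $2\sqrt2\,r$, then invoke \eqref{eq:scale_inv_bds}. Vitali is legitimate there because the selected cylinders $P_{Cr_i}(Z_i)$ are disjoint in space-time. Their energies therefore add up to at most the space-time energy of a fixed neighbourhood of $K$.

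\textbf{The gap is Step~2 of the slice argument.} With Vitali balls of varying radii, the lower bound for $B_{Cr_i}(X_i)$ is an average over the window $I_i=(t-4r_i^2,t-r_i^2)$, and this window changes with $i$. The quantity you must control is
\[
\sum_i \frac{1}{r_i^{2}}\int_{I_i}\int_{B_{Cr_i}(X_i)} e(U_\eps)(\cdot,s)\,dX\,ds ,
\]
and it is not an average of the energy of a single time slice. Let $\Omega$ be a fixed neighbourhood of $K'$. Balls with radii in one dyadic range $[\rho/2,\rho]$ contribute at most $C\sup_s\int_\Omega e(U_\eps)(\cdot,s)\,dX$. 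But a Vitali family can involve arbitrarily many dyadic ranges. Exchanging sum and integral gives the bound $4\sup_s\int_\Omega e(U_\eps)(\cdot,s)\,dX\cdot\int_{t-4\delta^2}^{t}\frac{ds}{t-s}$, which is infinite.

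Your Step~3 only provides a uniform-in-$s$ bound on slice energy, and that does not address this. Transporting each ball's energy to a common time with Lemma~\ref{local_energy_ineq1} does not help either: it costs an error of order $r_i^{m-3}$ times the time gap, which is not summable.

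\textbf{The repair is to use one radius for all points.} By monotonicity, $\liminf_{\eps}\E_\eps(U_\eps,(X,t),r)\ge\eps_0^2$ for every $X\in\Sigma^t$ and every $r<\sqrt t/2$. Fix a small $r$ and take a maximal set $X_1,\dots,X_N\in\Sigma^t\cap K'$ with $|X_i-X_j|>2Mr$ for $i\ne j$. Then $N$ is finite, so a single small $\eps$ works for all $i$. All windows now coincide, and Proposition~\ref{energy_tail_est} gives
\[
N\,c\,\eps_0^2\,r^{m+1}\le\int_{t-4r^2}^{t-r^2}\int_{\Omega}e(U_\eps)(\cdot,s)\,dX\,ds\le 3r^2\sup_{s}\int_{\Omega}e(U_\eps)(\cdot,s)\,dX\le C r^2 .
\]
The last step is the $L^\infty_tH^1_X$ bound of Remark~\ref{remark lem-mono} (or your Step~3). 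Hence $Nr^{m-1}\le C\eps_0^{-2}$.

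By maximality the balls $\overline{B_{2Mr}(X_i)}$ cover $\Sigma^t\cap K'$. So $\H^{m-1}_{4Mr}(\Sigma^t\cap K')\le C$ uniformly in $r$, which is even an upper Minkowski-content bound. For $m=1$ it bounds the number of points of $\Sigma^t\cap K'$, which is how the slice statement is used later. Equivalently, you can work with $\mathcal{D}_\eps$ at the single time $t-r^2$ via \eqref{eq:equivalence of energies}.
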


We conclude this section with a brief observation regarding the rescaling of $U_{\varepsilon}$. In what follows, we will frequently consider rescaled functions of the form$$(X, t) \mapsto U_{\varepsilon} (X_0 + r X, t_0 + r^2 t)$$for some positive parameter $r$ (which, in general, depends on $\varepsilon$). These rescaled functions satisfy systems similar to \eqref{eq:approx_ext_+ general target} and \eqref{eq:scale_inv_bds} with $\varepsilon$ replaced by $\varepsilon/\sqrt{r}$ (\eqref{eq:rescaledfunction} to be precise). Consequently, many of the previously stated and upcoming results in the following section remain valid for these rescaled functions. In particular, Lemmas \ref{lem-mono},  \ref{local_energy_ineq1} and \ref{eps_gradient_est} hold true in this context. We omit the detailed proofs for these cases, as they are nearly identical to those for $U_{\varepsilon}$.

\section{Blow Up Analysis}\label{blow up analysis section}

In this section, we establish several key properties of solutions to \eqref{eq:approx_ext_+ general target} that satisfy \eqref{eq:scale_inv_bds} and rescalings of such solutions, as outlined  at the end of the previous section. Some of these preliminary results are of independent interest. Throughout the following, it suffices to consider a sequence $(\varepsilon_n)_{n \in \Zp}$ decreasing to $0$ rather than a continuum of parameters  $\varepsilon$. Accordingly, we will occasionally write $\varepsilon_n$ in place of $\varepsilon$, and denote $U_{\varepsilon}$ correspondingly by $U_{\varepsilon_n}$ (or simply $U_n$). Furthermore, whenever we pass to a subsequence of $(\varepsilon_n)$, we shall still denote it by $(\varepsilon_n)$ for simplicity. 

Since our analysis relies on   specific rescalings of $U_{\varepsilon}$, we first summarize their basic structural   properties. For any $\varepsilon > 0$, $r > 0$ (which may depend on $\varepsilon$), and a fixed point $Z_0 = (X_0, t_0) \in \partial \mathbb{R}^{m+1}_+ \times (0, \infty)$, define the rescaled function 
\begin{equation}\label{eq:defn_rescaling}
V_{\varepsilon} (X, t) := U_{\varepsilon} (X_0 + r X, t_0 + r^2 t), \quad \forall (X, t) \in \RNp \times (-\frac{t_0}{r^2}, \infty).
\end{equation}
We refer to  $r$ as the rescaling factor of $V_{\eps}$. In what follows, we exclusively consider bounded families of rescaling factors $(r_{\eps})$.

The rescaled function $V_{\varepsilon}$ satisfies the following system
\begin{equation}\label{eq:rescaledfunction}
    \left\{
    \begin{array}{ll}
        \partial_t V_{\eps} - \Delta_X V_{\eps} = 0, & \text{ in } \RNp \times (- r^{-2} t_0 , \infty), \\
        \rule{0cm}{0.5cm} \partial_y V_{\eps} = \frac{r}{\eps^2} (\nabla F) (V_{\eps}), & \text{ on } \partial \RNp \times (- r^{-2} t_0, \infty),\\
        \rule{0cm}{0.5cm} \| V_{\eps} \|_{L^{\infty}} \leq \| U_{\eps} \|_{L^{\infty}}, & \\
        \rule{0cm}{0.5cm} \E_{\frac{\eps}{\sqrt r}} \left( V_{\eps}, (X, t), R \right) \leq  C(\mathcal K), &\forall (X, t) \in \mathcal K, \text { and } t-4R^2+r^{-2} t_0>0,
    \end{array}
    \right.
\end{equation}   
for any compact subset $\mathcal{K}$ of $\partial \RNp \times (-r^{-2} t_0, \infty)$. The last inequality follows from the identity $$\E_{\frac{\eps}{\sqrt r}} \left( V_{\eps}, (X, t), R \right) = \E_{\eps} \left( U_{\eps}, (X_0 + rX, t_0 + r^2 t), r R \right), $$
Lemma \ref{lem-mono} and \eqref{eq:scale_inv_bds}.
\begin{remark}\label{remark lem-mono}
    Using the estimates from \eqref{eq:rescaledfunction} and Lemma \ref{local_energy_ineq1} we get that $(V_{\eps})_{\eps}$ is bounded in $H^1_{loc}$, provided $(r_{\eps})$ stays bounded.
    Furthermore, a direct calculation yields
    \begin{equation}
    \E_{\eps} (U_{\eps}, Z_0, r) = \int_r^{2r} \frac{2}{s} \mathcal{D}_{\eps} (U_{\eps}, Z_0, s) \, ds, \quad \text{for any } Z_0 \in \partial \RNp \times (0, \infty), \text{ and } r \in (0, \frac{\sqrt{t_0}}{2}).
    \end{equation}
    From the above identity and  the monotonicity of $\mathcal{D}_{\eps} $ (Lemma \ref{lem-mono}) we deduce 
    \begin{equation}\label{eq:equivalence of energies}
    \mathcal{D}_{\eps} (U_{\eps}, Z_0, r) \leq \E_{\eps} (U_{\eps}, Z_0, r) \leq 2 \mathcal{D}_{\eps} (U_{\eps}, Z_0, 2r).
    \end{equation}
    Therefore, if $(r)_{\eps}$ is bounded then combining \eqref{eq:equivalence of energies} and \eqref{eq:rescaledfunction} one obtains that $(V_{\eps})_{\eps}$ is locally uniformly bounded in $L^{\infty}_t H^1_X$.
\end{remark}

For any rescaled $V_{\eps}$ as above, we now define the Radon measures $$e(V_{\eps})(\cdot, t) dX := \frac 1 2 |\nabla_X V_{\eps}|^2(\cdot, t) \: dX + \frac{r}{\eps^2} F(V_{\eps}) (\cdot, t) \: d\H^m \lfloor_{\partial \RNp}, \: \text{on } \: \overline{\RNp}, \: \forall t \in ( - \frac{t_0}{r^2}, \infty) ,$$ and 
\begin{equation}\label{eq:energy density}
e(V_{\eps}) \: dX dt := \frac12 |\nabla_X V_{\eps}|^2 \: dX dt + \frac{r}{ \eps^2} F(V_{\eps}) \: d\H^m \lfloor_{\partial \RNp} dt, \: \: \text{ on} \:\: \overline{\RNp} \times (- \frac{t_0}{r^2}, \infty),
\end{equation}
where $\mathscr{H}^m$ denotes the $m$-dimensional Hausdorff measure on  $  \R^{m + 1}$. To emphasize that $\frac{\varepsilon}{\sqrt{r}}$ replaces $\varepsilon$ in \eqref{eq:rescaledfunction}, we will occasionally write $e_{\varepsilon/\sqrt{r}} (V_{\varepsilon}) \, dX \, dt$ instead of $e(V_{\varepsilon}) \, dX \, dt$.

In the following we will assume that for some non-negative Radon measures $\nu$ and $\eta$ on $\overline{\RNp} \times (0, \infty)$ we have
\begin{align}\label{eq:Radonmeasureintro}
&e(U_{\eps}) \: dX dt \xrightarrow{\eps \to 0} \frac12 |\nabla_X U|^2 dX dt + \nu =: \mu,\\
&| \partial_t U_{\eps}|^2 \: dX dt \xrightarrow{\eps \to 0} |\partial_t U|^2 \: dX dt + \eta,
\end{align}
where the convergence is the weak* convergence of Radon measures. This follows  by passing to a subsequence of $(U_{\eps})$ and applying  Fatou's lemma. We emphasize that given these convergences, the limit inferior in \eqref{sing_set_intro} is, in fact, a limit.

Although the results in this section are stated and proved in terms of $U_{\varepsilon}$, their proofs rely primarily on \eqref{eq:rescaledfunction} and other structural properties satisfied by any rescaling $V_{\varepsilon}$ with bounded rescaling factors. Consequently, similar results remain valid for such rescaled functions as well, possibly with some more conditions on the rescaling factors $(r)_{\eps}$. Such conditions, if any, will be mentioned after the proofs of the results.

\begin{prop}\label{energy_tail_est}
    For any $ \delta > 0$ and $R, T > 1$ there exists $M = M( \delta, R, T) > 0$ such that for any $\eps > 0$ and $\overline Z = (\overline X, \overline t) \in \partial^0 B_R^+ \times (\frac{1}{T}, T)$ we have  $$\E_{\eps} (U_{\eps}, \overline Z, r) \leq \frac{C}{r^{m + 1}} \int_{\overline{t} - 4 r^2}^{\overline{t} - r^2} \int_{\overline{B_{Mr}^+ (\overline X)}} e(U_{\eps}) dX dt+ \delta,$$ and 
    $$\mathcal{D}_{\eps} (U_{\eps}, \overline Z, r) \leq \frac{C}{r^{m - 1}} \int_{\overline{B_{Mr}^+ (\overline X)}} e(U_{\eps})(\cdot, \overline t - r^2) dX + \delta,\quad \text{for }0<r<\frac{\sqrt{\bar t}}{2},$$ where $C>0$ is a  dimensional constant.
    Moreover, for any $Z_0 = (X_0, t_0) \in \partial \RNp \times (0, \infty)$, for the rescaled function $$V_{\eps} (X, t) := U_{\eps} (X_0 + r_{\eps} X, t_0 + r_{\eps}^2 t),$$ with the rescaling factor satisfying $r_{\eps} < \textnormal{min} \left\{ \frac{1}{R}, \sqrt{\frac{t_0}{2 T}} \right\}$ and $r r_{\eps} < \frac{\sqrt{t_0}}{10}$, we have for every $\overline Z = (\overline X, \overline t) \in \partial^0 B_R^+ \times (- T, T)$ $$\E_{\frac{\eps}{\sqrt{r_{\eps}}}} (V_{\eps}, \overline Z, r) \leq \frac{C}{r^{m + 1}} \int_{\overline t - 4 r^2}^{\overline t - r^2} \int_{\overline{B_{Mr}^+ (\overline X)}} e(V_{\eps}) dX dt + \delta, $$
for some constant $M = M(\delta, Z_0) > 0$.
\end{prop}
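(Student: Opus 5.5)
The plan is to split the Gaussian-weighted energy into a near part, $|X-\overline X|\le Mr$, and a far part, $|X-\overline X|> Mr$, and show that the far part is at most $\delta$ once $M$ is large. On the time slab $T_r^+(\overline t)$ we have $\overline t-t\in(r^2,4r^2)$. Hence
\[
\G_{\overline Z}(X,t)\le C r^{-(m+1)}e^{-|X-\overline X|^2/(16r^2)} .
\]
On $|X-\overline X|\le Mr$ we simply bound the exponential by $1$. This gives the first term
\[
\frac{C}{r^{m+1}}\int_{\overline t-4r^2}^{\overline t-r^2}\int_{\overline{B^+_{Mr}(\overline X)}} e(U_\eps)\,dX\,dt ,
\]
where the boundary contribution $\eps^{-2}F$ is included because the closed half ball contains its flat part. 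The same splitting on the single slice $t=\overline t-r^2$ treats $\mathcal D_\eps$: there the weight is $r^2\G\le Cr^{1-m}e^{-|X-\overline X|^2/(4r^2)}$.

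For the far part, decompose $\{|X-\overline X|>Mr\}$ into dyadic annuli $A_k=\{2^kMr<|X-\overline X|\le 2^{k+1}Mr\}$. On $A_k$ the weight is at most $Cr^{-(m+1)}e^{-4^kM^2/16}$. It then suffices to have a polynomial-growth bound for the energy on balls,
\[
\int_{\overline t-4r^2}^{\overline t-r^2}\int_{B^+_{\rho}(\overline X)} e(U_\eps)\le C\,r^2\Big(\frac{\rho}{r}\Big)^{N_0}\rho^{m-1},\qquad \rho\ge r,
\]
or even a crude bound of the form $C\rho^{m+1}\cdot(\text{poly in }\rho/r)$. Summing over $k$ then gives a tail $\le C\sum_k (2^kM)^{m+1+N_0}e^{-4^kM^2/16}$, and this is below $\delta$ for $M=M(\delta)$ large.

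The main obstacle is obtaining this growth bound uniformly in $\eps$ and $\overline Z$ from \eqref{eq:scale_inv_bds}. I would use the monotonicity Lemma \ref{lem-mono} together with \eqref{eq:equivalence of energies}, applied at base points shifted in time. For $X'\in \partial^0B^+_\rho(\overline X)$, the Gaussian $\G_{(X',\overline t+s)}$ with $s\sim\rho^2$ is bounded below by $c\rho^{-(m+1)}$ on $B^+_\rho(X')\times(\overline t-4r^2,\overline t-r^2)$. The hypothesis \eqref{eq:scale_inv_bds} bounds $\E_\eps$ at such points by $C$, so the slab energy in a ball of radius $\rho$ is at most $C\rho^{m+1}$, possibly after covering by finitely many time slabs. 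This uses only that base points stay in a compact set. That is why $\overline X\in\partial^0B_R^+$, $\overline t\in(1/T,T)$ and the restriction $r<\sqrt{\overline t}/2$ enter, and why $M$ depends on $R,T$. For very large $\rho$, beyond the compact set, I would instead use the global bounds: the $L^\infty$ bound plus the $L^\infty_tH^1_X$ bound of Remark \ref{remark lem-mono}, i.e. finite total energy on time slices, which controls all of $\R^{m+1}_+$ with a constant depending on $T$. Since $r$ is bounded by $\sqrt T$, the Gaussian factor $e^{-\rho^2/(16r^2)}$ beats any such constant once $\rho\ge Mr$ with $M$ large. The slice version for $\mathcal D_\eps$ is handled the same way, using the slice energy bound.

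For the rescaled statement, I would apply the identity
\[
\E_{\eps/\sqrt{r_\eps}}(V_\eps,\overline Z,r)=\E_\eps\big(U_\eps,(X_0+r_\eps\overline X,t_0+r_\eps^2\overline t),rr_\eps\big),
\]
and change variables in the integral of $e(V_\eps)$. The conditions on $r_\eps$ guarantee two things. First, the image base points lie in a fixed compact set $\partial^0B_1^+(X_0)\times(t_0/2,\,t_0+t_0/2)$. Second, $rr_\eps<\sqrt{t_0}/10$ stays in the admissible monotonicity range. So the unscaled estimate applies with $R,T$ determined by $Z_0$, giving $M=M(\delta,Z_0)$.
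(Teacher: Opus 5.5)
Your near/far splitting is fine, and so is your reduction of the rescaled statement through $\E_{\eps/\sqrt{r_\eps}}(V_\eps,\overline Z,r)=\E_\eps(U_\eps,(X_0+r_\eps\overline X,t_0+r_\eps^2\overline t),rr_\eps)$. Your growth bound for moderate radii also works. Take the base point $(\overline X,\overline t+\rho^2)$ and a scale $\tilde r$ with $4\tilde r^2=\rho^2+4r^2$. Then $T_r^+(\overline t)\subseteq T^+_{\tilde r}(\overline t+\rho^2)$, admissibility $4\tilde r^2<\overline t+\rho^2$ is exactly $4r^2<\overline t$, and the Gaussian is $\ge c\rho^{-(m+1)}$ on $B^+_\rho(\overline X)\times T^+_r(\overline t)$.

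The gap is the range of very large $\rho$. The constant in \eqref{eq:scale_inv_bds} depends on the compact set containing the base points. So your moderate-range bound holds only for $\rho\le\rho_{\max}$ with $\rho_{\max}$ fixed, while your annuli $A_k$ run off to infinity. For the remaining range you invoke ``finite total energy on time slices'' from Remark \ref{remark lem-mono}, and nothing of the sort is available:
\begin{itemize}
\item The remark gives only \emph{locally} uniform $L^\infty_tH^1_X$ bounds, obtained from \eqref{eq:equivalence of energies}.
\item The $L^\infty$ bound says nothing about $\nabla_X U_\eps$.
\item \eqref{eq:scale_inv_bds} is the only standing hypothesis (it deliberately replaces the initial condition), and it controls energy only with Gaussian weights. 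For large $\rho$ the only uniform control on $\int_{B^+_\rho}e(U_\eps)$ is therefore Gaussian-weighted, which allows growth like $e^{c\rho^2}$.
\end{itemize}
This matters in practice. The same proof is meant to apply to rescaled families, whose total slice energy scales like $r_\eps^{1-m}$ and is not uniformly bounded when $m\ge 2$. So your argument does not control the far tail beyond $\rho_{\max}$.

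The missing idea is that you need neither a growth bound nor a dyadic decomposition. The tail can be absorbed into a single Gaussian-weighted energy at a comparable scale, shifted forward in time.
\begin{itemize}
\item On the region $|X-\overline X|\ge Mr$, $t\in(\overline t-4r^2,\overline t-r^2)$, you have $\G_{\overline Z}\le Cr^{-(m+1)}e^{-|X-\overline X|^2/(16r^2)}\le Ce^{-M^2/32}\,r^{-(m+1)}e^{-|X-\overline X|^2/(32r^2)}$.
\item Since $\overline t+28r^2-t\in(29r^2,32r^2)$ there, the last weight is $\le C\G_{(\overline X,\overline t+28r^2)}$.
\item Moreover $T^+_r(\overline t)\subseteq T^+_{2\sqrt2 r}(\overline t+28r^2)=(\overline t-4r^2,\overline t+20r^2)$.
\item Hence the whole tail is at most $Ce^{-M^2/32}\,\E_\eps\bigl(U_\eps,(\overline X,\overline t+28r^2),2\sqrt2 r\bigr)$.
\end{itemize}
This scale is admissible, because $32r^2<\overline t+28r^2$ is equivalent to $r<\sqrt{\overline t}/2$. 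The base point stays in the compact set $\overline{\partial^0B_R^+}\times[1/T,8T]$. So \eqref{eq:scale_inv_bds} bounds the tail by $C(R,T)e^{-M^2/32}<\delta$ once $M$ is large. The $\mathcal D_\eps$ version is analogous, using \eqref{eq:equivalence of energies}. This is the paper's argument. You could keep your dyadic scheme up to $\rho_{\max}$ and handle $\rho>\rho_{\max}$ with this domination trick at a fixed larger scale. Once you have the trick, though, the dyadic part is superfluous.
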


\begin{proof}
Using the notations as in the statement of the proposition, we compute for any $M > 0$
\begin{align*}
    \int_{\overline{t} - 4 r^2}^{\overline{t} - r^2} \int_{\overline{\mathbb{R}}^{m+1}_+ \setminus B_{Mr}^+ (\overline X)} \mathcal{G}_{\overline{Z}} \, e(U_{\varepsilon}) dX dt&\leq \frac{C e^{- \frac{M^2}{32}}}{r^{m + 1}} \int_{\overline{t} - 4 r^2}^{\overline{t} - r^2} \int_{\overline{\mathbb{R}}^{m+1}_+ \setminus B_{Mr}^+ (\overline X)} e^{- \frac{|X - \overline{X}|^2}{4 (2 \sqrt{2} r)^2}} \, e(U_{\varepsilon}) dX dt \\
    &\leq C e^{- \frac{M^2}{32}} \mathcal{E}_{\varepsilon} \left( U_{\varepsilon}, (\overline{X}, \overline{t} + 28 r^2),\, 2 \sqrt{2} r \right) \\
    &\leq C(R, T) e^{- \frac{M^2}{32}},
\end{align*}
where we have used \eqref{eq:scale_inv_bds} for the last inequality. Therefore the first desired  estimate for the energy $\E_{\eps}$ follows by choosing $M$ sufficiently large  so that $C(R, T) e^{- \frac{M^2}{32}} < \delta$. A similar arguments yields  the corresponding  estimate for $\mathcal D_{\eps}$ when combined with the inequalities   in Remark \ref{remark lem-mono}. Finally, the   last assertion of the proposition follows from this  estimate and the rescaling identity  $$\E_{\frac{\eps}{\sqrt{r_{\eps}}}} (V_{\eps}, (X, t), s) = \E_{\eps} (U_{\eps}, (X_0 + r_{\eps} X, t_0 + r_{\eps}^2 t), r_{\eps} s).$$
\end{proof}

\begin{prop}\label{time density of mu} Let $\eps_n \to 0$ be such that \eqref{eq:Radonmeasureintro} holds. Then there exists a subsequence $(\eps_{n'})$ of $(\eps_n)$ and a family of non-negative Radon measures $\{ \mu^t \}_{t > 0}$ on $\overline{\RNp}$ such that $$ e(U_{\eps_{n'}})(\cdot, t) dX \xrightarrow{n' \to \infty} \mu^t, \: \: \forall t > 0,$$ where the convergence is the weak* convergence of Radon measures.
\end{prop}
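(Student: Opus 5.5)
The plan is the standard argument of Lin--Wang type: combine uniform local energy bounds on time slices with almost-monotonicity in time of the localized energy, then run a diagonal extraction over a countable dense set of times.

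\emph{Step 1: uniform slice bounds.} For every compact $K\subset\overline{\RNp}$ and every compact interval $I\subset(0,\infty)$ we want
\[
\sup_n \sup_{t\in I} \int_K e(U_{\eps_n})(\cdot,t)\,dX \le C(K,I).
\]
This follows from Remark~\ref{remark lem-mono}. The functional $\mathcal D_\eps$ at scale $R$ controls the $\G$-weighted slice energy at time $t_0-R^2$. On a ball of radius comparable to $R$, the weight $\G$ is bounded below by $cR^{-(m+1)}$. Together with \eqref{eq:equivalence of energies} and \eqref{eq:scale_inv_bds}, this gives the bound for every $t\in I$, after choosing base points $Z_0$ in a compact set of $\partial\RNp\times(0,\infty)$. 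Interior compact sets are covered by half-balls centred on the boundary, so boundary base points suffice.

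\emph{Step 2: almost monotonicity in time.} Fix a nonnegative $\phi\in C^1_c(\R^{m+1})$. By Lemma~\ref{local_energy_ineq1}, applied to $\sqrt\phi$ or in its first form with Cauchy--Schwarz, for $s<t$ in $I$,
\[
E^{\eps}_{\phi}(U_\eps(t)) - E^{\eps}_{\phi}(U_\eps(s)) \le C_\phi \int_s^t\!\!\int_{\spt\phi} |\nabla_X U_\eps|^2\,dX\,d\tau \le C(\phi,I)\,(t-s).
\]
The last inequality uses Step 1. Hence $f^\phi_n(t):=E^{\eps_n}_\phi(U_{\eps_n}(t)) - C(\phi,I)\,t$ is nonincreasing on $I$ and uniformly bounded.

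\emph{Step 3: Helly-type extraction.} Choose a countable family $\{\phi_k\}$ of nonnegative $C^1_c$ functions that is dense in $C_c(\overline{\RNp})$ for the sup norm on compact sets; squares of a dense family work. Also exhaust $(0,\infty)$ by intervals $I_j$. For each $k$, the functions $f^{\phi_k}_n$ are uniformly bounded and monotone, so Helly's selection theorem yields a subsequence along which $E^{\eps_n}_{\phi_k}(U_{\eps_n}(t))$ converges for \emph{every} $t$. A diagonal argument over $k$ and $j$ gives one subsequence $(\eps_{n'})$ such that $\int \phi_k\, e(U_{\eps_{n'}})(\cdot,t)$ converges for all $k$ and all $t>0$.

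By Step 1, the measures $e(U_{\eps_{n'}})(\cdot,t)$ are locally uniformly bounded for each fixed $t$. Density of $\{\phi_k\}$ then upgrades convergence on the family $\{\phi_k\}$ to convergence against all of $C_c(\overline{\RNp})$. The limit defines a nonnegative Radon measure $\mu^t$ by the Riesz representation theorem.

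\emph{Main obstacle.} The delicate point is Step 1 near the boundary. One has to check that the boundary term $\eps^{-2}F$ is measured on the same slice, and that the Gaussian lower bound works uniformly for $t$ in compacts, which requires $R<\sqrt{t_0}/2$. Both are handled by choosing $t_0=t+R^2$ with a fixed small $R$ depending on $I$. A secondary point is that Helly's theorem needs monotone functions; this is exactly why the linear correction $-C(\phi,I)\,t$ is built into $f^\phi_n$.
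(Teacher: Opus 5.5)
Your proof is correct, but it reaches the conclusion by a different route than the paper. You feed the uniformly bounded, nonincreasing functions $t\mapsto E^{\eps_n}_{\phi_k}(U_{\eps_n}(t))-C(\phi_k,I)\,t$ directly into Helly's selection theorem. That gives convergence at \emph{every} time at once, and you then diagonalize over $k$ and over an exhaustion of $(0,\infty)$.

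The paper instead reproves this by hand:
\begin{itemize}
\item It first extracts convergence on a countable dense set $Q$ of times, chosen so that $\eta$ charges none of the slices $\overline{\RNp}\times\{t\}$ for $t\in Q$.
\item It extends the monotone limit functions off $Q$ and passes to a cocountable set $\mathcal C$ of common continuity points.
\item At $t_0\in\mathcal C$ it identifies every subsequential limit $\tilde\mu^{t_0}$. It does this by passing to the limit in the integrated identity of Lemma~\ref{local_energy_ineq1} between $t_0$ and $t\in Q$, and then letting $t\to t_0$. This is where the weak* limit of $|\partial_t U_\eps|^2$, and hence the no-atom condition on $\eta$, is used.
\item A last diagonal extraction handles the countably many remaining times.
\end{itemize}

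Your route is shorter and never needs $\eta$: the monotone squeeze inside Helly's theorem already identifies the limit at continuity points. The paper's route makes explicit how the slice limits $\mu^t$ are linked through an energy identity involving $\eta$. That identity can equally be derived afterwards from your result.

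One small wording fix. A family of nonnegative functions cannot be dense in all of $C_c(\overline{\RNp})$. What you need is density, in sup norm with supports in a common compact set, among nonnegative compactly supported continuous functions. That suffices by splitting $\phi=\phi^+-\phi^-$ and using the local mass bounds of your Step~1.
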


\begin{proof}
    It  suffices to prove the proposition for $t \in I :=(T_0, T_1)$  with  $0 < T_0 < T_1 < \infty$ arbitrary. Let $Q$ be any countable dense subset of $I$ with the property $$ \eta \left( \overline{\RNp} \times \{ t\} \right) = 0, \quad \forall t \in Q.$$
    Then by the  weak* compactness of Radon measures and a diagonal argument, we can extract  a subsequence of $(n)$ (still denoted by the same notation) and find a family of non-negative Radon measures $\{ \mu^t \}_{t \in Q}$ such that $$e(U_{\eps_n})(\cdot, t) dX \xrightarrow{n \to \infty} \mu^t$$ as Radon measures, for each $t \in Q$. 
     
     For any non-negative $\phi \in C_c^2(\R^{m+1} )$, differentiating the first identity  in Lemma \ref{local_energy_ineq1} (with $\phi$ replaced by $\phi^2$) with respect to $t$,  and  together with the Young's inequality  we obtain 
\begin{align*}
	&\frac{d}{dt} \Bigg\{ \int_{\RNp} \frac{|\nabla_X U_{\eps_n} |^2 (\cdot, t)}{2} \phi^2 dX + \int_{\partial \RNp} \frac{F(U_{\eps_n}) (\cdot, t)}{\eps_n^2} \phi^2 dx \Bigg\} \leq C \int_{\RNp} |\nabla U_{\eps_n}|^2 (\cdot, t) |\nabla \phi|^2 dX. 
\end{align*}  Applying  the local uniform $L^{\infty}_t H^1_X$ bound on $(U_\eps)$ (see Remark \ref{remark lem-mono})  to the right hand side  yields   $$ \frac{d}{dt} \left( E^{\eps_n}_{\phi^2} (U_{\eps_n}(\cdot, t)) \right) \leq C(\phi).$$
Therefore, the function  $t \mapsto E^{\eps_n}_{\phi^2} (U_{\eps_n}(\cdot, t)) - C(\phi) t$ is decreasing on $I$. 

Consider a countable subset $\{ \phi_i \}_{i \in \Zp}$ of $C_c^2(\R^{m+1}; \R_{\geq 0})$ such that $\{ \phi_i^2 \}_{i \in \Zp}$ is dense (with respect to $L^{\infty}$ norm) in $C_c^2( \R^{m+1}; \R_{\geq 0})$. For each $i \in \Zp$, since $t \mapsto E^{\eps_n}_{\phi_i^2} (U_{\eps_n}(\cdot, t)) - C(\phi_i) t$ is monotone on $I$,  passing to the limit as $n \to \infty$, we see that $s \mapsto \int_{\overline{\RNp}} \phi_i^2 d \mu^s - C(\phi_i) s$ is also monotone for $s \in Q$. Hence we can extend $s \mapsto \int_{\overline{\RNp}} \phi_i^2 d \mu^s - C(\phi_i) s$ to a monotone function on $I$ which will have at most countably many points of discontinuities. Therefore, there exists   a cocountable subset $\mathcal C$ of $I$ such that for each $i \in \Zp$,  the function $s \mapsto \int_{\overline{\RNp}} \phi_i^2 d \mu^s$ can be extended continuously to the whole of $\mathcal C$.
By somewhat abusing the notation, for any $t \in \mathcal C$, let us keep on denoting the value of this extended function at $t$ by $\int_{\overline{\RNp}} \phi_i^2 d \mu^t$ (even though $\mu^t$ has not been defined yet for $t \in \mathcal C \backslash Q$). Deleting at most countably many points from $\mathcal C$ if necessary, we may ensure that $$\eta \left( \overline{\RNp} \times \{ t \} \right) = 0, \quad \forall t \in \mathcal C.$$

For any $t_0 \in \mathcal C \backslash Q$, we know that there exists a subsequence $\left( e(U_{\eps_{n_j}})(\cdot, t_0) dX \right)_j$ of the sequence $\left( e(U_{\eps_n})(\cdot, t_0) dX \right)_n$ that converges to some non-negative Radon measure $\tilde{\mu}^{t_0}$ on $\overline{\RNp}$. Then for any $i \in \Zp$ and $t \in Q$, using the choices of the sets $Q$ and $\mathcal C$ we compute
\begin{align*}
	&\int_{\overline{\RNp}} \phi_i^2 d \tilde{\mu}^{t_0} = \lim_{j \to \infty} \int_{\overline{\RNp}} \phi_i^2 \: e(U_{\eps_{n_j}})(\cdot, t_0) dX = \lim_{j \to \infty} E^{\eps_{n_j}}_{\phi_i^2} (U_{\eps_{n_j}} (t_0))\\
=& \lim_{j \to \infty} [E^{\eps_{n_j}}_{\phi_i^2} (U_{\eps_{n_j}} (t_0)) - E^{\eps_{n_j}}_{\phi_i^2} (U_{\eps_{n_j}} (t))] + \lim_{j \to \infty} E^{\eps_{n_j}}_{\phi_i^2} (U_{\eps_{n_j}} (t))\\
=& \lim_{j \to \infty} \int_{t_0}^t \int_{\RNp} |\partial_t U_{\eps_{n_j}}|^2 \phi_i^2 dX dt + 2 \lim_{j \to \infty} \int_{t_0}^t \int_{\RNp} \langle \phi_i \partial_t U_{\eps_{n_j}}, (\nabla \phi_i \cdot \nabla_X) U_{\eps_{n_j}} \rangle dX dt + \int_{\overline{\RNp}} \phi_i^2 d\mu^t\\
=& \int_{\overline{\RNp} \times [t_0, t]} \phi_i^2 d\eta + \lim_{j \to \infty} \int_{t_0}^t \int_{\RNp} \left( |\partial_t U|^2 \phi_i^2 + 2 \langle \phi_i \partial_t U_{\eps_{n_j}}, (\nabla \phi_i \cdot \nabla_X) U_{\eps_{n_j}} \rangle \right) dX dt + \int_{\overline{\RNp}} \phi_i^2 d\mu^t.
\end{align*}
As $t\to t_0$, the space-time integral on the right hand side vanish. Hence, sending $t \to t_0$ we see that $\tilde{\mu}^{t_0}$ does not depend on the choice of the subsequence $(n_j)$. Thus,  denoting  $\tilde{\mu}^{t_0}$ by $\mu^{t_0}$  we  conclude
$$e(U_{\eps_n})(\cdot, t_0) dx \to \mu^{t_0}, \quad  \forall t_0 \in \mathcal C.$$
Since $I \backslash \mathcal C$ is countable,  we can again use a diagonal process to pass to a further subsequence $(n')$ such that $$e(U_{\eps_{n'}})(\cdot, t) dx \xrightarrow{n' \to \infty} \mu^{t}, \quad \forall t \in I.$$
This proves the proposition.
\end{proof}

\begin{prop}\label{convergence of approximation outside singular set general target}
The followings hold:
	\begin{itemize}
		\item[(a)] as $\eps \to 0, \textnormal{dist}(U_{\eps}, N) \to 0$ in $L^{\infty}_{loc} \left( \partial \RNp \times (0, \infty) \setminus \Sigma \right)$,

\item[(b)] as $\eps \to 0, \frac{1}{\eps^2} F(U_{\eps}) \to 0$ in $L^{\infty}_{loc} \left( \partial \RNp \times (0, \infty) \setminus \Sigma \right)$.
\end{itemize}
\end{prop}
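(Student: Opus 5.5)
Fix a compact set $K \subset \partial \RNp \times (0,\infty) \setminus \Sigma$. Since $\Sigma$ is defined through a limit in $\eps$, the plan is to turn the failure of concentration into uniform small-energy hypotheses, apply Lemma \ref{eps_gradient_est} on a fixed small cylinder around each point of $K$, and then use a rescaling/blow-up argument to improve ``$\textnormal{dist}(U_\eps,N)\le\delta_N$'' to ``$\textnormal{dist}(U_\eps,N)\to0$'' and then to ``$F(U_\eps)/\eps^2\to0$''.

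\textbf{Uniform small energy.} For $Z_0\in K$ there is $R_0>0$ with $\lim_{\eps\to0}\E_\eps(U_\eps,Z_0,r)<\eps_0^2$ for all $r\in(0,R_0)$; by \eqref{eq:Radonmeasureintro} the $\liminf$ is a limit. Fix such an $r$ and use continuity of $Z\mapsto\E_\eps(U_\eps,Z,r)$, uniform in $\eps$. This follows from the tail estimate of Proposition \ref{energy_tail_est} together with the smooth dependence of $\G_Z$ on $Z$ and the local energy bounds of Remark \ref{remark lem-mono}. We get $\E_\eps(U_\eps,Z,r)<\eps_0^2$ for all $Z\in P_\rho(Z_0)\cap\partial\RNp\times\R$ and all $\eps$ small. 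Lemma \ref{eps_gradient_est} then gives, on a neighbourhood $P^+_{\delta_0 r/2}(Z_0)$, uniform bounds $\textnormal{dist}(U_\eps,N)\le\delta_N$ and $\|U_\eps\|_{C^{1,\alpha}}\le C$. By compactness, $K$ is covered by finitely many such neighbourhoods.

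\textbf{(b) from the Neumann condition.} On these neighbourhoods $\textnormal{dist}(U_\eps,N)\le\delta_N$, so $F(p)=\textnormal{dist}^2(p,N)$ and $\nabla F(U_\eps)=2(U_\eps-\Pi(U_\eps))$. The boundary condition reads $\partial_yU_\eps=\frac{2}{\eps^2}(U_\eps-\Pi(U_\eps))$, and the $C^{1,\alpha}$ bound gives $|\partial_yU_\eps|\le C$. Hence $\textnormal{dist}(U_\eps,N)\le C\eps^2$ and
$$\frac{F(U_\eps)}{\eps^2}=\frac{\textnormal{dist}^2(U_\eps,N)}{\eps^2}\le C\eps^2\to0$$
uniformly on $K$. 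This proves both (a) and (b) directly.

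\textbf{Main obstacle.} The delicate point is that the $C^{1,\alpha}$ estimate in Lemma \ref{eps_gradient_est} must be uniform in $\eps$ up to the boundary, including the normal derivative. The lemma asserts exactly this, with a constant independent of $\eps$ and $Z_0$. Even if one only trusts the gradient bound $|\nabla_XU_\eps|\le C(\delta_0R)^{-1}$, the same computation gives $\textnormal{dist}(U_\eps,N)=\frac{\eps^2}{2}|\partial_yU_\eps|\le C\eps^2$ on $\partial^0P^+_{\delta_0R}(Z_0)$, which suffices.

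The other technical point is the uniform continuity of $Z\mapsto\E_\eps(U_\eps,Z,r)$ needed for the covering. Alternatively, one can avoid it by working pointwise: for each $Z\in K$ apply the lemma at $Z$ itself with a radius $r_Z$. Uniformity in $\eps$ then requires the threshold $\eps(Z)$ to be locally uniform. To get this, use that $\E_\eps(U_\eps,\cdot,r)$ is, up to the tail $\delta$ from Proposition \ref{energy_tail_est}, controlled by $\mu$ of a slightly enlarged cylinder. Upper semicontinuity of weak* limits on closed sets then gives smallness on a neighbourhood for all small $\eps$.
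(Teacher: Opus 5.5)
Your proposal is correct and is essentially the paper's proof. Small-energy regularity (Lemma \ref{eps_gradient_est}) near a point outside $\Sigma$ gives $\textnormal{dist}(U_\eps,N)\le\delta_N$ and an $\eps$-uniform bound on $|\partial_y U_\eps|$, and the boundary condition $\partial_y U_\eps=\frac{2}{\eps^2}(U_\eps-\Pi(U_\eps))$ then yields $\textnormal{dist}(U_\eps,N)\le C\eps^2$ and $\eps^{-2}F(U_\eps)\le C\eps^2$. The uniform-continuity step you flag as an obstacle is not needed: the lemma applied at the single center $Z_0$ already controls the whole cylinder $P^+_{\delta_0 r}(Z_0)$, so compactness and a minimum over finitely many $\eps$-thresholds suffice.
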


\begin{proof}
It suffices to prove the above results locally within a small neighborhood of any point  $\tilde{Z} \in \left( \partial \mathbb{R}^{m+1}_+ \times (0, \infty) \right) \setminus \Sigma$. 
By Lemma \ref{eps_gradient_est}  there exists  a small ball $B$ centered at  $\tilde{Z}$ such that, on $\tilde{B} := B \cap \left( \partial \mathbb{R}^{m+1}_+ \times (0, \infty) \right)$, we have:
\begin{itemize}
	\item $d(U_{\eps}):=\textnormal{dist}(U_\eps, N)  \leq \delta_N$, and in particular  $\chi \left( d^2(U_{\eps}) \right) = d^2(U_{\eps})$,   

\item $|\partial_t U_{\eps}| + |\nabla_X U_{\eps}| \leq C$, for some $C$ independent of $\eps$,

\item $\chi' \left( d^2(U_{\eps}) \right) = 1$,

\item $\left| (\nabla d)( U_{\eps}) \right| = 1$, whenever it exists.
\end{itemize}

It then follows from 
	\begin{align*}
		 \left| \frac{1}{\eps^2} \chi' \left( d^2(U_{\eps}) \right) (\nabla d^2)( U_{\eps}) \right| = \left| \frac{1}{\eps^2} (\nabla F)(U_{\eps}) \right| =\left| \frac{\partial U_{\eps}}{\partial y}\right| \leq C,
	\end{align*} that \begin{align} d(U_{\eps}) \leq  C \eps^2,\end{align} and  consequently,   $$\frac{1}{\eps^2} F(U_{\eps})  =\frac{1}{\eps^2} d^2(U_{\eps})\leq C\eps^2.$$
\end{proof}

\begin{remark*} Note that the analogue of Proposition \ref{convergence of approximation outside singular set general target} need not hold in general for a sequence of rescaled functions $(V_{\varepsilon})$, as defined in \eqref{eq:defn_rescaling}. It remains valid, however, provided that $\varepsilon / \sqrt{r} \to 0$ as $\varepsilon \to 0$, where $r=r(\eps) > 0$ denotes the rescaling factor associated with $V_{\varepsilon}$. 
\end{remark*}

\begin{prop}\label{convergence of approximation}
	$\frac{1}{\eps^2} F(U_{\eps}) \to 0$ in $L^1_{loc} (\partial \RNp \times (0, \infty))$ as $\eps \to 0$.
\end{prop}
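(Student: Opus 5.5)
Fix a compact set $K \subseteq \partial \RNp \times (0,\infty)$ and write $K = (K \setminus P_\rho(\Sigma)) \cup (K \cap P_\rho(\Sigma))$ for a small parameter $\rho>0$. On the first piece we are at positive parabolic distance from $\Sigma$, so Proposition \ref{convergence of approximation outside singular set general target}(b) gives $\frac{1}{\eps^2}F(U_\eps)\to 0$ uniformly there. It therefore suffices to show that
$$\limsup_{\eps\to 0}\int_{K\cap P_\rho(\Sigma)} \frac{F(U_\eps)}{\eps^2}\,dx\,dt$$
tends to $0$ as $\rho\to 0$.

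For this I would exploit the monotonicity identity in Lemma \ref{lem-mono}. The boundary potential enters the derivative of $\E_\eps$ with a favorable sign. Integrating the identity from $r$ to $R$ and using $\E_\eps \le C$ from \eqref{eq:scale_inv_bds}, we obtain
$$\int_r^R \frac1s\int_{\partial^0 T_s^+(t_0)} \frac{F(U_\eps)}{\eps^2}\,\G_{Z_0}\,dx\,dt\,ds \le C$$
uniformly in $Z_0\in K$ and $\eps$. Since $\G_{Z_0}\gtrsim s^{-(m+1)}$ on $\partial^0 T_s^+(t_0)\cap\{|x-x_0|\le s\}$, this yields
$$\int_r^R \frac{ds}{s}\; s^{-(m+1)}\int_{P_s(Z_0)\text{-shifted}}\frac{F(U_\eps)}{\eps^2} \le C.$$
Here the shifted cylinder is $B_s(x_0)\times(t_0-4s^2,t_0-s^2)$; by shifting $t_0$ we may cover a full parabolic cylinder. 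Consequently, for every $\theta>0$ and $Z_0$ there is a scale $s\in(\rho,\sqrt\rho)$ at which
$$s^{-(m+1)}\int_{P_s(Z_0)\cap\partial\RNp}\frac{F(U_\eps)}{\eps^2}\le \frac{C}{\log(1/\rho)}.$$
The point is that the log-divergent integral forces smallness of the scale-invariant potential density at many scales.

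Next I would cover $K\cap \Sigma$ by a Vitali-type family. By Lemma \ref{partial regularity lemma}, $\p^{m+1}(\Sigma\cap K)<\infty$, so $\Sigma\cap K$ can be covered by cylinders $P_{s_i}(Z_i)$, $s_i\le\sqrt\rho$, with $\sum_i s_i^{m+1}\le C\,\p^{m+1}(\Sigma\cap K)+1$. Choosing each $s_i$ (up to a factor) as the good scale just found, summing gives
$$\int_{\bigcup P_{s_i}(Z_i)}\frac{F(U_\eps)}{\eps^2}\le \frac{C}{\log(1/\rho)}\sum_i s_i^{m+1}\to0.$$
The region between $\bigcup_i P_{s_i}(Z_i)$ and $\Sigma$ lies at positive distance from $\Sigma$ and is handled by the uniform convergence above. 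The main obstacle is this combination: selecting good scales compatible with an efficient covering, uniformly in $\eps$. The selection must be made for each $\eps$, so I would apply a Besicovitch/Vitali covering of the $\eps$-dependent family of cylinders $P_{5s_i}$, using only the uniform upper bound on the number of overlapping cylinders of comparable size given by the $\p^{m+1}$ bound, and a packing estimate for disjoint cylinders centered on $\Sigma$.

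An alternative, cleaner route, which I would try first, is to pass to the limit in the integrated monotonicity identity. The boundary term is absolutely continuous with respect to $ds/s$ with finite total mass. Together with the limit measure $\lambda$ of $\eps^{-2}F(U_\eps)\,dx\,dt$ (supported on $\Sigma$ by Proposition \ref{convergence of approximation outside singular set general target}), this shows that the upper $(m+1)$-parabolic density $\limsup_s s^{-(m+1)}\lambda(P_s(Z))$ vanishes at $\lambda$-a.e. $Z$. A standard density theorem, combined with $\p^{m+1}(\Sigma\cap K)<\infty$, then gives $\lambda(K\cap \Sigma)=0$, hence $\lambda=0$ and the claimed $L^1_{loc}$ convergence.
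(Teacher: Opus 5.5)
\textbf{There is a genuine gap, and it sits in the core estimate near $\Sigma$. Both of your routes fail at the same place.} Your reduction to a neighbourhood of $\Sigma$ via Proposition \ref{convergence of approximation outside singular set general target}(b) is fine. The problem is what the identity in Lemma \ref{lem-mono} actually controls. At a fixed centre $Z_0=(x_0,0,t_0)$ it controls $f_\eps:=\eps^{-2}F(U_\eps)$ only against $\G_{Z_0}$. This weight vanishes for $t\ge t_0$ and is exponentially small where $t_0-t\ll |x-x_0|^2$. Log-averaging in $s$ therefore gives a good scale $s=s(Z_0,\eps)$ only for the backward box $B_s(x_0)\times(t_0-4s^2,t_0-s^2)$, which does not contain $Z_0$.

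Your remedy, ``shifting $t_0$'', is incompatible with the scale selection. To cover $P_s(Z_0)$ you need a centre near $Z_0+(0,cs^2)$, which depends on $s$. At that centre only scales $\sigma\sim s$ see the time $t_0$, so no logarithmic range is left to average over. If you integrate the quantity you actually need, $s^{-(m+1)}\int_{\partial^0P_s(Z_0)}f_\eps$, against $ds/s$ over $(\rho,\sqrt\rho)$, you get essentially $\int f_\eps(W)\max(d_P(W,Z_0),\rho)^{-(m+1)}\,dW$, with $d_P$ the parabolic distance. This is a two-sided parabolic Riesz potential: it includes the future of $Z_0$ and the ``horizontal'' region $\{|t-t_0|\ll|x-x_0|^2\}$, and the monotonicity formula does not bound it. So the bound $s^{-(m+1)}\int_{P_s(Z_0)\cap\partial\RNp}f_\eps\le C/\log(1/\rho)$ is unproved, and the covering estimate collapses.

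A separate, fixable point: $\p^{m+1}(\Sigma\cap K)<\infty$ gives no packing bound for coverings with prescribed radii. A packing estimate has to come instead from the lower bound $\mu(\overline{P_{Cs}(Z)})\ge c\,s^{m+1}$ for $Z\in\Sigma$, which is available.

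\textbf{The alternative route has the same hole.} From the Dini-type bound you do get, at every point, $s^{-(m+1)}\lambda\bigl(B_s(x_0)\times(t_0-4s^2,t_0-s^2)\bigr)\to 0$, because the tail $\int_{s/2}^{2s}$ of a finite $d\sigma/\sigma$-integral tends to zero. You do not get $s^{-(m+1)}\lambda(P_s(Z))\to0$. Mass of $\lambda$ after $t_0$, or in the horizontal region, is invisible from the centre $Z$. A Fubini argument using the uniform bound over all centres could handle the forward paraboloid at $\lambda$-a.e.\ $Z$. Excluding concentration in the horizontal region, however, needs an additional geometric argument (e.g.\ via tangent measures), not a standard density theorem.

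\textbf{How the paper proceeds instead.} It never tries to extract smallness of the potential from the monotonicity formula alone. It splits $K$ according to whether $d(U_\eps)\ge\delta$ or $d(U_\eps)<\delta$.
\begin{itemize}
\item On the bad set $\{d(U_\eps)\ge\delta\}$ it works at the scales where $\E_\eps=\eps_0^2/2$; these satisfy $r\le C(\delta)\eps^2$. A blow-up/Liouville argument gives a dichotomy: either $|\partial_tU_\eps|^2$ concentrates at scale $r^{1-\overline\delta}$, or the potential is dominated by $\delta$ times the Dirichlet energy. Vitali coverings then bound the two pieces by $O(\eps^2)$ and $O(\delta)$ respectively.
\item On the good set $\{d(U_\eps)<\delta\}$ it tests the equation with $\phi\,\chi'(d^2/\rho^2)\nabla d^2(U_\eps)$. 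The transition-layer energy this produces is controlled by Lemma \ref{image_closeness}.
\end{itemize}
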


 \begin{proof} Let $0 < T_1 < T_2 < \infty$ and $R > 0$, and define the compact  set $K := \partial^0 B_R^+ \times [T_1, T_2]$. Our goal is to show that$$\lim_{\varepsilon \to 0} \frac{1}{\varepsilon^2}\int_K  F(U_{\varepsilon})\, dx \, dt = 0.$$ To this end, let $\delta \in (0, \delta_N)$ be small. It suffices to show  that for all sufficiently small $\varepsilon > 0$,$$ \frac{1}{\varepsilon^2} \int_K F(U_{\varepsilon}) \, dx \, dt < \delta.$$
Let us set $d(\cdot) := \textnormal{dist}(\cdot, N).$
For each $\varepsilon > 0$, we decompose $K$ into a bad part and a good part by defining$$K_b^{\varepsilon} := \left\{ Z \in K \;\middle|\; d\left( U_{\varepsilon} (Z)\right) \geq \delta \right\} \quad \text{and} \quad K_g^{\varepsilon} := K \setminus K_b^{\varepsilon}.$$We will then estimate the integral separately over $K_b^{\varepsilon}$ and $K_g^{\varepsilon}$.
 
 \medskip 

\noindent\underline{\textbf{Estimate on $K_b^{\eps}$ :}} To obtain the desired estimate on $K_b^{\eps}$, the core idea relies on the smallness of $K_b^{\eps}$ with respect to certain parabolic Hausdorff measures (although this connection will not be made explicit in the proof). To implement this strategy, we further partition $K_b^{\eps}$ into two subsets and estimate the integral over each separately. We proceed in several steps.

\medskip

\noindent\underline{Step 1:} There exists $\ve_1=\ve_1(K, \delta) > 0$ such that for every $\ve\in(0,\ve_1)$ and $Z\in K_b^\eps$ we have $$\E_{\eps} (U_{\eps}, Z ,r ) = \frac{\eps_0^2}{2} \quad\text{for some }0<r<\frac14\sqrt{T_1}. $$  

In order to prove Step 1, we first note that   $r \mapsto \E_{\eps} (U_{\eps}, Z, r)$ is continuous and  increasing. Moreover,  from the  smoothness of $U_{\eps}$ we have $$\lim_{r \searrow 0} \E_{\eps} (U_{\eps}, Z , r) = 0.$$ Thus, it suffices to show that $\E_{\eps} (U_{\eps}, Z , \sqrt{T_1}/4)>\frac{\ve_0^2}{2}$ for $Z\in K_b^\eps$ and $\ve\in(0,\ve_1)$. On the contrary,  $\E_{\eps} (U_{\eps}, Z , \sqrt{T_1}/4)\leq\frac{\ve_0^2}{2}$ for some $Z\in K_b^\ve$ would imply that $$ C(K) \geq |\partial_y U_{\eps} (Z)| = \frac{2}{\eps^2} d(U_{\eps} (Z)) |(\nabla d) (U_{\eps} (Z))| = \frac{2 d (U_{\eps} (Z))}{\eps^2} \geq \frac{2 \delta}{\eps^2},$$ thanks to Lemma \ref{eps_gradient_est}. This leads to a contradiction if $\ve_1>0$ is small enough, completing this step.

\medskip

For convenience, we define $$\F_{\eps} := \left\{ (Z, r ) \in K_b^{\eps} \times (0, \frac{\sqrt{T_1}}{4}) \Big| \: \E_{\eps} (U_{\eps}, Z , r ) = \frac{\eps_0^2}{2} \right\}.$$ Notice that if $(Z_{\eps}, r_{\eps}) \in \F_{\eps}$, then by the small energy regularity result, we have $$ C \geq r_{\eps} \left| \partial_y U_{\eps} (Z_{\eps}) \right| = \frac{2 r_{\eps}}{\eps^2} d(U_{\eps} (Z_{\eps})) \left| (\nabla d) (U_{\eps} (Z_{\eps})) \right| \geq \frac{2 \delta r_{\eps}}{\eps^2},$$ that is,   \begin{align}\label{est-step-1}r_{\eps} \leq C( \delta) \: \eps^2.\end{align}

\medskip 
\noindent\underline{Step 2 :} Fix $\overline{\delta} \in (0, 1)$ small enough so that $(m + 1) \overline{\delta} < 1$.   Then for any $\delta\in (0,\delta_N)$ small, there exist $\tau = \tau ( K, \delta) > 0$ and $\overline{\eps} = \overline{\eps} ( K, \delta) \in (0, \eps_1)$ such that for any $0 < \eps \leq \overline{\eps}$ and $(Z_{\eps}, r_{\eps}) \in \F_{\eps}$ at least one of the following holds: \begin{itemize} \item[(i)] 
\begin{align*}
\frac{1}{r_{\eps}^{m - 1}} \int_{P_{r_{\eps}^{1 - \overline{\delta}}}^+ (Z_{\eps})} |\partial_t U_{\eps}|^2 \: dX dt \geq \tau,\end{align*}
\item[(ii)] \begin{align*}\int_{\partial^0 P_{5 R r_{\eps}}^+ (Z_{\eps}) } \frac{1}{\eps^2} F(U_{\eps}) \: dx \: dt \leq \delta \int_{P_{R r_{\eps}}^+ (Z_{\eps}) } |\nabla_X U_{\eps}|^2 \: dX dt, \text{  for some } R \in \left[1, \: \frac{1}{\sqrt{r_{\eps}}} \right].
\end{align*}
\end{itemize}

We argue by contradiction. Suppose  Step 2 is false. Then, for each $k \in \Zp$ there exist $\eps_k \in (0, \frac{\eps_1}{k})$ and $(Z_k, r_k) = (X_k, t_k, r_k) \in \F_{\eps_k}$ such that
\begin{align*}
 \frac{1}{r_k^{m - 1}} \int_{P_{r_k^{1 - \overline{\delta}}}^+ (Z_k)} |\partial_t U_{\eps_k}|^2 \: dX dt < \frac 1 k, \end{align*}   and \begin{align*}
 \int_{\partial^0 P_{5 R r_k}^+ (Z_k) } \frac{1}{\eps_k^2} F(U_{\eps_k}) \: dx \: dt > \delta \int_{P_{R r_k}^+ (Z_k) } |\nabla_X U_{\eps_k}|^2 \: dX dt, \text{  for all } R \in \left[1, \: \frac{1}{\sqrt{r_k}} \right].
\end{align*}

Define $V_k (X, t) := U_{\eps_k} (X_k + r_k X, t_k + r_k^2 t)$, for all $(X, t) \in \overline{\RNp} \times (- \frac{t_k}{r_k^2}, \infty)$. Then $V_k$ satisfies
\begin{equation*}
\begin{cases}
\partial_t V_k - \Delta_X V_k = 0, \text{  in } \RNp \times (- \frac{t_k}{r_k^2}, \infty),\\
\partial_y V_k = \frac{r_k}{\eps_k^2} (\nabla F) (V_k), \text{  on } \partial \RNp \times (- \frac{t_k}{r_k^2}, \infty),\\
d( V_k (0, 0)) \geq \delta, \E_{\frac{\eps_k}{\sqrt{r_k}}} (V_k, (0, 0), 1) = \frac{\eps_0^2}{2},\\
\int_{P_{r_k^{ - \overline{\delta}}}^+} |\partial_t V_k|^2 \: dX dt < \frac 1 k,\\
\frac{r_k}{\eps_k^2}\int_{\partial^0 P_{5 R}^+ }  F(V_k) \: dx \: dt > \delta \int_{P_R^+ } |\nabla_X V_k|^2 \: dX dt, \text{  for all } R \in \left[1, \: \frac{1}{\sqrt{r_k}} \right].
\end{cases}
\end{equation*}
Recall that the sequence  $\left( r_k \, \eps_k^{- 2} \right)_k$ is bounded by \eqref{est-step-1}. Hence, from the regularity theory for the heat equation, up to a subsequence $V_k \to V$ in $C^2_{loc} (\overline{\RNp} \times \R)$, and $\frac{r_k}{\eps_k^2} \to \rho^2$, for some $\rho \in [0, \infty)$. Then using the properties of $V_k$ listed above, we see that $V$ is independent of $t$, and it  satisfies
\begin{equation}\label{system 1}
\begin{cases}
- \Delta_X V = 0, \text{  in } \RNp,\\
\partial_y V = \rho^2 (\nabla F) (V), \text{  on } \partial \RNp,\\
d(V(0, 0)) \geq \delta,\\
\rho^2 \int_{\partial^0 P_{5 R}^+ } F(V) \: dx \: dt \geq \delta \int_{P_R^+ } |\nabla_X V|^2 \: dX dt, \: \forall R \geq 1.
\end{cases}
\end{equation}
Using an estimate similar to the one given in Proposition \ref{energy_tail_est}, it follows that for all $R$ sufficiently large,
\begin{align}\label{nondegeneracy}
    \frac{\eps_0^2}{2} =& \lim_{k \to \infty} \E_{ \frac{\eps_k}{\sqrt{r_k}} } (V_k, (0, 0), 1)\\
    \leq& C \left[ \frac 1 2 \int_{- 4}^{-1} \int_{\RNp \cap \{ |X| \leq R \}} |\nabla_X V|^2 + \rho^2 \int_{-4}^{-1} \int_{\partial \RNp \cap \{ |X| \leq R \}} F(V) \right] + \frac{\eps_0^2}{4}.
\end{align}

We now distinguish the following two cases, namely $\rho=0$ and $\rho>0$. In the former case, the last inequality in \eqref{system 1} implies that $V \equiv const$,  a  contradiction to  \eqref{nondegeneracy}. In the latter case, first notice that for any $R > 1, \, t \in \R$ and sufficiently large $k \in \Zp$, we have using Lemma \ref{lem-mono}, Remark \ref{remark lem-mono} and \eqref{eq:scale_inv_bds},
\begin{align}
\frac{1}{R^{m - 1}} \: E^{\frac{\eps_k}{\sqrt{r_k}}} \left( V_k (t); B_R^+ \right) =& \frac{1}{(R r_k)^{m - 1}} \: E^{\eps_k} \left( U_{\eps_k} (t_k + r_k^2 t); B_{Rr_k}^+ (X_k) \right)\\
\leq& C \: \mathcal{D}_{\eps_k} \left( U_{\eps_k}, (X_k, t_k + r_k^2 t + (R r_k)^2), \frac{\sqrt{T_1}}{2} \right)\\
\leq& C.
\end{align}

\noindent Therefore passing to the limit as $k \to \infty$, we obtain $$\sup_{R > 1} \frac{1}{R^{m - 1}} E^{\frac{1}{\rho}} (V; B_R^+) \leq C.$$
Hence,  for any $1 < r < R < \infty$, using energy monotonicity formula for  $V$  (a proof of this monotonicity formula for the case where the target manifold is the unit sphere can be found in \cite[Lemma 5.2]{MS}; the proof for a general target manifold follows by an analogous argument.) we see that $$C \geq \frac{1}{R^{m - 1}} E^{\frac{1}{\rho}} (V; B_R^+) - \frac{1}{r^{m - 1}} E^{\frac{1}{\rho}} (V; B_r^+) \geq \int_r^R \frac 1 s \left[ \frac{1}{s^{m - 1}} \int_{\partial^0 B_s^+} \rho^2 F(V) \right] \: ds.$$ 
The above inequality clearly implies that along a subsequence of $R \to \infty$ we must have
\begin{align}
\label{eq-step-2}\lim_{R \to \infty} \frac{1}{R^{m - 1}} \int_{\partial^0 B_R^+}  F(V) = 0.
\end{align}
This, the last inequality in \eqref{system 1}, and again by the above  monotonicity  formula we deduce that for any $r>1$ and large enough $R$ along the above mentioned subsequence we have $$\frac{1}{r^{m - 1}} E^{\frac{1}{\rho}} (V; B_r^+) \leq \frac{1}{(R/ 5)^{m - 1}} E^{\frac{1}{\rho}} (V; B_{R/5}^+) \leq \frac{C(\delta)}{R^{m - 1}} \int_{\partial^0 B_R^+} \rho^2 F(V) \to 0, \text{ as } R \to \infty.$$ Hence,  $V \equiv const$, and going back to \eqref{eq-step-2},  $F(V) \equiv 0$, a contradiction to  $$  d(V(0, 0)) \geq \delta > 0.$$  
This proves Step 2.

\medskip 
 
With the same notations as above, for  each $\eps \in (0, \overline{\eps})$ we  define 
\begin{align*}
&K_{b,1}^{\eps} := \left\{ Z \in K_b^{\eps} \big| \: \exists r > 0 \text{ such that } (Z, r) \in \F_{\eps} \text{ and } \frac{1}{r^{m - 1}} \int_{P_{r^{1 - \overline{\delta}}}^+ (Z)} |\partial_t U_{\eps}|^2 \: dX dt \geq \tau \right\}, \text{ and }\\
&K_{b, 2}^{\eps} := \Bigg\{ Z \in K_b^{\eps} \big| \: \exists r > 0 \text{ such that } (Z, r) \in \F_{\eps} \text{ and }\\
&\hspace{0.5in} \int_{\partial^0 P_{5 R r}^+ (Z) } \frac{1}{\eps^2} F(U_{\eps}) \: dx \: dt \leq \delta \int_{P_{R r}^+ (Z) } |\nabla_X U_{\eps}|^2 \: dX dt, \text{ for some } R \in \left[1, \: \frac{1}{\sqrt r} \right] \Bigg\}.
\end{align*}
Step 2 then implies that $$K_b^{\eps} = K_{b, 1}^{\eps} \cup K_{b, 2}^{\eps}.$$ 

\medskip 

\noindent\underline{Step 3:}  We have    $$ \lim_{\ve\to0}\frac{1}{\eps^2}\int_{K_{b, 1}^{\eps}}  F(U_{\eps}) \: dx \: dt =0.$$  

For any $Z_0 = (X_0, t_0) = (x_0, 0, t_0) \in \partial \RNp \times [T_1, \infty)$ and $0 < r < \frac{\sqrt{T_1}}{4}$, using the monotonicity formula (Lemma \ref{lem-mono}) we have
\begin{align*}
\E_{\eps} (U_{\eps}, Z_0, \frac{\sqrt{T_1}}{4}) \geq& \E_{\eps} (U_{\eps}, Z_0, r) \geq C \int_{t_0 - 4 r^2}^{t_0 - r^2} \int_{\partial \RNp} \frac{1}{\eps^2} F(U_{\eps}) \frac{e^{- \frac{|x - x_0|^2}{4 (t_0 - t)} }}{(t_0 - t)^{\frac{m + 1}{2}}} \: dx \: dt\\
\geq& C \: r^{- m - 1} \int_{t_0 - 4 r^2}^{t_0 - r^2} \int_{\partial^0 B_r^+ (X_0)} \frac{1}{\eps^2} F(U_{\eps}) \: dx \: dt.
\end{align*} 
In particular, applying the above estimate for the point $(X_0, t_0 + 2r^2)$ with $(X_0, t_0) \in K$, we obtain 
\begin{equation}\label{eq:hausdorffbound}
\int_{\partial^0 P_r^+ (Z_0)} \frac{1}{\eps^2} F(U_{\eps}) \: dx \: dt \leq C r^{m + 1} \E_{\eps} (U_{\eps}, (X_0, t_0 + 2 r^2), \frac{\sqrt{T_1}}{4}) \leq C \: r^{m + 1},
\end{equation}
thanks to \eqref{eq:scale_inv_bds}.
We recall from Step 1 and Step 2 that whenever $\eps < \overline{\eps}$, we have for every $Z\in K_{b,1}^\ve$ there exists $r_Z\in (0,\frac14\sqrt{T_1})$ such that $(Z,r_Z)\in \mathcal F_\ve$ and 
\begin{equation}\label{eq:hausdorffbound2}
r_Z^{m - 1} \leq \frac{1}{\tau} \int_{P_{r_Z^{1 - \overline{\delta}}}^+ (Z)} |\partial_t U_{\eps}|^2 \: dX dt,\quad\text{and } r_Z\leq C(\delta)\ve^2.
\end{equation}
Combining the above estimates we see that for sufficiently small $\eps$, 
\begin{align*}
&\frac{1}{\eps^2} \int_{\partial^0 P_{5 r_Z^{1 - \overline{\delta}}}^+ (Z)} F(U_{\eps}) \: dx \: dt \leq C r_Z^{(1 - \overline{\delta}) (m + 1)} \leq C(\delta) \: \eps^2 r_Z^{m - 1}\\
&\leq C( \delta) \: \frac{\eps^2}{\tau} \int_{P_{r_Z^{1 - \overline{\delta}}}^+ (Z)} |\partial_t U_{\eps}|^2 \: dX dt = C(\delta) \: \eps^2 \int_{P_{r_Z^{1 - \overline{\delta}}}^+ (Z)} |\partial_t U_{\eps}|^2 \: dX dt.
\end{align*}
Since we have $$K_{b, 1}^{\eps} \subseteq \bigcup_{Z \in K_{b, 1}^{\eps}} P_{r_Z^{1 - \overline{\delta}}} (Z),$$
by a Vitali type covering lemma we get an at most countable, disjoint subcollection of the above cover, say $\left\{ P_{r_i^{1- \overline{\delta}}} (Z_i) \right\}_i$, such that $$ K_{b, 1}^{\eps} \subseteq \bigcup_i P_{5 r_i^{1- \overline{\delta}}} (Z_i).$$
Finally we estimate, for any $\eps \in (0, \overline{\eps})$ sufficiently small,
\begin{align*}
 \frac{1}{\eps^2}\int_{K_{b, 1}^{\eps}} F(U_{\eps}) \: dx \: dt \leq&  \frac{1}{\eps^2}\sum_i \int_{\partial^0 P_{5 r_i^{1- \overline{\delta}}}^+ (Z_i)}  F(U_{\eps}) \: dx \: dt\\
\leq& C( \delta) \: \eps^2 \sum_i \int_{P_{r_i^{1 - \overline{\delta}}}^+ (Z_i)} |\partial_t U_{\eps}|^2 \: dX dt\\
\leq& C(\delta) \: \eps^2,
\end{align*}
where we have used the local uniform boundedness of $(U_{\eps})$ in $H^1_{loc} \left( \overline{\RNp} \times (0, \infty) \right)$, provided by Remark \ref{remark lem-mono}.  Step 3 follows immediately.

\medskip 

\noindent\underline{Step 4:}   We have    $$ \limsup_{\ve\to0}\frac{1}{\eps^2}\int_{K_{b, 2}^{\eps}}  F(U_{\eps}) \: dx \: dt \leq C \delta.$$

For  $\eps \in (0, \overline{\eps})$ and  $Z \in K_{b, 2}^{\eps}$, there exist $r \in (0, \frac{1}{4} \sqrt{T_1})$ and $R \in \left[ 1, \frac{1}{\sqrt r} \right] $ such that $$\frac{1}{\eps^2}\int_{\partial^0 P_{5 R r}^+ (Z) }  F(U_{\eps}) \: dx \: dt \leq \delta \int_{P_{R r}^+ (Z) } |\nabla_X U_{\eps}|^2 \: dX dt.$$
Setting $r_Z := Rr$, we see that for every $Z \in K_{b, 2}^{\eps}$ there exists $r_Z > 0$ such that $$ \frac{1}{\eps^2} \int_{\partial^0 P_{5 r_Z}^+ (Z) } F(U_{\eps}) \: dx \: dt \leq \delta \int_{P_{r_Z}^+ (Z) } |\nabla_X U_{\eps}|^2 \: dX dt.$$
Applying a Vitali type covering lemma to the inclusion  $$K_{b, 2}^{\eps} \subseteq \bigcup_{Z \in K_{b,2}^{\eps}} P_{r_Z} (Z),$$
we get that there exists an at most countable subcollection $\left\{ P_{r_i} (Z_i) \right\}_i$ of the above cover such that $P_{r_i} (Z_i)$ are mutually disjoint and $$K_{b, 2}^{\eps} \subseteq \bigcup_i P_{5 r_i} (Z_i).$$ Then we have
\begin{align*}
\frac{1}{\eps^2}\int_{K_{b, 2}^{\eps}}  F(U_{\eps}) \: dx \: dt \leq& \sum_i  \frac{1}{\eps^2}\int_{\partial^0 P_{5 r_i}^+ (Z_i)} F(U_{\eps}) \: dx \: dt\\
\leq& \delta \sum_i \int_{P_{r_i}^+ (Z_i)} |\nabla_X U_{\eps}|^2 \: dX dt \leq C \delta,
\end{align*}
where the last inequality follows from the boundedness of $(U_{\eps})$ in $H^1_{loc} \left( \overline{\RNp} \times (0, \infty) \right)$. This concludes Step 4.

\medskip
From the above steps it follows that $$\limsup_{\eps \to 0} \frac{1}{\eps^2} \int_{K_b^{\eps}} F(U_{\eps}) \, dx \, dt \leq C \delta.$$

\medskip 
\noindent\underline{\textbf{Estimate on $K_g^{\eps}$:}} Recall that   $K = \partial^0 B_R^+ \times [T_1, T_2]$. Let $ \phi \in C_c^{\infty} (B_{2R})$ be a spatial   cut-off function satisfying $$0 \leq \phi \le 1, \quad \phi \equiv 1 \text{ on } B_R,   \quad \text{and }\, |\nabla \phi| \le \frac{2}{R}.$$ 
Set $\rho := \frac{\delta}{\delta_N} $. Testing the heat equation in \eqref{eq:approx_ext_+ general target} against the smooth vector field $\phi \chi' \left(\frac{d^2(U_{\varepsilon})}{\rho^2} \right) (\nabla d^2)(U_{\varepsilon})$, integrating over $\mathbb{R}^{m+1}_+ \times [T_1, T_2]$, and applying integration by parts in space, we obtain
\begin{align}\label{eq-3-3.9}
\int_{T_1}^{T_2}& \int_{\RNp} \phi \: \chi' \left(\frac{d^2(U_{\varepsilon})}{\rho^2} \right) \: \partial_t U_{\eps} \cdot (\nabla d^2) ( U_{\eps})\\
=& \int_{T_1}^{T_2} \int_{\RNp} \phi \: \chi' \left(\frac{d^2(U_{\varepsilon})}{\rho^2} \right) \: \Delta_X U_{\eps} \cdot (\nabla d^2)( U_{\eps})\\
=& - \int_{T_1}^{T_2} \int_{\RNp} \chi' \left(\frac{d^2(U_{\varepsilon})}{\rho^2} \right) (\nabla \phi \cdot \nabla_X) U_{\eps} \cdot (\nabla d^2)( U_{\eps})\\
&- \int_{T_1}^{T_2} \int_{\RNp} \rho^{- 2} \, \phi \: \chi'' \left(\frac{d^2(U_{\varepsilon})}{\rho^2} \right) \: \left| \nabla_X \left( d^2 (U_{\eps}) \right) \right|^2\\
&- \int_{T_1}^{T_2} \int_{\RNp}\phi \: \chi^{\prime} \left(\frac{d^2(U_{\varepsilon})}{\rho^2} \right) \sum_{i, j, k} \bigg( (\partial_i \partial_k d^2) (U_{\eps}) \bigg) (\partial_j U_{\eps}^i) (\partial_j U_{\eps}^k)\\
&- \int_{T_1}^{T_2} \int_{\partial \RNp} \phi \: \chi' \left(\frac{d^2(U_{\varepsilon})}{\rho^2} \right) \: \frac{1}{\eps^2} \chi' \left( d^2 (U_{\eps}) \right) \left| (\nabla d^2) ( U_{\eps}) \right|^2 \\=&: - I - II - III - IV.
\end{align}
In order to estimate these integrals, we recall the following properties of $\chi$ and $d$: 
\begin{itemize}\label{facts_1}
\item $\chi$ is increasing with  $|\chi'| + |\chi''| \leq C$ on $[0, \infty)$,
\item $\chi' \equiv 1$ on $[0, \delta_N^2]$, and $\chi' \equiv 0$ on $[4 \delta_N^2, \infty)$,
\item $\chi'' \equiv 0$ outside $[\delta_N^2, 4 \delta_N^2$],
\item $\left| \nabla d^2 \right| = 2 d $ in $B_{2 \delta_N} (N)$,
\end{itemize} These properties, and the definition of $K_g^\eps$ imply $$\int_{K_g^{\eps}} \frac{1}{\eps^2} d^2 (U_{\eps}) \leq \int_{T_1}^{T_2} \int_{\partial \RNp} \phi \: \chi' \left(\frac{d^2(U_{\varepsilon})}{\rho^2} \right) \: \frac{1}{\eps^2} \chi' \left( d^2 (U_{\eps}) \right) \left| (\nabla d^2)( U_{\eps}) \right|^2 = IV.$$  Considering  $II$, notice that $$\rho^{-2}\left|\chi'' \left(\frac{d^2(U_{\varepsilon})}{\rho^2} \right)\right| \: \left| \nabla_X \left( d^2 (U_{\eps}) \right) \right|^2\leq 4 \left|\chi'' \left(\frac{d^2(U_{\varepsilon})}{\rho^2} \right)\right| \: \frac{d^2(U_\eps)}{\rho^2}\left| \nabla_X   U_{\eps}  \right|^2\leq C\left| \nabla_X   U_{\eps}  \right|^2 , $$ and the integrand vanishes unless   $\delta< d(U_\eps)<2\delta$. We get $$ \liminf_{\eps\to0}|II|\leq C  \liminf_{\eps\to0}\int_{T_1}^{T_2}\int_{\{\delta<d(U_\eps)<2\delta\}}\phi|\nabla_X U_\eps|^2=:|II_\delta|.$$   

 In order to obtain a lower bound on $III$, we recall that $ d^2(p)=|p-\Pi(p)|^2$. For any point $p\in B_{\delta_N}(N)$ and any vector   $v\in\R^L $, expanding $d^2(p+tv)$ for $t$ small  yields
\begin{align*}
    \sum_{i, k} \partial_i \partial_k d^2 (p) v_i v_k = \frac{d^2}{dt^2}\big|_{t=0}d^2(p+tv)=2 |v - (v \cdot \nabla) \Pi (p)|^2 - 2 (p - \Pi(p)) \cdot \textnormal{Hess} \Pi(p) [v, v].
\end{align*} Consequently, \begin{align*}
\sum_{i, j, k} \bigg( (\partial_i \partial_k d^2) (U_{\eps}) \bigg) (\partial_j U_{\eps}^i) (\partial_j U_{\eps}^k) \geq - C d(U_{\eps})  |\nabla_X U_\eps|^2,\quad\text{in }K_g^\eps,
\end{align*}
whence it holds that $$-III \leq C \int_{T_1}^{T_2} \int_{B_{2R}^+} \chi' \left(\frac{d^2(U_{\varepsilon})}{\rho^2} \right)\, d(U_{\eps}) \, |\nabla_X U_{\eps}|^2 \leq C \delta.$$
Combining these estimates, the  properties of $\chi$ and $ d$, and   the uniform $H^1_{loc}$ bound of $U_{\eps}$, we deduce from  \eqref{eq-3-3.9} that 
 \begin{align}   \int_{K_g^{\eps}} \frac{1}{\eps^2} d^2 (U_{\eps})& \leq C \int_{T_1}^{T_2} \int_{\RNp}d(U_\eps)(\phi+|\nabla\phi|)  \chi'   \left(\frac{d^2(U_{\varepsilon})}{\rho^2} \right)  (|\partial_t U_\eps|^2+|\nabla_X U_\eps|^2)dXdt+|II|\\ &\leq C\delta+|II|.
\end{align}
Therefore, together with the estimates on $K_b^\ve$,  we obtain  $$\liminf_{\ve\to0}\frac{1}{\eps^2} \int_K F(U_{\eps}) \: dx \: dt \leq C \: \delta+|II_\delta|.$$ Finally, choosing $\delta\downarrow0$ suitably,  we obtain  $$\liminf_{\ve\to0}\frac{1}{\eps^2} \int_K F(U_{\eps}) \: dx \: dt=0,$$ thanks to  Lemma \ref{image_closeness}. 
\end{proof}

\begin{lem}\label{image_closeness}Let $R>0$ and $\phi\in C_c^\infty(B_R)$ be a  spatial cut-off function such that $0\leq\phi\leq1$. Then for $0<T_1<T_2$ we have \begin{align}  \liminf_{\delta\to0}\liminf_{\eps\to0}\int_{T_1}^{T_2}\int_{ \{ \delta< d(U_\eps)\leq 2\delta \}}\phi |\nabla_X U_\eps|^2 dXdt=0.  \end{align}
\end{lem}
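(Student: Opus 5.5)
Throughout, write $d := \textnormal{dist}(\cdot, N)$, so that $d(U_\eps)$ is the distance of the values of $U_\eps$ from $N$. The idea is a pigeonhole argument over dyadic layers of $d(U_\eps)$. For $j \in \Zp$ set $\delta_j := 2^{-j}\delta_N$ and consider the layers
$$A_j^\eps := \{ \delta_j < d(U_\eps) \le 2\delta_j \} \cap \big(B_R^+ \times (T_1,T_2)\big).$$
These sets are pairwise disjoint, so
$$\sum_j \int_{T_1}^{T_2}\int_{A_j^\eps} \phi\,|\nabla_X U_\eps|^2\,dX\,dt \le \int_{T_1}^{T_2}\int_{B_R^+}|\nabla_X U_\eps|^2\,dX\,dt \le C,$$
where $C$ is independent of $\eps$ by the uniform $H^1_{loc}$ bound of Remark \ref{remark lem-mono}. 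Write $a_j^\eps$ for the $j$-th summand. Taking the liminf in $\eps$ and applying Fatou's lemma for series gives $\sum_j \liminf_{\eps\to 0} a_j^\eps \le C$. Hence $\liminf_{\eps\to 0} a_j^\eps \to 0$ as $j \to \infty$, and in particular $\liminf_{j}\liminf_{\eps} a_j^\eps = 0$.

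The statement asks for $\liminf_{\delta\to0}$, and a liminf only needs one sequence $\delta\to 0$ realizing the value; the sequence $\delta=\delta_j$ works. There is one subtlety. The estimate on $K_g^\eps$ in Proposition \ref{convergence of approximation} takes the liminf in $\eps$ along the same subsequence used for the other terms. That is harmless here: every bound on $K_b^\eps$ was a limsup, so the combination "limsup of the bad part plus liminf of the good part" still controls the liminf of the whole integral. Then sending $\delta=\delta_j\to0$ finishes that proposition.

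There is a potential objection that the liminf over the continuous parameter $\delta$ must come from all $\delta$ or from an honest sequence. Since $\liminf_{\delta\to0} g(\delta)\le \liminf_j g(\delta_j)$ for any sequence $\delta_j\to0$, the dyadic sequence suffices. One further wrinkle is that the layers $\{\delta<d\le2\delta\}$ for arbitrary $\delta$ overlap. This does not matter because only dyadic $\delta$ are used.

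I expect no serious obstacle. The only point needing care is that the $H^1$ bound is uniform in $\eps$ on $B_R^+\times(T_1,T_2)$, which follows from Remark \ref{remark lem-mono} together with \eqref{eq:scale_inv_bds}. If one wanted a genuine limit rather than a liminf, one would try to show that $|\nabla_X U_\eps|^2$ restricted to $\{d(U_\eps)>0\}$ near the boundary is equi-integrable; that is unnecessary for the statement.
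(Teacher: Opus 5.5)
Your argument is correct, and it reaches the conclusion by a slightly different mechanism than the paper. Both proofs rest on the same two facts. The dyadic layers $\{2^{-k-1}<d(U_\eps)\le 2^{-k}\}$ are pairwise disjoint, and $\int_{T_1}^{T_2}\int_{B_R^+}|\nabla_X U_\eps|^2$ is bounded uniformly in $\eps$.

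The paper argues at fixed $\eps$. For large $n$ it restricts to the finite block of indices $k\in\{2^n,\ldots,2^{2n}-1\}$ and shows by pigeonhole that, for every $\eps$, some layer in this block carries energy less than $1/n$. It then uses the finiteness of the block a second time to find one index $k_n$ that works along a sequence $\eps\to0$, which gives $\liminf_{\eps\to0}\int_{A_{\eps,k_n}}|\nabla_X U_\eps|^2\le 1/n$.

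You instead take $\liminf_{\eps}$ first and apply Fatou's lemma for series, obtaining $\sum_j\liminf_{\eps\to0}a_j^\eps\le C$. This avoids the two-step pigeonhole. It also gives a slightly stronger conclusion: $\liminf_{\eps\to0}a_j^\eps\to0$ along the entire dyadic sequence $\delta_j$, not only along a subsequence of scales $2^{-k_n}$. Fatou's lemma for series is valid for a continuous parameter $\eps$ as well, via $\inf_{\eps<\eta}$ and monotone convergence in $\eta$, so no subsequence extraction is needed.

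In exchange, the paper's version locates a good layer for each individual $\eps$ within a prescribed finite range of scales. That extra information is not used in Proposition \ref{convergence of approximation}.

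Your remark on the application is also right. Since $\liminf(a_\eps+b_\eps)\le\limsup a_\eps+\liminf b_\eps$, the limsup bounds on $K_b^\eps$ combine legitimately with the liminf bound on the term $II$ before $\delta=\delta_j\to0$ is sent to zero.
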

\begin{proof} Since $(U_\eps)$ is uniformly bounded in $H^1_{loc} $, we have that $$\int_{T_1}^{T_2}\int_{B_R^+}|\nabla_X U_\eps|^2dXdt\leq M,$$ for some $M\geq 1$.  For any integer $n \gg 1$, consider the sets $$A_{\eps,k}:=\left\{ (X,t)\in B_R^+\times[T_1,T_2]\,|\, 2^{-k-1}<d(U_\eps(X,t)) \leq 2^{-k}  \right\},\quad 2^n\leq k\leq 2^{2n}-1 .$$
It then follows that for each $\eps>0$, there exists $k \in \{2^n, \dots, 2^{2n}-1\} $ such that  $$  \int_{A_{\eps,k}}|\nabla_X U_\eps|^2dXdt<\frac 1n,$$ provided $n$ is sufficiently large (depending only on $M$). On the contrary, for some $\eps > 0$, as the sets $\{A_{\eps,k}\}_{2^n\leq k\leq 2^{2n}-1}$ are  mutually disjoint, $$\frac{2^n}{n}\leq \sum_{k=2^n}^{2^{2n}-1}  \int_{A_{\eps,k}}|\nabla_X U_\eps|^2dXdt\leq M,$$ a contradiction for $n$ large. In particular, for such large $n$, there exists $k_n\in \{2^n, \dots, 2^{2n}-1\}$ such that $$\liminf_{\eps\to0}  \int_{A_{\eps,k_n}}|\nabla_X U_\eps|^2dXdt\leq\frac 1n.$$
The lemma follows immediately. 
\end{proof}

\begin{remark*}
Analogous to Proposition \ref{convergence of approximation outside singular set general target}, Proposition \ref{convergence of approximation}   remains valid for a sequence of rescaled functions $(V_{\varepsilon})$ as defined in \eqref{eq:defn_rescaling}, provided that $\varepsilon / \sqrt{r} \to 0$ as $\varepsilon \to 0$, where $r =r(\eps)> 0$ denotes the rescaling factor associated with $V_{\varepsilon}$.
\end{remark*}

We now  recall a few notions: $$reg(U) := \left\{ Z \in \overline{\RNp} \times (0, \infty) | \: U \in C^{\infty} \left( \overline{P_r^+ (Z)} \right), \text{ for some } r > 0 \right\}, \text{ and }$$ $$sing(U) := \left( \overline{\RNp} \times (0, \infty) \right) \setminus reg(U).$$

For any measure $\Gamma$ on $\overline{\RNp} \times (0, \infty)$, $$zero(\Gamma) := \left\{ Z \in \overline{\RNp} \times (0, \infty) | \: \Gamma \left( \overline{P_r^+ (Z)} \right) = 0, \text{ for some } r > 0 \right\}, \text{ and } $$ $$ \textnormal{spt}(\Gamma) := \left( \overline{\RNp} \times (0, \infty) \right) \backslash zero(\Gamma).$$
Let us also recall the defect measures $\nu$ and $\eta$ from \eqref{eq:Radonmeasureintro}.

\begin{prop}\label{configuration of Sigma}
	$\Sigma = \textnormal{spt}(\nu) \cup sing(U)$ and $\textnormal{spt}(\eta) \subseteq \Sigma.$
\end{prop}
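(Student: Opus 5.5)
We argue in the spirit of Lin--Wang, proving the three inclusions $\Sigma \subseteq \textnormal{spt}(\nu)\cup sing(U)$, $\textnormal{spt}(\nu)\cup sing(U)\subseteq \Sigma$ and $\textnormal{spt}(\eta)\subseteq\Sigma$ in turn. Throughout, $\Sigma$ is understood as a subset of $\overline{\RNp}\times(0,\infty)$ concentrated on the boundary. Points of the open interior $\RNp\times(0,\infty)$ are neither in $\textnormal{spt}(\nu)$, $\textnormal{spt}(\eta)$ nor $sing(U)$. Indeed, there $U_\eps$ solves the heat equation with uniform $L^\infty$ bounds, so interior parabolic estimates give $C^k_{loc}$ convergence. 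Hence $\nu=\eta=0$ and $U$ is smooth in the interior, and we only need to treat boundary points.

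\emph{Step 1: $\textnormal{spt}(\eta)\cup\textnormal{spt}(\nu)\cup sing(U) \subseteq \Sigma$.} Let $Z_0\in\partial\RNp\times(0,\infty)\setminus\Sigma$. By definition of $\Sigma$ (recall the $\liminf$ is a limit), there is $r_0>0$ with $\E_{\eps}(U_\eps,Z_0,r_0)<\eps_0^2$ for all small $\eps$. Continuity of $Z\mapsto\E_\eps(U_\eps,Z,r_0)$ for fixed $\eps$ is not uniform in $\eps$. We therefore use Proposition \ref{energy_tail_est} together with the monotonicity of Lemma \ref{lem-mono}, applied at a slightly later base time, to obtain a small parabolic neighbourhood of $Z_0$ on which $\E_\eps(U_\eps,Z,r_1)<\eps_0^2$ for some smaller $r_1$ and all $Z$ in it. This is the standard shifting of the center: compare the Gaussian weights centered at nearby $Z$, using $\G_{Z}\le C\G_{Z'}$ on the relevant time strip after a controlled dilation of radius. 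Lemma \ref{eps_gradient_est} then gives uniform $C^{1,\alpha}$ bounds on $P^+_{\delta_0 r_1}(Z_0)$ and a uniform bound on $|\partial_tU_\eps|$. Consequently:
\begin{itemize}
\item $U_\eps\to U$ in $C^1$ there, so $|\nabla_X U_\eps|^2\to|\nabla_XU|^2$ locally uniformly.
\item By Proposition \ref{convergence of approximation outside singular set general target}(b), $\eps^{-2}F(U_\eps)\to0$ uniformly, so $\nu=0$ near $Z_0$.
\item The uniform bound on $\partial_tU_\eps$, combined with interior Schauder estimates or a bootstrap through the boundary condition $\partial_yU_\eps=\eps^{-2}\nabla F(U_\eps)$, handles the time derivative. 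The key observation is that near $N$ this boundary term equals $\eps^{-2}\cdot 2(U_\eps-\Pi(U_\eps))$, which is bounded in $C^\alpha$ by the $C^{1,\alpha}$ estimate. This gives compactness of $\partial_tU_\eps$ in $L^2$ on a smaller cylinder, so $\eta=0$ there. If this higher regularity is unavailable, we instead use the local energy identity (Lemma \ref{local_energy_ineq1}): strong convergence of the energy densities at every time, with no defect, forces $\int|\partial_tU_\eps|^2\phi\to\int|\partial_tU|^2\phi$. Indeed, passing to the limit in the identity and comparing with the corresponding identity for $U$ gives $\eta(\phi)=0$.
\item The limit $U$ is $C^{1,\alpha}$, satisfies the free boundary condition, and lies in $N$ on the boundary. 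The smoothness theory for the limit free boundary flow (higher regularity from \cite{HSSW}) then yields $Z_0\in reg(U)$.
\end{itemize}

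\emph{Step 2: $\Sigma\subseteq\textnormal{spt}(\nu)\cup sing(U)$.} Suppose $Z_0\in\Sigma$ but $\nu$ vanishes on $P_{r}(Z_0)$ and $U$ is smooth on $\overline{P^+_r(Z_0)}$. Then
\[
\E_\eps(U_\eps,Z_0,s)\to \tfrac12\int_{T_s^+(t_0)}|\nabla_XU|^2\G_{Z_0}
\]
plus the contribution of $\nu$ outside $P_r(Z_0)$. The latter is handled via Proposition \ref{energy_tail_est} as a tail term of size $\le\delta$ for $s$ small relative to $r$. For the former, smoothness gives $|\nabla U|\le C$ near $Z_0$, so it is $O(s^2)$ for small $s$, hence $<\eps_0^2$. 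Note that $\G_{Z_0}$ on $T^+_s$ scales like $s^{-(m+1)}$ on a volume $s^{m+2}$, giving $\int|\nabla U|^2\G \lesssim s$. Thus $\lim_\eps\E_\eps(U_\eps,Z_0,s)<\eps_0^2$ for all small $s$, contradicting $Z_0\in\Sigma$. One should be careful that weak* convergence of measures is tested against the Gaussian weight, which is continuous but not compactly supported. This is exactly what the tail estimate of Proposition \ref{energy_tail_est} handles, together with $\mu(\partial T)=0$ for all but countably many radii $s$, which we may choose.

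\emph{Main obstacle.} I expect the main difficulty in Step 1 to be the uniformity of the small-energy hypothesis in a full neighbourhood of $Z_0$, and in particular the passage from bounded $\partial_tU_\eps$ to vanishing of $\eta$. My first attempt will be the local-energy-identity route just described, since it avoids any boundary higher regularity.
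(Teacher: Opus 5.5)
Your proposal is correct and follows the paper's proof. Off $\Sigma$, Lemma \ref{eps_gradient_est} already gives uniform $C^{1,\alpha}$ bounds on the full two-sided cylinder $P^+_{\delta_0 r_0}(Z_0)$, so no center-shifting is needed; this yields $\nu=0$, $\eta=0$ and, by higher regularity, smoothness of $U$ there. Conversely, Proposition \ref{energy_tail_est} localizes $\E_\eps(U_\eps,Z_0,r)$ to $\overline{B^+_{Mr}(X_0)}$, where $\nu=0$ and $|\nabla_X U|$ is bounded, giving $O(r^2)+\eps_0^2/2<\eps_0^2$.

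The step you flag as the main obstacle, $\eta=0$ off $\Sigma$, is in fact immediate. Near $Z_0$ you have $|\partial_t U_\eps|\le C$, and $U_\eps\to U$ smoothly in the open interior. The bulk measures $|\partial_t U_\eps|^2\,dX\,dt$ therefore cannot concentrate on the $dX\,dt$-null set $\partial\RNp\times(0,\infty)$, so $\eta=0$ there. This needs neither your bootstrap nor your energy identity. The bootstrap does not work as stated: a $C^\alpha$ Neumann datum only yields $C^{1+\alpha,(1+\alpha)/2}$ bounds, which do not control $\partial_t U_\eps$. The energy-identity route does work, but contrary to your remark it does not avoid boundary regularity: writing the identity for $U$ itself requires the boundary smoothness of $U$ from your last bullet.
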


\begin{proof} It follows from the  weak convergence of $U_{\eps}$ to $U$, standard estimates for the heat equation and the uniform bounds in \eqref{eq:scale_inv_bds} that $U_{\eps} \to U$ is $C^2_{loc} \left( \RNp \times (0, \infty) \right)$. In particular, $$\textnormal{spt}(\nu) \cup sing(U) \cup \textnormal{spt}(\eta) \subseteq \partial \RNp \times (0, \infty). $$
From  Lemma \ref{eps_gradient_est} we have that for any $\alpha \in (0, 1)$, $$U_{\varepsilon} \to U \text{ in } C^{1, \alpha}_{\text{loc}} \left( \left( \overline{\mathbb{R}}^{m+1}_+ \times (0, \infty) \right) \setminus \Sigma \right).$$
The higher order regularity theory then implies that $U \in C^{\infty} \left( \left( \overline{ \mathbb{R}}^{m+1}_+ \times (0, \infty) \right) \setminus \Sigma \right)$. Furthermore, this strong convergence yields $$\nu \left( \left( \partial \mathbb{R}^{m+1}_+ \times (0, \infty) \right) \setminus \Sigma \right) = 0 = \eta \left( \left( \partial \mathbb{R}^{m+1}_+ \times (0, \infty) \right) \setminus \Sigma \right).$$ 
This establishes the inclusion $$\textnormal{spt}(\nu) \cup \text{sing}(U) \cup \textnormal{spt}(\eta) \subseteq \Sigma.$$

\noindent Next, for the reverse inclusion, suppose $\tilde{Z} = (\tilde{X}, \tilde{t}) = (\tilde{x}, \tilde{y}, \tilde{t}) \in \left( \overline{\mathbb{R}}^{m+1}_+ \times (0, \infty) \right) \setminus \left( \textnormal{spt}(\nu) \cup \text{sing}(U) \right)$. If $\tilde{Z} \in \mathbb{R}^{m+1}_+ \times (0, \infty)$, then by the definition of $\Sigma$ we immediately have $\tilde{Z} \notin \Sigma$. Now, suppose $\tilde{Z} \in \partial \mathbb{R}^{m+1}_+ \times (0, \infty)$. By assumption, there exists a radius $r_0 \in (0, \sqrt{\tilde{t}}/4)$ such that $\nu \left( \overline{P^+_{r_0} (\tilde{Z})} \right) = 0$ and $U \in C^{\infty} \left( \overline{P^+_{r_0} (\tilde{Z})} \right)$. Proposition \ref{energy_tail_est} asserts the existence of $M > 1$ such that for any $r \in (0, r_0)$,

$$\E_{\eps} (U_{\eps}, \tilde Z, r) \leq \frac{C}{r^{m + 1}} \int_{\tilde t - 4 r^2}^{\tilde t - r^2} \int_{\overline{B_{Mr}^+ (\tilde X)}} e(U_{\eps}) dX dt + \frac{\eps_0^2}{2}.$$ 
Passing to the limit as $\varepsilon \to 0$, and using $\nu \left( \overline{P^+_{ r_0} (\tilde{Z})} \right) = 0$, it follows that for all $r$ sufficiently small
\begin{align} 
\liminf_{\eps \to 0}\, \E_{\eps} (U_{\eps}, \tilde Z, r) &\leq \frac{C}{r^{m + 1}} \int_{\tilde t - 4 r^2}^{\tilde t - r^2} \int_{B_{2 M r}^+ (\tilde X)} |\nabla_X U|^2 \: dX dt + \frac{\eps_0^2}{2} \\
& \leq C \| U\|^2_{C^1 \left( P_{r_0}^+ (\tilde Z) \right)} \: r^2+ \frac{\eps_0^2}{2}.
\end{align}
Thus,  for  sufficiently small choice of $r>0$, $$\liminf_{\eps \to 0} \E_{\eps} (U_{\eps}, \tilde Z, r) < \eps_0^2,$$ which implies $\tilde{Z} \notin \Sigma$. Hence, $\Sigma \subseteq \textnormal{spt}(\nu) \cup \text{sing}(U)$, completing the proof. 
\end{proof}

For any $Z \in \partial \RNp \times (0, \infty)$, let us define 
\begin{align}\label{defn_density}
&\Theta^{m+1} (\mu, Z) := \lim_{r \searrow 0} \int_{\overline{T_r^+ (Z)}} \G_Z \: d\mu,\\
&\Theta^{m+1} (U, Z) := \lim_{r \searrow 0} \frac{1}{r^{m+1}} \int_{P_r^+(Z)} |\nabla_X U|^2 \: dX dt,\\
&\Theta^{m+1} (\nu, Z) := \lim_{r \searrow 0} \int_{\overline{T_r^+ (Z)}} \G_Z \: d\nu,
\end{align}
whenever the limits exist.

\begin{prop}\label{density_results}
	With the above notations, we have the following
\begin{enumerate}
	\item for all $Z \in \partial \RNp \times (0, \infty)$ the limit defining $\Theta^{m+1} (\mu, Z)$ exists and we have $$\Sigma = \left\{ Z \in \partial \RNp \times (0, \infty) | \: \Theta^{m+1}(\mu, Z) \geq \eps_0^2 \right\},$$
	\item for $\p^{m+1}$-almost every $Z \in \partial \RNp \times (0, \infty)$ the limit defining $\Theta^{m+1}(U, Z)$ exists and equals $0$,
	\item for $\p^{m+1}$-almost every $Z \in \partial \RNp \times (0, \infty)$ we have $\Theta^{m+1} (\mu, Z) = \Theta^{m+1} (\nu, Z)$.
\end{enumerate}
\end{prop}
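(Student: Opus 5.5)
For (1), I will pass the monotonicity formula (Lemma \ref{lem-mono}) to the limit. Fix $Z\in\partial\RNp\times(0,\infty)$. For every $\eps$, $r\mapsto\E_\eps(U_\eps,Z,r)$ is nondecreasing. To get convergence, the boundary $\partial T_r^+(Z)$ must not charge $\mu$. By Proposition \ref{time density of mu} and the local energy inequality, $\mu = \mu^t\,dt$, so the slices $\{t=Z_t-r^2\}$ and $\{t=Z_t-4r^2\}$ are $\mu$-null for all $r$ outside a countable set. For such $r$, the decay of $\G_Z$ at spatial infinity together with Proposition \ref{energy_tail_est} (which controls tails uniformly in $\eps$) give $\E_\eps(U_\eps,Z,r)\to\int_{\overline{T_r^+(Z)}}\G_Z\,d\mu$.

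Since a pointwise limit of nondecreasing functions is nondecreasing, $r\mapsto\int_{\overline{T_r^+}}\G_Z\,d\mu$ is nondecreasing on a co-countable set, so $\Theta^{m+1}(\mu,Z)$ exists as a monotone limit. The monotonicity of $\E_\eps$ also lets me compare values at radii in the exceptional set with values at nearby good radii. The characterization of $\Sigma$ then follows from the definition \eqref{sing_set_intro}, in which the $\liminf$ is a limit. Indeed, $Z\in\Sigma$ iff $\lim_\eps\E_\eps(U_\eps,Z,r)\ge\eps_0^2$ for arbitrarily small $r$, and by monotonicity this holds iff the limit stays $\ge\eps_0^2$ for all small $r$, i.e.\ iff $\Theta^{m+1}(\mu,Z)\ge\eps_0^2$. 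The main technical point is handling equality at the threshold together with the exceptional radii, and I will sandwich radii between good ones to deal with it.

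For (2), I will use the standard density argument: for a locally integrable $f=|\nabla_XU|^2$ on $\R^{m+2}$ with the parabolic metric, the set where $\limsup_r r^{-(m+1)}\int_{P_r^+(Z)} f>0$ has $\p^{m+1}$-measure zero. Here $m+1<m+3$, the parabolic dimension of the space. This is a Vitali covering argument: cover $\{\limsup>\lambda\}\cap K$ by small cylinders where $\int f\ge\lambda r^{m+1}$, with $r\le\alpha$, choose disjoint subfamilies, and use absolute continuity of $\int f$ over a neighborhood of measure $\to0$. This gives $\p^{m+1}=0$ for each $\lambda>0$.

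For (3), I will write $\mu=\frac12|\nabla_XU|^2+\nu$ and bound $\int_{T_r^+(Z)}\G_Z|\nabla_XU|^2$ by a sum over dyadic annuli, $\sum_k e^{-c4^k}(2^kr)^{-(m+1)}\int_{P^+_{C2^kr}(Z)}|\nabla_X U|^2$. At points where (2) holds, each term tends to $0$, and the tail is uniformly bounded using the $L^\infty_tH^1_X$ bound. Hence the Dirichlet part of the density vanishes $\p^{m+1}$-a.e., so $\Theta^{m+1}(\nu,Z)$ exists and equals $\Theta^{m+1}(\mu,Z)$. I expect the only delicate step to be the uniform tail control in this dominated-convergence argument.
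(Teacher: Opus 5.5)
Your proposal is correct and takes essentially the paper's route. In (1) you pass the monotonicity of Lemma \ref{lem-mono} to the limit, filling in the slice-nullity and tail details the paper leaves implicit. In (2) you give the standard covering argument the paper cites from Federer. In (3) you split the Gaussian integral into a near part controlled by (2) and a far tail.

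The one step to sharpen is the uniform tail bound in (3). A purely local $L^\infty_t H^1_X$ bound does not do it. You need the Gaussian-weighted bounds \eqref{eq:scale_inv_bds}, which also supply the scale-invariant Morrey bound needed on the small annuli. Proposition \ref{energy_tail_est} packages exactly this, and the paper uses it with a single cut at radius $Mr$ instead of a dyadic sum.
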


\begin{proof} The first assertion of $(1)$ follows directly from the monotonicity formula (Lemma \ref{lem-mono}), which implies that the map $r \mapsto \int_{\overline{T_r^+ (Z)}} \G_Z \: d\mu$  is non-decreasing. The second assertion is an immediate consequence of the definitions of $\Sigma$ and $\mu$.

\medskip 

The proof of  $(2)$ follows from standard covering arguments in geometric measure theory and is therefore omitted here; we refer the reader to \cite[Section 2.10.19]{F} for a detailed proof.

\medskip 
The proof of $(3)$ would follow immediately from $(2)$ if we show for $\tilde Z = (\tilde X, \tilde t) \in \partial \RNp \times (0, \infty)$ that \begin{align}\label{p-3.5.3}\lim_{r \searrow 0} \int_{T_{r}^+ (\tilde Z)} | \nabla_X U|^2 \, \mathcal G_{\tilde Z} \, dX dt = 0,\end{align} whenever $\Theta^{m+1}(U,\tilde Z)=0$. Let us fix $\delta > 0$. In order to show \eqref{p-3.5.3}, we use Proposition \ref{energy_tail_est} to obtain $M \gg 1$ satisfying
\begin{align*}
\frac{1}{2}& \int_{T_r^+ (\tilde Z)} |\nabla_X U|^2 \, \G_{\tilde Z} dX dt\\
\leq& \frac{1}{2} \int_{\tilde t - 4 r^2}^{\tilde t - r^2} \int_{ B_{Mr}^+ (\tilde X)} |\nabla_X U|^2 \, \G_{\tilde Z} \, dX dt + \liminf_{\eps \to 0} \int_{\tilde t - 4 r^2}^{\tilde t - r^2} \int_{ \RNp \setminus B_{Mr}^+ (\tilde X)} \G_{\tilde Z} \, e(U_{\eps}) dX dt\\
\leq& \frac{C}{r^{m + 1}} \int_{P_{M r}^+ (\tilde Z)} |\nabla_X U|^2 \, dX dt + \delta = o_r (1) + \delta,
\end{align*}
where we have used the assumption that $\Theta^{m + 1} (U, \tilde Z) = 0$. This completes the proof.
\end{proof}

\begin{prop}\label{blowupcndn}
$\nu > 0$ if and only if $\p^{m + 1}(\Sigma)>0$.
\end{prop}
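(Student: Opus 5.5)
We prove the two implications separately. The direction ``$\p^{m+1}(\Sigma)>0 \Rightarrow \nu>0$'' uses the density statements of Proposition \ref{density_results}. The converse uses a uniform upper density bound for $\mu$ together with a covering argument.

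\emph{Step 1: $\p^{m+1}(\Sigma)>0$ implies $\nu\neq 0$.} Assume $\p^{m+1}(\Sigma)>0$. By Proposition \ref{density_results}(1), every $Z\in\Sigma$ satisfies $\Theta^{m+1}(\mu,Z)\ge \eps_0^2$. By Proposition \ref{density_results}(3), $\Theta^{m+1}(\nu,Z)=\Theta^{m+1}(\mu,Z)$ for $\p^{m+1}$-a.e. $Z$. Since $\Sigma$ has positive $\p^{m+1}$ measure, the exceptional null set cannot exhaust it. Hence there is some $Z\in\Sigma$ with $\Theta^{m+1}(\nu,Z)\ge\eps_0^2>0$. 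This means $\int_{\overline{T_r^+(Z)}}\G_Z\,d\nu>0$ for small $r$. Because $\G_Z$ is bounded on $\overline{T_r^+(Z)}$, we get $\nu(\overline{T_r^+(Z)})>0$, so $\nu\neq 0$.

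\emph{Step 2: a uniform upper density bound.} For the converse we first show the following. For every compact $K\subset\partial\RNp\times(0,\infty)$ there are $r_K>0$ and $C$ such that
\begin{equation*}
\mu\big(\overline{P^+_r(Z)}\big)\le C r^{m+1},\qquad Z\in K,\ 0<r<r_K .
\end{equation*}
We argue as in \eqref{eq:hausdorffbound}, but now keep the full energy density $e(U_\eps)$ instead of only the boundary term. Take the shifted point $Z'=(X,t+2r^2)$. On $P^+_{2r}(Z)$, which lies in a region where $t'-s\sim r^2$ for the time variable $s$, the kernel satisfies $\G_{Z'}\ge c\,r^{-(m+1)}$. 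A slightly larger shift may be needed so that the whole cylinder fits inside a time slab of the type $T_\rho^+(Z')$; one may also cover by a few such slabs. This gives
\begin{equation*}
\int_{P_{2r}^+(Z)} e(U_\eps)\le C r^{m+1}\,\E_\eps(U_\eps,Z',\rho)\le C r^{m+1},
\end{equation*}
where the last inequality uses Lemma \ref{lem-mono} and \eqref{eq:scale_inv_bds}. The set $P_{2r}^+(Z)$, taken relatively open in $\overline{\RNp}\times(0,\infty)$, contains $\overline{P_r^+(Z)}$. Passing to the weak$^*$ limit via lower semicontinuity on open sets yields the bound for $\mu$, and hence for $\nu\le\mu$.

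\emph{Step 3: $\p^{m+1}(\Sigma)=0$ implies $\nu=0$.} Assume $\p^{m+1}(\Sigma)=0$. By Proposition \ref{configuration of Sigma}, $\textnormal{spt}(\nu)\subseteq\Sigma\subset\partial\RNp\times(0,\infty)$, so it suffices to prove $\nu(\Sigma\cap K)=0$ for each compact $K$. Fix $\delta>0$ and $\alpha<r_K/2$. Cover $\Sigma\cap K$ by closed cylinders $\overline{P_{r_i}(Z_i)}$ with $r_i\le\alpha$ and $\sum r_i^{m+1}<\delta$; we may discard those not meeting $\Sigma\cap K$. If $\overline{P_{r_i}(Z_i)}$ meets $\Sigma\cap K$ at a point $Z_i'$, then $\overline{P_{r_i}(Z_i)}\subseteq\overline{P_{2r_i}(Z_i')}$ with $Z_i'\in\partial\RNp\times(0,\infty)$ lying in a slightly enlarged compact set. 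Step 2 then gives
\begin{equation*}
\nu(\Sigma\cap K)\le\sum_i\nu\big(\overline{P^+_{2r_i}(Z_i')}\big)\le C\sum_i r_i^{m+1}\le C\delta .
\end{equation*}
Letting $\delta\to0$ shows $\nu=0$.

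The main obstacle is Step 2: choosing the shifted vertex and time window so that $\G_{Z'}$ is bounded below by $c\,r^{-(m+1)}$ on the whole cylinder, while staying within the range $r<\sqrt{t'}/2$ where \eqref{eq:scale_inv_bds} applies with a constant uniform over $K$. The remaining steps are routine.
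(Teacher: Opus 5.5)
Your proposal is correct and follows essentially the same route as the paper. The forward direction uses Proposition \ref{density_results}(1),(3). The converse uses a uniform bound $\mu(\overline{P_r^+(Z)})\le Cr^{m+1}$ on compact sets, obtained from a time-shifted Gaussian vertex and \eqref{eq:scale_inv_bds}, followed by a covering of $\Sigma\cap K$ with $\sum r_i^{m+1}<\delta$. Your extra care only spells out details the paper leaves implicit: passing to the weak$^*$ limit on the relatively open set $P_{2r}^+(Z)$ (which requires a somewhat larger time shift), and re-centering the covering cylinders at points of $\Sigma\cap K$.
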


\begin{proof}
If $\p^{m+1}(\Sigma) > 0$, assertions $  (1)$ and $(3)$ of Proposition \ref{density_results} yield a point $\tilde{Z} \in \Sigma$  such that $\eps_0^2 \leq \Theta^{m+1} (\mu, \tilde Z) = \Theta^{m+1} (\nu, \tilde Z)$.  This lower bound for the density immediately implies that $\nu > 0$.

Conversely,  suppose that $\p^{m+1} \left( \Sigma \right) = 0$. Fix a compact set $K \subseteq \partial \RNp \times (0, \infty)$ and $r_0 > 0$ such  that   $r_0 < \frac{\sqrt t}{4}$ for all $(X, t) \in K$. Notice that for any $\tilde Z = (\tilde X, \tilde t) \in \Sigma \cap K$ and any $r \in (0, r_0)$ we have  
\begin{align*}
\frac{\mu \left( \overline{ P_r^+ (\tilde Z)} \right)}{r^{m+1}} &\leq \frac{C}{r^{m+1}} \int_{ \overline{B_r^+ (\tilde X)} \times [\tilde t - r^2, \tilde t + r^2] } e^{ - \frac{|X- \tilde X|^2}{4 r^2} } d\mu\\
&\leq C \int_{ \overline{\RNp} \times [\tilde t - 2 r^2, \tilde t + r^2]} \G_{\tilde Z + (0, 2 r^2)} d\mu = C \int_{ \overline{T_r^+ (\tilde Z + (0, 2 r^2))}} \G_{\tilde Z + (0, 2 r^2)} d\mu\\
&\leq C \int_{\overline{T_{r_0}^+ (\tilde Z + (0, 2 r^2))}} \G_{\tilde Z + (0, 2 r^2)} d\mu \leq C.
\end{align*}
Since $\p^{m+1} \left( \Sigma \cap K \right) = 0$,  for each $\delta > 0$ there exists a covering sequence of parabolic cylinders $\{ P_{r_i} (\tilde Z_i) \}_{i = 1}^{\infty}$ centered at $\tilde Z_i \in \Sigma \cap K$ with radii  $r_i \in (0, r_0)$ such that  $$\Sigma \cap K \subseteq \bigcup_{i=1}^{\infty} \overline{ P_{r_i} (\tilde Z_i) }\quad\text{ and }\quad\sum_{i = 1}^{\infty} r_i^{m+1} < \delta.$$
Using the local upper density bound for $\mu$, it follows that $$\mu \left( \Sigma \cap K \right) \leq \sum_{i = 1}^{\infty} \mu \left( \overline{ P_{r_i}^+ (\tilde Z_i)} \right) \leq C \sum_{i = 1}^{\infty} r_i^{m+1} < C \delta.$$
Taking the limit as $\delta \to 0$ shows that $\mu (\Sigma \cap K) = 0$. By the arbitrariness of the compact set $K$, we conclude that $\mu (\Sigma) = 0$, which immediately implies that $\nu \equiv 0$.
\end{proof}

\begin{prop}
    If $\eta > 0$ then $\p^{m - 1} (\Sigma) > 0$.
\end{prop}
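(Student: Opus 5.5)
We argue by contraposition: assuming $\p^{m-1}(\Sigma)=0$, we show that $\eta\equiv 0$. By Proposition \ref{configuration of Sigma}, $\textnormal{spt}(\eta)\subseteq\Sigma$, so it suffices to prove $\eta(\Sigma\cap K)=0$ for every compact $K\subseteq\partial\RNp\times(0,\infty)$. The strategy is the one used in the converse direction of Proposition \ref{blowupcndn}, with $\mu$ replaced by $\eta$ and the exponent $m+1$ replaced by $m-1$. The key input is a uniform upper density bound
\begin{equation*}
\eta\big(\overline{P_r(\tilde Z)}\big)\leq C\,r^{m-1},\qquad \tilde Z\in K,\ 0<r<r_0 .
\end{equation*}
This has the right scaling: $|\partial_t U_\eps|^2\,dX\,dt$ scales like $r^{m+1-2}=r^{m-1}$.

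To prove the density bound, we use the last estimate of Lemma \ref{local_energy_ineq1}:
\begin{equation*}
\int_{P_r(\tilde Z)\cap\RNp}|\partial_tU_\eps|^2 \leq \frac{C}{r^2}\left(\int_{P_{2r}(\tilde Z)\cap \RNp}|\nabla_XU_\eps|^2+\int_{P_{2r}(\tilde Z)\cap\partial\RNp}\frac{F(U_\eps)}{\eps^2}\right).
\end{equation*}
The bracket on the right is at most $C r^{m+1}$ uniformly in $\eps$. To see this, bound the Gaussian weight from below on $P_{2r}$, as in the computation in the proof of Proposition \ref{blowupcndn} (equivalently, as in \eqref{eq:hausdorffbound}, centering at a shifted point $\tilde Z+(0,c r^2)$). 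This controls the bracket by $C r^{m+1}\,\E_\eps(U_\eps,\tilde Z+(0,cr^2),r_0)$, and \eqref{eq:scale_inv_bds} together with Lemma \ref{lem-mono} bounds this by $C r^{m+1}$. Hence the left-hand side is at most $C r^{m-1}$. Passing to the weak$^*$ limit, and using lower semicontinuity of measures on open sets (enlarging the radius slightly to handle closed cylinders), gives $\eta(\overline{P_r(\tilde Z)})\leq C r^{m-1}$ for all $\tilde Z\in K$ and $r<r_0$.

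We then cover. Since $\p^{m-1}(\Sigma\cap K)=0$, for each $\delta>0$ there are cylinders $\overline{P_{r_i}(Z_i)}$ covering $\Sigma\cap K$ with $\sum r_i^{m-1}<\delta$. The centers $Z_i$ need not lie in $\Sigma\cap K$. We either pass to centers in $\Sigma\cap K$ at the cost of doubling the radii, or note that the density bound also holds at centers in a slightly larger compact neighborhood $K'$, with small radii so that all cylinders stay in that region. In either case
\begin{equation*}
\eta(\Sigma\cap K)\leq \sum_i C r_i^{m-1}\leq C\delta ,
\end{equation*}
so $\eta(\Sigma\cap K)=0$, and therefore $\eta\equiv 0$. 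The only case needing care is $m=1$, where $\p^0$ is counting measure. There $\p^0(\Sigma)=0$ means $\Sigma=\emptyset$, and $\eta=0$ follows directly from $\textnormal{spt}(\eta)\subseteq\Sigma$.

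The main obstacle is the uniformity of the density bound, in particular making sure the energy bound in \eqref{eq:scale_inv_bds} applies at the shifted centers with the required radius constraint $r<\sqrt{t}/2$. Restricting to $r<r_0$ with $r_0$ small relative to $\inf_K t$ handles this, exactly as in Proposition \ref{blowupcndn}.
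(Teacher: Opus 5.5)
Your proof is correct and follows essentially the same route as the paper. Both argue by contraposition: the local energy inequality of Lemma \ref{local_energy_ineq1}, together with a Gaussian lower bound at the shifted center $Z_i+(0,28r_i^2)$ and \eqref{eq:scale_inv_bds}, gives $\eta(P_{r_i}(Z_i))\leq C r_i^{m-1}$; then a cover with $\sum r_i^{m-1}<\delta$ and $\textnormal{spt}(\eta)\subseteq\Sigma$ finish the argument. Your extra care with closed versus open cylinders, the location of the centers, and the counting-measure case $m=1$ makes explicit points the paper passes over.
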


\begin{proof}
    We will show that the contrapositive of the above statement holds. Therefore let us assume that $\p^{m - 1} ( \Sigma) = 0$. Consider a compact set $K \subseteq \partial \RNp \times (0, \infty)$. Fix any $\alpha, \delta > 0$ small enough so that $P_{3 \alpha} (K) \subseteq \R^{m + 1} \times (0, \infty)$. Then we can find at most countably many  parabolic cylinders $\{ P_{r_i} (Z_i) \}_i$ with $Z_i \in K$ and $r_i < \alpha$ such that $$\Sigma \cap K \subseteq \bigcup_i P_{r_i} (Z_i), \text{ and } \sum_i r_i^{m - 1} < \delta.$$
    Now using Lemma \ref{local_energy_ineq1} and Lemma \ref{lem-mono} we have
    \begin{align*}
    \eta (\Sigma \cap K) \leq& \sum_i \liminf_{\eps \to 0} \int_{P_{r_i}^+ (Z_i)} |\partial_t U_{\eps}|^2 \, dX \, dt\\
    \leq& C \sum_i r_i^{- 2} \liminf_{\eps \to 0} \left( \int_{P_{2 r_i}^+ (Z_i)} |\nabla_X U_{\eps}|^2 \, dX \, dt + \int_{\partial^0 P_{2 r_i}^+ (Z_i)} \frac{F(U_{\eps})}{\eps^2} dx \, dt \right)\\
    \leq& C \sum_i r_i^{- 2} \liminf_{\eps \to 0} r_i^{ m + 1} \E_{\eps} (U_{\eps}, Z_i + (0, 28 r_i^2), 2 \sqrt 2 r_i)\\
    \leq& C \sum_i r_i^{m - 1} < C \delta.
    \end{align*}
    Using the arbitrariness of $\delta$ and $K$ we see that $\eta( \Sigma ) = 0$. Together with Proposition \ref{configuration of Sigma} this implies that $\eta \equiv 0$.
\end{proof}

\begin{prop}\label{time_singular_set}
For $\delta \geq 0$ define $$F_\delta := \left\{ Z \in\Sigma| \limsup_{r \to 0} \liminf_{ \ve \to 0} \frac{1}{r^{m - 1}} \int_{P_r^+ (Z)} |\partial _t U_\ve|^2 \: dX dt > \delta   \right\}.$$ Then for each $\delta > 0$, $F_\delta$ has locally finite $\p^{m - 1}$ measure. In particular, when $m = 1$, $F_{\delta}$ is locally finite for each $\delta > 0$ and $F_0$ is at most countable.

\end{prop}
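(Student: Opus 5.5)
The plan is a Vitali covering argument that compares $\p^{m-1}$ with the defect measure $\eta$ together with the absolutely continuous part $|\partial_t U|^2\,dX\,dt$. Fix $\delta>0$ and a compact set $K\subseteq \partial\RNp\times(0,\infty)$, and choose $\alpha>0$ small enough that $P_{5\alpha}(K)\subseteq \R^{m+1}\times(0,\infty)$. Write
\[
\lambda := |\partial_t U|^2\,dX\,dt+\eta
\]
for the weak* limit of $|\partial_t U_\eps|^2\,dX\,dt$ from \eqref{eq:Radonmeasureintro}.

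The first step is to pass from the defining condition to a density bound for $\lambda$. For $Z\in F_\delta\cap K$ there are arbitrarily small radii $r<\alpha$ with
\[
\liminf_{\eps\to0} r^{1-m}\int_{P_r^+(Z)}|\partial_t U_\eps|^2 > \delta .
\]
By weak* lower semicontinuity on closed sets (or upper semicontinuity on open sets, after slightly enlarging the radius), we get $\lambda\big(\overline{P_r^+(Z)}\big)\ge \liminf_\eps \int_{P_r^+(Z)}|\partial_t U_\eps|^2$. To be safe, I would apply the inequality $\limsup_\eps \mu_\eps(\text{compact}) \le \mu(\text{compact})$ on $\overline{P_r^+(Z)}$, which gives
\[
\lambda\big(\overline{P_r^+(Z)}\big) \ge \delta r^{m-1}.
\]

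The second step is the covering. The cylinders $P_r(Z)$ obtained above form a fine cover of $F_\delta\cap K$. Vitali's covering lemma for parabolic cylinders gives a disjoint subfamily $\{P_{r_i}(Z_i)\}$ with $F_\delta\cap K\subseteq \bigcup_i P_{5r_i}(Z_i)$. Disjointness of the closures can be arranged by shrinking the radii slightly and adjusting constants. Then
\[
\p^{m-1}_{10\alpha}(F_\delta\cap K)\le 5^{m-1}\sum_i r_i^{m-1}\le \frac{C}{\delta}\sum_i \lambda\big(\overline{P_{r_i}^+(Z_i)}\big)\le \frac{C}{\delta}\,\lambda\big(P_{2\alpha}(K)\big).
\]
This last quantity is finite by Lemma \ref{local_energy_ineq1} combined with the uniform $H^1_{loc}$ bound of Remark \ref{remark lem-mono}. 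Letting $\alpha\to0$ gives $\p^{m-1}(F_\delta\cap K)<\infty$.

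For $m=1$, $\p^0$ is counting measure, so $F_\delta\cap K$ is finite, i.e.\ $F_\delta$ is locally finite. Since $F_0=\bigcup_{k\ge1}F_{1/k}$, the set $F_0$ is at most countable.

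The only delicate point is the semicontinuity step, which requires handling open versus closed cylinders. I would handle it by using the cylinder $P_{r}$ for the liminf and the closed cylinder $\overline{P_{r}}$ for the limit measure, and by noting that the Vitali family can be taken with disjoint closed cylinders.
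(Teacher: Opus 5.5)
Your proof is correct and follows the paper's argument. The paper likewise passes to the limit measure $|\partial_t U|^2\,dX\,dt+\eta$ and observes that $F_\delta$ is contained in the set where $\limsup_{r\to0} r^{1-m}\bigl(|\partial_t U|^2\,dX\,dt+\eta\bigr)\bigl(\overline{P_{2r}^+(Z)}\bigr)>\delta$; it then cites the density theorem \cite[2.10.19]{F}, whose proof is exactly the Vitali covering estimate you write out (the paper enlarges the radius to $2r$ where you use upper semicontinuity on the compact set $\overline{P_r^+(Z)}$, and both handle the open/closed issue equally well).
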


\begin{proof}
    Let us set $$\tilde{\eta} := |\partial_t U|^2 dX dt + \eta = \lim_{\eps \to 0} \, |\partial_t U_{\eps}|^2 dX dt.$$
    Then for any $\delta > 0$ we have $$F_{\delta} \subseteq \left\{ Z \in \Sigma| \: \limsup_{r \to 0} \frac{1}{r^{m - 1}} \tilde{\eta} \left( \overline{P_{2 r}^+ (Z)} \right) > \delta \right\}.$$
    Appealing to the density results for Radon measures (see \cite[2.10.19]{F}) we conclude the first part of the proposition. The rest follows immediately from this by taking $m = 1$.
\end{proof}

\section{The Conformal Case}

In this section we present the proofs of Theorems \ref{main thm_intro},    \ref{energy identity_intro} and  
\ref{reverse bubbling}. We  begin with Theorem \ref{energy identity_intro},  whose proof relies crucially on an energy identity proved in \cite[Theorem 5.1]{JLZ}. 
To set the framework, we briefly recall some essential notations: For any $a \in ( - \infty, 0]$ we define $\R^2_{\geq a} := \{ (x, y) \in \R^2 | \: y \geq a\}$ and for $a \in [- \infty, 0], \: \R^2_{> a} := \{ (x, y) \in \R^2 | \: y > a\}$. Needless to mention, in this section the domains for the spatial variables of any map will be taken to be subsets of $\R^2$.

\begin{lem}[\cite{JLZ}]\label{free boundary bubbling}
Let $\tilde N$ be a compact Riemannian manifold (possibly with non-empty boundary) that is isometrically embedded in $\R^L$ and $N$ be an embedded submanifold of $\tilde N$ with $N \cap \partial \tilde N = \emptyset$. Also let $v_i \in H^2 \left( B_1^+(0); \tilde N \right)$ be a sequence of maps with tension fields $\tau(v_i)$ and with free boundaries $v_i(\partial^0 B_1^+(0))$ on $N$ and satisfying
\begin{itemize}
\item[(a)] $\|v_i \|_{H^1 \left( B_1^+(0) \right)} + \| \tau(v_i) \|_{L^2 \left( B_1^+(0) \right)} \leq C$,

\item[(b)] $v_i \to v$ strongly in $H^1_{loc} \left( \overline{B_1^+(0)} \setminus \{ 0 \}; \R^L \right)$, as $i \to \infty$,

\item[(c)]  on $\partial^0 B_1^+(0)$ we have $v_i \in N$ and $\partial_y v_i \perp T_{v_i} N$.
\end{itemize}
Then there exist a subsequence of $(v_i)$ (still denoted by $(v_i)$) and a non-negative integer $b$ such that, for any $j \in \{1, \ldots, b \}$, there exists a sequence of points $(X^j_i)_i$ in $\overline{B_1^+ (0)}$, a sequence of positive numbers $(\lambda^j_i)_i$ and either a non-constant harmonic sphere $\omega^j$ or a non-positive constant $a^j$ and a non-constant harmonic disk $\omega^j$ (which we view as a map from $\R^2_{\geq a^j} \cup \{ \infty \} \to \tilde N$) with free boundary $\omega^j(\partial \R^2_{\geq a^j})$ on $N$ such that
\begin{itemize}
\item[(1)] for each $j \in \{1, \ldots, b\}$, $X^j_i \to 0, \lambda^j_i \to 0$, as $i \to \infty$,

\item[(2)] for each $j \in \{1, \ldots, b\}$, $- \frac{\textnormal{dist}(X^j_i, \partial^0 B_1^+(0))}{\lambda^j_i} \to a^j$ or $ \frac{\textnormal{dist}(X^j_i, \partial^0 B_1^+(0))}{\lambda^j_i} \to \infty$, as $i \to \infty$,

\item[(3)] for any $j, k \in \{1, \ldots, b \}$ with $j \neq k$ we have $\frac{\lambda^j_i}{\lambda^k_i} + \frac{\lambda^k_i}{\lambda^j_i} + \frac{|X^j_i - X^k_i|}{\lambda^j_i + \lambda^k_i} \to \infty$, as $i \to \infty$,

\item[(4)] for each $j \in \{1, \ldots, b\}, \, \omega^j$ is the weak limit of $v_i (X^j_i + \lambda^j_i \: \cdot)$ in $H^1_{loc} \left( \R^2 \right)$, if $ \frac{\textnormal{dist} (X^j_i, \partial^0 B_1^+(0))}{\lambda^j_i} \to \infty$ or $\omega^j$ is the weak limit of $v_i(X^j_i + \lambda^j_i \: \cdot)$ in $H^1_{loc} \left( \R^{2}_{> a^j} \right)$, if $- \frac{\textnormal{dist} (X^j_i, \partial^0 B_1^+(0))}{\lambda^j_i} \to a^j$,

\item[(5)] we have $$\lim_{i \to \infty} E(v_i; B_1^+(0)) = E(v; B_1^+(0)) + \sum_{j = 1}^b E(\omega^j).$$

\end{itemize}
\end{lem}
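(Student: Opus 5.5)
The plan is the standard Sacks--Uhlenbeck/Parker bubble-tree construction, adapted to the free boundary setting, with the tension field in $L^2$ treated as a perturbation. First I would establish two analytic tools for maps $v \in H^2(B_r^+;\tilde N)$ whose free boundary lies on $N$. The first is an $\eps$-regularity estimate: there is $\eps_1>0$ such that $E(v;B_{r}^+) < \eps_1$ implies
\[
\|\nabla v\|_{W^{1,2}(B_{r/2}^+)} \le C\big(r^{-1}\|\nabla v\|_{L^2(B_r^+)} + \|\tau(v)\|_{L^2(B_r^+)}\big),
\]
and hence, by Sobolev embedding in dimension two, uniform $C^{0,\alpha}$ and $W^{1,p}$ control. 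I would prove this by reflecting across $\partial^0 B_r^+$ via the Fermi-coordinate reflection of $N$ inside $\tilde N$ (as in Scheven or Fraser), which turns the free boundary condition into an interior elliptic system with antisymmetric-type structure, and then using the $L^2$ bound on $\tau$. The second tool is a gap/removable singularity statement: harmonic spheres and free boundary harmonic disks with finite energy extend smoothly across $\infty$, and nonconstant ones carry energy at least $\eps_1$.

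Next I would run the bubble extraction. By (b), all energy loss occurs at the origin. If $\lim_i E(v_i;B_1^+) = E(v;B_1^+)$, then $b=0$. Otherwise, I would choose $\lambda_i^1 \to 0$ and $X_i^1 \to 0$ so that $\sup_{X} E(v_i; B_{\lambda_i^1}(X)\cap B_1^+) = \eps_1/2$, with the supremum attained at $X_i^1$. Rescaling gives $\tilde v_i(Z) = v_i(X_i^1+\lambda_i^1 Z)$, which satisfies $\|\tau(\tilde v_i)\|_{L^2} = \lambda_i^1\|\tau(v_i)\|_{L^2}\to 0$. Two cases then arise, according to whether $\textnormal{dist}(X_i^1,\partial^0B_1^+)/\lambda_i^1 \to \infty$ or stays bounded. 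After passing to a subsequence, these produce respectively a limit on $\R^2$ or on $\R^2_{>a^1}$ with $a^1 \le 0$. The $\eps$-regularity estimate gives strong convergence away from finitely many new concentration points, the limit is harmonic (with free boundary on $N$ in the second case), and the normalization makes it nonconstant. I would then iterate at the new concentration points and on the region where all scales are comparable. Since each bubble costs at least $\eps_1$ and (a) bounds the energy, the process terminates with finitely many bubbles satisfying the separation condition (3).

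The main obstacle is (5), i.e.\ showing that no energy is lost in the neck regions $A_i = \{\lambda_i R \le |X - X_i| \le \delta\}$, including necks that touch the free boundary. My plan is as follows. On each dyadic annulus in the neck the energy is below $\eps_1$, so $\eps$-regularity gives pointwise gradient bounds. I would then split the energy into angular and radial parts. The angular part is controlled by an oscillation (three-circle) argument on the half-annuli, using reflection to handle the boundary. The radial part is controlled by a Pohozaev-type identity: testing the equation against $(X-X_i)\cdot\nabla v_i$ gives
\[
\int_{\partial B_r^+} \big(|\partial_r v_i|^2 - r^{-2}|\partial_\theta v_i|^2\big) \lesssim r\,\|\tau(v_i)\|_{L^2}\|\nabla v_i\|_{L^2(\partial B_r^+)} + (\text{boundary terms}).
\]
The boundary terms on $\partial^0$ vanish because $\partial_y v_i \perp T_{v_i}N$ while $\partial_x v_i \in T_{v_i}N$. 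When the neck center sits at distance comparable to $\lambda_i$ from the boundary, I would recenter at the boundary projection before applying the identity. Integrating in $r$ and summing over necks yields vanishing neck energy as $R\to\infty$ and $\delta\to0$, which proves (5). The delicate point I expect is the case of mixed necks, where the annulus transitions from interior to boundary type. There I would cut at scale $\textnormal{dist}(X_i,\partial^0)$ and treat the two parts separately.
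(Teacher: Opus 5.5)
The paper does not prove this lemma; it imports it from \cite{JLZ}. Your outline is essentially the standard argument behind that result: $\eps$-regularity via reflection across $N$, bubble extraction at the smallest concentration scale, a Pohozaev identity on half-balls $B_r^+(X_i')$ centred at a point $X_i'\in\partial^0 B_1^+$, and a splitting of mixed necks at the scale $\textnormal{dist}(X_i,\partial^0 B_1^+)$. The flat-boundary terms of that Pohozaev identity do vanish, since on $\partial^0$ we have $(X-X_i')\cdot\nu=0$, $\partial_y v_i\perp T_{v_i}N$ and $(X-X_i')\cdot\nabla v_i=(x-x_i')\,\partial_x v_i\in T_{v_i}N$.

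Two corrections are needed.

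\begin{itemize}
\item With only $\tau(v_i)\in L^2$, $\eps$-regularity gives $W^{2,2}$ control on the dyadic neck annuli, hence $C^{0,\alpha}$ and small-oscillation control, but not pointwise gradient bounds. Small oscillation is all the angular estimate needs. For instance, apply Ding--Tian's comparison with circular averages to the reflected map; the reflection is well defined there because $v_i\in N$ on $\partial^0 B_1^+$ and the oscillation is small.
\item The Pohozaev identity actually reads $r\int_{\partial^+ B_r^+(X_i')}\bigl(|\partial_r v_i|^2-r^{-2}|\partial_\theta v_i|^2\bigr)=2\int_{B_r^+(X_i')}\tau(v_i)\cdot\bigl((X-X_i')\cdot\nabla v_i\bigr)$. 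After dividing by $r$ and integrating over the neck, the error is $O\bigl(\delta\,\|\tau(v_i)\|_{L^2}\|\nabla v_i\|_{L^2}\bigr)$, which is what you need.
\end{itemize}
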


The following version of small energy regularity lemma for free boundary approximate harmonic maps has been proved in \cite[Lemma 4.1]{JLZ}.

\begin{lem}[\cite{JLZ}]\label{free boundary small energy regularity} Let $N$ and $\tilde N$ be as in Lemma \ref{free boundary bubbling}. 
Let $v \in H^2( B_2^+(0); \tilde N)$ be a map with tension field $\tau(v) \in L^2(B_2^+(0))$ and with free boundary $v (\partial^0 B_2^+(0))$ on $N$. There exists $\overline{\eps} = \overline{\eps} (\tilde N, N) > 0$ such that if $$\| \nabla v \|_{L^2 (B_2^+(0))} + \| \tau (v) \|_{L^2 (B_2^+(0))} \leq \overline{\eps},$$ then $$ \left\| v - \frac{1}{|B_1^+(0)|} \int_{B_1^+(0)} v \right\|_{H^2 (B_{\frac 1 2}^+(0))} \leq C(\tilde N, N) (\| \nabla v \|_{L^2 (B_1^+(0))} + \| \tau (v) \|_{L^2 (B_1^+(0))}).$$
\end{lem}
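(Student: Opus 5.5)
The plan is a direct weighted $H^2$ estimate, exploiting two facts: the domain is two dimensional, so $L^4$ norms of $\nabla v$ can be interpolated between $L^2$ and $H^1$, and the smallness of $\|\nabla v\|_{L^2}$ lets us absorb the quadratic nonlinearities. Since $v$ takes values in $\tilde N\subset\R^L$ with tension field $\tau(v)$, the Euclidean Laplacian satisfies
\begin{equation}
\Delta v = A_{\tilde N}(v)(\nabla v,\nabla v)+\tau(v) \quad \text{in } B_2^+(0),
\end{equation}
so that $|\Delta v|\le C|\nabla v|^2+|\tau(v)|$. On $\partial^0 B_2^+(0)$ we have $v\in N$, hence $\partial_x v\in T_vN$, together with the free boundary condition $\partial_y v\perp T_vN$. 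Everything will be localised with a cut-off $\varphi\in C_c^\infty(B_1)$, $\varphi\equiv1$ on $B_{1/2}$; since $\varphi$ does not vanish on the flat boundary, only the boundary term needs care.

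\textbf{Step 1 (tangential second derivatives).} Test the equation against $-\partial_x(\varphi^2\partial_x v)$, in practice via difference quotients in $x$, since $v$ is only $H^2$. Integrating by parts gives
\begin{equation}
\int \varphi^2|\nabla\partial_x v|^2 \le C\int|\nabla\varphi|^2|\nabla v|^2 + C\int\varphi^2\big(|\nabla v|^2+|\tau|\big)|\nabla^2 v| + \Big|\int_{\partial^0}\partial_y v\cdot\partial_x(\varphi^2\partial_x v)\,dx\Big|.
\end{equation}
In the boundary term, $\partial_y v$ is normal to $T_vN$. The derivative of the tangent field $\partial_x v$ along the curve $v(\cdot,0)\subset N$ splits into a tangential part plus $\mathrm{II}_N(\partial_xv,\partial_xv)$. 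Only the second fundamental form survives, so the boundary term is bounded by $C\int_{\partial^0}\varphi^2|\nabla v|^3\,dx$.

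\textbf{Step 2 (normal derivative and absorption).} The missing second derivative comes from the equation: $\partial_{yy}v=\Delta v-\partial_{xx}v$, so $|\partial_{yy}v|\le|\partial_{xx}v|+C|\nabla v|^2+|\tau|$. Hence $\int\varphi^2|\nabla^2 v|^2$ is controlled by the right-hand side of Step 1 plus $\int\varphi^2(|\nabla v|^4+|\tau|^2)$. Two estimates handle the nonlinear terms:
\begin{itemize}
\item the two-dimensional Ladyzhenskaya inequality, $\|\varphi\nabla v\|_{L^4}^2\le C\|\nabla v\|_{L^2}\big(\|\varphi\nabla^2v\|_{L^2}+\|\nabla v\|_{L^2}\big)$;
\item the trace bound $\int_{\partial^0}g\,dx\le\int|\partial_y g|$, applied to $g=\varphi^2|\nabla v|^3$ (after extending $\varphi$ trivially in $y$), which reduces the boundary cubic term to $\int\varphi^2|\nabla v|^2|\nabla^2 v|+\int|\nabla\varphi|\,\varphi\,|\nabla v|^3$.
\end{itemize}
Both resulting terms are bounded by $C\|\varphi\nabla v\|_{L^4}^2\|\varphi\nabla^2v\|_{L^2}$ plus lower-order terms. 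When $\|\nabla v\|_{L^2(B_2^+)}\le\overline\eps$, all cubic and quartic contributions are absorbed into the left-hand side, leaving
\begin{equation}
\|\nabla^2 v\|_{L^2(B_{1/2}^+)}\le C\big(\|\nabla v\|_{L^2(B_1^+)}+\|\tau\|_{L^2(B_1^+)}\big).
\end{equation}
The lower-order parts of the $H^2$ norm of $v-\bar v$, where $\bar v$ is the mean over $B_1^+(0)$, then follow from Poincaré's inequality on $B_1^+(0)$.

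The \textbf{main obstacle} is the boundary integral. It must produce only the second fundamental form term, which requires a rigorous justification of the integration by parts at $H^2$ regularity (through difference quotients) and the bookkeeping of $\varphi$ on $\partial^0$. It must also be shown to be absorbable, and this is exactly where the trace bound and the dimension two are used. If this step proves delicate, I would fall back on reflecting $v$ across $N$ with a local involution of a tubular neighbourhood of $N$ that fixes $N$, in the spirit of Scheven. The reflected map solves an interior elliptic system with bounded coefficients on $B_2(0)$, and an interior $\eps$-regularity argument would then apply. A drawback is that this needs the image of $v$ to lie near $N$, which in turn would have to be obtained first from the small energy through a Courant--Lebesgue type argument.
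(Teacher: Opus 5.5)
The paper does not prove this lemma; it quotes it from \cite[Lemma 4.1]{JLZ}, so there is no in-paper argument to compare with. Your direct argument (tangential second derivatives, then $\partial_{yy}v$ from the equation) is correct in substance and gives a self-contained proof, once the absorption in Step 2 is repaired as described in the second paragraph.

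Compared with the reflection across $N$ that you keep as a fallback, it needs much less. The only geometric input on $\partial^0 B_1^+$ is the global inequality
\begin{equation*}
|(q-p)\cdot\nu|\le C_N|q-p|^2|\nu|, \qquad p,q\in N,\ \nu\perp T_pN,
\end{equation*}
which holds on the compact submanifold $N$ with no smallness of $|q-p|$. So you never need $v$ to map into a tubular neighbourhood of $N$. The Courant--Lebesgue step you anticipate for the reflection route, which small Dirichlet energy alone does not supply uniformly, simply does not arise. What the reflection route would buy instead is the ability to quote interior $\eps$-regularity once the reflected system is set up.

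The same inequality is what makes your Step 1 rigorous. Test with $-D_{-h}(\varphi^2 D_h v)$, where $D_h$ is the $x$-difference quotient. Since both $v(x+h)-v(x)$ and $v(x)-v(x-h)$ are differences based at $p=v(x)$, the boundary integrand $\partial_y v(x)\cdot D_{-h}(\varphi^2 D_h v)(x)$ is bounded by $C|\partial_y v(x)|\big(\varphi^2(x)|D_hv(x)|^2+\varphi^2(x-h)|D_hv(x-h)|^2\big)$. Because $v\in H^2$, the traces of $D_hv$ converge to $\partial_xv$ in $H^{1/2}$, hence in $L^3_{loc}$. In the limit you get exactly $C\int_{\partial^0}\varphi^2|\partial_y v||\partial_x v|^2\,dx$.

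One point in Step 2 is not right as written: the nonlinear terms cannot be absorbed into $\int\varphi^2|\nabla^2 v|^2$ with a single cut-off. Any splitting of $\int\varphi^2|\nabla v|^2|\nabla^2 v|$ that keeps $\|\varphi\nabla^2 v\|_{L^2}$ leaves $\|\varphi^{1/2}\nabla v\|_{L^4}^2$. Ladyzhenskaya controls that only through $\|\varphi^{1/2}\nabla^2 v\|_{L^2}$, which the left-hand side does not dominate; the same happens for $\int\varphi|\nabla\varphi||\nabla v|^3$.

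The standard repair works over nested half-balls. For $\tfrac12\le s<t\le 1$, take $\varphi\equiv1$ on $B_s$, supported in $B_t$, with $|\nabla\varphi|\le 2/(t-s)$, and apply Ladyzhenskaya on the whole half-ball $B_t^+$. You obtain
\begin{equation*}
f(s)\le \theta f(t)+\frac{A}{(t-s)^2}+B,\qquad f(r):=\|\nabla^2 v\|_{L^2(B_r^+)}^2,
\end{equation*}
with $\theta\le\tfrac12$ once $\overline{\eps}$ is small and $A+B\le C\big(\|\nabla v\|_{L^2(B_1^+)}+\|\tau(v)\|_{L^2(B_1^+)}\big)^2$. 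Since $f$ is bounded (as $v\in H^2$), the usual iteration lemma gives $f(\tfrac12)\le C(A+B)$. With this and Poincar\'e for the lower-order terms, your proof is complete.
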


The next lemma provides a local energy monotonicity formula for the limit map $U$.

\begin{lem}\label{energy decay}
For $0 < \frac{1}{T} < s < t < T < \infty$, $0 < r < R \leq \overline R < \infty$ and $X_0 \in \partial \R^2_+$, if $U$ is smooth in $\overline{B_{\overline R}^+ (X_0)} \times [s, t]$, then we have $$ \int_{B_r^+ (X_0)} | \nabla_X U(\cdot, t)|^2 \leq \int_{B_R^+ (X_0)} | \nabla_X U(\cdot, s)|^2 + C( \overline R, T) \frac{t - s}{(R - r)^2}.$$
In particular, if $2 r \leq \overline R$, then it holds that $$ \int_{B_r^+ (X_0)} | \nabla_X U(\cdot, t)|^2 \leq \int_{B_{2r}^+ (X_0)} | \nabla_X U(\cdot, s)|^2 + C( \overline R, T) \frac{t - s}{r^2}.$$
\end{lem}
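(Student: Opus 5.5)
We prove the estimate with a localized energy identity for the limit map $U$ on the smooth region. Fix a cut-off $\phi \in C_c^\infty(B_R(X_0))$ with $0 \le \phi \le 1$, $\phi \equiv 1$ on $B_r(X_0)$ and $|\nabla \phi| \le \frac{2}{R-r}$. Note that $\phi$ is not required to vanish on $\partial \R^2_+$.

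On $\overline{B_{\overline R}^+(X_0)} \times [s,t]$, $U$ is smooth. It solves $\partial_t U = \Delta_X U$ in the interior, and on the flat boundary $U \in N$ with $-\partial_y U \perp T_U N$. Differentiating $\frac12\int_{\RNp} |\nabla_X U|^2 \phi^2$ in time and integrating by parts gives
\begin{align*}
\frac{d}{d\tau}\frac12\int_{\RNp} |\nabla_X U|^2\phi^2 = -\int_{\RNp} |\partial_t U|^2\phi^2 - 2\int_{\RNp} \phi\,\langle \partial_t U, (\nabla\phi\cdot\nabla_X) U\rangle - \int_{\partial\RNp} \phi^2 \langle \partial_y U, \partial_t U\rangle.
\end{align*}
The boundary term vanishes: since $U(\cdot,\tau)|_{\partial\RNp} \in N$, we have $\partial_t U \in T_U N$ there, and $\partial_y U \perp T_U N$. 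Young's inequality then yields
\begin{align*}
\frac{d}{d\tau}\frac12\int_{\RNp} |\nabla_X U|^2\phi^2 \le 2\int_{\RNp} |\nabla_X U|^2 |\nabla\phi|^2 \le \frac{8}{(R-r)^2}\int_{B_{\overline R}^+(X_0)} |\nabla_X U(\cdot,\tau)|^2.
\end{align*}
This is the limit analogue of Lemma \ref{local_energy_ineq1}. Alternatively one could pass to the limit in that lemma directly, but then the defect $\nu$ must be controlled, so working with $U$ directly is cleaner.

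The remaining step, and the only real obstacle, is a bound on $\sup_{\tau\in[1/T,T]}\int_{B_{\overline R}^+(X_0)} |\nabla_X U(\cdot,\tau)|^2$ by a constant $C(\overline R, T)$. This follows from the uniform local $L^\infty_t H^1_X$ bound on $(U_\eps)$ in Remark \ref{remark lem-mono}. That bound comes from \eqref{eq:equivalence of energies} and \eqref{eq:scale_inv_bds}, applied with base points pushed forward in time so that the Gaussian weight is bounded below on $B_{\overline R}^+(X_0)$ by $c(\overline R, T)$. Lower semicontinuity then transfers it to $U$: $\nabla_X U_\eps(\cdot,\tau) \rightharpoonup \nabla_X U(\cdot,\tau)$ weakly in $L^2$, which holds for every $\tau$ by the $C^2_{loc}$ interior convergence together with the uniform bound.

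Integrating the differential inequality from $s$ to $t$ and using $\phi \equiv 1$ on $B_r(X_0)$ and $\phi \le 1$ gives the first inequality, after absorbing the factor $2$ by working with $\frac12$-energies or rescaling constants. The second inequality is the special case $R = 2r \le \overline R$.
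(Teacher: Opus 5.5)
Your proof is correct and follows essentially the same route as the paper: test the heat equation against $\phi^2\partial_t U$, remove the boundary term using $\partial_t U\in T_UN$ and $\partial_y U\perp T_UN$, apply Young's inequality, and control the remaining gradient term with the local uniform $L^\infty_t H^1_X$ bound of Remark \ref{remark lem-mono}. Your extra step, transferring that bound from $U_\eps$ to $U$ by weak lower semicontinuity (Fatou on the interior convergence), is a justification the paper leaves implicit.
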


\begin{proof}
Recall that $U$ satisfies
\begin{equation}\left\{
\begin{array}{ll}
    \partial_t U - \Delta_X U = 0, &\text{ in } \R^2_+ \times (0, \infty),  \\
    \rule{0cm}{0.5cm} \partial_y U \perp T_U N, &\text{ on } \partial \R^2_+ \times (0, \infty).
\end{array}\right.
\end{equation}
Let  $\phi \in C_c^{\infty} \left( \R^2 \right)$ be a cut-off function such that $\phi \equiv 1$ on $B_r (X_0)$, $\phi \equiv 0$ outside $B_R (X_0)$, $0 \leq \phi \leq 1$ and $| \nabla \phi | \leq \frac{2}{R - r}$.
Testing the heat equation against $\phi^2 \partial_t U$, integrating over $\mathbb{R}^2_+$, and applying integration by parts together with the boundary condition $\partial_y U \perp T_U N$ (which yields $\partial_t U \cdot \partial_y U = 0$ on $\partial \mathbb{R}^2_+$ since $\partial_t U \in T_U N$), we obtain
\begin{align*}
0 =& \int_{\R^2_+} |\partial_t U|^2 \phi^2 - \int_{\R^2_+} \phi^2 \: \partial_t U \cdot \D_X U\\
=& \int_{\R^2_+} |\partial_t U|^2 \phi^2 + \frac 1 2 \frac{d}{dt} \int_{\R^2_+} \phi^2 |\nabla_X U|^2 + 2 \int_{\R^2_+} \phi \: \partial_t U \cdot (\nabla \phi \cdot \nabla_X) U + \int_{\partial \R^2_+} \phi^2 \: \partial_t U \cdot \partial_y U\\
\geq& \int_{\R^2_+} |\partial_t U|^2 \phi^2 + \frac 1 2 \frac{d}{dt} \int_{\R^2_+} \phi^2 |\nabla_X U|^2 - \int_{\R^2_+} |\partial_t U|^2 \phi^2 - C \| \nabla \phi \|_{L^{\infty}}^2 \int_{B_{\overline R}^+(X_0)} |\nabla_X U|^2 \\
\geq& \frac 1 2 \frac{d}{dt} \int_{\R^2_+} \phi^2 |\nabla_X U|^2 - \frac{C( \overline R, T)}{(R - r)^2},
\end{align*}
where we have used a local uniform $L^{\infty}_t H^1_X$ bound from the Remark \ref{remark lem-mono} for the last inequality. Integrating with respect to time from $s$ to $t$ and applying the definition of $\phi$, we arrive at the desired energy inequality. The final statement of the lemma follows immediately upon setting $R = 2r$.  
\end{proof}

\begin{remark}\label{remark:energy decay}
    It is worth noting that in the set up of Lemma \ref{energy decay} we have in fact derived the following identity $$ \int_{\R^2_+} |\partial_t U|^2 \phi^2 + \frac 1 2 \frac{d}{dt} \int_{\R^2_+} \phi^2 |\nabla_X U|^2 + 2 \int_{\R^2_+} \phi \: \partial_t U \cdot (\nabla \phi \cdot \nabla_X) U = 0.$$
    Integrating this from $s$ to $t$, we obtain the following finer version of Lemma \ref{energy decay} $$|E_{\phi^2} (U(t)) - E_{\phi^2} (U(s))| \leq C \int_s^t \int_{B_{\overline R}^+ (X_0)} |\partial_t U|^2 + C(\overline R, T) \frac{t - s}{(R - r)^2}.$$
\end{remark}

\begin{lem}\label{existence of limiting energy}
Suppose $Z_0 = (X_0, t_0)$ is an isolated point of $\Sigma$. If $\delta > 0$ is such that $P_{2 \delta}(Z_0) \cap \Sigma = \{ Z_0 \}$, then $\lim_{t \nearrow t_0} \int_{B_{\delta}^+(X_0)} \left| \nabla_X U(\cdot, t) \right|^2$ exists.
\end{lem}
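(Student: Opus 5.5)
The plan is to show that the function $g(t) := \int_{B_{\delta}^+(X_0)} |\nabla_X U(\cdot,t)|^2$ is, up to an arbitrarily small error, a monotone function of $t$ on a left neighbourhood of $t_0$. This yields a Cauchy criterion as $t \nearrow t_0$. The tool is the localized energy identity of Remark \ref{remark:energy decay}, which is available because $U$ is smooth away from $\Sigma$. Indeed, since $P_{2\delta}(Z_0)\cap\Sigma=\{Z_0\}$, Proposition \ref{configuration of Sigma} gives $U \in C^\infty$ on $\overline{B_{2\delta'}^+(X_0)}\times[t_0-\delta'^2, t]$ for every $t<t_0$ and some fixed $\delta' \in (\delta, 2\delta)$, say $\delta' = 3\delta/2$ after shrinking slightly. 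So for $s<t<t_0$ (with $t_0 - s$ small) the identity of Remark \ref{remark:energy decay} holds with a cut-off $\phi$ supported in $B_{\delta'}(X_0)$.

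The first step is a uniform space-time bound on $\partial_t U$ near $Z_0$. From the third inequality of Lemma \ref{local_energy_ineq1} together with the uniform local energy bound from \eqref{eq:scale_inv_bds} and Remark \ref{remark lem-mono}, the quantity $\int_{P_{\delta'}^+(Z_0)}|\partial_t U_\eps|^2$ is bounded uniformly in $\eps$. Lower semicontinuity then gives $\partial_t U \in L^2(B_{\delta'}^+(X_0)\times(t_0-\delta'^2,t_0))$. Consequently, for any fixed cut-off $\phi$ with $0\le\phi\le1$, the identity
\begin{equation*}
\frac12\frac{d}{dt}\int \phi^2|\nabla_X U|^2 = -\int\phi^2|\partial_t U|^2 - 2\int \phi\,\partial_t U\cdot(\nabla\phi\cdot\nabla_X)U
\end{equation*}
has an integrable right-hand side on $(t_0-\delta'^2,t_0)$. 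This uses Cauchy--Schwarz and the uniform $L^\infty_t H^1_X$ bound on $U$, inherited from Remark \ref{remark lem-mono}. Therefore $t\mapsto E_{\phi^2}(U(t))$ is absolutely continuous with integrable derivative, and $\lim_{t\nearrow t_0}E_{\phi^2}(U(t))$ exists for every such $\phi$.

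The second step passes from smooth cut-offs to the sharp indicator of $B_\delta^+(X_0)$, and this is the main obstacle. If $\phi_k$ approximates $\mathbf 1_{B_\delta}$ from inside and from outside, the difference between $g(t)$ and $E_{\phi_k^2}(U(t))$ is controlled by the energy of $U(t)$ in a thin annulus $A_k = B_{\delta+1/k}^+\setminus B_{\delta-1/k}^+(X_0)$. Because $\overline{A_k}\times[t_0-\delta'^2,t_0]$ avoids $\Sigma$ (the only singular point is $Z_0$, located at the centre), I would invoke Lemma \ref{eps_gradient_est}, or equivalently the smoothness of $U$ up to and including time $t_0$ away from $X_0$. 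This gives a uniform bound $|\nabla_X U|\le C$ on the annulus region for all $t$ near $t_0$. Then the annulus energy is at most $C|A_k|\to0$ uniformly in $t$. Combining this with the first step gives $\limsup_{t\nearrow t_0} g - \liminf_{t\nearrow t_0} g \le 2C|A_k|\to0$, which proves that the limit exists. The delicate point is justifying the uniform gradient bound near $\partial B_\delta^+(X_0)\times\{t_0\}$; this relies on the compactness of that set and on its disjointness from $\Sigma$.
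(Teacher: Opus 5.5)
Your proof is correct and is essentially the paper's first proof. In both, a localized energy identity with a smooth cut-off, together with $\partial_t U\in L^2$ near $Z_0$, makes $E_{\phi^2}(U(t))$ Cauchy as $t\nearrow t_0$, and the sharp ball $B_\delta^+(X_0)$ is then recovered from the smoothness of $U$ on compact subsets of $P^+_{2\delta}(Z_0)\setminus\{Z_0\}$ up to time $t_0$. The differences are minor: the paper takes one cut-off $\phi\equiv1$ on $B_\delta$, subtracts the convergent annulus energy, and gets the smooth-cut-off estimate by letting $\eps\to0$ in Lemma \ref{local_energy_ineq1}, whereas you squeeze between inner and outer cut-offs and apply Remark \ref{remark:energy decay} to $U$ directly. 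One harmless slip: with $\delta'=3\delta/2$ the ball $B_{2\delta'}(X_0)$ is larger than $B_{2\delta}(X_0)$, so the hypothesis does not give smoothness there, but you only need it on the support of $\phi\subset B_{\delta'}(X_0)$, where it does hold.
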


We provide two proofs of this result. The first one is the shorter one, whereas certain estimates  obtained  in  the second one will be used in the proof of Theorem \ref{energy identity_intro}.  

\begin{proof}
Fix a smooth cut-off function  $\phi \in C_c^{\infty} (\R^2)$ satisfying  $\phi \equiv 1$ on $B_{\delta} (X_0), \, \phi \equiv 0$ outside $B_{\frac{3 \delta}{2}} (X_0)$ and $0 \leq \phi \leq 1$. For any $\eps > 0$ and $t_0 - \delta^2 < s < t < t_0$, using Lemma \ref{local_energy_ineq1} for $U_{\eps}$ we have 
\begin{align*}
\left| E^{\eps}_{\phi^2} (U_{\eps} (t)) - E^{\eps}_{\phi^2} (U_{\eps} (s)) \right| &\leq 2 \int_s^t \int_{\R^2_+} |\partial_t U_{\eps}|^2 \phi^2 + C \int_s^t \int_{\R^2_+} |\nabla_X U_{\eps}|^2 |\nabla \phi|^2\\
&\leq 2 \int_s^t \int_{\R^2_+} |\partial_t U_{\eps}|^2 \, \phi^2 + C(\phi) |t - s|,
\end{align*}
where the second inequality follows from the  local uniform $L^{\infty}_t H^1_X$ bound in  Remark \ref{remark lem-mono}. Since $\Sigma \cap \left( B_{2 \delta} (X_0) \times (t_0 - \delta^2, t_0) \right) = \emptyset$,  by  Proposition \ref{convergence of approximation outside singular set general target} we may pass to the limit as $\eps \to 0$ to obtain    \begin{align}\left| E_{\phi^2} (U (t)) - E_{\phi^2} (U (s)) \right| \leq 2 \int_s^t \int_{\R^2_+} |\partial_t U |^2 \, \phi^2 + C(\phi) |t - s| \xrightarrow{s,t\,\uparrow t_0} 0    .\end{align}
This and the fact that $U$ is smooth in $P_{2 \delta}^+ (Z_0) \backslash \{ Z_0 \}$ yields 
\begin{align*}
\Bigg| \int_{B_{\delta}^+(X_0)} \left| \nabla_X U(\cdot, t) \right|^2 &- \int_{B_{\delta}^+(X_0)} \left| \nabla_X U(\cdot, s) \right|^2 \Bigg| \leq \left| \int_{\R^2_+} \left| \nabla_X U(\cdot, t) \right|^2 \phi^2 - \int_{\R^2_+} \left| \nabla_X U(\cdot, s) \right|^2  \phi^2 \right|\\
&+ \left| \int_{B_{2 \delta}^+ (X_0) \setminus B_{\delta}^+ (X_0)} \left| \nabla_X U(\cdot, t) \right|^2 \phi^2 - \int_{B_{2 \delta}^+ (X_0) \setminus B_{\delta}^+ (X_0)} \left| \nabla_X U(\cdot, s) \right|^2  \phi^2 \right|\\
&  \xrightarrow{s,t\,\uparrow t_0} 0   . 
\end{align*}

\medskip

\noindent\textit{Second proof.}
This proof  is in the spirit of  of \cite[Proposition 2.1]{Q}.

Let $\delta > 0$ be as in the statement of the lemma. Suppose $t_n \nearrow t_0$ and $s_n \nearrow t_0$ are two sequences such that for some constants $A$ and $B$ we have $$\limsup_{t \nearrow t_0} \int_{B_{\delta}^+(X_0)} \left| \nabla_X U(\cdot, t) \right|^2 = \lim_{n \to \infty} \int_{B_{\delta}^+(X_0)} \left| \nabla_X U(\cdot, t_n) \right|^2 = A + \int_{B_{\delta}^+(X_0)} \left| \nabla_X U(\cdot, t_0) \right|^2,  $$ and $$\liminf_{t \nearrow t_0} \int_{B_{\delta}^+(X_0)} \left| \nabla_X U(\cdot, t) \right|^2 = \lim_{n \to \infty} \int_{B_{\delta}^+(X_0)} \left| \nabla_X U(\cdot, s_n) \right|^2 = B + \int_{B_{\delta}^+(X_0)} \left| \nabla_X U(\cdot, t_0) \right|^2.$$ It follows directly from  Fatou's lemma that $0\leq B\leq A$.   To establish the existence of the limit, it suffices   to show that $B \geq A$. The case $A = 0$ being immediate, we henceforth assume $A > 0$.

Given any $i \in \Zp$, consider $\lambda_i := \frac{\delta}{i}$. For each $i \in \Zp$, the smoothness of $U$ outside $\Sigma$ guarantees the existence of $n_i \in \Zp$ satisfying $$ \int_{B_{\delta}^+(X_0) \setminus B_{\lambda_i}^+(X_0)} \left| |\nabla_X U(\cdot, t_{n_i})|^2 - |\nabla_X U(\cdot, t_0)|^2 \right| < \frac{A}{2i},$$ and  $$ \int_{B_{\delta}^+(X_0)} |\nabla_X U(\cdot, t_{n_i})|^2 - \int_{B_{\delta}^+(X_0)} |\nabla_X U(\cdot, t_0)|^2 \geq A - \frac{A}{2i}.$$
Therefore it holds that
\begin{align*}
\int_{B_{\lambda_i}^+(X_0)} |\nabla_X U(\cdot, t_{n_i})|^2 &= \int_{B_{\delta}^+(X_0)} |\nabla_X U(\cdot, t_{n_i})|^2 - \int_{B_{\delta}^+(X_0) \setminus B_{\lambda_i}^+(X_0)} |\nabla_X U(\cdot, t_{n_i})|^2\\
&\geq A + \int_{B_{\delta}^+(X_0)} |\nabla_X U(\cdot, t_0)|^2 - \frac{A}{2i} - \int_{B_{\delta}^+(X_0) \setminus B_{\lambda_i}^+(X_0)} |\nabla_X U(\cdot, t_0)|^2 - \frac{A}{2i}\\
&\geq \left( \frac{i - 1}{i} \right) A.
\end{align*}
Without loss of generality we may assume that the sequence $(n_i)$ is strictly increasing and $s_1 \leq t_1 \leq s_2 \leq t_2 \leq \cdots$. Then for any $\alpha \in (0, \delta)$, applying the estimate from Remark \ref{remark:energy decay} we obtain
$$\int_{B_{\alpha}^+ (X_0)} |\nabla_X U(\cdot, s_{n_i})|^2 \geq \int_{B_{\lambda_i}^+(X_0)} |\nabla_X U(\cdot, t_{n_i})|^2 + o_i(1)$$ holds for all $i$ large enough so that $\lambda_i < \alpha$.
Combining the above estimates, we find
\begin{align*}
\lim_{i \to \infty} \int_{B_{\delta}^+ (X_0)} | \nabla_X U (\cdot, s_{n_i}) |^2 &= \lim_{i \to \infty} \left( \int_{B_{\delta}^+ (X_0) \setminus B_{\alpha}^+ (X_0)} + \int_{B_{\alpha}^+ (X_0)} \right) | \nabla_X U (\cdot, s_{n_i}) |^2\\
&\geq \int_{B_{\delta}^+ (X_0) \setminus B_{\alpha}^+ (X_0)} |\nabla_X U (\cdot, t_0)|^2 + \limsup_{i \to \infty} \int_{B_{\lambda_i}^+ (X_0)} |\nabla_X U (\cdot, t_{n_i}) |^2\\
&\geq \int_{B_{\delta}^+ (X_0) \setminus B_{\alpha}^+ (X_0)} |\nabla_X U (\cdot, t_0)|^2 + A.
\end{align*}
Now we are done by the arbitrariness of $\alpha$.
\end{proof}

\begin{remark}\label{remark: energy concentration}
    Using the same notations as in Lemma \ref{existence of limiting energy} it follows from the above proof that if $$ \lim_{t \nearrow t_0} \int_{B_{\delta}^+ (X_0)} |\nabla_X U(\cdot, t)|^2 = A + \int_{B_{\delta}^+ (X_0)} |\nabla_X U(\cdot, t_0)|^2,$$ for some $A > 0$, then we can find sequences $t_i \nearrow t_0$ and $\lambda_i \searrow 0$ satisfying $$ \int_{B_{\lambda_i}^+ (X_0)} |\nabla_X U(\cdot, t_i)|^2 \geq \left( \frac{i - 1}{i} \right) A.$$
\end{remark}

\begin{lem}\label{fwd_bubble_exist}
    Suppose $Z_0 = (X_0, t_0)$ is an isolated point of $\Sigma$. Let   $ \delta > 0$ be such that  $P_{2 \delta} (Z_0) \cap \Sigma = \{ Z_0 \}$. If  $$ \lim_{t \nearrow t_0} \int_{B_{\delta}^+ (X_0)} |\nabla_X U (\cdot, t)|^2 = A + \int_{B_{\delta}^+ (X_0)} |\nabla_X U (\cdot, t_0)|^2,$$  for some constant $A$,  then   $A$ must be positive. 
\end{lem}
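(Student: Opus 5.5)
We argue by contradiction: assume $A = 0$, i.e.\ $\lim_{t\nearrow t_0}\int_{B_\delta^+(X_0)}|\nabla_X U(\cdot,t)|^2 = \int_{B_\delta^+(X_0)}|\nabla_X U(\cdot,t_0)|^2$. The goal is to show that $Z_0\notin\Sigma$. Since $\Sigma$ is defined by concentration of the weighted energy $\E_\eps$, we will prove that $\liminf_{\eps\to0}\E_\eps(U_\eps,Z_0,r)<\eps_0^2$ for some small $r$.

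\textbf{Step 1: no concentration for $U$ at small scales as $t\nearrow t_0$.} Every $Z\ne Z_0$ in $P_{2\delta}(Z_0)$ is regular, so $U$ is smooth on $\overline{B_\delta^+(X_0)}\times[t_0-\delta^2,t_0]$ away from $Z_0$. This gives
\[
\int_{B_\delta^+\setminus B_\alpha^+}|\nabla U(t)|^2 \to \int_{B_\delta^+\setminus B_\alpha^+}|\nabla U(t_0)|^2
\]
for each fixed $\alpha$. Combining this with $A=0$, we get
\[
\limsup_{t\nearrow t_0}\int_{B_\alpha^+(X_0)}|\nabla_X U(t)|^2\le \int_{B_\alpha^+(X_0)}|\nabla_X U(t_0)|^2 = o_\alpha(1).
\]
Hence for any $\theta>0$ there is $\alpha>0$ with $\sup_{t\in(t_0-\alpha^2,t_0)}\int_{B_{\alpha}^+(X_0)}|\nabla_X U(t)|^2<\theta$. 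Since $m=1$, this is scale invariant: $r^{-(m-1)}=1$.

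\textbf{Step 2: transfer smallness to $U_\eps$ and to $\mathcal D_\eps$.} By Proposition \ref{configuration of Sigma}, $\nu$ and $\eta$ are supported in $\Sigma$, so in $P_{2\delta}(Z_0)$ they are concentrated on $\{Z_0\}$. The measure $\nu$ (and, by Proposition \ref{time density of mu}, $\mu^t$ for $t<t_0$) therefore carries no mass on the slices $t<t_0$. More precisely, for $t\in(t_0-\alpha^2,t_0)$ we have $\mu^t=\frac12|\nabla U(t)|^2dX$ on $B_{3\alpha/2}^+(X_0)$. Indeed, Proposition \ref{convergence of approximation outside singular set general target} together with the $C^{1,\alpha}$ convergence off $\Sigma$ gives $e(U_\eps)(\cdot,t)\to\frac12|\nabla U(t)|^2$ locally uniformly on compact subsets of the slice, and the only possible loss is at the single point $(X_0,t_0)$, which lies outside the slice.

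Now take $r$ small and the slice $t=t_0-r^2$. Proposition \ref{energy_tail_est} (the $\mathcal D_\eps$ version) gives
\[
\mathcal D_\eps(U_\eps,Z_0,r)\le C\int_{\overline{B^+_{Mr}(X_0)}} e(U_\eps)(\cdot,t_0-r^2)\,dX+\tau .
\]
Letting $\eps\to0$ and choosing $Mr<\alpha$, this is at most $C\theta+\tau$. A small subtlety: $\overline{B^+_{Mr}}$ touches $X_0$, but at time $t_0-r^2<t_0$ this is a regular point, so the convergence holds. By \eqref{eq:equivalence of energies}, $\E_\eps(U_\eps,Z_0,r)\le 2\mathcal D_\eps(U_\eps,Z_0,2r)$, so $\liminf_\eps \E_\eps(U_\eps,Z_0,r)\le C\theta+2\tau<\eps_0^2$ once $\theta,\tau$ are small. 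Hence $Z_0\notin\Sigma$, a contradiction, and $A>0$. (We know $A\ge0$ by Fatou, as in the second proof of Lemma \ref{existence of limiting energy}.)

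\textbf{Main obstacle.} The delicate point is the passage from the bound on $\int|\nabla U(t)|^2$ over $B_\alpha^+$ to the bound on $\int_{B_{Mr}^+}e(U_\eps)(t_0-r^2)$ in the limit. This requires that $\mu^{t}$ has no atom or singular part at time slices $t<t_0$ near $X_0$, including the boundary term $\eps^{-2}F(U_\eps)\to0$. I would handle it with the smooth convergence of Lemma \ref{eps_gradient_est} on compact subsets of $P_{2\delta}(Z_0)\setminus\{Z_0\}$, which contain the whole slice $\overline{B_{Mr}^+(X_0)}\times\{t_0-r^2\}$. The Gaussian tail is controlled uniformly by Proposition \ref{energy_tail_est}, and the region outside $B_{2\delta}$ is absorbed into $\tau$ by choosing $M$ first and then $r$ small.
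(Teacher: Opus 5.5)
Your proposal is correct and follows essentially the same route as the paper. Assuming $A=0$, you get uniform-in-time smallness of $\int_{B_\alpha^+(X_0)}|\nabla_X U(\cdot,t)|^2$ for $t<t_0$, localize the Gaussian-weighted energy via Proposition \ref{energy_tail_est}, pass to the limit using the strong convergence of $U_\eps$ away from $\Sigma$, and contradict $Z_0\in\Sigma$. The only cosmetic difference is that you bound the single-slice quantity $\mathcal{D}_\eps$ and then invoke \eqref{eq:equivalence of energies}, whereas the paper applies the time-integrated version of Proposition \ref{energy_tail_est} to $\E_\eps$ directly.
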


\begin{proof}
Fatou's lemma yields that $A \geq 0$. For the sake of contradiction,  assume  that $A = 0$.
By lowering the value of $\delta$ if necessary, we may assume without loss of generality that $$\int_{B_{\delta}^+ (X_0)} |\nabla_X U|^2 (\cdot, t_0) \,dX < \frac{\eps_0^2}{C_1},$$ where $C_1$ is to be determined later. As $A = 0$, we can find $r_1 > 0 $ so that $$\int_{B_{\delta}^+ (X_0)} |\nabla_X U|^2 (\cdot, t) \,dX \leq \int_{B_{\delta}^+ (X_0)} |\nabla_X U|^2 (\cdot, t_0) \,dX + \frac{\eps_0^2}{C_1} < \frac{2 \eps_0^2}{C_1}, \quad \forall t \in (t_0 - 4 r_1^2, t_0).$$
Applying Proposition \ref{energy_tail_est} we obtain $M > 0$ such that for each $r \in (0, r_1)$,
\begin{align*}
\liminf_{\eps \to 0} \E_{\eps} (U_{\eps}, Z_0, r) \leq \frac{C}{r^2} \int_{t_0 - 4 r^2}^{t_0 - r^2} \int_{B_{Mr}^+ (X_0)} |\nabla_X U|^2 dXdt + \frac{\eps_0^2}{2},
\end{align*}
thanks to Proposition \ref{convergence of approximation}. Therefore for sufficiently small $r \in (0, r_1)$ we obtain  
\begin{align*}
    \liminf_{\eps \to 0} \E_{\eps} (U_{\eps}, Z_0, r) \leq& \frac{C}{r^2} \int_{t_0 - 4 r^2}^{t_0 - r^2} \int_{B_{\delta}^+ (X_0)} |\nabla_X U|^2 dXdt + \frac{\eps_0^2}{2}\\
    \leq& \frac{C \eps_0^2}{C_1} + \frac{\eps_0^2}{2} < \eps_0^2,
\end{align*}
provided $C_1$ is sufficiently large. But this is a contradiction as $Z_0 \in \Sigma$. This completes the proof.
\end{proof}

With these results at hand, we are   now in a position to establish  the energy identity at an isolated singular point in our context.

\begin{proof}[Proof of Theorem \ref{energy identity_intro}] Let $U$ be the weak limit of $(U_\eps)$ in $H^1_{loc}(\overline{\R^{2}_+}\times(0,\infty) )$. Assume that  $Z_0=(X_0,t_0)\in\Sigma$ is an isolated singular point, and let $R>0$ be such that $\Sigma\cap P_{2R}(Z_0)=\{Z_0\}$. It then follows from  Lemma \ref{existence of limiting energy} and Lemma \ref{fwd_bubble_exist} that there exists $A > 0$ satisfying $$\lim_{t \nearrow t_0} \int_{B_R^+(X_0)} \left| \nabla_X U(\cdot, t) \right|^2 = A + \int_{B_R^+(X_0)} \left| \nabla_X U(\cdot, t_0) \right|^2.$$
Moreover, by Remark \ref{remark: energy concentration},  there exist   $\lambda_i \searrow 0$ and $t_i \nearrow t_0$ such that
\begin{align}\label{lambdai-ti}
\int_{B_{\lambda_i}^+ (X_0)} | \nabla_X U (\cdot, t_i) |^2 \geq \left( \frac{i - 1}{i} \right) A, \quad \forall i \in \Zp.
\end{align}
We  define $$\tilde V_i (X, t) := U (X_0 + \lambda_i X, t_i + \lambda_i^2 t), \quad \forall (X, t) \in \overline{\R^2_+} \times (- \frac{t_i}{\lambda_i^2}, \infty).$$
Notice that for any fixed $T > 0$, $$ \int_{-T}^0 \int_{B_{R/\lambda_i}^+} |\partial_t \tilde V_i|^2 = \int_{t_i - \lambda_i^2 T}^{t_i} \int_{B_R^+ (X_0)} | \partial_t U|^2  \xrightarrow{i\to\infty}0  .$$
Therefore, we can choose $\overline t_i \in (- 1, 0)$ such that $\overline{t}_i \to 0$ and
\begin{equation}\label{eq: tension field L2 bound}
\lim_{i \to \infty} \int_{B_{R/\lambda_i}^+} |\partial_t \tilde V_i|^2 (\cdot, \overline t_i) = 0.
\end{equation}
Finally, we  set $$v_i(X) := \tilde V_i (X, \overline t_i) = U (X_0 + \lambda_i X, t_i + \lambda_i^2 \overline t_i), \quad \text{and } s_i := t_i + \lambda_i^2 \overline t_i.$$

\medskip

\noindent\textbf{\underline{Step 1 :}} The sequence $(v_i)$ satisfies the hypotheses of Lemma \ref{free boundary bubbling}.

\medskip 

An application of the energy bounds   in Remark \ref{remark lem-mono} yields  
\begin{equation}\label{eq:finite energy}
E(v_i; B_{R/\lambda_i}^+) \leq E(U(\cdot, s_i); B_R^+ (X_0)) \leq C.
\end{equation}
By virtue of the above estimate,  after passing to a subsequence if necessary,   we may assume that  the  Radon measures $|\nabla v_i|^2 dX$ converges to some Radon measure on $\overline{\R^2_+}$. Moreover, the heat equation satisfied by $\tilde{V}_i$ in the interior of its domain implies that  $$\D v_i = \partial_t \tilde V_i (\cdot, \overline t_i)\quad\text{in }\R^2_+. $$ Combined with  \eqref{eq: tension field L2 bound}, this  shows that the tension field of $v_i$ converges to zero in $L^2_{loc} ( \overline{\R^2_+} )$.

Consider $\tilde N$ to be a large closed ball in $\R^L$  containing $N$ in its interior. We define the energy concentration set $$ S:= \left\{ X \in \overline{\R^2_+}| \: \liminf_{i \to \infty} E \left( v_i; B_r^+ (X) \right) \geq \overline{\eps}, \text{ for each } r > 0 \right\}, $$ where $\overline{\eps}$ is the constant given by the small energy regularity estimate in Lemma \ref{free boundary small energy regularity}. From the uniform energy bound in  \eqref{eq:finite energy} we deduce  that $S$ is a finite (possibly empty) subset of $\partial \R^2_+$, 
say $S = \{ X_1, \ldots, X_k\}$. Again by   Lemma \ref{free boundary small energy regularity} and using that $\D v_i\to0$ in $L^2_{loc}(\overline{\R^2_+})$, up to a subsequence,  we obtain  $$v_i \to v\quad \text{in }H^1_{loc} (\overline{\R^2_+} \setminus S)\cap C^{\alpha}_{loc} (\overline{\R^2_+} \setminus S)\quad\text{and weakly in }H^2_{loc} (\overline{\R^2_+}),$$  for some limit map $v$. This implies  $v  \left(\partial \R^2_+\right)\subseteq  N$.

Also for any $\phi \in L^{\infty} \cap H^1 (\partial \R^2_+; \R^L)$ satisfying $\phi \in T_{v} N$, let $\phi = \phi_i^{\top} + \phi_i^{\perp}$ with $\phi_i^{\top} \in T_{v_i}N$ and $\phi_i^{\perp} \in T_{v_i}^{\perp} N$ for each $i \in \Zp$. It is straightforward to see that $\phi_i^{\top} \to \phi$ and $\phi_i^{\perp} \to 0$ strongly in $L^2_{loc} (\partial \R^2_+)$. Therefore using the weak $H^1_{loc} (\partial \R^2_+ \setminus S)$ convergence of $v_i$ to $ v$ and the fact that $\partial_y v_i \perp T_{v_i} N$, we obtain for any compact set $K \subseteq \partial \R^2_+$ $$\int_K \phi \cdot \partial_y v = \lim_{i \to \infty} \int_K \phi \cdot \partial_y v_i = \lim_{i \to \infty} \int_K \phi_i^{\perp} \cdot \partial_y v_i = 0.$$ Hence $v$ is a harmonic map with free boundary on $\partial \R^2_+$. In fact, as a corollary of the regularity theory for harmonic maps with free boundary on two-dimensional domains (see \cite{scheven}), $v$ satisfies
\begin{equation}\left\{
\begin{array}{ll}
\D v = 0, &\text{ in } \R_+^2,\\
\rule{0cm}{0.5cm} \partial_y v \perp T_v N, &\text{ on } \partial \R^2_+.
\end{array}\right.
\end{equation}

At this point if $S$ is empty then we may skip the remainder of this step and proceed to the next one. If $S$ is non-empty then going back to the sequence $(v_i)$,
we see that it satisfies the hypotheses of Lemma \ref{free boundary bubbling} locally around points of $S$. Therefore applying Lemma \ref{free boundary bubbling},    for any  $\rho > 0$ large enough so that $S \subseteq B_{\rho}$, we get   \begin{align}\label{enrg-vi}\lim_{i \to \infty} E (v_i; B_{\rho}^+) = E( v; B_{\rho}^+) + \sum_{j = 1}^k \sum_{l = 1}^{n_j} E(\omega^j_l),\end{align}
where for each  $j \in \{ 1, \ldots, k \}$, the maps  $ \omega^j_1, \ldots, \omega^j_{n_j}$ are the non-trivial bubbles obtained  in  Lemma \ref{free boundary bubbling} in a neighborhood of $X_j$. For  $j \in \{ 1, \ldots, k\}$ and $l \in \{ 1, \ldots, n_j\}$ let $(X^j_{l,i})_i$, $(\lambda^j_{l, i})_i$ be the sequences and $a^j_l$ as given in the statement of Lemma \ref{free boundary bubbling}. Then $$\lim_{i \to \infty} X^j_{l, i} = X_j\quad  \text{ and } \lim_{i \to \infty} \lambda^j_{l, i} = 0,$$ and $\omega^j_l$ is defined on $\R^2_{\geq a^j_l} \cup \{\infty\}$. Here we emphasize   that for every possible values of $j$ and $l$ we must have $a^j_l > - \infty$. Indeed, if  $a^j_l = - \infty$, then $\omega^j_l$ would be a bounded harmonic map from $\R^2$ to $\R^L$, forcing it to be constant and thus $E(\omega^j_l) = 0$, which contradicts the non-triviality of bubbles. Hence, $a^j_l > -\infty$ holds for every bubble.  

It follows that  $$v_i - v - \sum_{j = 1}^k \sum_{l = 1}^{n_j} \left[ \omega^j_l \left( \frac{\cdot - X^j_{l, i}}{\lambda^j_{l, i}} \right) - \omega^j_l (\infty) \right] \to 0, \quad \text{weakly in } H^1_{loc} (\overline{\R^2_+}).$$
In order to show that the above convergence is strong in $ H^1_{loc} (\overline{\R^2_+})$, we first note that  for any $\rho > 0$ as above and for any  $\delta > 0$ small  $$\lim_{i \to \infty} \| v_i - v \|_{H^1 ( B_{\rho}^+ \setminus B_{\delta}^+ (S))} = 0\quad \text{and } \lim_{i \to \infty} \left\| \omega^j_l \left( \frac{\cdot - X^j_{l, i}}{\lambda^j_{l, i}} \right) - \omega^j_l (\infty) \right\|_{H^1 ( B_{\rho}^+ \backslash B_{\delta}^+ (S))} = 0,$$
where $B_{\delta}^+(S) := \cup_{j = 1}^k B_{\delta}^+(X_j)$. Since the Dirichlet energies of $v_i$  and the bubbles $\omega^j_l$  are uniformly bounded, and   $\int_{B_{\delta}^+(S)} |\nabla v|^2 = o_{\delta} (1)$, we obtain 
\begin{align*}
\int_{B_{\delta}^+(S)}& \left| \nabla \left( v_i - v - \sum_{j = 1}^k \sum_{l = 1}^{n_j} \omega^j_l \left( \frac{\cdot - X^j_{l, i}}{\lambda^j_{l, i}} \right) \right) \right|^2 \\
=& \int_{B_{\delta}^+(S)} | \nabla v_i |^2 + \sum_{j = 1}^k \sum_{l = 1}^{n_j} \int_{B_{\delta}^+(S)} \left| \nabla \left( \omega^j_l \left( \frac{\cdot - X^j_{l, i}}{\lambda^j_{l, i}} \right) \right) \right|^2- 2 \sum_{j, l} \int_{B_{\delta}^+(S)} \nabla v_i \cdot \nabla \left( \omega^j_l \left( \frac{\cdot - X^j_{l, i}}{\lambda^j_{l, i}} \right) \right) \\
&\quad -\sum_{(j, l) \neq (j', l')} \int_{B_{\delta}^+(S)} \nabla \left( \omega^{j'}_{l'} \left( \frac{\cdot - X^{j'}_{l', i}}{\lambda^{j'}_{l', i}} \right) \right) \cdot \nabla \left( \omega^j_l \left( \frac{\cdot - X^j_{l, i}}{\lambda^j_{l, i}} \right) \right) + o_{\delta} (1).
\end{align*}
We  analyze each term above individually.  First, using the energy identity \eqref{enrg-vi}   for $(v_i)$  together with  strong convergence away from $S$, we obtain  
\begin{align*}
\int_{B_{\delta}^+ (S)} | \nabla v_i |^2 
&= \sum_{j, l} \int_{\R^2_{\geq a^j_l}} |\nabla \omega^j_l |^2 + o_i (1) + o_{\delta} (1). 
\end{align*}
Next, by scale invariance of the Dirichlet energy,  the rescaled bubbles satisfy $$\sum_{j, l} \int_{B_{\delta}^+ (S)} \left| \nabla \left( \omega^j_l \left( \frac{\cdot - X^j_{l, i}}{\lambda^j_{l, i}} \right) \right) \right|^2 = \sum_{j, l} \int_{\R^2_{\geq a^j_l}} |\nabla \omega^j_l|^2 + o_i (1).$$
 Using the weak convergence resulting from Lemma \ref{free boundary bubbling}, we get 
\begin{align*}
\sum_{j, l} \int_{B_{\delta}^+ (S)} \nabla v_i \cdot \nabla \left( \omega^j_l \left( \frac{\cdot - X^j_{l, i}}{\lambda^j_{l, i}} \right) \right) &= \sum_{j, l} \int_{B_{\delta/ \lambda^j_{l, i}}^+ \left( \frac{1}{ \lambda^j_{l, i}} S \right) - \frac{X^j_{l, i}}{\lambda^j_{l, i}}} \nabla \left( v_i (X^j_{l, i} + \lambda^j_{l, i} \: \cdot) \right) \cdot \nabla \omega^j_l\\
&= \sum_{j, l} \int | \nabla \omega^j_l |^2 + o_i (1).
\end{align*}
  Finally, consider distinct pairs $(j, l) \neq (j', l')$. We first suppose $j \neq j'$, so that the two bubbles concentrate at distinct singular points $X_j \neq X_{j'}$. It is then easy to see that 
\begin{align*}
&\int_{B_{\delta}^+ (S)} \nabla \left( \omega^{j'}_{l'} \left( \frac{\cdot - X^{j'}_{l', i}}{\lambda^{j'}_{l', i}} \right) \right) \cdot \nabla \left( \omega^j_l \left( \frac{\cdot - X^j_{l, i}}{\lambda^j_{l, i}} \right) \right)=o_i (1).
\end{align*}
We now consider  the second case where  $j = j'$ and $l \neq l'$. From  Lemma \ref{free boundary bubbling}, the bubble parameters satisfy   $$\lim_{i \to \infty} \left[ \frac{\lambda^j_{l,i}}{\lambda^{j'}_{l',i}} + \frac{\lambda^{j'}_{l',i}}{\lambda^j_{l,i}} + \frac{|X^j_{l,i} - X^{j'}_{l',i}|}{\lambda^j_{l,i} + \lambda^{j'}_{l',i}} \right] = \infty.$$
If the spatial separation term (the third term  above) diverges to infinity, we proceed exactly   as   in the case $j \neq j'$.   Otherwise,  interchanging the roles of $l$ and $l'$ if necessary, we may assume without loss of generality that $$\lim_{i \to \infty} \frac{\lambda^j_{l, i}}{\lambda^{j'}_{l', i}} = 0.$$
For $M \gg 1$ we decompose  $$B_\delta^+(S)=\left(B_\delta^+(S)\cap B_{M\lambda_{l,i}^j}(X_{l,i}^j)\right)\cup\left(B_\delta^+(S) \setminus B_{M\lambda_{l,i}^j}(X_{l,i}^j)\right)=:\Omega_{l,i}\cup \Omega_{l',i}. $$ 
Then we have $$ \int_{\Omega_{l',i}} \left|\nabla \left( \omega^j_l \left( \frac{\cdot - X^j_{l, i}}{\lambda^j_{l, i}} \right) \right)\right|^2=o_i(1) +o_M(1),\quad   \int_{\Omega_{l,i}} \left|\nabla \left( \omega^{j'}_{l'} \left( \frac{\cdot - X^{j'}_{l', i}}{\lambda^{j'}_{\Omega{l', i}}} \right) \right)\right|^2=o_i(1),$$
which leads to  $$\int_{B_{\delta}^+ (S)} \nabla \left( \omega^{j'}_{l'} \left( \frac{\cdot - X^{j'}_{l', i}}{\lambda^{j'}_{l', i}} \right) \right) \cdot \nabla \left( \omega^j_l \left( \frac{\cdot - X^j_{l, i}}{\lambda^j_{l, i}} \right) \right) \to 0\quad  \text{ as } i \to \infty.$$
Combining the above estimates we conclude that  \begin{align}\label{strong-vi}v_i - v - \sum_{j = 1}^k \sum_{l = 1}^{n_j} \left[ \omega^j_l \left( \frac{\cdot - X^j_{l, i}}{\lambda^j_{l, i}} \right) - \omega^j_l (\infty) \right] \xrightarrow{i\to\infty }0  \quad  \text{  in } H^1_{loc} (\overline{\R^2_+}).\end{align}

\medskip

\noindent\textbf{\underline{Step 2 :}} Unraveling the scaling and completing the proof.

\medskip 
We now return to the original heat flow by inverting the rescaling defining $v_i$. Set  $b := n_1 + \cdots + n_k + 1$, and reindex the bubble collections  $\omega^1_1, \ldots, \omega^1_{n_1}, \ldots \: \ldots, \omega^k_1, \ldots, \omega^k_{n_k}$ as $\omega^1, \ldots, \omega^{b-1}$. For each  $j \in \{ 1, \ldots, b - 1 \}$, let  $ j' \in \{1, \ldots, k\}$ and $l \in \{1, \ldots, n_{j'} \}$ denote the indices such  that $\omega^{j} = \omega^{j'}_l$. We set  $X^{j}_i := X_0 + \lambda_i X^{j'}_{l, i}$, $\lambda^{j}_i := \lambda_i \lambda^{j'}_{l, i}$ and $a^j := a^{j'}_l$ for $1\leq j\leq b-1$, and for $j=b$  set  $\omega^b := v$, $X^b_i := X_0$,   $\lambda^b_i := \lambda_i$, $X^{j'}_{l', i} := 0, \lambda^{j'}_{l', i} := 1$ and $a^b := 0$.  Under this unified indexing, by a change of variable,  we deduce for any fix $\rho \gg 1$,
\begin{flalign*}
\int_{B_{R}^+ (X_0)} \left| \nabla \left(  U (\cdot, s_i) - \sum_{j = 1}^b   \omega^j \left( \frac{\cdot - X^j_i} {\lambda^j_i} \right)      \right)\right|^2
&= \int_{B_{R/ \lambda_i}^+} \left| \nabla \left( v_i - \sum_{j = 1}^b \omega^j \left( \frac{\cdot - X^{j'}_{l, i}}{\lambda^{j'}_{l, i}} \right) \right)\right|^2  \\
=& \left( \int_{B_{\rho}^+} + \int_{B_{R/ \lambda_i}^+ \backslash B_{\rho}^+} \right) \left| \nabla \left( v_i - \sum_{j = 1}^b \omega^j \left( \frac{\cdot - X^{j'}_{l, i}}{\lambda^{j'}_{l, i}} \right) \right)\right|^2&&\\
=& \int_{B_{R/ \lambda_i}^+ \backslash B_{\rho}^+}  | \nabla   v_i   |^2 + o_{i} (1)+o_\rho(1)\\
=& \int_{B_R^+ (X_0) \backslash B_{\rho \lambda_i}^+ (X_0)} |  \nabla  U |^2 ( \cdot, s_i) + o_i (1)+o_\rho(1)\\
\geq & \int_{B_R^+ (X_0)  } |  \nabla  U |^2 ( \cdot, t_0) + o_i (1)+o_\rho(1),
\end{flalign*} where the above last inequality follows from Fatou's lemma.  Recalling that $s_i=t_i+\lambda_i^2 \bar t_i$, $\bar t_i\uparrow0$, \eqref{lambdai-ti} and the estimate from Remark \ref{remark:energy decay}, we infer
\begin{align*}
\liminf_{i\to\infty}  \int_{ B_{2 \lambda_i}^+ (X_0)} |  \nabla  U |^2 ( \cdot, s_i)\geq \liminf_{i\to\infty}  \int_{ B_{ \lambda_i}^+ (X_0)} |  \nabla  U |^2 ( \cdot, t_i)-C\bar t_i\geq A .
\end{align*} 
Since $\rho \gg 1$, \begin{align*}
\int_{B_R^+ (X_0) \backslash B_{\rho \lambda_i}^+ (X_0)} |  \nabla  U |^2 ( \cdot, s_i)=\int_{B_R^+ (X_0)  } |  \nabla  U |^2 ( \cdot, s_i) -\int_{  B_{\rho \lambda_i}^+ (X_0)} |  \nabla  U |^2 ( \cdot, s_i) \leq \int_{B_R^+ (X_0)  } |  \nabla  U |^2 ( \cdot, t_0)+o_i(1) . \end{align*}
Combining the above estimates and together with \eqref{strong-vi} we conclude that 
\begin{equation}\label{eq:convergenceofnorm}
\lim_{i \to \infty} \int_{B_{R}^+ (X_0)} \left| \nabla \left(  U (\cdot, s_i) - \sum_{j = 1}^b \left[ \omega^j \left( \frac{\cdot - X^j_i}{\lambda^j_i} \right) - \omega^j (\infty) \right] \right) \right|^2 = \int_{B_R^+ (X_0)} |  \nabla_X U |^2 ( \cdot, t_0).
\end{equation}
Since $$  U (\cdot, s_i) - \sum_{j = 1}^b \left[ \omega^j \left( \frac{\cdot - X^j_i}{\lambda^j_i} \right) - \omega^j (\infty) \right] \to U (\cdot, t_0), \text{ almost everywhere in } B_R^+ (X_0),$$   the convergence of the norms in \eqref{eq:convergenceofnorm} implies the above convergence holds strongly in $H^1(B_R^+(X_0))$. This completes the proof.
\end{proof}

We now prove the reverse bubbling phenomena stated in Theorem \ref{reverse bubbling}.

\begin{proof}[Proof of Theorem \ref{reverse bubbling}]
The argument is analogous to the proof of Theorem \ref{energy identity_intro} and  hence, we only outline the main steps. Following the reasoning in Lemma \ref{existence of limiting energy}, there exists a constant $A \ge 0$ such that for any $R > 0$ as in the statement of Theorem \ref{reverse bubbling}, $$\lim_{t \searrow t_0} E(U(t); B_R^+(X_0)) = A + E(U(t_0); B_R^+(X_0)).$$
Moreover, we can extract a sequence of times $t_i \searrow t_0$ and a scaling sequence $\lambda_i \searrow 0$ such that $$\int_{B_{\lambda_i}^+(X_0)} \vert{}\nabla_X U(\cdot, t_i)\vert{}^2 \, dX \ge A + o_i(1).$$
As shown earlier, define the rescaled functions $\tilde V_i (X, t) := U(X_0 + \lambda_i X, t_i + \lambda_i^2 t)$ and choose a sequence $\overline t_i \downarrow 0$ with the property $$\lim_{i \to \infty} \int_{B_{R/ \lambda_i}^+} |\partial_t \tilde V_i|^2 (\cdot, \overline{t}_i) = 0.$$
Set $v_i := \tilde{V}_i (X, \overline{t}_i)$ and $s_i := t_i + \lambda_i^2 \overline{t}_i$.
Performing the blow-up analysis on the sequence $(v_i)$ and unwinding the rescaling as in the proof of Theorem~\ref{energy identity_intro} then yields the desired bubble decomposition and strong $H^1(B_R^+(X_0))$ convergence.
\end{proof}

\begin{remark*}
    It is worth noting that the bubbling analysis in Theorem \ref{energy identity_intro} remains valid for non-isolated points $Z_0 \in \Sigma$ admitting some $R > 0$ that satisfies $$ \left( B_R (X_0) \times [t_0 - R^2, t_0] \right) \, \bigcap \, \Sigma = \{ Z_0\}.$$
    Similarly, Theorem \ref{reverse bubbling} applies to points $Z_0 \in \Sigma$ for which there exists an $R > 0$  satisfying $$ \left( B_R (X_0) \times [t_0, t_0 + R^2] \right) \, \bigcap \, \Sigma = \{ Z_0\}.$$
\end{remark*}

\begin{cor}\label{bubbling_energy_relation}
Suppose that $Z_0 = (X_0, t_0)$ is an isolated point in $\Sigma$ with $P_{2 R} (Z_0) \cap \Sigma = \{ Z_0 \}$. We know that there exist a positive real number $A^-$ and a non-negative real number $A^+$ such that 
\begin{align*}
\lim_{t \nearrow t_0} E(U(t); B_R^+( X_0 )) &= E(U(t_0); B_R^+ ( X_0 )) + A^-, \: \: \text{ and}\\
\lim_{t \searrow t_0} E(U(t); B_R^+( X_0 )) &= E(U(t_0); B_R^+ ( X_0 )) + A^+.
\end{align*}
Then for any $s \in (t_0 - R^2, t_0), t \in (t_0, t_0 + R^2)$ and $\phi \in C_c^{\infty} \left( B_R (X_0) \right)$ we have
\begin{equation}
E_{\phi} (U (t)) - E_{\phi} (U (s)) = - \int_s^t \int_{\R^2_+} |\partial_t U|^2 \phi - \int_s^t \int_{\R^2_+} \phi d\eta - \int_s^t \int_{\R^2_+} \langle \partial_t U, (\nabla \phi \cdot \nabla_X) U \rangle,
\end{equation}
where $\eta$ is the Radon measure introduced in \eqref{eq:Radonmeasureintro}. In particular, we have $$ A^- = A^+ + \eta( \{ Z_0 \}).$$
\end{cor}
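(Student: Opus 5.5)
The plan is to pass to the limit $\eps \to 0$ in the exact local energy identity of Lemma \ref{local_energy_ineq1}, applied with test function $\phi$. Then we localize near $Z_0$ to read off the atom of $\eta$.

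\textbf{Step 1: the limiting identity.} Fix $s \in (t_0 - R^2, t_0)$, $t \in (t_0, t_0 + R^2)$ and $\phi \in C_c^\infty(B_R(X_0))$. Lemma \ref{local_energy_ineq1} gives
\[
E^{\eps}_{\phi}(U_\eps(t)) - E^{\eps}_{\phi}(U_\eps(s)) = -\int_s^t\int_{\R^2_+} |\partial_t U_\eps|^2\phi - \int_s^t\int_{\R^2_+}\langle \partial_t U_\eps, (\nabla\phi\cdot\nabla_X)U_\eps\rangle .
\]
\begin{itemize}
\item Since $P_{2R}(Z_0)\cap\Sigma=\{Z_0\}$, the slices at times $s,t\ne t_0$ avoid $\Sigma$ on $\spt\phi$. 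Hence Lemma \ref{eps_gradient_est} and Proposition \ref{convergence of approximation outside singular set general target} give $C^{1,\alpha}$ convergence of $U_\eps(s), U_\eps(t)$ on $\overline{B_R^+(X_0)}\cap \spt \phi$, and $F(U_\eps)/\eps^2\to 0$ uniformly there. So the left side tends to $E_\phi(U(t)) - E_\phi(U(s))$.
\item The first right-hand term converges to $-\int_s^t\int |\partial_t U|^2\phi - \int_{[s,t]}\phi\, d\eta$ by \eqref{eq:Radonmeasureintro}. This needs $\eta(\overline{\RNp}\times\{s,t\})=0$, which holds because $\spt\eta\subseteq\Sigma$ (Proposition \ref{configuration of Sigma}) and $\Sigma$ near these slices is empty. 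Note that $\phi$ is continuous but not compactly supported in time; since $\eta$ restricted to $P_{2R}$ is carried by $Z_0$, this is harmless.
\item For the cross term, the only difficulty is near $Z_0$. Away from any small cylinder $P_\rho(Z_0)$ the convergence is strong ($C^1$). Inside $P_\rho^+(Z_0)$, Cauchy–Schwarz bounds the term by $\|\nabla\phi\|_\infty (\int_{P_\rho^+}|\partial_t U_\eps|^2)^{1/2}(\int_{P_\rho^+}|\nabla U_\eps|^2)^{1/2}$. The second factor is $\le C\rho^{(m+1)/2}=C\rho$ by the density bound in the proof of Proposition \ref{blowupcndn}, and the first factor is bounded. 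So that contribution is $O(\rho)$, and letting $\rho\to0$ gives the limit $-\int_s^t\int\langle\partial_tU,(\nabla\phi\cdot\nabla_X)U\rangle$, using $\partial_t U\in L^2$ and $\nabla U\in L^2$.
\end{itemize}
This yields the displayed identity.

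\textbf{Step 2: the atom relation.} Choose $\phi_k$ with $\phi_k\equiv1$ on $B_{r_k}(X_0)$, supported in $B_{2r_k}(X_0)$, with $r_k\to0$. Apply the identity with $s=s_k\nearrow t_0$ and $t=t_k\searrow t_0$, choosing them after $r_k$ so that $|t_k-s_k|\ll r_k^2$. Then:
\begin{itemize}
\item The cross term is bounded by $\|\nabla\phi_k\|_\infty\,(\int_{s_k}^{t_k}\int_{B_{2r_k}}|\partial_tU|^2)^{1/2}(\int_{s_k}^{t_k}\int_{B_{2r_k}}|\nabla U|^2)^{1/2}\le C r_k^{-1}\cdot o(1)\cdot (t_k-s_k)^{1/2}$, using the $L^\infty_tH^1_X$ bound. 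This tends to $0$.
\item $\int_{s_k}^{t_k}\int|\partial_tU|^2\phi_k\to0$.
\item $\int\phi_k\,d\eta\to\eta(\{Z_0\})$, since $\eta$ near $Z_0$ is supported at $Z_0$.
\item On the left, use that $E_{\phi_k}(U(t_k))$ equals $E(U(t_k);B_R^+)$ minus contributions from $B_R^+\setminus B_{r_k}^+$. The energy of $U(t_0)$ on $B_{2r_k}^+$ tends to $0$, and $U(t)\to U(t_0)$ smoothly away from $X_0$. By Lemma \ref{existence of limiting energy}, its reverse analogue and a diagonal choice, $E_{\phi_k}(U(t_k))\to A^+$ and $E_{\phi_k}(U(s_k))\to A^-$.
\end{itemize}
Hence $A^+-A^-=-\eta(\{Z_0\})$.

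\textbf{Main obstacle.} The delicate point is the cross term: both the justification of its limit in Step 1 near $Z_0$ and its vanishing in Step 2. I expect the Cauchy–Schwarz plus monotonicity density bound to suffice. The diagonal ordering of $r_k$ versus $s_k,t_k$ must be arranged carefully: first fix $r_k$ small so that the $t_0$-energy on $B_{2r_k}$ is small, then pick times close enough to $t_0$.
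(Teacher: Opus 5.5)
Your proposal is correct and follows the paper's route. You pass to the limit $\eps\to0$ in the identity of Lemma \ref{local_energy_ineq1}, with your Cauchy--Schwarz estimate near $Z_0$ making explicit what the paper compresses into an appeal to the local $H^1$ and $L^\infty_tH^1_X$ bounds, and then let $s\nearrow t_0$, $t\searrow t_0$ with $\phi\equiv1$ near $X_0$. The only difference is that the paper keeps one fixed such $\phi$: the $E_\phi(U(t_0))$ contributions cancel, and the $|\partial_tU|^2$ and cross terms vanish by absolute continuity over the shrinking time interval, so your shrinking cutoffs $\phi_k$ and the diagonal choice $|t_k-s_k|\ll r_k^2$ are harmless but unnecessary.
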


\begin{proof}
Using Lemma \ref{local_energy_ineq1} we have that for any $\eps > 0$, $t_0 - R^2 < s < t_0 < t < t_0 + R^2$ and $\phi \in C_c^{\infty} (B_R ( X_0 ))$, 
\begin{equation}\label{change of energy}
E^{\eps}_{\phi} (U_{\eps} (t)) - E^{\eps}_{\phi} (U_{\eps} (s)) = - \int_s^t \int_{\R^2_+} |\partial_t U_{\eps}|^2 \phi - \int_s^t \int_{\R^2_+} \langle \partial_t U_{\eps}, (\nabla \phi \cdot \nabla_X) U_{\eps} \rangle.
\end{equation}

\noindent Due to Proposition \ref{convergence of approximation outside singular set general target} and Lemma \ref{eps_gradient_est} we can pass to the limit as $\eps \to 0$, to obtain
\begin{equation}\label{change of energy of limit}
E_{\phi} (U (t)) - E_{\phi} (U (s)) = - \int_s^t \int_{\R^2_+} |\partial_t U|^2 \phi - \int_s^t \int_{\R^2_+} \phi d\eta - \int_s^t \int_{\R^2_+} \langle \partial_t U, (\nabla \phi \cdot \nabla_X) U \rangle.
\end{equation}
Here the convergence of the last term follows from Proposition \ref{convergence of approximation outside singular set general target}, Lemma \ref{eps_gradient_est} and the local uniform $H^1$ and $L^{\infty}_t H^1_X$ bounds of $(U_{\eps})$, aided by Remark \ref{remark lem-mono}.
Taking $\phi$ such that $\phi \equiv 1$ in a small neighborhood of  $ X_0 $ and passing to the limits  $s \nearrow t_0$ and $t \searrow t_0$ in \eqref{change of energy of limit} we deduce  $$ A^+ - A^- = - \eta (\{ Z_0 \}).$$
\end{proof}

\begin{proof}[Proof of Theorem \ref{main thm_intro}]  Let $U$ be the weak limit of $(U_\eps)$ in $H^1_{loc}(\overline{\R^{2}_+} \times (0,\infty) )$. 
Let $\Sigma$ be non-empty. First of all let us consider the case when $\Sigma$ contains an isolated point, say $\overline Z = (\overline X, \overline t)$. Let $R > 0$ be such that $P_{2R} (\overline Z) \cap \Sigma = \{ \overline Z \}$. Then we know that there exists a positive real number $A$ such that $$\lim_{t \nearrow \bar t} E(U(t); B_R^+(\overline X)) = E(U(\overline t); B_R^+ (\overline X)) + A.$$
Recall from Theorem \ref{energy identity_intro} that $A$ can be written as sum of Dirichlet energies of harmonic maps $\omega : \R^2_+ \to \R^L$ with $\omega \left( \partial \R^2_+ \right) \subseteq N$ and $\partial_y \omega \big|_{\partial \R^2_+} \perp T_{\omega} N$. If any such $\omega$ is non-constant then we are done. If all such $\omega$'s are constants then we get $A = 0$, which contradicts Lemma \ref{fwd_bubble_exist}.
Hence we are done in the case when $\Sigma$ contains an isolated point.

\medskip 

We now assume that  $\Sigma$ is non-empty and  contains no isolated points. As $\Sigma$ is closed, it must also be uncountable. Hence by Proposition \ref{time_singular_set} there exists $Z_0 = (X_0, t_0) = (x_0, 0, t_0) \in \Sigma \backslash F_0$. Moreover, Lemma \ref{partial regularity lemma} shows  that the time slice  $\Sigma^t := \Sigma \cap \left( \R^2 \times \{ t \} \right)$ is locally finite for every $t > 0$. Therefore, we can choose $r_0 > 0$ such that $ B_{2r_0}(X_0) \cap \Sigma^{t_0} = \{ X_0 \}$. For any $r < r_0$ and $\eps > 0$ we define   $$Q_{\eps}(r) := \sup_{x \in \overline{B_{r_0}(x_0)}} \E_{\eps} (U_{\eps}, (x, 0, t_0), r).$$
For every $\eps > 0$, the smoothness of $U_{\eps}$ implies $$\lim_{r \to 0} Q_{\eps}(r) = 0.$$
On the other hand, since   $Z_0 \in \Sigma$,  it follows that  for any $r>0$ $$ \liminf_{\eps \to 0} Q_{\eps} (r) \geq \liminf_{\eps \to 0} \E_{\eps} (U_{\eps}, Z_0, r) > \frac{\eps_0^2}{2 C_1},$$ where 
 $C_1 = 1024 \pi e$ (a constant arising in the proof of Claim 2 below).
 By a standard diagonal process we can choose $\eps_n \searrow 0$, $r_n \searrow 0$,  $ Z_n := (X_n, t_0) := (x_n, 0, t_0)$ such that $$\E_{\eps_n} (U_{\eps_n}, Z_n, r_n) = Q_{\eps_n}(r_n) = \frac{\eps_0^2}{2 C_1}. $$

\noindent\underline{\textbf{Claim 1 :}}  We have $Z_n\to  Z_0$. 

\medskip 

 Since $(Z_n)_n$ is bounded, up to a subsequence,   $Z_n \to \tilde Z := (\tilde X, t_0) \in \partial^0 {B_{r_0}^+ (X_0)}\times\{t_0\}$. By our choice of $r_0$ we have  $B_{2r_0}(X_0) \cap \Sigma^{t_0} = \{X_0\}$, and hence it suffices to show   that $\tilde Z \in \Sigma$.

If possible assume that $\tilde Z \notin \Sigma$. Then by Lemma \ref{eps_gradient_est} there exists $\delta_0 > 0$ such that up to a subsequence, $\| U_{\eps_n} \|_{C^1 \left( P_{2 \delta_0}^+ (\tilde Z) \right)}$ is uniformly bounded (i.e. independent of $n$). Proposition \ref{energy_tail_est} implies that for some $M > 0$ and all sufficiently large $n$ we have
\begin{align*}
    \frac{\eps_0^2}{2 C_1} =& \E_{\eps_n} (U_{\eps_n}, Z_n, r_n) \leq \frac{C}{r_n^2} \int_{t_0 - 4 r_n^2}^{t_0 - r_n^2} \int_{\overline{B_{M r_n}^+ (X_n)}} e(U_{\eps_n}) dX dt + \frac{\eps_0^2}{4 C_1}\\
    \leq& C M^2 \| U_{\eps_n} \|^2_{C^1 \left( \overline{P_{2 \delta_0}^+ (\tilde Z)} \right)} r_n + \frac{\eps_0^2}{4 C_1}\\
    =& o_n(1) + \frac{\eps_0^2}{4 C_1}.
\end{align*}
Letting $n \to \infty$, we arrive at a contradiction.
Thus $\tilde{Z} \in \Sigma$, completing the proof of Claim 1.

\medskip

$$ V_n (X, t) := U_{\eps_n} (X_n + r_n X, t_0 + r_n^2 t),\quad \forall (X,t) \in \overline{\R^2_+} \times \left( - \frac{t_0}{r_n^2}, \infty \right).  $$ Then    $V_n$ satisfies
\begin{equation}\label{rescaled_eqn}
\left\{\begin{array}{ll}
 \partial_t V_n -\D_{X} V_n=0,
&\text{ in } \R_+^2 \times (- \frac{t_0}{r_n^2}, \infty),\\
\rule{0cm}{0.5cm} \partial_y V_n = \frac{1}{\tilde \eps_n^2} (\nabla F) (V_n), &\text{ on } \partial \R^2_+ \times (- \frac{t_0}{r_n^2}, \infty),
\end{array}\right.
\end{equation}
where $\tilde \eps_n := \frac{\eps_n}{\sqrt{r_n}}$. Moreover, for every $R>0$ we have 
\begin{equation}\label{eq:time_cgs}
\liminf_{n\to\infty} \int_{P_R^+}|\partial_t V_n|^2dXdt=0.
\end{equation}

\noindent To see this, recall that from the choice of $Z_0$ we have
\begin{equation}\label{time_int_est}
	\lim_{r \to 0} \liminf_{\eps \to 0} \int_{P_r^+(Z_0)} |\partial_t U_{\eps} |^2 dX dt = 0.
\end{equation}
Therefore, as $Z_n\to Z_0$, for any $R>0$,
\begin{equation}\label{time_est_eqn}
\liminf_{n\to\infty} \int_{P_R^+}|\partial_t V_n|^2dXdt = \liminf_{n\to\infty} \int_{P_{Rr_n}^+(Z_n)}|\partial_t U_{\eps_n}|^2dXdt = 0,
\end{equation}
which proves \eqref{eq:time_cgs}.

\medskip

\noindent\underline{\textbf{Claim 2 :}}  Up to a subsequence, $V_n \to V$   in $C^1_{loc} \left( \overline{\R^2_+} \times \R \right)$, where the limit map $V$ is independent of the time variable $t$.

\medskip

The $C^1_{loc}$ convergence of $V_n$ follows directly from standard parabolic regularity theory, provided $\tilde{\varepsilon}_n \not\to 0$. Therefore, it suffices to consider the case $\tilde{\varepsilon}_n \to 0$. 

For each  $\overline X \in \partial \R^2_+$,   Claim 1 implies that for   $n$ sufficiently large, 
\begin{equation}\label{eq:small_bdry_energy}
\E_{\tilde \eps_n } \left( V_n, (\overline X, 0), 1 \right) = \E_{\eps_n} \left(U_{\eps_n}, ( X_n + r_n \overline X, t_0), r_n \right) \leq \E_{\eps_n} (U_{\eps_n}, Z_n, r_n ) =\frac{\eps_0^2}{2C_1}.
\end{equation}
Extending the small-energy estimate to arbitrary centers in $\partial \mathbb{R}^2_+ \times \mathbb{R}$ allows us to invoke Lemma~\ref{eps_gradient_est}, yielding uniform $C^{1, \alpha}_{\text{loc}}$ estimates for $V_n$ in a small neighborhood of $\overline{\R^2_+} \times \{0 \}$.  
To this end,  fix any $ \overline X \in \partial \R^2_+ $.
The choice of $C_1$,  estimate  \eqref{eq:small_bdry_energy} and the mean value property for integrals yield a time slice  $t_n \in (-4, -1)$ such that for $n$ sufficiently large 
$$ E^{\tilde{\eps}_n}(V_n(t_n); B_2^+ (\overline X)) \leq \frac{\eps_0^2}{8} . $$
Fix $\phi \in C_c^{\infty} (B_2(\overline X))$ such that $\phi \equiv 1$ on $B_1(\overline X)$ and  $0 \leq \phi \leq 1$. From Lemma \ref{local_energy_ineq1} we have for any $s \in \R$,
\begin{equation}\label{energy_identity_eqn}
	E^{\tilde{\eps}_n}_{\phi} \left( V_n(s) \right) = - \int_{t_n}^s \int_{\R^2_+} |\partial_t V_n|^2 \: \phi - \int_{t_n}^s \int_{\R^2_+} \langle \partial_t V_n, (\nabla \phi \cdot \nabla_X) V_n \rangle + E^{\tilde{\eps}_n}_{\phi} \left( V_n(t_n) \right).
\end{equation}
Using  the local uniform $L^{\infty}_t H^1_X$ bound in Remark \ref{remark lem-mono}, \eqref{eq:time_cgs} and together with Holder's inequality we conclude that the first two terms appearing on the right hand side converge to zero as $n \to \infty$. 
In particular, for any $T>0$,    we obtain for   $n$  sufficiently large  (depending on $\overline X$ and $T$) $$ E^{\tilde{\eps}_n} (V_n(s); B_1^+ (\overline X)) < \frac{\eps_0^2}{C_2}, \quad\forall s \in [ - T, T],$$ where the constant $C_2$ is to be determined shortly.
We now consider any $\overline t \in \R$ and let $\overline Z := (\overline X, \overline t)$. Combining the above estimate with Proposition \ref{energy_tail_est} we get an $M > 0$ such that for $r \in (0, 1)$ sufficiently small, it holds that
\begin{align*}
    \E_{\tilde \eps_n} (V_n, \overline Z, r) \leq& \frac{C}{r^2} \int_{\overline t - 4 r^2}^{\overline{t} - r^2} \int_{\overline{B_{M r}^+ (\overline X)}} e(V_n) dX dt + \frac{\eps_0^2}{2}\\
    \leq& \frac{C}{r^2} \int_{\overline t - 4 r^2}^{\overline{t} - r^2} E^{\tilde{\eps}_n} (V_n(s); B_1^+ (\overline X)) ds + \frac{\eps_0^2}{2}\\
    \leq& \frac{C \eps_0^2}{C_2} + \frac{\eps_0^2}{2} < \eps_0^2,
\end{align*}
provided $C_2$ is chosen sufficiently large.

\noindent Hence by Lemma \ref{eps_gradient_est} and together with standard parabolic regularity theory  we conclude that  $(V_n)$ is bounded $C^{1,\alpha}_{loc}(  \overline{\R^2_+} \times \R)$. Therefore, up to a subsequence, $V_n \to V$   in $C^1_{loc} \left( \overline{\R^2_+} \times \R \right)$, and the limit map  $V$ is independent of $t$ follows immediately from \eqref{eq:time_cgs}.

\medskip

\noindent\underline{\textbf{Claim 3 :}} $V$ is the desired non-trivial, smooth harmonic map with free boundary on $N$.

\medskip

We first assume the case that $\tilde{\eps}_n \to 0$. Passing to the limit in the system \eqref{rescaled_eqn}, we find that $V$ satisfies
\begin{equation}\left\{
    \begin{array}{lll}
        \Delta V &= 0, & \text{ in } \R^2_+,  \\
        \rule{0cm}{0.5cm} \partial_y V &\perp T_{V} N, & \text{ on } \partial \R^2_+. 
    \end{array}\right.
\end{equation}
The smoothness of $V$ follows from the regularity theory for free boundary harmonic maps from two-dimensional domains (see \cite{scheven}). For the non-constancy of $V$, we compute using Proposition \ref{energy_tail_est} 
\begin{align}\label{eq:non-trivial}
	\frac{\eps_0^2}{2 C_1} &= \E_{\tilde \eps_n} (V_n, (0, 0), 1)\\
&\leq C \left[ \int_{-4}^{-1} \int_{B_M^+} |\nabla_X V_n|^2 \, dX \, dt + \frac{1}{\tilde{\eps}_n^2} \int_{-4}^{-1} \int_{\partial^0 B_M^+} F(V_n) \, dx \, dt \right] + \frac{\eps_0^2}{4 C_1},
\end{align}
for some $M \gg 1$ and for all $n \in \Zp$. Therefore using Claim 2 and Proposition \ref{convergence of approximation} we obtain$$\int_{-4}^{-1} \int_{B_R^+} |\nabla_X V|^2 \: dX dt = \lim_{n \to \infty} \int_{-4}^{-1} \int_{B_R^+} |\nabla_X V_n|^2 \: dX dt > 0.$$

Therefore it is enough to establish that $\tilde{\eps}_n \to 0$. We prove this by assuming the contrary, which brings us to the following two cases:
\medskip

\noindent\underline{\textbf{Case 1 :}} Up to a subsequence, $ \tilde \eps_n \to \infty$ as $n \to \infty$.

\medskip

From \eqref{rescaled_eqn} and \eqref{eq:time_cgs} we see that $V$ satisfies
\begin{equation}\left\{
\begin{array}{ll}
\D V = 0, &\text{ in } \R_+^{2},\\
\rule{0cm}{0.5cm} \frac{\partial V}{\partial y} = 0, &\text{ on } \partial \R^2_+.
\end{array}\right.
\end{equation}
Therefore an even extension of $V$ to the whole of $\R^{2}$ is a weakly harmonic function and hence is smooth. The $L^{\infty}$ boundedness of $V$ (which follows from that of $U_{\eps}$) then gives that $V$ is constant throughout. But this brings us to a contradiction when we pass limit to \eqref{eq:non-trivial}, as $n \to \infty$.
\medskip

\noindent\underline{\textbf{Case 2 :}} Up to a subsequence, $ \tilde \eps_n \to c \in (0, \infty)$, as $n \to \infty$.

\medskip

Similar to the previous case, from \eqref{rescaled_eqn} and \eqref{eq:time_cgs} we see that $V$ satisfies
\begin{equation}\left\{
\begin{array}{ll}
\D V = 0, &\text{ in } \R_+^{2},\\
\rule{0cm}{0.5cm} \partial_y V = \frac{1}{c^2} (\nabla F) (V), &\text{ on } \partial \R^2_+.
\end{array}\right.
\end{equation}

\noindent Moreover, from  Remark \ref{remark lem-mono} and the second inequality in \eqref{eq:scale_inv_bds} we deduce that that for any $r > 0$, $$E^c (V; B_r^+) \leq C \liminf_{n \to \infty} \mathcal{D}_{\eps_n} (V_n, (0, r^2), r) \leq C,$$ where 
the above constant $C$ is independent of $r$. Letting $r \to \infty$, we get $$\int_{\R^2_+} |\nabla V|^2 dX + \int_{\partial \R^2_+} F(V) \, dx < \infty.$$
Appealing to Proposition \ref{Proposition A.2} we see that $V$ must be constant and $F(V) \equiv 0$. This is a contradiction to \eqref{eq:non-trivial}.

Thus, we must have $\tilde{\eps}_n \to 0$, finally establishing the claim and concluding the proof of the theorem.
\end{proof}

\section{The Problem in Higher Dimensions}

This section is devoted to the proof of Theorems \ref{no harmonic S1_intro} and \ref{fine reg parabolic blow up set}. 

\begin{proof}[Proof of Theorem \ref{no harmonic S1_intro}] Before proceeding with the technical details, we outline the strategy of the proof, which relies on an iterated blow-up analysis and bubble extraction. We start by selecting  a  suitable   point $Z_0 = (X_0, t_0) \in \Sigma$  along with  suitable  sequences $r_k \searrow 0$ and $\eps_k \searrow 0$ to define the rescaled functions $\tilde U_k (X, t) := U_{\eps_k} (X_0 + r_k X, t_0 + r_k^2 t)$ in such a way that their   $H^1_{loc}$ weak limit is a constant function. In this setting,  the corresponding energy concentration set $\Sigma_*$ (analogous to $\Sigma$) coincides with the support of the related defect measure $\mu_*$ (analogous to $\nu$), as established in Proposition \ref{configuration of Sigma}. The key properties of the new energy concentration set $\Sigma_*$ and the measure $\mu_*$ are established  in  Claim 1 and Claim 2. 

Next we choose a suitable point $Z_1 \in \Sigma_*$ and rescaling factors $s_j \searrow 0$ and further rescale $\tilde U_k$ around $Z_1$ to obtain the rescaled functions $V_l$. 
As a result of the particular choice of rescaling, at least $m - 1$ spatial directional derivatives and the time derivative of $V_l$ converge strongly to zero in $L^2_{loc}$ (Claim 3 to 6), reducing the problem to an effective two dimensional spatial setting. However, because  $V_l$ itself converges weakly in $H^1_{loc}$ to a constant by construction, the weak limit alone does not yield a non-trivial map. 

As before, we consider $\Sigma_{**}$ and $\mu_{**}$ to be the energy concentration set and the defect measure corresponding to the sequence $(V_l)$, respectively.
In order to separate the above mentioned $m - 1$ spatial directions from the rest of the two spatial directions, using Claim 7 and Claim 8, we decompose $\mu_{**}$ into the product of an $(m - 1)$-dimensional Hausdorff measure and  the restriction of the parabolic Hausdorff measure $\p^2$ to a self-similar set. 
Finally, we select a suitable energy concentration point in $\Sigma_{**}$ and perform  a standard bubble extraction. Because the sequence effectively depends on only two spatial variables, the analysis reduces to the critical conformal dimension, ensuring that the bubble extraction succeeds and yields a non-trivial bubble.

\medskip

We start the proof by assuming that  $\nu \not\equiv 0$. Choose a point $Z_0 \in \Sigma$ such that  
\begin{equation}\label{eq:zerodensity}
    \lim_{r \searrow 0} r^{- (m + 1)} \int_{P_r^+ (Z_0)} |\nabla_X U|^2 \: dX dt = \lim_{r \searrow 0} r^{- (m + 1)} \int_{P_r^+ (Z_0)} |\partial_t U|^2 \: dX dt = 0.
\end{equation}
Since these equalities are satisfied $\p^{m + 1}$-almost everywhere in $\partial \RNp \times (0, \infty)$, and    Proposition \ref{blowupcndn} ensures that  $\p^{m + 1}(\Sigma) > 0$, such a point $Z_0 = (X_0, t_0) \in \Sigma$ satisfying \eqref{eq:zerodensity} can indeed be chosen. Furthermore, we fix a sequence   $(r_i)$ of positive real numbers decreasing  to $0$.

For any $r \in (0, \infty)$, we define the parabolic dilation $D_r : \R^{m + 2} \to \R^{m + 2}$ by setting 
\begin{equation}\label{eq:parabolic dilation}
D_r (X, t) := (r X, r^2 t),\quad \: \forall \,(X, t) \in \R^{ m + 2 }, 
\end{equation}
and  for any $i \in \Zp$, define the rescaled measure $\mu_i$ as $$\mu_i (A) := r_i^{- (m + 1)} \mu \left( Z_0 + D_{r_i} (A) \right).$$
For  convenience,   any Radon measure on $\overline{\RNp} \times \R$ (or on a subset) is extended to a Radon measure on the entire space  $\R^{m + 1} \times \R$ via the zero extension. For instance, the measure $e(U_{\eps}) dXdt$ is  originally defined on $\overline{\RNp} \times (0, \infty)$. We treat  it as a Radon measure on $\R^{m + 1} \times \R$  by defining  the measure of any Borel set $A$ to be $$ \int_{A \cap \left( \overline{\RNp} \times (0, \infty) \right) } e(U_{\eps}) dX dt. $$

For any $R>0$ and $i \in \Zp$ sufficiently large, we have
\begin{align}\label{eq:measure growth rate}
\mu_i \left( P_R \right) =& r_i^{- (m + 1)} \mu \left( P_{R r_i} (Z_0) \right) \leq \liminf_{\eps \to 0} r_i^{- (m + 1)} \int_{\overline{P_{R r_i}^+ (Z_0)}} e \left( U_{\eps} \right) dX dt\\
\leq& C \liminf_{\eps \to 0} r_i^{- (m + 1)} \int_{t_0 - R^2 r_i^2}^{t_0 + R^2 r_i^2} \int_{ \overline{B_{R r_i}^+ (X_0)}} e^{- \frac{|X - X_0|^2}{4 (t_0 + 2 R^2 r_i^2 - t)}} e(U_{\eps}) dX dt\\
\leq& C R^{m + 1} \liminf_{\eps \to 0} \E_{\eps} (U_{\eps}, (X_0, t_0 + 2 R^2 r_i^2), R r_i) \\ \leq & C R^{m + 1},
\end{align}
thanks to \eqref{eq:scale_inv_bds}.
By a diagonal argument, we obtain a Radon measure $\mu_*$ on $\R^{m + 1} \times \R$ such that, up to a subsequence, $\mu_i \to \mu_*$ as Radon measures. Since $Z_0 \in \Sigma$,    Proposition \ref{density_results} yields  $\Theta^{m + 1} (\mu, Z_0) \geq \eps_0^2$. This fact,  combined with the monotonicity formula (Lemma \ref{lem-mono}) and Proposition \ref{energy_tail_est}, implies that the measure $\mu_*$ is not the zero measure, and in fact $(0, 0) \in \textnormal{spt}(\mu_*)$.

For any $\eps > 0$ and $r > 0$, we set $$U_{\eps, r} (X, t) := U_{\eps} (X_0 + r X, t_0 + r^2 t), \quad \forall (X, t) \in \RNp \times (- \frac{t_0}{r^2}, \infty).$$ 
Let $\{ \phi_j \}$ be a   countable dense subset of $C_c^{\infty} (\R^{m + 2})$ with respect to the $L^{\infty}$ norm. Then for every $j \in \Zp$,  weak* convergence implies  $$\lim_{i \to \infty} \int \phi_j \, d\mu_i = \int \phi_j \, d\mu_*.$$
Thus, for every $k \in \Zp$, we can  find an index  $i(k) \gg 1$  such that 
 $$\left| \int \phi_j \, d\mu_{i(k)} - \int \phi_j \, d\mu_* \right| < \frac{1}{2k},\quad \forall j \in \{ 1, \ldots, k \}.$$
Since for every $j, k \in \Zp$ we have $$\lim_{\eps \to 0} \int \phi_j e_{\eps/ \sqrt{r_{i(k)}} } (U_{\eps, r_{i(k)}}) \: dX dt = \int \phi_j d \mu_{i(k)},$$
 we can choose $\eps_k \searrow 0$ such that for every  $\eps \in (0, \eps_k]$   $$\left| \int \phi_j e_{\eps/ \sqrt{r_{i(k)}} } (U_{\eps, r_{i(k)}}) \: dX dt - \mu_{i(k)} (\phi_j) \right| < \frac{1}{2k},\quad  \forall j \in \{1, \ldots, k\}.$$
 Combining these two estimates via the triangle inequality, for any $j \in \{1, \dots, k\}$ and $\eps \in (0, \eps_k]$, we obtain
$$\left| \int \phi_j e_{\eps/ \sqrt{r_{i(k)}} } (U_{\eps, r_{i(k)}}) \: dX dt - \mu_* (\phi_j) \right| < \frac{1}{k}.$$
Since all the inequalities above are preserved upon decreasing $\eps_k$, we may lower the value of each $\eps_k > 0$ further to satisfy the asymptotic condition
 $$\lim_{k \to \infty} \frac{\eps_k}{\sqrt{r_{i(k)}}} = 0.$$ Now, let  $\{ \psi_j \}\subset C_c^{\infty} \left( \R^{m + 1} \times \R \right)$  be a   countable dense subset of $L^2 (\RNp \times \R; \R^{L \times (m + 2)})$.  By choosing $\eps_k$ even smaller if necessary, we additionally ensure that for every $j \in \{1, \dots, k\}$,
 $$\left| \int \nabla_X \left( U_{\eps_k} - U \right) : \psi_j \left( \frac{X - X_0}{r_{i(k)}}, \frac{t - t_0}{r_{i(k)}^2} \right) \right| < \frac{(r_{i(k)})^{m + 2}}{k},$$ $$\left| \int \partial_t \left( U_{\eps_k} - U \right) : \psi_j \left( \frac{X - X_0}{r_{i(k)}}, \frac{t - t_0}{r_{i(k)}^2} \right) \right| < \frac{(r_{i(k)})^{m + 1}}{k}.$$

Define $\tilde U_k := U_{\eps_k, r_{i(k)}}$. Then the above choices ensure that $\tilde U_k$ converges to a constant function weakly in $H^1_{loc} ( \overline{\RNp} \times \R; \R^L)$. Moreover, setting  $\tilde \eps_k := \frac{\eps_k}{\sqrt{r_{i(k)}}}$ we have   $$e_{\tilde \eps_k} (\tilde U_k) \: dX dt \overset{*}{\rightharpoonup} \mu_*.$$

Let   $\Sigma_* := \textnormal{spt}(\mu_*)$ denote the support of $\mu_*$. As established before,  $(0, 0) \in \Sigma_*$. In light of    Proposition \ref{configuration of Sigma},    $\Sigma_*$ is the energy concentration set for $(\tilde U_k)$. More precisely,  $\Sigma_*$ plays the same role for the sequence $(\tilde U_k)$ as $\Sigma$ plays for $(U_{\eps})$. Since $\mu_* \not\equiv 0$,   Proposition \ref{blowupcndn} implies  that $\p^{m + 1} (\Sigma_*) > 0$. For any $t \in \R$, we define the time slice of $\Sigma_*$ as follows
\begin{equation}\label{eq:time slice of singular set}
\Sigma_*^t := \{X \in \overline{\RNp} | \: (X, t) \in \Sigma_*\}.
\end{equation}
Let us also recall the Radon measure $\mu_*^t$ on $\overline{\RNp}$, obtained by applying Proposition \ref{time density of mu} to $\mu_*$. It follows from Lemma \ref{eps_gradient_est} and Proposition \ref{time density of mu} that $\spt( \mu_*^t) \subseteq \Sigma_*^t$.

\medskip 

\noindent \underline{\textbf{Claim 1:}} There exists $\overline{\delta} > 0$ such that $\mu_*^t (B_1) > 0 $ for all $t \in (- \overline{\delta}, 0)$. In particular $\Sigma_*^t \cap B_1 \neq \emptyset$ for all such $t$.

Suppose, for the sake of contradiction, that Claim 1 is false. Then there exists a strictly increasing sequence $t_n=-r_n^2 \nearrow 0$ such that 
$\mu_*^{- r_n^2} (B_1) = 0$. This, Proposition \ref{time density of mu}  and Proposition \ref{energy_tail_est} with $\delta=\frac{\eps_0^2}{4}$ would imply  that $$\lim_{k \to \infty} \mathcal{D}_{\tilde \eps_k} \left( \tilde U_k, (0, 0), r_n \right) \leq \frac{\eps_0^2}{2}, \quad \forall n \gg 1.$$
Consequently,  $(0, 0)$ can not be a point of energy concentration for $(\tilde U_k)_k$, which contradicts  $(0, 0) \in \Sigma_*$. The proof of Claim 1 follows immediately from $\spt(\mu_*^t) \subseteq \Sigma_*^t$.

\medskip 

\noindent\underline{\textbf{Claim 2:}} For any $R, T > 0$ there exists a constant $C = C(R, T)>0$ such that for each $\overline X \in B_R$ and $\overline t \in (- T, T)$ it holds that $$\mu_*^{\overline t} \left( \overline{B_r(\overline X)} \right) \leq C r^{m - 1}, \quad \text{for all } r < R.$$
In particular we have $$\mu_*^{ \overline t} \left( A \right) \leq C \H^{m - 1} \left( \Sigma_*^{\overline t} \cap A \right), \quad \forall \overline t \in (- T, T), \: \text{and any Borel set } A \subseteq B_R.$$

Let $(\overline X, \overline t) \in B_R \times (- T, T)$ and $r < R$. Applying Proposition \ref{time density of mu}, the estimate from Remark \ref{remark lem-mono} and \eqref{eq:scale_inv_bds} we have
\begin{align*}
\mu_*^{\overline t} (B_r (\overline X)) \leq& \liminf_{k \to \infty} \int_{B_r (\overline X)} e_{\tilde \eps_k} (\tilde U_k) (\cdot, \overline t) \, dX\\
=& r^{m - 1} \liminf_{k \to \infty} (r r_{i(k)})^{1 - m} \int_{B_{r r_{i(k)}} (X_0 + r_{i(k)} \overline X)} e(U_{\eps_k}) (\cdot, t_0 + r_{i(k)}^2 \overline t) \: dX\\
\leq& C r^{m - 1} \liminf_{k \to \infty} \mathcal{D}_{\eps_k} \left( U_{\eps_k}, (X_0 + r_{i(k)} \overline X, t_0 + r_{i(k)}^2 \overline t + r^2 r_{i(k)}^2, r r_{i(k)} \right)
\leq C r^{m - 1}.
\end{align*}
This proves the first part of the claim. The rest of the proof follows by standard arguments (see \cite[2.10.19]{F}).

\medskip 

Now appealing to Proposition \ref{time_singular_set} we have that for $\p^{m + 1}$-almost every point $\overline Z = (\overline X, \overline t) \in \Sigma_* \cap \left( B_1 \times (- \overline{\delta}, 0) \right)$,
\begin{equation}\label{zero time density equation}
    \limsup_{r \to 0} \liminf_{k \to \infty} \frac{1}{r^{m - 1}} \int_{P_r^+ (\overline Z)} |\partial_t \tilde U_k|^2 \: dX dt = 0.
\end{equation}
Recall from \eqref{defn_density} that $$\Theta^{m + 1} (\mu_*, \cdot) := \lim_{r \to 0} \lim_{k \to \infty} \E_{\tilde \eps_k} \left( \tilde U_k, \cdot, r \right).$$
Applying the last inequality in \eqref{eq:rescaledfunction} in the context of $(\tilde{U}_k)$, we get local boundedness of $\Theta^{m + 1} (\mu_*, \cdot)$. This in turn implies the local boundedness for the $(m + 1)$-dimensional upper density of $\mu_*$. The density results in \cite[2.10.19]{F} then shows that \eqref{zero time density equation} holds for $\mu_*$-almost every point in $\Sigma_* \cap \left( B_1 \times (- \overline{\delta}, 0) \right)$.
Combining this with Claim 1, Claim 2 and Proposition \ref{appx_cty_iff} we see that for $\mu_*$-almost every point $\overline Z = (\overline X, \overline t) \in \Sigma_* \cap \left( B_1 \times (- \overline{\delta}, 0) \right)$, the following properties hold:
\begin{itemize}
    \item[$(a)$] $\H^{m - 1} (\Sigma_*^{\overline t} \cap B_1) > 0$,
    \item[$(b)$] $\Theta^{m + 1} (\mu_*, \cdot)$ is $\mu_*$- approximately continuous at $\overline Z$, 
    \item[$(c)$] $\overline Z$ satisfies \eqref{zero time density equation}.
\end{itemize}
Let us recall from Proposition \ref{time density of mu} that $\mu_* = \mu_*^t \, dt$. So we can find $t_1 \in (- \overline{\delta}, 0)$ such that
\begin{equation}\label{eq:slice_measure_positive}
\mu_*^{t_1}(A)>0,\quad A:= \left \{ X \in \Sigma_*^{t_1} \cap B_1| \: (X, t_1) \text{ satisfies the properties $(a)\text{-}(c)$} \right\} .
\end{equation}
Lemma \ref{partial regularity lemma} shows us that $\Sigma_*^{t_1}$ has locally finite $\H^{m - 1}$ measure.
Combining this with Claim 2 we can conclude that $\mu_*^{t_1}$ is absolutely continuous with respect to the Radon measure $\H^{m - 1} \lfloor_{\Sigma_*^{t_1}}$. 
This fact together with Proposition \ref{appx_cty_iff} and  standard results in geometric measure theory yield that the following properties hold at $\mu_*^{t_1}$-almost every $X_1 \in A$:
\begin{itemize}
    \item[$(d)$] $\overline{\Theta}^{m - 1} (\mu_*^{t_1}, \cdot) $ is $\H^{m - 1}\lfloor_{\Sigma_*^{t_1}}$-approximately continuous at $X_1$, where $$\overline{\Theta}^{m - 1} (\mu_*^{t_1}, X) := \limsup_{r \to 0} r^{1 - m} \mu_*^{t_1} (\overline{B_r (X)}), \text{ for } X \in \Sigma_*^{t_1} \cap B_1,$$ 
    \item[$(e)$] $\overline{\Theta}^{m - 1} (\mu_*^{t_1}, X_1) > 0$, and
    \item[$(f)$] $\limsup_{r \searrow 0} r^{1 - m} \H^{m - 1} \left( \Sigma_*^{t_1} \cap \overline{B_r (X_1)} \right) < \infty$.
\end{itemize}
In particular, since  $\mu_*^{t_1} (A) > 0$,  we can choose a point $X_1 \in A$   satisfying  the properties $(d)$-$(f)$. We set $Z_1 := (X_1, t_1)$.

With this we now have the following claim which follows from Lemma \ref{geometric lemma}.
\medskip

\noindent\underline{\textbf{Claim 3:}} There exists a sequence of positive real numbers $s_j \searrow 0$ and $m - 1$ sequences of points $(X_1^j)_j, \ldots, (X_{m - 1}^j)_j$ in $\Sigma_*^{t_1}$ and a constant $\delta \in (0, \frac 12)$ such that
\begin{enumerate}
\item $\delta s_j \leq \left| X_k^j - X_1 \right| \leq s_j, \forall j \in \Zp, \forall k \in \{ 1, \ldots, m - 1\}$,
\item dist$\left( X_k^j - X_1, \text{span} \{ X_1^j - X_1, \ldots, X_{k - 1}^j - X_1 \} \right) \geq \delta s_j, \forall j \in \Zp, \: \forall k \in \{ 2, \ldots, m - 1\}$, 
\item $\overline{\Theta}^{m - 1} (\mu_*^{t_1}, (X_k^j, t_1)) \geq \overline{\Theta}^{m - 1} (\mu_*^{t_1}, (X_1, t_1)) + o_j(1), \, \forall j \in \Zp, \, \forall k \in \{ 1, \ldots, m - 1\}$.
\end{enumerate}
\medskip
The above mentioned constant $\delta$ will not be used extensively  in what follows. Hence we use a generic symbol for the constant in order to reduce some notational jargon.

From \eqref{zero time density equation}, for each $l \in \Zp $, there exists $\overline{r}_l > 0$ such that
\begin{equation}\label{eq: liminf-bound}
\liminf_{k \to \infty} \frac{1}{r^{m - 1}} \int_{P_r^+ (Z_1)} |\partial_t \tilde{U}_k|^2 \, dX dt < l^{-m}, \quad \forall r \in (0, \overline{r}_l].
\end{equation}
We choose a strictly decreasing sequence $(s_j)  $ from Claim 3 satisfying $j s_j < \overline{r}_j$ for all $j \in \Zp$. For $j, k \in \Zp$, set $$\mu_{*, j}(A) := s_j^{-(m+1)} \mu_*(Z_1 + D_{s_j}(A)) \quad \text{and} \quad V_{k, j}(X, t) := \tilde{U}_k(X_1 + s_j X, \, t_1 + s_j^2 t).$$
As before, we can show that the measures $\mu_{*, j}$ are locally uniformly bounded, and hence up to a subsequence, $\mu_{*, j} \overset{*}{\rightharpoonup} \mu_{**}$ for some Radon measure $\mu_{**}$. Therefore, by a diagonal process, we may choose subsequences $(k_l)$ and $(j_l)$ of $(k)$ and $(j)$ respectively, so that
\begin{itemize}
\item  $V_l := V_{k_l, j_l}$ converges to a constant function weakly in $W_{loc}^{1, 2} (\overline{\RNp} \times \R; \R^L)$,
\item   $\hat \eps_l := \frac{\tilde \eps_{k_l}}{\sqrt{s_{j_l}}}\to0$,
\item  $e_{\hat \eps_l} (V_l) \: dX dt \overset{*}{\rightharpoonup} \mu_{**}$
\item $l s_{j_l} < \overline{r}_l$.
\end{itemize}
Then by  \eqref{eq: liminf-bound}
\begin{equation}\label{eq: kl-choice}
\frac{1}{(l s_{j_l})^{m-1}} \int_{P_{l s_{j_l}}^+ (Z_1)} |\partial_t \tilde{U}_{k_l}|^2 \, dX dt < l^{-m},
\end{equation}
and in particular,
\begin{equation}\label{eq: time derivative cgs}
\lim_{l \to \infty} \partial_t V_l = 0 \quad \text{in } L^2_{\mathrm{loc}} \left( \overline{\RNp} \times \mathbb{R} \right).
\end{equation}

Let us denote the support of $\mu_{**}$ by $\Sigma_{**}$. It follows from the first and second conclusions of Claim 3 that there exists a set of $m-1$ linearly independent points $\{\xi_1, \ldots, \xi_{m - 1}\} \subset \overline{B_1 \backslash B_{\delta}}$ such that, up to a subsequence, 
\begin{equation}\label{eq:defofxi}
    \xi_k = \lim_{l \to \infty} \frac{X_k^{j_l} - X_1}{s_{j_l}}, \quad  \text{ for all } k \in \{ 1, \ldots, m - 1 \}.
\end{equation}

\medskip 

\noindent\underline{\textbf{Claim 4:}} For any $r > 0$ and $(Y, s) \in \Sigma_{**}$ we have $\int_{\overline{T_r^+ (s)}} \G_{Y, s} d\mu_{**} = \Theta^{m + 1} (\mu_*, Z_1)$. 

\medskip 

We fix  $r > 0$ and $(Y, s) \in \Sigma_{**}$. Then for any $\rho > 0$ we have
\begin{align*}
\int_{\overline{T_r^+ (s)}} \G_{Y, s} \, d\mu_{**} =& \lim_{j \to \infty} \int_{\overline{T_r^+ (s)}} \G_{Y, s} \, d\mu_{*, j}\\
=& \lim_{j \to \infty} s_j^{- (m + 1)} \int_{\overline{T_{r s_j}^+( t_1 + s_j^2 s)}} \G_{Y, s} \left( \frac{X - X_1}{s_j}, \frac{t - t_1}{s_j^2} \right) \, d\mu_* (X, t)\\
=& \lim_{j \to \infty} \int_{\overline{T_{r s_j}^+ ( t_1 + s_j^2 s)}} \G_{X_1 + s_j Y, t_1 + s_j^2 s} \, d\mu_*\\
\leq& \lim_{j \to \infty} \int_{\overline{T_{\rho}^+ ( t_1 + s_j^2 s)}} \G_{X_1 + s_j Y, t_1 + s_j^2 s} \, d\mu_*\\
=& \int_{\overline{ T_{\rho}^+ ( t_1 )}} \G_{X_1, t_1} \, d\mu_*,
\end{align*}
where we have used Lemma \ref{lem-mono} and the dominated convergence theorem. The arbitrariness of $\rho > 0$ gives us $$\int_{\overline{T_r^+ (s)}} \G_{Y, s} d\mu_{**} \leq \Theta^{m + 1} (\mu_*, Z_1).$$

In order to prove the reverse inequality, we fix  $\alpha > 0$ small. We first show that for any $R > 0$
\begin{equation}\label{eq:appx cty}
\lim_{j \to \infty} \mu_{*, j} \left( \left\{ (X, t) \in \overline{ \partial^0 P_R^+} | \: \Theta^{m + 1} (\mu_{*, j}, (X, t)) \leq \Theta^{m + 1} (\mu_*, Z_1) - \alpha \right\}\right) = 0.
\end{equation}
To see this, we notice that $\Theta^{m + 1} (\mu_{*, j}, (X, t)) = \Theta^{m + 1} (\mu_*, (X_1 + s_j X, t_1 + s_j^2 t))$. So we have 
\begin{align*}
&\mu_{*, j} \left( \{ (X, t) \in \overline{ \partial^0 P_R^+} | \: \Theta^{m + 1} (\mu_{*, j}, (X, t)) \leq \Theta^{m + 1} (\mu_*, Z_1) - \alpha \}\right)\\
&= \frac{ \mu_* \left( \{ (X, t) \in \overline{ \partial^0 P_{R s_j}^+ (Z_1)} | \: \Theta^{m + 1} (\mu_*, (X, t)) \leq \Theta^{m + 1} (\mu_*, Z_1) - \alpha \}\right) }{\mu_* \left( \overline{ P_{R s_j}^+ (Z_1)} \right)} \frac{\mu_* \left( \overline{ P_{R s_j}^+ (Z_1)} \right)}{(R s_j)^{m + 1}} R^{m + 1}\\&\xrightarrow{j\to\infty}0,
\end{align*}
since $\Theta^{m + 1} (\mu_*, \cdot)$ is   $\mu_*$-approximate continuous  at $Z_1$, and   the second factor stays bounded. The latter fact can be shown using arguments similar to the ones given in \eqref{eq:measure growth rate}. This proves \eqref{eq:appx cty}.

Let $(Y, s) \in \textnormal{spt} (\mu_{**})$, and let $R>0$ be such that  $(Y, s) \in P_{\frac{R}{2}}$. Using \eqref{eq:appx cty} we can  choose a sequence $\left( (\tilde Y_j, \tilde s_j ) \right)_j$ that converges to $(Y, s)$ and satisfies
$$\Theta^{m + 1} (\mu_{*, j}, (\tilde Y_j, \tilde s_j )) > \Theta^{m + 1} (\mu_*, Z_1) - \alpha.$$
Let us fix some $\rho \in (0, r)$, $M \in \Zp$ and $\delta > 0$. If $j \in \Zp$ is taken sufficiently large, then
\begin{align*}
\Theta^{m + 1}( \mu_*, Z_1) <& \Theta^{m + 1} \left( \mu_{*, j}, (\tilde Y_j, \tilde s_j) \right) + \alpha \leq \int_{\overline{T_{\rho}^+ (\tilde s_j)}} \G_{\tilde Y_j, \tilde s_j} d\mu_{*, j} + \alpha\\
\leq& \int_{s - 4 \rho^2 - \delta}^{s - \rho^2 + \delta}  \int_{\overline{B_M^+ (Y)}} \G_{\tilde Y_j, \tilde s_j} d\mu_{*, j} + \int_{s - 4 \rho^2 - \delta}^{s - \rho^2 + \delta} \int_{\overline{\RNp} \setminus \overline{B_M^+ (Y)}} \G_{\tilde Y_j, \tilde s_j} d\mu_{*, j} + \alpha.
\end{align*}
The second integral in the right hand side is bounded above by $\alpha$, provided we choose $M$ large enough (see e.g.  proof of Proposition \ref{energy_tail_est}). Therefore, we have
\begin{align*}
\Theta^{m + 1}( \mu_*, Z_1) <& \int_{s - 4 \rho^2 - \delta}^{s - \rho^2 + \delta} \int_{\overline{B_M^+ (Y)}} \G_{\tilde Y_j, \tilde s_j} d\mu_{*, j} + 2 \alpha\\
\leq& \int_{s - 4 \rho^2 - \delta}^{s - \rho^2 + \delta} \int_{\overline{B_M^+ (Y)}} \G_{Y, s} d\mu_{*, j} + 3 \alpha\\
\leq& \int_{s - 4 \rho^2 - \delta}^{s - \rho^2 + \delta} \int_{\overline{\RNp}} \G_{Y, s} d\mu_{**} + 4 \alpha\\
\leq& \int_{\overline{T_{\rho}^+ (Y, s)}} \G_{Y, s} d\mu_{**} + 4 \alpha \leq \int_{\overline{T_r^+ (Y, s)}} \G_{Y, s} d\mu_{**} + 4 \alpha,
\end{align*}
where the last inequality  follows from the arbitrariness of $\delta > 0$ and the monotonicity formula (Lemma \ref{lem-mono}).  Since $\alpha > 0$ was arbitrary, this completes the proof.

\medskip 

\noindent\underline{\textbf{Claim 5:}} We have $0, \xi_1, \ldots, \xi_{m - 1} \in \Sigma_{**}^0$, where $\xi_1, \ldots, \xi_{m - 1}$ are defined as in \eqref{eq:defofxi}, and    $\Sigma_{**}^{\overline t}  := \{ X \in \overline{\RNp} | \: (X, \overline t) \in \Sigma_{**} \}$ denotes the time slice for  $\overline t \in \R$.

\medskip 
 The proof   that $0 \in \Sigma_{**}^0$ is analogous to that of   $(0, 0) \in \Sigma_*$. For the  remaining  points, fix $k \in \{ 1, \ldots, m - 1\}$ and take any $\delta > 0$. Then
\begin{align*}
\mu_{**} \left( P_{2 \delta} (\xi_k, 0) \right) \geq& \limsup_{j \to \infty} \mu_{*, j} \left( P_{\delta} (\xi_k, 0) \right) = \limsup_{j \to \infty} s_j^{-(m +1)} \mu_* \left( P_{\delta s_j} ( X_1 + s_j \xi_k, t_1) \right)\\
\geq& \limsup_{j \to \infty} s_j^{-(m +1)} \mu_* \left( P_{\frac{\delta s_j}{2} } ( X_k^j, t_1) \right).
\end{align*}
Since $X_k^j \in \Sigma_*^{t_1}$ for every $j \in \Zp$, we get that   $\Theta^{m + 1} (\mu_*, (X_k^j, t_1)) \geq \eps_0^2$. So using Proposition \ref{energy_tail_est}, we can get some large enough $M \in \Zp$ such that for all large enough $j \in \Zp$, $0 < r < \delta s_j$ and with the help of Lemma \ref{lem-mono} we have
\begin{align*}
    \eps_0^2 \leq& \Theta^{m + 1}(\mu_*, (X^j_k, t_1)) \leq \int_{\overline{T_r^+ (t_1)}} \G_{X^j_k, t_1} \, d\mu_*
    \leq \frac{C}{r^{m + 1}} \mu_* \left( P_{2 M r} (X^j_k, t_1) \right) + \frac{\eps_0^2}{2}.
\end{align*}
Choosing $r = \frac{\delta s_j}{4 M}$, we obtain \begin{align}\mu_{**} \left( P_{\delta} (\xi_k, 0) \right) &\geq \limsup_{j \to \infty} s_j^{-(m +1)} \mu_* \left( P_{\frac{\delta s_j}{2} } ( X_k^j, t_1) \right)\\& = \limsup_{j \to \infty} s_j^{-(m +1)} \mu_* \left( P_{2 M \frac{\delta s_j}{4 M} } ( X_k^j, t_1) \right) \geq C \frac{\eps_0^2 \: \delta^{m + 1}}{M^{m + 1}}.\end{align} This completes the proof of Claim 5. 

\medskip

\noindent\underline{\textbf{Claim 6:}} For every  $k \in \{1, \ldots, m - 1\}$, we have $$\lim_{l \to \infty} (\xi_k \cdot \nabla_X) V_l = 0, \quad  \text{ in } L^2_{loc} \left( \overline{\RNp} \times (-\infty, 0) \right).$$

Suppose that $\tilde \xi = (\tilde X, \tilde t) \in \Sigma_{**}$ and $0 < R_1 < R_2$. Then using Claim 4 we know that $$\int_{\overline{T_{R_1}^+ (\tilde t)}} \G_{\tilde \xi} \: d\mu_{**} = \int_{\overline{T_{R_2}^+ (\tilde t)}} \G_{\tilde \xi} \: d\mu_{**}.$$ Using this, Lemma \ref{lem-mono} and Fatou's lemma, we get
\begin{align*}
0 =& \lim_{l \to \infty} \left[ \E_{\hat \eps_l} \left( V_l, \tilde \xi, R_2 \right) - \E_{\hat \eps_l} \left( V_l, \tilde \xi, R_1 \right) \right]\\
\geq& \lim_{l \to \infty} \int_{R_1}^{R_2} \int_{T_1^+} \frac{r}{2 |t|} \left| \partial_r \left( V_l (\tilde X + r X, \tilde t + r^2 t) \right) \right|^2 \G_0 \: dX dt \: dr\\
\geq& \int_{R_1}^{R_2} \liminf_{l \to \infty} \int_{T_1^+} \frac{r}{2 |t|} \left| \partial_r \left( V_l (\tilde X + r X, \tilde t + r^2 t) \right) \right|^2 \G_0 \: dX dt \: dr.
\end{align*}
Hence,  for almost every $r \in (0, \infty)$ we get that  $$ \liminf_{l \to \infty} \partial_r \left( V_l (\tilde X + r X, \tilde t + r^2 t) \right) = 0, \text{ in } L^2_{loc} \left( \overline{T_1^+} \right).$$ Evaluating the derivative in the integrand and using a change of variables it can be seen that $$\liminf_{l \to \infty} \int \left| \left( (X - \tilde X) \cdot \nabla_X \right) V_l + 2 (t - \tilde t) \partial_t V_l \right|^2 \: dX dt = 0,$$ where the integral is taken over any compact subset of $\overline{\RNp} \times (- \infty, 0)$. The above equality is true for any $\tilde \xi \in \Sigma_{**}$. In particular using Claim 5, we can take $\tilde \xi = (0, 0), (\xi_1, 0), \ldots, (\xi_{m - 1}, 0)$. Passing to a subsequence along $l$, we get that for every $k \in \{ 1, \ldots, m - 1\}$ we have $$ \left( (X - \xi_k) \cdot \nabla_X \right) V_l + 2t \partial_t V_l \to 0, \text{  in } L^2_{loc} \left( \overline{\RNp} \times (- \infty, 0) \right), \text{ and}$$
\begin{equation}\label{eq:monotonicity at origin}
\left( X \cdot \nabla_X \right) V_l + 2t \partial_t V_l \to 0, \text{  in } L^2_{loc} \left( \overline{\RNp} \times (- \infty, 0) \right).
\end{equation}
Now the claim follows from $$ \left| (\xi_k \cdot \nabla_X) V_l \right| \leq \left| \left( (X - \xi_k) \cdot \nabla_X \right) V_l + 2t \partial_t V_l \right| + \left| \left( X \cdot \nabla_X \right) V_l + 2t \partial_t V_l \right|.$$

\medskip 

\noindent\underline{\textbf{Claim 7:}} For any $r>0$ and  $A \subseteq \overline{\RNp} \times (-\infty, 0)$  we have $$r^{-(m + 1)} \mu_{**} \left( D_r (A) \right) = \mu_{**} (A).$$

\medskip
 It suffices  to show that for any test function  $\phi \in C_c^{\infty} \left( \R^{m + 1} \times (-\infty, 0) \right)$ $$\int_{- \infty}^0 \int_{\overline{\RNp}} \phi \, d\mu_{**} 
 =r^{-(m + 1)} \int_{- \infty}^0 \int_{\overline{\RNp}} \phi \left( \frac X r, \frac{t}{r^2} \right) \, d\mu_{**}.$$   Recalling that $e_{\hat \eps_l} (V_l) \: dX dt \overset{*}{\rightharpoonup} \mu_{**}$,   Proposition \ref{convergence of approximation},  and a  change of variables together imply, $$r^{-(m + 1)} \int_{- \infty}^0 \int_{\overline{\RNp}} \phi \left( \frac X r, \frac{t}{r^2} \right) \, d\mu_{**}=\lim_{l \to \infty} \frac 12 \int_{- \infty}^0 \int_{\RNp} \phi \, |\nabla_X V_{l, r}|^2 \, dX dt,$$ where $V_{l,r}(X, t) := V_l(r X, r^2 t)$. 
Using the fundamental theorem of calculus in $s$, integration by parts in space, and Einstein's summation convention, we obtain
\begin{align*}
    &r^{- (m + 1) } \int_{- \infty}^0 \int_{\overline{\RNp}} \phi \, d\mu_{**}(D_r(\cdot)) - \int_{- \infty}^0 \int_{\overline{\RNp}} \phi \, d\mu_{**}\\
    =& \frac 12 \lim_{l \to \infty} \int_{- \infty}^0 \int_{\RNp} \phi \, [|\nabla_X V_{l, r}|^2 - |\nabla_X V_{l, 1}|^2] \, dX dt \\
    =& \frac 12 \lim_{l \to \infty} \int_1^r \int_{- \infty}^0 \int_{\RNp} \phi \, \partial_s (|\nabla_X V_{l, s}|^2) \, dX dt \, ds\\
    =& \lim_{l \to \infty} \int_1^r \int_{- \infty}^0 \int_{\RNp} \phi \, (\partial_i V_{l, s}^j) \, (\partial_i \partial_s V_{l, s}^j) \, dX dt \, ds \\
    =& - \lim_{l \to \infty} \int_1^r \int_{- \infty}^0 \int_{\RNp} \phi \, (\Delta_X V_{l, s}^j) \, (\partial_s V_{l, s}^j) \, dX dt \, ds
    - \lim_{l \to \infty} \int_1^r \int_{- \infty}^0 \int_{\RNp} ( \partial_i \phi) \, (\partial_i V_{l, s}^j) \, (\partial_s V_{l, s}^j) \, dX dt \, ds\\
    &\quad - \lim_{l \to \infty} \int_1^r \int_{- \infty}^0 \int_{ \partial \RNp} \phi \, (\partial_y V_{l, s}^j) \, (\partial_s V_{l, s}^j) \, dx \, dt \, ds\\
    =& - \lim_{l \to \infty} \int_1^r \int_{- \infty}^0 \int_{\RNp} \phi \, (\partial_t V_{l, s}^j) \, (\partial_s V_{l, s}^j) \, dX dt \, ds - \lim_{l \to \infty} \int_1^r \int_{- \infty}^0 \int_{\RNp} ( \partial_i \phi) \, (\partial_i V_{l, s}^j) \, (\partial_s V_{l, s}^j) \, dX dt \, ds\\
    &\quad - \lim_{l \to \infty} \frac{1}{\hat{\eps}_l^2} \int_1^r \int_{- \infty}^0 \int_{ \partial \RNp} \phi \, \partial_s (F(V_{l, s})) \, dx \, dt \, ds \\ =&: I + II + III. 
\end{align*} Since $(V_l)$ is bounded in $H^1_{loc}$,  the convergence in \eqref{eq:monotonicity at origin} implies  $I=II=0$. To bound  $III$, we integrate in $s$ and apply Proposition \ref{convergence of approximation}
    $$III = \lim_{l \to \infty} \frac{1}{\hat{\eps}_l^2} \int_{- \infty}^0 \int_{ \partial \RNp} \phi \, F(V_{l, 1}) \, dx \, dt - \lim_{l \to \infty} \frac{1}{\hat{\eps}_l^2} \int_{- \infty}^0 \int_{ \partial \RNp} \phi \, F(V_{l, r}) \, dx \, dt = 0.$$
 This completes the proof of Claim 7.

\medskip

Without loss of generality, we assume (recall from    Claim 3  that     $\{\xi_1, \ldots, \xi_{m - 1}\}$ is linearly independent)
$$\textnormal{span} \left(\{ \xi_1, \ldots, \xi_{m - 1} \}\right) = \textnormal{span} \left( \{e_1, \ldots, e_{m - 1}\} \right).$$
Together with  Claim 6, this yields  $$\lim_{l \to \infty} \partial_k V_l = 0, \text{ in } L^2_{loc} \left( \overline{\RNp} \times (- \infty, 0) \right)\quad \text{for each } k \in \{ 1, \ldots, m - 1\}.$$ Let $\Sigma_{**}^- := \textnormal{spt}( \mu_{**}) \cap \left( \R^{m + 1} \times (- \infty, 0) \right)$ be the restricted singular set. The scale invariance of $\mu_{**}$ proved in Claim 7 implies that $\Sigma_{**}^-$ is self-similar $$D_r(\Sigma_{**}^-)=\Sigma_{**}^-, \quad\text{for every }r>0.$$Consequently, $\Sigma_{**}^-$ is completely parameterized by its time slice at $t = -1$ $$\Sigma_{**}^- = \left\{ ( {\zeta} \sqrt{ - t}, t) | \:  {\zeta}\in \Sigma_{**}^{- 1} \text{ and } t \in (- \infty, 0) \right\}.$$

\noindent Since $(0, 0)\in \Sigma_{**}$ is an  energy concentration point, Proposition \ref{density_results} and Claim 4 guarantees $$\eps_0^2 \leq \Theta^{m + 1} (\mu_{**}, (0,0)) = \int_{\overline{T_ 1^+}} \G_0 \, d\mu_{**},$$
which  in particular shows that $\Sigma_{**}^-$ is nonempty.

\medskip 

\noindent \underline{\textbf{Claim 8:}} We have $$\mu_{**} = \H^{m - 1}\lfloor_{span(\{ \xi_1, \ldots, \xi_{m - 1} \})} \times h \p^2 \lfloor_{\mathcal S}, \quad \text{ on } \R^{m - 1} \times (\R^2 \times (- \infty, 0)),$$ where $h$ satisfies $$\frac{\Theta^{m + 1} (\mu_*, Z_1)}{C} \leq h \leq C \Theta^{m + 1} (\mu_*, Z_1), \quad \text{ and } \mathcal{S} = \{ (\zeta_i \sqrt{ - t}, t) \in \R^2 \times (- \infty, 0) | \: i \in I \},$$ for some $\zeta_i$'s in $\R^2$ and some index set $I$ which is non-empty and at most countable. Moreover we have $\mathcal S \cap \left(\R^2 \times \{t = - 1\} \right)$ is locally finite.

\medskip

Using the local uniform boundedness of the rescaled energy $\E$ for $V_l$, provided by \eqref{eq:rescaledfunction}, and the   estimates on the time energy from Lemma \ref{local_energy_ineq1}, we see that $(V_l)$ is bounded in $H^1_{loc} \left( \overline{\RNp} \times \R \right)$. So, for any $\phi \in C_c^{\infty} \left( \R^{m + 1} \times (- \infty, 0) \right), \, l \in \Zp$ and $k \in \{1, \ldots, m - 1\}$ by integration by parts (and using Einstein's summation convention) we have
\begin{align*}
\int_{\R} \int_{\overline{\RNp}} \partial_k \phi \: e(V_l) dX dt &= \frac12 \int_{\R} \int_{\RNp} \partial_k \phi \: |\nabla_X V_l|^2\: dX dt + \frac{1}{\hat \eps_l^2} \int_{\R} \int_{\partial \RNp} \partial_k \phi \: F(V_l) \: dx \: dt\\
&= - \int_{\R} \int_{\RNp} \phi (\partial_i V_l^j) \: \partial_i (\partial_k V_l^j) \: dX dt + \frac{1}{\hat \eps_l^2} \int_{\R} \int_{\partial \RNp} \partial_k \phi \: F(V_l) \: dx \: dt\\
&= \int_{\R} \int_{\RNp} (\partial_i \phi) (\partial_i V_l^j) \: (\partial_k V_l^j) \: dX dt + \int_{\R} \int_{\RNp} \phi \: (\partial_{ii} V_l^j) \: (\partial_k V_l^j) \: dX dt\\
& + \int_{\R} \int_{\partial \RNp} \phi \: (\partial_y V_l^j) \: (\partial_k V_l^j) \: dx \: dt + \frac{1}{\hat \eps_l^2} \int_{\R} \int_{\partial \RNp} \partial_k \phi \: F(V_l) \: dx \: dt\\
&= \int_{\R} \int_{\RNp} ( \nabla_X \phi \cdot \nabla_X) V_l \cdot \partial_k V_l \: dX dt + \int_{\R} \int_{\RNp} \phi \partial_t V_l \cdot \partial_k V_l \: dX dt\\
 &\xrightarrow{l\to\infty} 0,  
\end{align*}
thanks to  Claim 6.
Thus,  for any $k \in \{ 1, \ldots, m - 1\}$, $$ \lim_{l \to \infty} \partial_k \left( e(V_l) dX dt \right) = 0,$$ as distributions on $ \R^{m + 1} \times (- \infty, 0)$. Proposition \ref{Propsition A.1} immediately implies the existence of a non-negative Radon measure $\nu_{**}$ on $\R^2 \times (- \infty, 0)$ such that $$ \mu_{**} = \H^{m - 1} \times \nu_{**},\quad  \text{ on } \R^{m - 1} \times ( \R^2 \times (- \infty, 0)).$$
This in particular gives us
\begin{equation}\label{eq:measure_decomposition}
    \textnormal{spt}(\mu_{**}) \cap (\R^{m + 1} \times (-\infty, 0)) = (\R^{m - 1} \times \textnormal{spt}(\nu_{**})) \cap (\R^{m + 1} \times (-\infty, 0)).
\end{equation}

Now in order to determine $\nu_{**}$ as mentioned in the statement of the above claim, we will first show that on the support of $\nu_{**}$ the 2-dimensional upper and lower densities of $\nu_{**}$ is bounded above and below by $C \Theta^{m + 1} (\mu_*, Z_1)$ and $\frac{\Theta^{m + 1} (\mu_*, Z_1)}{C}$ respectively.

Let  $\overline Z := (\overline X, \overline t) \in \textnormal{spt}(\mu_{**})$. Denoting  any $X \in \overline{\RNp}$   by $X = (x^{\prime}, x_m, y) \in \R^{m - 1} \times \R \times [0, \infty)$, we have   $\overline X = (\overline x^{\prime}, \overline x_m, 0)$. Then by  Proposition \ref{energy_tail_est}  there exists sufficiently  large  $M \in \Zp$ such that for any $r > 0$,    Claim 4  yields 
\begin{align*}
    \Theta^{m + 1}(\mu_*, Z_1) =& \int_{\overline{T_r^+ (\overline t)}} \G_{\overline X, \overline t} \: d\mu_{**} \leq \frac{C}{r^{m + 1}} \mu_{**} \left( P_{2 M r} (\overline Z) \right) + \frac{1}{2} \Theta^{m + 1} (\mu_*, Z_1).
\end{align*}
Combining this with $\eqref{eq:measure_decomposition}$ and replacing $r$ with $\frac{r}{2M}$ yields
\begin{align*}
    \Theta^{m + 1}(\mu_*, Z_1) \leq C \frac{\nu_{**} \left( B_{r}^2(\overline{x}_m, 0) \times (\overline t - r^2, \overline t + r^2) \right)}{r^2}.
\end{align*}
This gives us the required lower bound on the 2-dimensional lower density of $\nu_{**}$ on its support. For the upper bound,  we estimate 
\begin{align*}
\frac{\mu_{**} \left( P_r (\overline X, \overline t) \right) }{r^{m + 1}} &\leq \liminf_{l \to \infty} \frac{1}{r^{m + 1}} \int_{P_r (\overline X, \overline t)} e(V_l) dX dt\\
&\leq \liminf_{l \to \infty} \frac{1}{(r s_{j_l})^{m + 1}} \int_{P_{r s_{j_l}} (X_1 + s_{j_l} \overline X, t_1 + s_{j_l}^2 \overline t)} e(\tilde U_{k_l}) dX dt \leq C.
\end{align*}
Together with \eqref{eq:measure_decomposition}, this gives 
\begin{align*}
\frac{\nu_{**} \left( B_r(\overline x_m, 0) \times (\overline t - r^2, \overline t + r^2) \right) }{r^{2}} \leq& C \frac{\mu_{**} \left( P_r (\overline X, \overline t) \right) }{r^{m + 1}} \leq \frac{C}{\Theta^{m + 1}(\mu_*, Z_1)} \Theta^{m + 1}(\mu_*, Z_1)\\
\leq& C \Theta^{m + 1}(\mu_*, Z_1),
\end{align*}
which  is  the required upper bound.

Now let $\mathcal{S} := \textnormal{spt}(\nu_{**}) \cap \{t < 0\}$. Using the corresponding property for $\mu_{**}$ we have $$\mathcal{S} = \left\{ (\zeta \sqrt{ - t}, t) | \: (\zeta, - 1) \in \mathcal{S}, \, t \in (- \infty, 0) \right\}.$$
Now the density bounds and local finiteness of $\nu_{**}$ implies that $\{ \zeta \in \R^2 | \: (\zeta, - 1) \in \mathcal{S} \}$ is locally finite. This gives us the required structure for $\mathcal{S}$.

Finally, for any $(\overline x_m, 0, \overline t) \in \partial \R^2_+ \times (- \infty, 0)$, $$ \frac{\nu_{**} \left( B_r(\overline x_m, 0) \times (\overline t - r^2, \overline t + r^2) \right) }{r^{2}} \leq C$$ implies that $\nu_{**}$ is absolutely continuous with respect to $\p^2 \lfloor_{\mathcal{S}}$, whence we are done.  

\medskip 
The local finiteness of the slice $\mathcal{S} \cap \{ t = - 1\} = \{\zeta_i\}_{i \in I}$ implies that the parabolic curves $\gamma_i(t) := (\zeta_i \sqrt{-t}, t)$ constituting $\mathcal{S}$ are locally separated away from $t = 0$. Namely, for each $i \in I$ and $t < 0$, the curve $\gamma_i$ admits a space-time neighborhood in $\mathbb{R}^2 \times (-\infty, 0)$ disjoint from all other curves $\gamma_j$ ($j \neq i$).

We fix  $\tilde{\zeta} \in \partial \mathbb{R}^2_+$ such that $$\left\{ ( \tilde{\zeta} \sqrt{-t} , \, t) : t \in (-\infty, 0) \right\} \subseteq \mathcal{S} \quad \text{or, equivalently,} \quad \tilde{\zeta} \in \mathcal{S} \cap \{t = -1\}.$$
We then choose $- \infty < T_0 < T_1 < 0$ such that  $$K := \left\{ ( \tilde{\zeta} \sqrt{ - t}, t)| \: T_0 \leq t \leq T_1 \right\}\subseteq \overline{B_{\frac12}} \times [T_0, T_1].$$

For any $l \in \Zp$, define  $$\psi_l := | \partial_t V_l|^2 + \sum_{k = 1}^{m - 1} | \partial_k V_l|^2.$$
Then by \eqref{eq: time derivative cgs} and Claim 6,  $\psi_l \to 0$ in $L^1_{loc} \left( \overline{\RNp} \times (-\infty, 0) \right)$ as $l \to \infty$. For any $$\bar Z=(\overline X, \overline t) = (\overline x^{\prime}, (\overline x_m, 0), \overline t) \in \R^{m - 1} \times \partial \R^2_+ \times (-\infty, 0) = \partial \RNp \times (-\infty, 0),$$ we set $$\tilde P_r^+ (\bar Z) := \left\{ (x^{\prime}, (x_m, y), t) \in \R^{m - 1} \times \R^2_+ \times (-\infty, 0) | \: (x_m, y) \in B_{r_0}^+ (\overline x_m, 0), \, (x^{\prime}, t) \in P_r (\overline x^{\prime}, \overline t) \right\},$$
where $r_0 \in (0, \frac{1}{2} \sqrt{|T_1|})$ is small enough so that $P_{ 2 r_0} (K) := \bigcup_{Z \in K} P_{2 r_0} (Z)$ does not contain any other \textit{parabolic tentacle} of $\mathcal{S}$ (see the discussion following Claim 8).
Next, for  $l \in \Zp$,  define the localized   maximal function  
\begin{align*}
&M_l (\overline Z) := \sup_{r \in (0, r_0)} r^{- (m + 1)} \int_{\tilde P_r^+ (\overline Z)} \psi_l \: dX dt.
\end{align*}
Consider the compact set $\tilde K := \overline{B_{r_0}} \times K \subseteq \R^{m - 1} \times \partial \R^2_+ \times (- \infty, 0) = \partial \RNp \times (- \infty, 0)$. Standard   weak $L^1$ type estimates for   maximal function yield, for any  $\alpha > 0$, $$\p^{m + 1} \left( \left\{\overline Z \in \tilde K| \: M_l (\overline Z) > \alpha \right\} \right) \leq \frac{C}{\alpha} \int_{ P_{r_0}^+ (\tilde K)} \psi_l \: dX dt.$$
Since $\psi_l \to 0$ in $L^1(P_{r_0}^+(\tilde{K}))$, thanks to  \eqref{eq: time derivative cgs} and Claim 6,   passing to a suitable subsequence  (which we still denote by $(l)$),  we can find  $\tilde Z = (\tilde x^{\prime}, \tilde\zeta \sqrt{- \tilde t}, \tilde t)\in \tilde K$ satisfying 
\begin{equation}\label{eq: maximal function estimate}
\lim_{l \to \infty} M_l (\tilde Z) = 0.
\end{equation}

By Claim 4 and Proposition \ref{energy_tail_est} we get some large enough $M \in \Zp$ such that for any $r > 0$,
\begin{align*}
    \eps_0^2 &\leq \Theta^{m + 1} (\mu_*, Z_1) = \int_{\overline{T_r^+ (\tilde Z)}} \G_{\tilde Z} \: d\mu_{**}\\
    \leq& C \liminf_{l \to \infty} \frac{1}{r^{m + 1}} \int_{B_{Mr} (\tilde x^{\prime}) \times \overline{B_{Mr}^+ (\tilde\zeta\sqrt{- \tilde t})} \times (\tilde t - (M r)^2, \tilde t)} e(V_l) dX dt + \frac{\eps_0^2}{2}.
\end{align*}
Replacing $r$ with $\frac{r}{M}$ and absorbing the constant $M^{m+1}$ into the constant $C$, we obtain for every $r > 0$
\begin{equation}\label{eq: energy concentration}
\frac{\eps_0^2}{2} \leq C_0 \liminf_{l \to \infty} \frac{1}{r^{m + 1}} \int_{B_r (\tilde x^{\prime}) \times \overline{B_r^+ (\tilde\zeta\sqrt{- \tilde t})} \times (\tilde t - r^2, \tilde t)} e(V_l) dX dt.
\end{equation}

Fix a constant  $C_1 > \max \{ 2, 2 C_0\}$, whose precise value will be specified later.
For each $l \in \Zp$ and $\lambda > 0$ define $$Q(l, \lambda) := \max_{Y \in \overline{\partial^0 B_{r_0}^+ (\tilde\zeta \sqrt{- \tilde t})}} \frac{1}{\lambda^{m + 1}} \int_{B_{\lambda}(\tilde x^{\prime}) \times \overline{B_{\lambda}^+ (Y)} \times (\tilde t - \lambda^2, \tilde t)} e(V_l) dX dt.$$
It follows from \eqref{eq: energy concentration} that for every $\lambda > 0$  $$\liminf_{l \to \infty} Q(l, \lambda) > \frac{\eps_0^2}{C_1}.$$ On the other hand, the smoothness of $V_l$ implies that for each fixed $l \in \Zp$, $$\lim_{\lambda \to 0} Q(l, \lambda) = 0.$$
By a standard intermediate value argument, we can extract a subsequence of $(l)$ (still denoted by $(l)$) and a strictly decreasing sequence of radii $\lambda_l \downarrow 0$ such that, for every $l \in \Zp$,$$Q(l, \lambda_l) = \frac{\eps_0^2}{C_1}.$$ Moreover,  by compactness of  $\overline{ \partial^0 B_{r_0}^+ (\tilde{\zeta} \sqrt{- \tilde t})}$ and    smoothness of   $V_l$, the maximum  $Q(l, \lambda_l)$ is attained at some point $Y_l \in \overline{ \partial^0 B_{r_0}^+ (\tilde{\zeta} \sqrt{- \tilde t})}$
\begin{equation}\label{energy concentration equation}
\frac{\eps_0^2}{C_1} = Q(l, \lambda_l) = \frac{1}{\lambda_l^{m + 1}} \int_{B_{\lambda_l}(\tilde x^{\prime}) \times \overline{B_{\lambda_l}^+ (Y_l)} \times (\tilde t - \lambda_l^2, \tilde t)} e(V_l) dX dt.
\end{equation}
Furthermore,  $Y_l \to$ $\tilde\zeta \sqrt{- \tilde t}$ as $l \to \infty$. Indeed, if   $Y_l \to \overline Y \in \overline{\partial^0 B_{r_0}^+ (\tilde\zeta \sqrt{- \tilde t})}$ along a subsequence, then one can show  that $(\tilde x^{\prime}, \overline Y, \tilde t) \in \Sigma_{**}$, whence $(\overline Y, \tilde t) \in \mathcal{S}$. But then the choice of $r_0$ forces $\overline Y = \tilde\zeta \sqrt{- \tilde t}$.

We are now ready to define the aforementioned rescaling of $V_l$ as follows: $$\overline V_l (x^{\prime}, Y, t) := V_l (\tilde x^{\prime} + \lambda_l x^{\prime}, Y_l + \lambda_l Y, \tilde t + \lambda_l^2 t), \, \forall (x^{\prime}, Y) \in \lambda_l^{- 1} \left( B_{r_0} \times \overline{B_{r_0}^+} \right), \forall t \in \lambda_l^{- 2}( -r_0^2, r_0^2).$$ Notice that each $\overline V_l$ satisfies parabolic system in the interior of its domain and satisfies a lateral boundary condition  analogous to  \eqref{eq:approx_ext_+ general target}  with the rescaled parameter  $\overline \eps_l := \frac{\hat \eps_l}{\sqrt{\lambda_l}}$.
Going back to  \eqref{eq: maximal function estimate}, a change of variables yields $$\lim_{l \to \infty} \sup_{r \in (0, \frac{r_0}{\lambda_l})} r^{-(m + 1)} \int_{- r^2}^{r^2} \int_{B_r \times B_{\frac{r_0}{2 \lambda_l}}^+} \left( | \partial_t \overline V_l|^2 + \sum_{k = 1}^{ m - 1} | \partial_k \overline V_l |^2 \right) \: dX dt = 0.$$
In particular,   for every  $k \in \{ 1, \ldots, m - 1\}$ we have  \begin{align}\label{bar-Vl}\partial_k \overline V_l \to 0\quad\text{and }\partial_t \overline V_l \to 0\quad\text{in }L^2_{loc} \left( \overline{\RNp} \times \R \right)\quad\text{as }l\to\infty.\end{align}

We now establish local strong convergence for the sequence of  rescaled maps $(\overline V_l)$  in $\overline{\RNp} \times \R$. While  strong convergence near the origin follows directly from our choice of rescaling, extending this convergence to arbitrary compact subsets  of $ \overline{\RNp} \times \mathbb{R}$  requires suitable control on the energy. 
To this end, for any $l \in \Zp$ define $$\Psi_l (\overline x^{\prime}, \overline Y, \overline t) := \int_{\overline{\RNp} \times \R} e(\overline V_l) (\overline x^{\prime} + x^{\prime}, \overline Y + Y, \overline t + t) \: \phi(x^{\prime}, Y, t) \: d x^{\prime} \: dY dt,$$
where $\phi \in C_c^{\infty} (\R^{m + 1} \times \R)$ satisfies $\phi \equiv 1$ in $B_{\frac12}^{m - 1} \times B_{\frac12}^2 \times (- \frac34, - \frac14) \subseteq \R^{m - 1} \times \R^2 \times \R$, $\textnormal{spt}(\phi) \subseteq B_1^{m - 1} \times B_1^2 \times (- 1, 0)$ and $0 \leq \phi \leq 1$. Here,  $\Psi_l$ is defined for all  values of $(\overline x^{\prime}, \overline Y, \overline t)$ for which the defining expression of $\Psi_l$ makes sense. As $l \to \infty$, these domains form an increasing sequence that exhausts  $\overline{\RNp} \times \R$.

For any $k \in \{ 1, \ldots, m - 1\}$, differentiating $\Psi_l$ with respect to the $k$-th component of $\overline x^{\prime}$,   integrating by parts, and invoking both the equation satisfied by $\overline{V}_l$ and Einstein's summation convention, we obtain
\begin{align*}
\partial_k \Psi_l (\overline x^{\prime}, \overline Y, \overline t) =& \int_{ \overline{\RNp} \times \R} (\partial_i \overline V_l^j) (\partial_i \partial_k \overline V_l^j) (\overline x^{\prime} + x^{\prime}, \overline Y + Y, \overline t + t) \: \phi(x^{\prime}, Y, t) \: dx^{\prime} dY dt\\
&+ \frac{1}{\overline \eps_l^2} \int_{\partial \RNp \times \R} \partial_k (F(\overline V_l)) (\overline x^{\prime} + x^{\prime}, \overline Y + Y, \overline t + t) \: \phi(x^{\prime}, Y, t) \: dx^{\prime} dx_m dt\\
=& - \int_{ \overline{\RNp} \times \R} (\partial_{ii} \overline V_l^j) (\partial_k \overline V_l^j) (\overline x^{\prime} + x^{\prime}, \overline Y + Y, \overline t + t) \: \phi(x^{\prime}, Y, t) \: dx^{\prime} dY dt\\
&- \int_{ \overline{\RNp} \times \R} (\partial_i \overline V_l^j) (\partial_k \overline V_l^j) (\overline x^{\prime} + x^{\prime}, \overline Y + Y, \overline t + t) \: (\partial_i \phi) (x^{\prime}, Y, t) \: dx^{\prime} dY dt\\
&- \int_{\partial \RNp \times \R} (\partial_y \overline V_l^j) (\partial_k \overline V_l^j) (\overline x^{\prime} + x^{\prime}, \overline Y + Y, \overline t + t) \: \phi(x^{\prime}, Y, t) \: dx^{\prime} dx_m dt\\
&+ \frac{1}{\overline \eps_l^2} \int_{\partial \RNp \times \R} \partial_k (F(\overline V_l)) (\overline x^{\prime} + x^{\prime}, \overline Y + Y, \overline t + t) \: \phi(x^{\prime}, Y, t) \: dx^{\prime} dx_m dt\\
=& - \int_{ \overline{\RNp} \times \R} (\partial_t \overline V_l) \cdot (\partial_k \overline V_l) (\overline x^{\prime} + x^{\prime}, \overline Y + Y, \overline t + t) \: \phi(x^{\prime}, Y, t) \: dx^{\prime} dY dt\\
&- \int_{ \overline{\RNp} \times \R} (\partial_i \overline V_l) \cdot (\partial_k \overline V_l) (\overline x^{\prime} + x^{\prime}, \overline Y + Y, \overline t + t) \: (\partial_i \phi) (x^{\prime}, Y, t) \: dx^{\prime} dY dt,
\end{align*} and in a  similar way,  \begin{align*}
\partial_{\overline t} \Psi_l (\overline x^{\prime}, \overline Y, \overline t) =& - \int_{ \overline{\RNp} \times \R} |\partial_t \overline V_l|^2 (\overline x^{\prime} + x^{\prime}, \overline Y + Y, \overline t + t) \: \phi(x^{\prime}, Y, t) \: dx^{\prime} dY dt\\
&- \int_{ \overline{\RNp} \times \R} (\partial_i \overline V_l) \cdot (\partial_t \overline V_l) (\overline x^{\prime} + x^{\prime}, \overline Y + Y, \overline t + t) \: (\partial_i \phi) (x^{\prime}, Y, t) \: dx^{\prime} dY dt. 
\end{align*} It then follows from \eqref{bar-Vl} that  $$\partial_k \Psi_l\to0\quad\text{and }\partial_t \Psi_l\to0\quad\text{in }L^{\infty}_{loc} \left( \overline{\RNp} \times \R \right)\quad\text{as }l\to\infty.$$

Now take any $( \overline X, \overline t) = (\overline x^{\prime}, \overline Y, \overline t) \in \R^{m - 1} \times \partial \R^2_+ \times \R = \partial \RNp \times \R$. Due to our choice of $Y_l$ we know that for all $l \in \Zp$ large enough and for all $Y \in \overline{ \partial^0 B_2^+ (\overline Y)}$, we have $$\Psi_l (0, Y, 0) \leq \frac{\eps_0^2}{C_1}.$$
So using the above derivative bounds for $\Psi_l$ we see that when $l$ is sufficiently large then $$\Psi_l \leq \frac{2 \eps_0^2}{C_1}, \quad \text{ in } \overline{P_2^+ (\overline X, \overline t)}.$$
Together with a covering argument this would imply  that for some dimensional constant $C > 0$, $$\int_{ \overline{ P_2^+ (\overline X, \overline t)}} e(\overline V_l) dX dt \leq \frac{2 C \eps_0^2}{C_1}.$$
Now following Proposition \ref{energy_tail_est}, we can choose $\overline r \in (0, \frac12)$ such that the following estimate holds for all $l \in \Zp$
\begin{align*}
\E_{\overline \eps_l} \left( \overline V_l, (\overline X, \overline t), \overline r \right) < C \int_{ \overline{ P_1^+ (\overline X, \overline t)}} e(\overline V_l) \:dX dt + \frac{\eps_0^2}{2}.
\end{align*}
So choosing $C_1$ large enough, we get
\begin{align*}
\E_{\overline \eps_l} \left( \overline V_l, (\overline X, \overline t), \overline r \right) < C \int_{ \overline{ P_1^+ (\overline X, \overline t)}} e(\overline V_l) dX dt + \frac{\eps_0^2}{2} \leq \frac{C}{C_1} + \frac{\eps_0^2}{2} < \eps_0^2.
\end{align*}
Using this with Lemma \ref{eps_gradient_est} and interior estimate for heat equation we get that up to a subsequence $(\overline V_l)$ converges to some function $\overline V$ in $ C^1_{loc} \left( \overline{\RNp} \times \R \right)$.
Now using \eqref{energy concentration equation}, we get
\begin{equation}\label{eq:energyconc}
    \int_{- 1}^0 \int_{B_1 \times \overline{ B_1^+}} e(\overline V_l) dX dt = \frac{\eps_0^2}{C_1}.
\end{equation}
Since  $\partial_k V_l\to0$ and $\partial_t V_l\to0$   in $L^2_{loc} \left( \overline{\RNp} \times \R \right)$ for all $k \in \{ 1, \ldots, m - 1 \}$,  it follows that  $\partial_k \overline V \equiv 0 \equiv \partial_t \overline V$ for all $k \in \{ 1, \ldots, m - 1\}$.
Since $\overline{V}_l$ satisfies the heat equation, passing to the limit as $l \to \infty$ implies that $\overline{V}$ is a harmonic function.

Suppose that up to a subsequence, $c := \lim \overline{\eps}_l \in [0, \infty]$. Then for every $r > 0$, 
\begin{equation}\label{eq:integrability}
\frac{1}{r^{m - 1}} E^c (\overline V; B_r^+) \leq \lim_{l \to \infty} \frac{1}{r^{m - 1}} E^{\overline{\eps}_l} (\overline{V}_l (\cdot, 0); B_r^+) \leq C \liminf_{l \to \infty} \mathcal {D}_{\overline{\eps}_l} \left( \overline V_l, (0, r^2), r \right) \leq C,
\end{equation}
where the constant $C$ is independent of $r$ and $E^c$ denotes the standard Dirichlet energy when $c = 0$.
Since $\overline V$ is independent of the first $m - 1$ spatial variables, it can be viewed purely as a function from $\overline{\R^2_+}$ into $\R^L$. In light of this, \eqref{eq:integrability} shows  $$ \int_{B_r^+} |\nabla \overline V|^2 \, dX + \frac{1}{c} \int_{\partial^0 B_r^+} F(\overline V) \, dx \leq C,\quad\text{for every }r>0. $$
The rest of the proof now follows by the reasoning identical to that of Claim 3 in the proof of Theorem \ref{main thm_intro}, showing that $\overline V$ is the desired non-trivial harmonic map with free boundary on $N$. We conclude the proof.
\end{proof}

\medskip 

In Theorem \ref{no harmonic S1_intro}, we have shown that non-constant harmonic maps from $\mathbb{R}^2_+$ to $\mathbb{R}^L$ with free boundary on $N$ are the primary obstruction to the strong convergence of spatial derivatives, yielding a non-zero defect measure $\nu$. Combined with Proposition \ref{blowupcndn}, this result explains the measure-theoretic size of the concentration set $\Sigma$, specifically $\mathcal{P}^{m+1}(\Sigma) > 0$. In fact, the existence of such non-constant free-boundary harmonic maps leads to a lower bound on the parabolic Hausdorff dimension of the singular set, namely $\mathcal{P}_{\mathrm{dim}}(\Sigma) \ge m - 1$. Theorem \ref{fine reg parabolic blow up set} addresses this dimensional result via   an approach inspired by \cite[Theorem 4.3]{LW-1}.

We now prove Theorem \ref{fine reg parabolic blow up set}.

\begin{proof}[Proof of Theorem \ref{fine reg parabolic blow up set}]  
    By  Theorem \ref{no harmonic S1_intro},  the proof is immediate if   $\nu \not\equiv 0$. Consequently,  in what follows, we  assume  that $\nu \equiv 0$. 
    
    We now assume that  $\p_{\textnormal{dim}} (\Sigma) > m - 1$. Then there exists $s > m - 1$ such that $\p^s (\Sigma) > 0$ (not necessarily finite). We choose a compact set $\tilde \Sigma \subseteq \Sigma$ such that $\p^s (\tilde \Sigma) \in (0, \infty)$ (see e.g. \cite[Theorem 8.19]{Mattila}). Before going to the blow up argument first consider the function $$Z \mapsto \sup_{r \in (0, 1)} \frac{\p^s (\tilde \Sigma \cap \overline{P_r(Z)})}{r^s} \in [0, \infty].$$
    Notice that $\p^s (\tilde \Sigma) \in (0, \infty)$ implies $$\limsup_{r \searrow 0} \frac{\p^s(\tilde \Sigma \cap \overline{P_r(Z)})}{r^s} < \infty, \text{ for } \p^s \text{-almost every } Z \in \tilde \Sigma,$$
    whence we get that $$\sup_{r \in (0, 1)} \frac{\p^s (\tilde \Sigma \cap \overline{P_r(Z)})}{r^s} < \infty, \text{ for } \p^s \text{-almost every } Z \in \tilde \Sigma.$$
    So for any $M \in \Zp$, if we consider $$A_M := \{ Z \in \tilde \Sigma | \: \sup_{r \in (0, 1)} \frac{\p^s (\tilde \Sigma \cap \overline{P_r(Z)})}{r^s} \leq M \},$$
    then $$\p^s \left( \tilde \Sigma \backslash \bigcup_{M \in \Zp} A_M \right) = 0.$$
    In particular we can choose $M \gg 1$ such that for $A := A_M$ we have $\p^s (A) \in (0, \infty)$. We can now choose $Z_0 = (X_0, t_0) \in A$ and a sequence of positive real numbers $r_i \searrow 0$ such that
    $$\limsup_{r \searrow 0} \frac{\p^s \left( A \cap \overline{P_r (Z_0)} \cap \{ t \leq t_0 \} \right) }{r^s} = \lim_{i \to \infty} \frac{\p^s \left( A \cap \overline{P_{r_i} (Z_0)} \cap \{ t \leq t_0 \} \right) }{r_i^s} \in (0, \infty).$$
    
    \noindent Before proceeding further let us recall that $U$ is the weak limit of $(U_{\eps})$ in $H^1_{loc} \left( \overline{\RNp} \times (0, \infty) \right)$. 
    Using Lemma \ref{eps_gradient_est} it also follows that $U \in C^{\infty} \left( \overline{\RNp} \times (0, \infty) \backslash \Sigma \right)$.
    
    Now for each $i \in \Zp$, let us define 
    $$U_i (X, t) := U(X_0 + r_i X, t_0 + r_i^2 t), \: \forall (X, t) \in \overline{\RNp} \times (- \frac{t_0}{r_i^2}, \infty).$$
    Note that $\Sigma^{(i)} := sing(U_i) = D_{1/r_i} (\Sigma - Z_0)$ (see \eqref{eq:parabolic dilation} for the definition of the parabolic dilation $D$).
    Observe that each $U_i$ can be realized as a weak limit of smooth solutions to \eqref{eq:approx_ext_+ general target} satisfying \eqref{eq:scale_inv_bds}, namely the sequence $(U_{\eps, i})_{\eps}$, given by $$U_{\eps, i} (X, t) := U_{\eps} (X_0 + r_i X, t_0 + r_i^2 t ).$$
    Therefore, an application of Theorem \ref{no harmonic S1_intro} to the sequence $(U_{\eps, i})_{\eps}$ either produces the desired bubble, or yields the following convergence $$\nabla_X U_{\eps,i} \to \nabla_X U_i, \quad \text{strongly in } L^2_{loc} \left( \overline{\RNp} \times \R; \R^L \right).$$
    As we are done in the former case, we shall assume the latter case holds and proceed.
    Therefore,  we can choose subsequences $(\eps_k)$ and $(i_k)$ such that $$ \| U_{\eps_k, i_k} - U_{i_k} \|_{L^2 (P_k^+)} + \| \nabla_X U_{\eps_k, i_k} - \nabla_X U_{i_k} \|_{L^2 (P_k^+)} < \frac{1}{k}.$$
   Lemma \ref{local_energy_ineq1} and   \eqref{eq:rescaledfunction} imply   that $(U_{\eps_k, i_k})_k$ is bounded in $H^1_{loc} \left( \overline{\RNp} \times \R \right)$. Hence up to a subsequence $U_{\eps_k, i_k} \rightharpoonup V_0$, weakly in $H^1_{loc} \left( \overline{\RNp} \times \R \right)$, for some function $V_0$.
    As before, Theorem \ref{no harmonic S1_intro} applied to the sequence $(U_{\eps_k, i_k})_k$ either gives us the desired bubble, or aided with the above mentioned properties, it implies (which is the case that we shall assume)
    \begin{equation}\label{eq:strong grad cgs}
    \nabla_X U_{\eps_k, i_k} \to \nabla_X V_0, \quad \text{strongly in } L^2_{loc} \left( \overline{\RNp} \times \R; \R^L \right),
    \end{equation}
    and hence up to a subsequence $$ \nabla_X U_{i_k} \to \nabla_X V_0, \quad \text{strongly in } L^2_{loc} \left( \overline{\RNp} \times \R; \R^L \right). $$ Without any loss of generality,  we denote all the above subsequences by $(i)$.

    For any $(\overline X, \overline t) \in \partial \RNp \times \R$ and $r > 0$ we define    $$\E (V_0, (\overline X, \overline t), r) := \int_{\overline t - 4 r^2}^{\overline t - r^2} \int_{\RNp} \frac{1}{2} |\nabla_X V_0|^2 \, \G_{\overline X, \overline t} \: dX dt.$$
    We note that the strong convergence in \eqref{eq:strong grad cgs} implies $$ \E (V_0, (\overline X, \overline t), r) = \lim_{k \to \infty} \E_{\eps_k} (U_{\eps_k, i_k}, (\overline X, \overline t), r).$$
    Hence passing the limit $k \to \infty$ in the monotonicity expression given in Lemma \ref{lem-mono} for $U_{\eps_k, i_k}$, it follows that $V_0$ satisfies the following monotonicity inequality: for any $\overline{Z} = (\overline{X}, \overline{t}) \in \partial \RNp \times \R$ and any $0 < r < R < \infty$, 
    \begin{align}\label{eq:rescaled monotonicity}
    \rule{.7cm}{0cm}\E (V_0, \overline Z, R) - \E (V_0, \overline Z, r) \geq \int_r^R \int_{T_s^+(\overline t)} \frac{1}{s} \frac{ \left| \left( (X - \overline X) \cdot \nabla_X \right) V_0 + 2 (t - \overline t) \partial_t V_0 \right|^2}{2 (\overline t - t)} \, \G_{\overline Z} \, dX dt ds.
    \end{align}
    
    Next set $\Sigma_0 := sing(V_0)$. Using small energy regularity (Lemma \ref{eps_gradient_est}) it can be seen that if a sequence $(Z_i)$ converges to $\overline{Z}$ with $Z_i \in \Sigma^{(i)}$ for each $i \in \Zp$, then $\overline{Z} \in \Sigma_0$. This shows that for any $\delta > 0$ there exists $i_0 \in \Zp$ such that for all $i \geq i_0$, $\Sigma^{(i)} \cap \overline{P_1 (0)} \subseteq P_{\delta} ( \Sigma_0)$.
    
    For any $ t \in \R$, define $\Sigma_0^{t} := \{ X \in \overline{\RNp}| \: (X, t) \in \Sigma_0 \}$ and $\Sigma_0^- := \Sigma_0 \cap ( \overline{\RNp} \times (- \infty, 0))$. We have that following claim:  

    \medskip 
    
\noindent\underline{\textbf{Claim 1:}} $\p^s \left( \overline{\Sigma_0^-} \cap \overline{P_2 (0)} \right) > 0$.

\medskip 

Assume by contradiction that  $\p^s \left( \overline{\Sigma_0^-} \cap \overline{P_2 (0)} \right)=0$. Then for any small $\alpha > 0$ and $\delta \in (0, 1)$ we have $\p^s_{\delta} \left( \overline{\Sigma_0^-} \cap \overline{P_2(0)} \right) = 0$. By the compactness of $   \overline{\Sigma_0^-} \cap \overline{P_2 (0)}  $,     there exist a collection of finitely many parabolic cylinders  $\{P_{\rho_j} (Z_j) \}_j$ with $\rho_j \leq \delta$ and $Z_j \in \overline{\Sigma_0^-} \cap \overline{P_2 (0)}$ such that $$\overline{\Sigma_0^-} \cap \overline{P_2 (0)} \subseteq \bigcup_j \overline{P_{\rho_j} (Z_j)}, \quad \text{ and } \sum_j \rho_j^s < \alpha.$$
Set  $\rho := \min \{ \rho_j \}_j > 0$. Notice that  $P_{\rho} \left( \overline{\Sigma_0^-} \cap \overline{P_2 (0)} \right) \subseteq \cup_j \overline{P_{2 \rho_j} (Z_j)}$, and hence there exists $i_0 \in \Zp$ such that for all $i \geq i_0$ we have $$ D_{1/r_i} (A - Z_0) \cap \{ t \leq 0 \} \cap \overline{P_1 (0)} \subseteq \Sigma^{(i)} \cap \{ t \leq 0 \} \cap \overline{P_1 (0)} \subseteq P_{\rho} \left( \overline{\Sigma_0^-} \cap \overline{P_2 (0)} \right) \subseteq \bigcup_j \overline{P_{2 \rho_j} (Z_j)}.$$
For each $i \geq i_0$, we restrict this covering to those cylinders in $\{ \overline{P_{2 \rho_j} (Z_j)} \}_j$ that  intersects $D_{1/r_i} (A - Z_0) \cap \{ t \leq 0 \} \cap \overline{P_1 (0)}$.   Changing the centers of these cylinders  if necessary, we  obtain  $$D_{1/r_i} (A - Z_0) \cap \{ t \leq 0 \} \cap \overline{P_1 (0)} \subseteq \bigcup_j \overline{P_{4 \rho_j} (Z_j)}, \text{ with } Z_j \in D_{1/r_i} (A - Z_0) \cap \{ t \leq 0 \} \cap \overline{P_1 (0)}.$$
Notice that the choice of the cylinders  and  its centers  may depend on $i$. Using the subadditivity and scaling property of $\p^s$, it holds for each $i \geq i_0$  that
\begin{align*}
    \p^s \left( D_{1/r_i} (A - Z_0) \cap \{ t \leq 0 \} \cap \overline{P_1 (0)} \right) \leq& \sum_j \p^s ( D_{1/r_i} (A - Z_0) \cap \overline{P_{4 \rho_j} (Z_j)})\\
    \leq& \sum_j r_i^{-s} \p^s \left( A \cap \overline{P_{4 r_i \rho_j} (Z_0 + D_{r_i} Z_j)} \right)\\
    \leq& \sum_j (4 \rho_j)^s \frac{\p^s \left( A \cap \overline{P_{4 r_i \rho_j} (Z_0 + D_{r_i} Z_j)} \right)}{(4 r_i \rho_j)^s}.
\end{align*}
Choosing  $i > i_0$ sufficiently  large such  that $4 r_i \delta < 1$,   the definition of $A=A_M$ yields  $$\frac{\p^s (A \cap \{ t \leq t_0 \} \cap \overline{P_{r_i} (Z_0)})}{r_i^s} = \p^s \left( D_{1/r_i} (A - Z_0) \cap \{ t \leq 0 \} \cap \overline{P_1 (0)} \right) \leq 4^s M \alpha.$$
Since $\alpha > 0$ is arbitrary,  the above inequality contradicts  to the fact that $$\lim_{i \to \infty} \frac{\p^s (A \cap \overline{P_{r_i} (Z_0)} \cap \{ t \leq t_0 \})}{r_i^s} > 0.$$
This proves Claim 1. 

\medskip 

It is straightforward to see that for every $r > 0$, $\E(V_0, (0, 0), r)$ is independent of $r$. In fact we have $$\E (V_0, (0, 0), r) = \lim_{s \searrow 0} \E (U, Z_0, s) \geq \eps_0^2,$$
This in particular implies  that $V_0$ is non-constant on $\overline{\RNp} \times (- \infty, 0)$ and $(0, 0) \in \Sigma_0$. Moreover, the monotonicity inequality \eqref{eq:rescaled monotonicity} centered at the point $(0, 0)$ shows
\begin{equation}\label{eq: monotonicity derivative zero at zero}
    2 t \partial_t V_0 + (X \cdot \nabla_X) V_0 = 0,\quad \text{ a.e. on } \overline{\RNp} \times (- \infty, 0).
\end{equation}
By Claim 1, we have $\p^s \left( \overline{\Sigma_0^-} \right) > 0$. Repeating the rescaling  procedure that produced $V_0$ from $U$, we can   choose a point $Z_1 = (X_1, t_1) \in \overline{\Sigma_0^-} \backslash \{ (0, 0) \}$, with  properties  analogous  to that of $Z_0$, together with a sequence $s_i \searrow 0$ playing the role of  $(r_i)$. We then define another  rescaling $$ V_0^i (X, t) := V_0 (X_1 + s_i X, t_1 + s_i^2 t),\quad  \text{for } (X, t) \in \overline{\RNp} \times \R. $$
As before, applying Theorem \ref{no harmonic S1_intro}, either we get the desired bubble, or up to a subsequence
\begin{itemize}
    \item $V_0^i \rightharpoonup V_1, \text{ weakly in } H^1_{loc} \left( \overline{\RNp} \times \R; \R^L \right)$, for some limit map $V_1$,
    \item $V_1$ is a weak limit of a  sequence of solutions to \eqref{eq:approx_ext_+ general target} satisfying \eqref{eq:scale_inv_bds}, such that the spatial derivatives converges locally strongly in $L^2$,
    \item $V_1$ satisfies a monotonicity inequality similar to \eqref{eq:rescaled monotonicity},
    \item $\nabla_X V_0^i \to \nabla_X V_1, \text{ strongly in } L^2_{loc} \left( \overline{\RNp} \times \R; \R^L \right)$.
\end{itemize}
Let us proceed with the assumption of the above mentioned properties. Suppose $\Sigma_1 := sing(V_1)$ is the singular set of $V_1$. Following the same arguments as before, it holds that $$ \p^s \left( \overline{\Sigma_1^-} \cap \overline{P_2 (0)} \right) > 0, \quad (0, 0) \in \Sigma_1 ,$$  and  $\E (V_1, (0, 0), r) $ is strictly positive and is independent of $r>0$. Using \eqref{eq:rescaled monotonicity} once more we infer $$ 2 t \partial_t V_1 + (X \cdot \nabla_X) V_1 = 0, \quad \text{ a.e. on } \overline{\RNp} \times (- \infty, 0).$$
Rewriting  \eqref{eq: monotonicity derivative zero at zero} in terms of $V_0^i$ yields  $$2 s_i^{- 2} (t_1 + s_i^2 t) (\partial_t V_0^i) + s_i^{- 1} \left( (X_1 + s_i X) \cdot \nabla_X \right) V_0^i = 0,\quad \text{ in } \overline{\RNp} \times ( - \infty, 0).$$
Passing to the limit as $i \to \infty$ we obtain \begin{itemize}
\item if $t_1 < 0$ then $\partial_t V_1 \equiv 0$ in $\overline{\RNp} \times (- \infty, 0]$,
\item  if $t_1 = 0$ then $(X_1 \cdot \nabla_X) V_1 \equiv 0$ in $\overline{\RNp} \times (- \infty, 0]$, that is,  $V_1$ is independent of the $X_1$ direction in $\overline{\RNp} \times (- \infty, 0)$. In particular, since $(0,0)\in\Sigma_1$, this spatial invariance  yields $(\lambda X_1, 0) \in \Sigma_1$ for   every  $\lambda \in \R$. \end{itemize}  

\medskip 

Depending on the time dependence of $V_1$, we are led to consider the following two cases:

\medskip 

\noindent \underline{\textbf{Case 1:}} $\partial_t V_1 \equiv 0$ in $\overline{\RNp} \times (- \infty, 0)$.

As before, we may choose a suitable point $Z_2 = (X_2, t_2) \in \overline{\Sigma_1^-} \setminus \{ (0, 0) \}$, and perform  rescalling of  $V_1$ around $Z_2$.  Since  $\overline{\Sigma_1^-} = \Sigma_1^0 \times (- \infty, 0]$, in this case we can choose  $Z_2 \in \Sigma_1^0$. Consequently,  the limit $V_2$ of the rescaled sequence is invariant under translation along the $X_2$ direction. Moreover,  $V_2$ satisfies all structural   properties analogous to $V_1$, and additionally   it is independent of $t$ on $\overline{\RNp} \times ( - \infty, 0]$.

Set  $\Sigma_2 := sing(V_2)$. From   $\p^s \left( \overline{\Sigma_2^-} \right) > 0$ and  the product structure $\overline{\Sigma_2^-} = \Sigma_2^0 \times (- \infty, 0]$, we infer  that  $\H^{s - 2} (\Sigma_2^0) > 0$. Since $s > m - 1$,   $\Sigma_2^0$ contains at least $m - 2$  linearly independent vectors. We may therefore select a suitable point $Z_3 = (X_3, 0) \in \Sigma_2$ with $X_3$ linearly independent of $X_2$, and perform a rescaling of $V_2$ around $Z_3$. The resulting limit $V_3$ is then independent of time and invariant under translations along both $X_2$ and $X_3$ directions. We also remark that these directions $X_2, X_3$ lie in $\partial \RNp$ since they are being chosen from the energy concentration set, which is contained in $\partial \RNp \times \R$. 

We keep on iterating this procedure, making the limit function independent of a new spatial direction at each stage, when restricted to $\overline{\RNp} \times (- \infty, 0)$. At any of the intermediate steps, if $\overline V$ is the limit function and $\overline{\Sigma} := sing(\overline{V})$, then arguing as earlier, we conclude that $\overline{\Sigma}^0$ contains at least $m - 2$ linearly independent vectors. We can proceed with the iteration as long as we get some vector $\overline X \in \overline{\Sigma}^0$ along which $\overline V$ is not translation invariant on $\overline{\RNp} \times (-\infty, 0)$. We stop when such a choice becomes impossible. Clearly, such a situation must occur in a finite number of steps due to the finiteness of the number of linearly independent directions. This finally yields a limiting map $V$ that satisfies the following properties:

\begin{itemize}
    \item on $\overline{\RNp} \times (-\infty, 0)$, $V$ is independent of $t$ and $m - 2$   linearly independent spatial directions, whence we may regard $V$ as a function defined on $\overline{\R^3_+}$,

    \item $\Sigma_V := sing(V)$ contains $(0, 0)$, hence in particular $V$ is non-constant,

    \item $V$ is 0-homogeneous, that is,  $(X \cdot \nabla_X) V \equiv 0$ on $\R^3_+$, which follows from the analogue of \eqref{eq: monotonicity derivative zero at zero} for $V$,

    \item $V$ is a harmonic map with free boundary on $N$.
\end{itemize}
Set  $$\S^2_+ := \{ (x, y, z) \in \R^3 | \, x^2 + y^2 + z^2 = 1 \text{ and } z > 0 \},$$
and consider the restricted map  $ \tilde V := V |_{\overline{\S^2_+}}: \overline{\S^2_+} \to \R^L$.  It then follows from the  harmonicity and $0$-homogeneity of $V$  that $\tilde V$ is a non-constant harmonic map  from $\overline{\S^2_+}$ into $\R^L$ with its free boundary on $N$. By conformal equivalence, this yields a non-constant free-boundary harmonic map from $\mathbb{R}^2_+$ into $\mathbb{R}^L$,    with free boundary   on $N$. This completes the proof in this case.

\medskip 

\noindent\underline{\textbf{Case 2:}}  $\partial_t V_1\not\equiv0$  in $\overline{\RNp} \times (- \infty, 0)$.

In this case we must have $t_1 = 0$,  and on $\overline{\RNp} \times (-\infty, 0)$, $V_1$ is invariant under translations along the $X_1$ direction. We will prove that after finite number of suitable rescalings, we must fall back to Case 1.

As in Case 1, we start by choosing a suitable point $Z_2 $ around which we perform a second rescaling. Recall that  $\p^s \left( \overline{\Sigma_1^-} \right) > 0$. If $\p^s \left( \Sigma_1^- \right) > 0$ then we can   choose the point $Z_2$ from $\Sigma_1^-$. Rescalings around $Z_2$ yields a  limit function $V_2$, which  is independent of $t$, thereby falling back to Case 1, whence we are done. So we consider the situation when $\p^s \left( \Sigma_1^- \right) = 0$. Then necessarily  $\p^s \left( \Sigma_1^0 \right) > 0$. Since $s > m - 1$, the set $\Sigma_1^0$ contains    at least $m$ linearly independent vectors. Continuing this rescaling procedure inductively, we either fall back to Case 1 at an intermediate step or, after finitely many steps, arrive at a non-trivial limit map $V$ satisfying the following properties: 
\begin{itemize}
    \item on $\overline{\RNp} \times (-\infty, 0)$, $V$ is translation invariant in $m$ linearly independent spatial directions, allowing us to view it simply as a function on  $(0, \infty) \times \R$,

    \item $\Sigma_V := sing(V)$ contains $(0, 0)$, and  in particular, $V$ is non-constant on $ \{ t < 0 \}$,

    \item $\E (V, (0, 0), r)$ is independent of $r > 0$,

    \item $V$ is self-similar, that is,  $$V(y, t) = \psi \left( \frac{y}{\sqrt{- t}} \right), \quad  \forall (y, t) \in (0, \infty) \times (- \infty, 0),$$ which follows from the analogue  of \eqref{eq: monotonicity derivative zero at zero} for $V$,

    \item $V$ satisfies the heat equation  $$\partial_t V-\partial_{yy}V=0,\quad\text{in }(0,\infty)\times(-\infty,0).$$
\end{itemize}
By standard interior regularity,   $\psi$ is smooth in $(0, \infty)$.  Moreover, $\psi$ satisfies $$\psi^{\prime \prime} (y) - \frac{y}{2} \psi^{\prime} (y) = 0, \quad \text{ in } (0, \infty).$$ Since $\psi$ is bounded, it must be constant, a contradiction to the non-constancy of $V$. 
\end{proof}

\appendix 

\section{}\label{Appendix A} 

A related Liouville-type result for sphere targets appears in \cite[Remark 5.3]{MS}.  The proof presented below for general targets follows the same spirit.

\begin{prop}\label{Proposition A.2} Let $F$ be given by  \eqref{eq:defn of perturbation}. Let  $V \in L^{\infty} \cap C^1 \left( \overline{\R^2_+}; \R^L \right) $ be a solution of
\begin{equation}\left\{
    \begin{array}{lll}
        \Delta V &= 0, & \text{ in } \R^2_+,  \\
        \rule{0cm}{0.5cm} \partial_y V &= \frac{1}{c^2} (\nabla F) (V), & \text{ on } \partial \R^2_+,
    \end{array}\right.
\end{equation}
for some $c \in (0, \infty)$.
If $V$ satisfies \begin{equation}\label{eq:finite GL energy}
    \int_{\R^2_+} |\nabla V|^2 \, dX + \int_{\partial \R^2_+} F(V) \, dx < \infty,
\end{equation}then $V$ is constant and $F(V) \equiv 0$.
\end{prop}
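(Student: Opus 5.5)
We will prove Proposition \ref{Proposition A.2} with a Pohozaev identity: multiply the equation by the dilation field $X\cdot\nabla V$ and integrate over large half-balls. This is the analogue of \cite[Remark 5.3]{MS}. The point is that in dimension two the Dirichlet energy is scale invariant, while the boundary potential term scales like length. Testing against the dilation therefore isolates $\int_{\partial\R^2_+}F(V)$.

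\emph{Step 1: regularity and decay.} Since $V$ is bounded and $C^1$ up to the boundary, and the Neumann datum $c^{-2}(\nabla F)(V)$ is smooth in $V$, boundary elliptic regularity gives $V\in C^\infty(\overline{\R^2_+})$. The finiteness condition \eqref{eq:finite GL energy} lets us pick radii $R_k\to\infty$ with
\[
R_k\int_{\partial^+B_{R_k}^+}|\nabla V|^2\,d\H^1+R_k\bigl(F(V(R_k,0))+F(V(-R_k,0))\bigr)\to 0 .
\]
Indeed, the function $R\mapsto \int_{\partial^+B_R^+}|\nabla V|^2+F(V(R,0))+F(V(-R,0))$ is integrable on $(1,\infty)$. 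So $\liminf_{R\to\infty}R$ times it is $0$, since otherwise it would be bounded below by $c/R$, which is not integrable.

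\emph{Step 2: Pohozaev identity.} On $B_R^+$ we integrate $\Delta V\cdot (X\cdot\nabla V)=0$ by parts. The standard computation in dimension $n=2$ has interior term $\frac{n-2}{2}\int|\nabla V|^2=0$. What remains are boundary terms:
\[
0=\int_{\partial^+B_R^+}\Bigl(R|\partial_r V|^2-\tfrac R2|\nabla V|^2\Bigr)-\int_{\partial^0B_R^+}\partial_yV\cdot (x\,\partial_xV)\,dx .
\]
The sign is arranged so that the flat part carries the outward normal $-e_y$, and on $\{y=0\}$ we have $X\cdot\nabla V=x\partial_xV$. Substituting the boundary condition $\partial_y V=c^{-2}(\nabla F)(V)$ gives
\[
\partial_yV\cdot x\partial_xV=c^{-2}\,x\,\partial_x\bigl(F(V)\bigr).
\]
Integrating by parts in $x$ on $(-R,R)$ turns the flat term into
\[
-c^{-2}\Bigl(R\bigl(F(V(R,0))+F(V(-R,0))\bigr)-\int_{-R}^RF(V)\,dx\Bigr).
\]
Letting $R=R_k\to\infty$ and using Step 1, all the curved-boundary terms and the endpoint terms vanish. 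We conclude $c^{-2}\int_{\partial\R^2_+}F(V)\,dx=0$, that is, $F(V)\equiv0$ on $\partial\R^2_+$.

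\emph{Step 3: conclusion.} Since $F\ge0$ and $F(V)\equiv0$, the boundary values lie on $N$. The points of $N$ are minima of $F$, so $\nabla F(V)=0$ there (recall $F=\textnormal{dist}^2$ near $N$). Hence $\partial_yV=0$ on $\partial\R^2_+$. The even reflection of $V$ across $\{y=0\}$ is then a bounded harmonic function on $\R^2$, so it is constant by Liouville's theorem. This is exactly the argument of Case 1 in the proof of Theorem \ref{main thm_intro}.

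The main obstacle is Step 2. One must justify the boundary terms and their vanishing along a good sequence $R_k$, and keep careful track of the sign conventions for the normal on the flat boundary. The constant-in-scaling structure, namely the vanishing interior term in two dimensions, is what makes the identity conclusive.
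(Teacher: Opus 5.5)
Your proof is correct and follows the paper's route. It tests $\Delta V=0$ against $X\cdot\nabla V$; in two dimensions the interior term vanishes and $c^{-2}\int_{\partial\R^2_+}F(V)\,dx=0$ is isolated, after which even reflection and Liouville finish. The only difference is technical: the paper uses a smooth cutoff $\phi_r=\phi(\cdot/r)$, so the error terms are bounded by the energy on $B_{2r}^+\setminus B_r^+$ and vanish without choosing radii, whereas you integrate over sharp half-balls along good radii $R_k$ and also justify the second-order regularity the computation needs.
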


\begin{proof}
    Consider any $\phi \in C_c^{\infty} (\R^{2})$ with $\phi \equiv 1$ on $ B_1(0)$ and $\textnormal{spt}(\phi) \subseteq B_2(0)$. For each $r > 0$, define $$ \phi_r(X) := \phi \left( \frac X r \right).$$ Using $ \phi_r (X \cdot \nabla) V$ as a test function in the equation satisfied by $V$ and integrating by parts (and using the Einstein summation convention) we get
\begin{align*}
	0 =& - \int_{\R^2_+} \phi_r \: \Delta V \cdot \left( (X \cdot \nabla) V \right) = - \int_{\R^2_+} \phi_r \: \partial_{kk} V^i \:  X_j \: \partial_j V^i\\
=& \int_{\R^2_+} \partial_k V^i \: \left( X_j \: \partial_k \phi_r \: \partial_j V^i + \delta_{jk} \: \phi_r \: \partial_j V^i + \phi_r \: X_j \: \partial_k \partial_j V^i \right) - \int_{\partial \R^2_+} \phi_r \: \partial_k V^i \: ( - \delta_{k2} ) X_j \: \partial_j V^i\\
=& \int_{\R^2_+} X_j \: \partial_k \phi_r \: \partial_k V^i \: \partial_j V^i + \int_{\R^2_+} \phi_r \: \partial_k V^i \: \partial_k V^i + \frac 1 2 \int_{\R^2_+} X_j \: \phi_r \: \partial_j \left( |\nabla V|^2 \right) + \int_{\partial \R^2_+} X_j \: \phi_r \: \partial_j V^i \: \partial_2 V^i\\
=& \int_{\R^2_+} X_j \: \partial_k \phi_r \: \partial_k V^i \: \partial_j V^i + \int_{\R^2_+} \phi_r \: |\nabla V|^2 - \frac 1 2 \int_{\R^2_+} (2 \phi_r + X_j \: \partial_j \phi_r) |\nabla V|^2\\
& + \frac 1 2 \int_{\partial \R^2_+} X_j \: \phi_r |\nabla V|^2 (- \delta_{j 2}) + \frac{1}{c^2} \int_{\partial \R^2_+} X_1 \: \phi_r \: \partial_1 V^i (\partial_i F) (V)\\
=& \int_{\R^2_+} X_j \: \partial_k \phi_r \: \partial_k V^i \: \partial_j V^i - \frac 1 2 \int_{\R^2_+} X_j \: \partial_j \phi_r \: |\nabla V|^2 + \frac{1}{c^2} \int_{\partial \R^2_+} X_1 \: \phi_r \: \partial_1 \left( F(V) \right)\\
=& \int_{\R^2_+} X_j \: \partial_k \phi_r \: \partial_k V^i \: \partial_j V^i - \frac 1 2 \int_{\R^2_+} X_j \: \partial_j \phi_r \: |\nabla V|^2 - \frac{1}{c^2} \int_{\partial \R^2_+} ( \phi_r + X_1 \: \partial_1 \phi_r) F(V).
\end{align*}
Letting $r \to \infty$ and using \eqref{eq:finite GL energy}, we get that $\int_{\partial \R^2_+} F(V) = 0$, i.e. $F(V) \equiv 0$. Hence $V$ satisfies
\begin{equation}\left\{
\begin{array}{ll}
\D V = 0, &\text{ in } \R_+^{2},\\
\rule{0cm}{0.5cm} \partial_2 V = 0, &\text{ on } \partial \R^2_+.
\end{array}\right.
\end{equation}
Let $\overline V$ denote an extension of $V$ to the whole of $\R^2$ using an even reflection around $\partial \R^2_+$. It can then be checked that $\overline V$ is a harmonic function from $\R^2$ to $\R^L$. As $V$ is bounded, so is $\overline V$ and hence it's constant, completing the proof.
\end{proof}

 Although the following proposition is standard, we include a proof for completeness as we could not find a precise reference in the literature.

\begin{prop}\label{Propsition A.1}
Let $(f_n)$ be a sequence of   non-negative functions on $\R^{m+1}\times(-\infty,0)$. Suppose that $(f_n)$ is locally bounded in  $ L^1( \R^{m+1}\times(-\infty,0))$, and   
\begin{align}
f_n\,dX \,dt  \overset{*}{\rightharpoonup} \mu
\end{align}
as Radon measures on $\mathbb{R}^{m+1}\times(-\infty,0)$. Assume that, for every
$\phi\in C_c^\infty\bigl(\mathbb{R}^{m+1}\times(-\infty,0)\bigr)$
and every $j=1,\ldots,m-1$,
\begin{align}
\lim_{n\to\infty}
\int_{\mathbb{R}^{m+1}\times(-\infty,0)}
\partial_{X_j}\phi\, f_n\,dX\,dt=0.
\end{align}
Then there exists a non-negative Radon measure $\nu$ on $  \mathbb{R}^2\times(-\infty,0)$  such that \begin{align} \mu = \H^{m-1} \times \nu. \end{align} Equivalently, for every $\phi\in C_c(\mathbb{R}^{m+1}\times(-\infty,0))$, \begin{align} \int_{\mathbb{R}^{m+1}\times(-\infty,0)} \phi\,d\mu = \int_{\mathbb{R}^2\times(-\infty,0)} \int_{\mathbb{R}^{m-1}} \phi(X_1,\ldots,X_{m+1},t)
\,dX_1\cdots dX_{m-1}\,d\nu(X_m,X_{m+1},t). \end{align}
\end{prop}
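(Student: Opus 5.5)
The plan is to show that $\mu$ is invariant under translations in the directions $X_1,\dots,X_{m-1}$, and then to disintegrate $\mu$ along those directions.

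\textbf{Step 1: the distributional derivatives vanish.} The hypothesis passes to the limit. Since $\partial_{X_j}\phi\in C_c$ and $f_n\,dX\,dt\overset{*}{\rightharpoonup}\mu$, we get
$$\int \partial_{X_j}\phi\,d\mu=\lim_{n\to\infty}\int\partial_{X_j}\phi\, f_n\,dX\,dt=0 \quad\text{for all }\phi\in C_c^\infty,\ j\le m-1.$$
So $\partial_{X_j}\mu=0$ in the sense of distributions for each $j\le m-1$.

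\textbf{Step 2: translation invariance.} For $h\in\R^{m-1}$, let $\tau_h$ denote translation by $(h,0,0,0)$. For $\phi\in C_c^\infty$ the function $s\mapsto\int\phi(\cdot+s h)\,d\mu$ is differentiable, and its derivative is $\int h\cdot\nabla'\phi(\cdot+sh)\,d\mu=0$ by Step 1. Hence $\int\phi\circ\tau_h\,d\mu=\int\phi\,d\mu$. By density of $C_c^\infty$ in $C_c$ in the uniform norm on compact supports, this extends to all $\phi\in C_c$. Therefore $(\tau_h)_\#\mu=\mu$ for every $h$.

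\textbf{Step 3: define $\nu$ and identify $\mu$.} Fix $\chi\in C_c(\R^{m-1})$ with $\chi\ge0$ and $\int\chi=1$. For $\psi\in C_c(\R^2\times(-\infty,0))$ set
$$\Lambda(\psi):=\int\chi(x')\psi(x_m,y,t)\,d\mu .$$
This is a positive linear functional, so by the Riesz representation theorem there is a Radon measure $\nu\ge0$ with $\Lambda(\psi)=\int\psi\,d\nu$.

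Now take a product test function $\phi=\alpha(x')\psi(x_m,y,t)$. Mollify $\mu$ in $x'$ (convolving with $\rho_\delta$ in the first $m-1$ variables). By Step 2, $\mu*\rho_\delta=\mu$, and the mollified measure has a density with respect to $\mathcal H^{m-1}$ in $x'$; this density is again translation invariant, hence constant in $x'$. Concretely: for fixed $\psi$, the functional $\alpha\mapsto\int\alpha\psi\,d\mu$ is a translation-invariant Radon measure on $\R^{m-1}$ (a signed measure, but one can split $\psi=\psi^+-\psi^-$). By uniqueness of Haar measure it equals $c(\psi)\,\mathcal H^{m-1}$. Testing with $\alpha=\chi$ gives $c(\psi)=\Lambda(\psi)=\int\psi\,d\nu$. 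Hence
$$\int\alpha\psi\,d\mu=\int\alpha\,dx'\int\psi\,d\nu .$$
Finally, linear combinations of products are dense in $C_c$ (Stone--Weierstrass on compacts, with supports controlled by a cutoff), so $\mu=\mathcal H^{m-1}\times\nu$.

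The only delicate point is this last identification: applying Haar uniqueness for each fixed $\psi\ge0$ and then justifying the density of product test functions, including uniform control of supports. This is routine.
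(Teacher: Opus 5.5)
Your proof is correct and follows essentially the same route as the paper. You pass the hypothesis to the limit to get $\partial_{X_j}\mu=0$, and show that for each fixed nonnegative test function in $(X_m,X_{m+1},t)$ the induced measure on $\mathbb{R}^{m-1}$ is a constant multiple of Lebesgue measure. You then obtain $\nu$ from the Riesz representation theorem and conclude by density of product test functions; the paper does the same.

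The differences are cosmetic. You derive the ``constant multiple of Lebesgue'' step from translation invariance and Haar uniqueness, whereas the paper quotes it directly from the vanishing of the distributional derivatives. You also normalize $\nu$ through a fixed bump $\chi$ rather than through the constants $c_\eta$.
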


\begin{proof}
First, by the weak* convergence of $f_n\,dX\,dt$ to $\mu$, for every
$\phi\in C_c^\infty\bigl(\mathbb{R}^{m+1}\times(-\infty,0)\bigr)$
and every $j=1,\ldots,m-1$,
\begin{align}
\int_{\mathbb{R}^{m+1}\times(-\infty,0)}
\partial_{X_j}\phi\,d\mu
&=
\lim_{n\to\infty}
\int_{\mathbb{R}^{m+1}\times(-\infty,0)}
\partial_{X_j}\phi\,f_n\,dX\,dt
=0.
\end{align}
Hence
\begin{align}
\partial_{X_j}\mu=0,
\qquad\text{in }\mathcal{D}'
\bigl(\mathbb{R}^{m+1}\times(-\infty,0)\bigr),
\qquad j=1,\ldots,m-1.
\end{align}
Write $x'=(X_1,\ldots,X_{m-1})$ and  $y=(X_m,X_{m+1})$, so that   $(X,t)=(x',y, t)$. 
For every  non-negative $ 
\eta\in C_c^\infty\bigl(\mathbb{R}^2\times(-\infty,0)\bigr),$ we define a non-negative Radon measure $\mu_\eta$ on $\mathbb{R}^{m-1}$ by
\begin{align}
\int_{\mathbb{R}^{m-1}}\psi(x')\,d\mu_\eta(x') := \int_{\mathbb{R}^{m+1}\times(-\infty,0)} \psi(x')\eta(y, t)\,d\mu(x',y, t),\quad \text{for every } \psi\in C_c(\mathbb{R}^{m-1}).\end{align}   We claim that \begin{align} \partial_{X_j}\mu_\eta=0, \qquad\text{in }\mathcal{D}'(\mathbb{R}^{m-1}), \qquad j=1,\ldots,m-1. \end{align} Indeed, for every
$\psi\in C_c^\infty(\mathbb{R}^{m-1})$, \begin{align} \int_{\mathbb{R}^{m-1}}
\partial_{X_j}\psi(x')\,d\mu_\eta(x') &= \int_{\mathbb{R}^{m+1}\times(-\infty,0)}
\partial_{X_j}\psi(x')\eta(y, t)\,d\mu(x',y, t) =0, \end{align} where the last equality follows from $\partial_{X_j}\mu=0$.

Since $\mu_\eta$ is a non-negative Radon measure on $\mathbb{R}^{m-1}$ whose distributional derivatives all vanish, it must be a constant multiple of Lebesgue measure. Hence there exists a constant $c_\eta\geq0$ such that $  \mu_\eta=c_\eta\,\H^{m-1}.$ In other words, \begin{align} \int_{\mathbb{R}^{m+1}\times(-\infty,0)} \psi(x') \, \eta(y, t)\,d\mu(x',y, t) = c_\eta \int_{\mathbb{R}^{m-1}}\psi(x')\,dx', \quad \forall \psi\in C_c^\infty(\mathbb{R}^{m-1}). \end{align}
Notice that the constant  $c_\eta$ depends only on $\eta$ and $\mu$, and the map
\begin{align} \eta\longmapsto c_\eta \end{align}
is a positive linear functional on
$C_c^\infty(\mathbb{R}^2\times(-\infty,0))$. Therefore, by the Riesz representation theorem, there exists a unique non-negative Radon measure $\nu$ on $\mathbb{R}^2\times(-\infty,0)$  such that \begin{align} c_\eta = \int_{\mathbb{R}^2\times(-\infty,0)}
\eta(y, t)\,d\nu(y, t). \end{align} Consequently,  for every $\psi\in C_c^\infty(\mathbb{R}^{m-1})$ and $\eta\in C_c^\infty(\mathbb{R}^2\times(-\infty,0))$, \begin{align} \int_{\mathbb{R}^{m+1}\times(-\infty,0)} \psi(x')\eta(y, t)\,d\mu(x',y, t) &= \left( \int_{\mathbb{R}^{m-1}}\psi(x')\,dx'
\right) \left( \int_{\mathbb{R}^2\times(-\infty,0)} \eta(y, t)\,d\nu(y, t) \right)\\&=\int_{\mathbb{R}^{m+1}\times(-\infty,0)}
\psi(x')\eta(y, t)\,
d\bigl(\H^{m-1}\times\nu\bigr)
\end{align}
 Since finite linear combinations of such product functions are dense in $C_c\bigl(\mathbb{R}^{m+1}\times(-\infty,0)\bigr)$, we conclude that  $\mu=\H^{m-1}\times\nu$.
\end{proof}

\addtocontents{toc}{\protect\setcounter{tocdepth}{1}}
\subsection*{Preliminaries on Approximate Continuity}
In this subsection we briefly collect the definition and key properties of approximate continuity relevant to  our analysis. For a comprehensive discussion in a broader setting, see \cite[2.9.12]{F}.
\begin{defn}[Approximate Continuity]\label{defn_appx_cty}
    Let $M \in \Zp$ and $\sigma$ be a Radon measure on $\R^M$. A Borel measurable function $\phi : \R^M \to \R$ is said to be $\sigma$-approximately continuous at $x_0 \in \R^M$ if for every $\eps > 0$ $$\lim_{r \to 0} \frac{\sigma \left( \{ x \in \overline{B_r (x_0)} | \: |\phi(x) -\phi(x_0)|\geq \eps \} \right) }{ \sigma \left( \overline{B_r(x_0)} \right) } = 0.$$
\end{defn}
Proof of the following   result    can be found in \cite[2.9.13]{F}.
\begin{prop}\label{appx_cty_iff}
    Let $\sigma$ be a Radon measure on $\R^M$. Then every real valued Borel measurable function on $\R^M$ is $\sigma$-approximately continuous at $\sigma$-almost every point of $\R^M$.
\end{prop}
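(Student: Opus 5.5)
The statement is the standard fact (Federer 2.9.13) that every Borel function is $\sigma$-approximately continuous at $\sigma$-a.e.\ point. I will deduce it from the Lebesgue--Besicovitch differentiation theorem for Radon measures on $\R^M$, combined with a countable approximation of $\phi$.

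First I reduce to characteristic functions. For each rational $q$ and each $\eps>0$ rational, consider the Borel sets $E_q := \{ x : \phi(x) < q\}$ and $F_q := \{x : \phi(x) > q\}$. By the Besicovitch differentiation theorem (valid for arbitrary Radon measures on $\R^M$ with closed balls), for $\sigma$-a.e.\ $x_0$ and every Borel set $E$ one has
\begin{equation}
\lim_{r\to 0}\frac{\sigma(E\cap \overline{B_r(x_0)})}{\sigma(\overline{B_r(x_0)})} = \mathds{1}_E(x_0).
\end{equation}
Since there are countably many sets $E_q, F_q$, there is a single $\sigma$-null set $Z$ outside of which this density identity holds simultaneously for all of them. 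Points with $\sigma(\overline{B_r(x_0)})=0$ for some $r$ lie outside $\textnormal{spt}(\sigma)$ and form a $\sigma$-null set, which I also put into $Z$.

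Now I fix $x_0\notin Z$ and $\eps>0$, and pick rationals $q_1,q_2$ with $\phi(x_0)-\eps<q_1<\phi(x_0)<q_2<\phi(x_0)+\eps$. Then
\begin{equation}
\{ x : |\phi(x)-\phi(x_0)|\ge \eps\}\subseteq E_{q_1}\cup F_{q_2}.
\end{equation}
Here $x_0\notin E_{q_1}$ and $x_0\notin F_{q_2}$, so both sets have density $0$ at $x_0$. Hence the ratio in Definition \ref{defn_appx_cty} tends to $0$, which gives the conclusion.

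The only nontrivial input is the differentiation theorem for a general (non-doubling) Radon measure. This rests on the Besicovitch covering lemma: one first shows that the upper and lower densities of $\sigma\lfloor_E$ with respect to $\sigma$ agree $\sigma$-a.e. and equal $\mathds{1}_E$. I would cite it (Federer 2.9, or Mattila Cor.~2.14) rather than reprove it, and that is where the real work lies.
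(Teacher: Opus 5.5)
Your proof is correct, but it does not follow the paper's route. The paper has no argument of its own; it only cites Federer 2.9.13, where, as I recall it, the statement is deduced from the density theorem (2.9.11) via Lusin's theorem. There one covers $\R^M$, up to a $\sigma$-null set, by countably many closed sets $C_j$ with $\phi|_{C_j}$ continuous. At every point where $C_j$ has density one, continuity of $\phi|_{C_j}$ gives approximate continuity at once.

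You bypass Lusin's theorem by using the order structure of $\R$. You apply the density theorem only to the countably many Borel sets $\{\phi<q\}$, $\{\phi>q\}$ with $q\in\mathbb{Q}$. You then trap $\{|\phi-\phi(x_0)|\geq\eps\}$ inside $\{\phi<q_1\}\cup\{\phi>q_2\}$, both of which have density zero at $x_0$.

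Each route has its advantage. Yours is slightly more economical for real-valued $\phi$. The Lusin route is the natural one for maps into a general separable metric space, which is the generality of Federer's theorem. Your argument also adapts to that setting, by applying it to the real functions $\textnormal{dist}(\phi,y_k)$ for a dense sequence $(y_k)$. In both cases the only substantial input is the Besicovitch-based differentiation theorem, which is needed because $\sigma$ need not be doubling; you correctly single this out.

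One wording point. The density identity holds, for each fixed Borel set $E$, off a $\sigma$-null set that depends on $E$. It cannot hold for all Borel sets off a single null set: take $E=\{x_0\}$ when $\sigma$ has no atoms. Your next sentence uses it in the correct, set-by-set way and takes a countable union, so this affects only the phrasing.
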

\addtocontents{toc}{\protect\setcounter{tocdepth}{4}}

We now prove a variant of the geometric lemma \cite[Lemma 2.4]{Lin-annals} in our framework.
 
\begin{lem}\label{geometric lemma}
Let $\sigma$ be a non-negative Radon measure on $\R^m$ such that for some integer $n \in \{1, \dots,m-1\}$, $$\H^n (\spt(\sigma)) < \infty,$$ and
\begin{equation}\label{eq:bdd_disc_up_density}
\sigma \left( \overline{B_r(x)} \right)\leq \alpha r^{n}, \quad \forall x\in B_1, \, 0<r\leq1.
\end{equation}
Set $$ \theta(x) := \limsup_{r \searrow 0} r^{-n} \sigma \left( \overline{B_r(x)} \right).$$
Assume that $x_0\in B_1$ is such that $\theta(x_0)>0$, $\theta$ is $\H^n \lfloor_{\spt(\sigma)}$-approximately continuous at $x_0$ and 
\begin{equation}\label{eq:bdd_up_density}
\limsup_{r \searrow 0} r^{- n} \H^n \lfloor_{\spt(\sigma)} \left( \overline{B_r (x_0)} \right) < \infty.
\end{equation}
Then there exists a constant $\delta = \delta(n, x_0, \alpha) \in (0, \frac 12)$, a sequence of real numbers $s_j \searrow 0$ and $n$-many sequences $(x_1^j)_j, \ldots, (x_n^j)_j$ in $\spt(\sigma)$ such that
\begin{enumerate}
    \item $\delta s_j \leq |x^j_k - x_0| \leq s_j, \forall j \in \Zp, \forall k \in \{1, \ldots, n\}$,

    \item $\textnormal{dist} \left(x^j_k - x_0, \textnormal{span}\{ x^j_1 - x_0, \ldots, x^j_{k - 1} - x_0 \} \right) \geq \delta s_j, \forall j \in \Zp, \forall k \in \{ 2, \ldots, n\},$

    \item $\theta(x^j_k) \geq \theta(x_0) - \eps_j, \forall j \in \Zp, \forall k \in \{ 1, \ldots, n\}, \text{ and some sequence } \eps_j \searrow 0$. 
\end{enumerate}
\end{lem}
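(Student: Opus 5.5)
We argue in the spirit of \cite[Lemma 2.4]{Lin-annals}, combining a density estimate with a contradiction argument for the linear independence. Write $\theta_0:=\theta(x_0)>0$ and fix $\eps>0$. Approximate continuity of $\theta$ at $x_0$ with respect to $\H^n\lfloor_{\spt(\sigma)}$ gives
\begin{equation*}
\H^n\bigl(\{x\in\spt(\sigma)\cap\overline{B_r(x_0)} \,|\, \theta(x)<\theta_0-\eps\}\bigr)=o(1)\,\H^n\bigl(\spt(\sigma)\cap\overline{B_r(x_0)}\bigr)=o(r^n)
\end{equation*}
as $r\to0$, where the last equality uses \eqref{eq:bdd_up_density}. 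We call $G_\eps:=\{x\in\spt(\sigma):\theta(x)\ge\theta_0-\eps\}$ the set of good points.

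\emph{Step 1: good points carry a definite amount of $\H^n$-measure at every small scale.} Because $\theta_0>0$, there are radii $r\searrow0$ with $\sigma(\overline{B_r(x_0)})\ge\frac{\theta_0}{2}r^n$. Since $\H^n(\spt\sigma)<\infty$ and \eqref{eq:bdd_disc_up_density} bounds the upper density of $\sigma$, the measure $\sigma$ is absolutely continuous with respect to $\H^n\lfloor_{\spt\sigma}$, with density at most $C\alpha$ (\cite[2.10.19]{F}). Hence $\H^n(\spt\sigma\cap\overline{B_r(x_0)})\ge c(\alpha)\theta_0 r^n$. Using the bad-set estimate above and removing the small ball $B_{\delta r}(x_0)$, which has $\sigma$-measure at most $\alpha\delta^nr^n$, we obtain
\begin{equation*}
\H^n\bigl(G_\eps\cap(\overline{B_r(x_0)}\setminus B_{\delta r}(x_0))\bigr)\ge c\,r^n
\end{equation*}
for small $\delta=\delta(n,\alpha,\theta_0)$ and for $r$ small along this sequence.

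\emph{Step 2: choose the points inductively.} Select $x_1\in G_\eps$ in the annulus from Step 1. Given $x_1,\dots,x_{k-1}$ with $k\le n$, let $W$ be their span translated to $x_0$; it is an affine subspace of dimension $k-1\le n-1$. The $\delta r$-neighborhood of $W$ inside $\overline{B_r(x_0)}$ can be covered by about $C\delta^{1-n}$ balls of radius $\delta r$, and each of them is thin for the relevant measure: by \eqref{eq:bdd_disc_up_density}, $\sigma(\overline{B_{\delta r}})\le\alpha(\delta r)^n$, so the neighborhood has $\sigma$-measure at most $C\alpha\,\delta\, r^n$. The mass estimate of Step 1 should therefore be run with $\sigma$ itself, using that on $G_\eps$ the density $\theta$ is bounded below, so $\sigma\ge c\,\theta_0\,\H^n$ there (again \cite[2.10.19]{F}). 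This yields $\sigma(G_\eps\cap\text{annulus})\ge c\,r^n$, which beats $C\alpha\delta r^n$ once $\delta$ is small, and so there is a good point $x_k$ at distance at least $\delta r$ from $W$.

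Finally, take $\eps=\eps_j\searrow0$ and the matching scales $s_j=r$, and rename so that conditions (1)--(3) of the statement hold. The main obstacle is the transfer between $\sigma$-mass and $\H^n$-mass on the good set. It requires the lower bound $\sigma\ge c\,\theta\,\H^n\lfloor_{\spt\sigma}$, and for this I would cite the density theorem \cite[2.10.19]{F} together with $\theta\ge\theta_0/2$ on $G_\eps$.
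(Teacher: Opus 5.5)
Your proposal is correct and is essentially the paper's argument, phrased as a direct inductive selection instead of by contradiction: the paper likewise bounds $\sigma$ of the bad set by $C\alpha\,\H^n$ of it (via $\sigma\le C\alpha\,\H^n\lfloor_{\spt(\sigma)}$ and approximate continuity), and covers the $\delta s_j$-neighbourhood of an $(n-1)$-plane through $x_0$ by $C\delta^{1-n}$ balls to get $\sigma\le C\alpha\delta s_j^n$, which contradicts $\sigma(\overline{B_{s_j}(x_0)})\approx\theta(x_0)s_j^n$. The lower density bound $\sigma\ge c\,\theta\,\H^n$ you call the main obstacle is not needed: if you run Step 1 in $\sigma$ from the start, $\sigma\bigl(G_\eps\cap(\overline{B_r(x_0)}\setminus B_{\delta r}(x_0))\bigr)\ge\frac{\theta_0}{2}r^n-C\alpha\,o(r^n)-\alpha\delta^n r^n$ uses only the upper bound on $\sigma$, and this also fixes the slip where you subtract the small ball's $\sigma$-mass from an $\H^n$-estimate.
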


\begin{proof}
    Let us choose a sequence $s_j \searrow 0$ in such a way that $$\theta(x_0) = \lim_{j \to \infty} s_j^{- n} \sigma \left( \overline{B_{s_j}(x_0)} \right) > 0.$$
    Denote $S := \spt(\sigma)$. The $\H^n \lfloor_S$-approximate continuity of $\theta$ at $x_0$ and \eqref{eq:bdd_up_density} imply  that for each $\eps > 0$, $$\lim_{r \to 0} r^{- n} \H^n \lfloor_S \left( \{ x \in \overline{B_r(x_0)} | \: |\theta(x) - \theta(x_0)| > \eps \} \right) = 0.$$
    This allows us to choose a sequence $\eps_j \searrow 0$ so that possibly after passing to a subsequence
    \begin{equation}\label{eq:approx_cts}
    s_j^{- n} \H^n \lfloor_S \left( \{ x \in \overline{B_{s_j} (x_0)} | \: |\theta(x) - \theta(x_0)| \geq \eps_j \} \right) \leq \delta,
    \end{equation}
    for some fixed $\delta \in (0, \frac{1}{2})$ whose precise value is to be determined later.

    Now for each $j \in \Zp$, we want to choose $n$ number of points $x^j_1, \ldots, x^j_n \in \overline{B_{s_j}(x_0)}$ that satisfy the above mentioned relations $(1)\text{-}(3)$. For the sake of contradiction suppose that this is not possible for every $j \in \Zp$. It is then straightforward to see that for each $j \in \Zp$, there exists an $(n - 1)$-dimensional affine subspace $P_j \subseteq \R^m$ which passes through $x_0$ and satisfy $$\{ x \in S \cap \overline{B_{s_j} (x_0)} | \: |\theta(x) - \theta(x_0)| < \eps_j \} \subseteq B_{\delta s_j} ( P_j).$$
    Therefore for each $j \in \Zp$, $$S \cap \overline{B_{s_j} (x_0)} \subseteq \left\{ x \in S \cap \overline{B_{s_j} (x_0)} | \: |\theta(x) - \theta(x_0)| \geq \eps_j \right\} \cup B_{\delta s_j} \left( P_j \cap \overline{B_{s_j} (x_0)} \right).$$
    Let us estimate the $\sigma$ measure of both the sets on the right-hand side. By standard geometric measure theoretic results, together with \eqref{eq:bdd_disc_up_density} and  \eqref{eq:approx_cts} we deduce that
    \begin{align*}
        &\sigma \left( \{ x \in S \cap \overline{B_{s_j} (x_0)} | \: |\theta(x) - \theta(x_0)| \geq \eps_j \} \right)\\
        &\leq C \alpha \H^n \left( \{ x \in S \cap \overline{B_{s_j} (x_0)}| \: |\theta(x) - \theta(x_0)| \geq \eps_j \} \right)\\
        &\leq C \alpha \delta s_j^n.
    \end{align*}

    \noindent Next note that $B_{\delta s_j} \left( P_j \cap \overline{B_{s_j} (x_0)} \right)$ can be covered by $N$-many balls $\left\{ \overline B_i \right\}_{i = 1}^N$ of radius $\delta s_j$, with $N \leq C \delta^{1 - n}$. Aided with \eqref{eq:bdd_disc_up_density} we compute
    \begin{align*}
        \sigma \left( B_{\delta s_j} \left(P_j \cap \overline{B_{s_j}(x_0)} \right) \right) \leq& \sum_{i = 1}^N \sigma \left( \overline B_i \right) \leq C \alpha (\delta s_j)^n \delta^{1 - n} = C \alpha \delta s_j^n.
    \end{align*}
Combining these  estimates we conclude
    \begin{align*}
        \sigma \left( \overline{B_{s_j} (x_0)} \right) \leq& \sigma \left( \{ x \in S \cap \overline{B_{s_j}(x_0)} | \: |\theta(x) - \theta(x_0)| \geq \eps_j \}\right) + \sigma \left( B_{\delta s_j} \left( P_j \cap \overline{B_{s_j} (x_0)} \right) \right)\\
        \leq& C \alpha \delta s_j^n < \frac{1}{2} \theta(x_0) s_j^n,\quad \forall j \gg 1,
    \end{align*}
    when $\delta$ is chosen sufficiently small. But this is a contradiction to the choice of $(s_j)$. This finishes the proof.
\end{proof}

\section*{Acknowledgments}
A.D. and A.H. acknowledge the support of the Department of Atomic Energy, Government of India, under
Project Identification No. RTI 4014.  Y.S. is partially supported by NSF DMS Grant $2154219$, " Regularity {\sl vs} singularity formation in elliptic and parabolic equations".

\bibliographystyle{alpha}
\bibliography{wed_hmhf,wed_hmhf2,biblio}

\end{document}